\theoremstyle{definition}
\newtheorem{theorem}{Theorem}[section]
\newtheorem{lemma}[theorem]{Lemma}
\newtheorem{proposition}[theorem]{Proposition}
\newtheorem{corollary}[theorem]{Corollary}
\newtheorem{problem}[theorem]{Problem}
\newtheorem{definition}[theorem]{Definition}
\newtheorem{example}[theorem]{Example}
\newtheorem{remark}[theorem]{Remark}
\newcommand{\VV}{{\mathbf{V}}}
\newcommand{\ee}{{\mathbf{e}}}
\newcommand{\ZZ}{\mathbb{Z}}
\newcommand{\C}{\mathbb{C}}
\newcommand{\T}{\mathcal{T}}
\newcommand{\hh}{\mathfrak{h}}
\newcommand{\Hom}{\text{Hom}}
\newcommand{\Irrep}{\text{Irrep}}
\newcommand{\supp}{\text{supp}}
\newcommand{\rank}{\text{rank}}
\newcommand{\Sh}{\text{Sh}}
\newcommand{\Rep}{\text{Rep}}
\newcommand{\Lie}{\text{Lie}}
\newcommand{\Spec}{\text{Spec}}
\newcommand{\loc}{\text{loc}}
\newcommand{\Id}{\text{id}}
\newcommand{\g}{\mathfrak{g}}
\newcommand{\GL}{\operatorname{GL}}
\newcommand{\slf}{\mathfrak{sl}}
\newcommand{\D}{\mathcal{D}}
\newcommand{\U}{\mathcal{U}}
\newcommand{\A}{\mathcal{A}}
\newcommand{\z}{\mathfrak{z}}
\newcommand{\Pro}{\mathcal{P}}
\newcommand{\End}{\operatorname{End}}
\newcommand{\Str}{\mathcal{O}}
\renewcommand{\sl}{\mathfrak{sl}}
\newcommand{\taut}{\mathcal{T}}
\newcommand{\Ext}{\operatorname{Ext}}
\newcommand{\Dcal}{\mathcal{D}}
\newcommand{\LEnd}{\mathcal{E}nd}
\newcommand{\ii}{\rm i}
\newcommand{\h}{{\mathfrak{h}}}
\newcommand{\CC}{{\mathbb{C}}}
\DeclareMathOperator{\Tr}{Tr}
\DeclareMathOperator{\ch}{ch}
\DeclareMathOperator{\Ch}{Ch}
\DeclareMathOperator{\Coef}{Coef}
\DeclareMathOperator{\Res}{Res}
\DeclareMathOperator{\sgn}{sgn}
\DeclareMathOperator{\Ker}{Ker}
\DeclareMathOperator{\Alt}{Alt}
\date{}
\author{Pavel Etingof}
\address{Department of Mathematics, Massachusetts Institute of Technology,
Cambridge, MA 02139, USA}
\email{etingof@math.mit.edu}
\author{Eugene Gorsky}
\address{Mathematics Department, Stony Brook University,
Stony Brook NY, 11794-3651, USA}
\email{egorsky@math.sunysb.edu}
\author{Ivan Losev}
\address{Department of Mathematics,
Northeastern University, Boston, MA, 02115, USA}
\email{I.Loseu@neu.edu}
\title[Representations with minimal support and torus knots]{Representations of Rational Cherednik algebras with minimal
support and torus knots}
\begin{document}


\begin{abstract}
In this paper we obtain several results about
representations of rational Cherednik algebras, and
discuss their applications.
Our first result is the Cohen-Macaulayness property (as modules
over the polynomial ring) of Cherednik algebra modules with minimal
support. Our second result is an explicit formula
for the character of an irreducible minimal support module in
type $A_{n-1}$ for $c=\frac{m}{n}$,
and an expression of its quasispherical part (i.e.,
the isotypic part of ``hooks'') in terms of the HOMFLY
polynomial of a torus knot colored by a Young diagram.
We use this formula and the work of Calaque,
Enriquez and Etingof to give explicit formulas
for the characters
of the irreducible equivariant D-modules on
the nilpotent cone for $SL_m$.
Our third result is the construction
of the Koszul-BGG complex for the rational Cherednik
algebra, which generalizes the construction of the Koszul-BGG resolution
from \cite{BEG} and \cite{Go}, and the calculation
of its homology in type $A$. We also show in type $A$
that the differentials in the Koszul-BGG complex
are uniquely determined by the condition that they are
nonzero homomorphisms of modules over the Cherednik algebra.
Finally, our fourth result is
the symmetry theorem, which identifies
the quasispherical components in the representations
with minimal support over the rational Cherednik algebras
$H_{\frac{m}{n}}(S_n)$ and $H_{\frac{n}{m}}(S_m)$.
In fact, we show that the simple quotients of the
corresponding quasispherical subalgebras are isomorphic
as filtered algebras.
This symmetry was essentially
established in \cite{CEE} in the spherical case,
and in \cite{Gor} in the case $GCD(m,n)=1$, and
it has a natural interpretation in terms of invariants of torus
knots.
\end{abstract}

\maketitle

\section{Introduction}

The goal of this paper is to establish a number of properties
of representations of rational Cherednik algebras with minimal
support, and connect them to knot invariants.
Our motivation came from the connections of representations of Cherednik
algebras with quantum invariants of torus knots
and Hilbert schemes of plane curve singularities
(such as $x^m=y^n$, $GCD(m,n)=1$),
see \cite{GORS}.

\subsection{} Let $\h$ be a finite dimensional complex vector space,
$W\subset GL(\h)$ a finite subgroup,
$S\subset W$ the set of reflections, and $c: S\to \CC$ a conjugation invariant function.
Let $H_c(W,\h)$ be the rational Cherednik algebra attached to $W,\h$.
Let ${\mathcal{O}}_c={\mathcal O}_c(W,\h)$ be the category of modules over this algebra
which are finitely generated over $\CC[\h]=S\h^*$ and locally
nilpotent under $\h$. Typical examples of objects of this
category are $M_c(\tau)$, the Verma (a.k.a. standard)
module over $H_c(W,\h)$ with lowest
weight $\tau\in \Irrep\ W$, and $L_c(\tau)$, the
irreducible quotient of $M_c(\tau)$.

Any object $M\in {\mathcal{O}}_c$, being a finitely generated
$\CC[\h]$-module, has support $\supp(M)$ as a module over $\CC[\h]$, which
is a closed subvariety of $\h$.

\begin{definition} We say that $M\in {\mathcal O}_c$ has minimal support if
no subset of $\supp(M)$ of smaller dimension is the support of a nonzero object
of ${\mathcal O}_c$.
\end{definition}

Our first main result is

\begin{theorem}\label{cmth}
If $M$ has minimal support then it is a Cohen-Macaulay module
over $\CC[\h]$ of dimension $d=\dim\supp(M)$.
In other words, it is free over any polynomial
subalgebra $\CC[p_1,...,p_d]\subset \CC[\h]$ (with
homogeneous $p_i$) over which it is finitely
generated.
\end{theorem}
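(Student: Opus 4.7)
The plan is to use the Bezrukavnikov--Etingof (BE) completion theorem to express the formal completion of $M$ at any closed point of $\supp(M)$ as a tensor product of a polynomial ring with a module over a smaller rational Cherednik algebra, and to conclude Cohen--Macaulayness by induction on $|W|$ (and on $\dim\h$).

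Recall the structure of supports: for any $M\in\mathcal{O}_c$, $\supp(M)$ is a $W$-stable union of flats $\h^{W_0}$ for parabolic subgroups $W_0\subseteq W$. Minimality then forces $\supp(M)=W\cdot\h^{W'}$ for a single parabolic class $W'$, and excludes the existence of nonzero objects of $\mathcal{O}_c$ with strictly smaller support; in particular, if $\h^{W'}\ne 0$, then $\mathcal{O}_c$ contains no nonzero finite-dimensional module. At a point $b\in\h$ with stabilizer $W_b$, BE identifies $\widehat{M}_b\cong \widehat{\CC[\h^{W_b}]}_b\otimes_{\CC} N_b$ as $\widehat{\CC[\h]}_b$-modules, with $N_b\in\mathcal{O}_{\underline{c}}(W_b,\h/\h^{W_b})$, and $N_b$ inherits minimality of support from $M$. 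Thus Cohen--Macaulayness of $\widehat{M}_b$ is equivalent to Cohen--Macaulayness of $N_b$. For a generic $b\in\h^{W'}$ (stabilizer $W'$), the module $N_b$ is finite-dimensional, hence trivially CM of dimension $0$, and so $\widehat{M}_b$ is CM of dimension $\dim\h^{W'}$. For $b$ in a deeper stratum (stabilizer $W_b\supsetneq W'$), the module $N_b$ has minimal support for the strictly smaller Cherednik algebra $H_{\underline{c}}(W_b,\h/\h^{W_b})$ with $|W_b|<|W|$, so CM follows from the inductive hypothesis.

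\textbf{Main obstacle.} The critical case is $b=0$ with $W_b=W$, where BE yields no reduction. Using the factorization $H_c(W,\h)\cong H_c(W,\tilde\h)\otimes D(\h^W)$ coming from the splitting $\h=\h^W\oplus\tilde\h$, one reduces to the essential situation $\h^W=0$. In that case, Cohen--Macaulayness at the origin for a graded $\CC[\h]$-module is equivalent to the vanishing of higher local cohomology $H^i_{\mathfrak{m}}(M)=0$ for $i<d=\dim\supp(M)$. I expect this to be obtained from the construction of a finite resolution of $M$ in $\mathcal{O}_c$ by modules that are free over $\CC[\h]$ (e.g.\ direct sums of Verma modules $M_c(\sigma)$) of length exactly $\text{codim}(\supp M)=\dim\h-d$: by Auslander--Buchsbaum, such a resolution forces the projective dimension of $M$ over $\CC[\h]$ to equal $\dim\h-d$, hence Cohen--Macaulayness. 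The existence and correct length of such a resolution is the most delicate point; it likely uses the highest-weight structure of $\mathcal{O}_c$ together with the absence of finite-dimensional modules guaranteed by minimality of support, to rule out unwanted terms in the resolution.
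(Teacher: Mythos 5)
Your reduction via Bezrukavnikov--Etingof restriction and induction on the stabilizer only relocates the problem: after splitting off $D(\h^W)$ you are left with the case $\h^W=0$, $b=0$, and there the entire content of the theorem remains, and your argument for it is circular. Objects of $\mathcal{O}_c$ admitting a standard filtration are free over $R=\CC[\h]$, so a resolution of $M$ by such objects of length $\ell$ exists if and only if $\mathrm{pd}_R(M)\le \ell$, and by Auslander--Buchsbaum $\mathrm{pd}_R(M)=\dim\h-\mathrm{depth}(M)$; hence the assertion that one can arrange a resolution of length exactly $\dim\h-d$ is literally a restatement of the Cohen--Macaulayness to be proved, not a tool for proving it. The heuristic you offer for producing such a resolution --- that minimality of support excludes finite-dimensional modules and so ``rules out unwanted terms'' --- does not address the real danger: the obstructions are syzygies (equivalently, the modules $\Ext^i_R(M,R)$ for $i<\dim\h-d$) supported on intermediate strata strictly between the origin and $\supp(M)$, and nothing in your plan kills those.

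What is missing is a mechanism that turns these obstructions into objects of a category $\mathcal{O}$ over a Cherednik algebra, so that minimality of support can be applied to them; this is exactly what the paper supplies, in two ways, neither of which needs your stratification. In the first proof, a Koszul resolution by $R$-free $H$-modules yields $\Ext^{n+i}_H(M,H)\cong\Ext^i_R(M,R)\otimes\wedge^n\h^*$ (Proposition \ref{lem} and Corollary \ref{coro}), and by the homological duality of \cite{GGOR} these become objects of $\mathcal{O}_{c^*}$ with support contained in $\supp(M)$; since $M$ is generically Cohen--Macaulay on its support, for $i<n-d$ they are supported on a proper subvariety, hence vanish by minimality, which is precisely $\mathrm{depth}(M)\ge d$. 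In the second proof, the local cohomology $H^i_Y(M)$ along the non-Cohen--Macaulay locus $Y$ is shown to carry a natural $H$-action (Lemma \ref{Coh_supp_Cher}) and to be finitely generated over $R$ for $i<\mathrm{codim}\,Y$, so it is an object of $\mathcal{O}_c$ supported on $Y\subsetneq\supp(M)$ and must vanish, contradicting the definition of $Y$. If you wish to keep your reduction-to-the-origin skeleton, the step you actually need is of this second kind --- an $H$-module structure together with a finiteness statement for $H^i_{\mathfrak m}(M)$, $i<d$ (finiteness is not automatic for local cohomology at the maximal ideal and would have to be extracted from the Cohen--Macaulayness on the punctured spectrum provided by your induction) --- rather than the resolution-length claim, which assumes the conclusion.
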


\begin{remark}
Note that the minimal support condition is needed.
For example, if $W=S_3$, $c=1/3$, and
$M=L_c(\h)$ is the irreducible module with lowest weight $\h$,
then $M$ is the augmentation ideal in $\CC[\h]$,
so it is not Cohen-Macaulay (as it is not free).
\end{remark}

\subsection{}
Our second result is the character formula for
irreducible minimally supported modules
for rational Cherednik algebras of $S_n$ for $c=\frac{m_0}{n_0}$, and its
consequences. Let $\h=\h_n$ be the reflection representation of
$S_n$, and consider the rational Cherednik algebra
$H_c(S_n):=H_c(S_n,\h_n)$, where $c=\frac{m_0}{n_0}$ and $m_0,n_0\in \ZZ_{\ge 1}$
are coprime.
Let $n=dn_0+r$, where $0\leqslant r<n_0$. Recall from \cite{Wi} that minimally
supported modules in the category ${\mathcal O}_c:={\mathcal O}_c(S_n,\h_n)$
are of the form $L_c(n_0\lambda+\lambda')$,
where $\lambda$ is a partition of $d$ and $\lambda'$ is a partition of $r$. Here $n_0\lambda+\lambda'$
is the partition given by $(n_0\lambda+\lambda')_i=n_0\lambda_i+\lambda'_i$.

To state the character formula, define the constants
$c_{\lambda,\lambda',n_0}^\nu$ by:
$$
s_\lambda(x_1^{n_0}, x_2^{n_0},...)s_{\lambda'}(x_1,x_2,\ldots)=\sum_\nu c_{\lambda,\lambda',n_0}^\nu s_\nu(x_1,
x_2,...),
$$
where $s_\lambda$ are the Schur polynomials. When we write $c_{\lambda,n_0}^\nu$, we mean $c_{\lambda,\varnothing,n_0}^\nu$.

\begin{theorem}\label{charfor}
In the Grothendieck group $K_0({\mathcal O}_c)$, we have
$$
[L_c(n_0\lambda+\lambda')]=\sum_{\nu: |\nu|=n} c_{\lambda,\lambda',n_0}^\nu M_c(\nu).
$$
In particular, the character of $L_c(n_0\lambda)$ is given by
the formula
$$
\Tr_{L_c(n_0\lambda+\lambda')}(\sigma q^{\mathbf{h}})=
\sum_{\nu: |\nu|=n} c_{\lambda,\lambda',n_0}^\nu
q^{\frac{n-1}{2}-c\kappa(\nu)}\chi_\nu(\sigma)
\det{_{\h}}(1-q\sigma)^{-1},
$$
where $\mathbf{h}$ is the scaling element of the rational Cherednik algebra,
$\kappa(\nu)$ is the content of $\nu$ (see formula (\ref{kappa})),
$\sigma\in S_n$, and $\chi_\nu$ is the character of the $S_n$-module
attached to the partition $\nu$.
\end{theorem}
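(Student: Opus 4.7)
The character formula (the second displayed identity) is a formal consequence of the Grothendieck-group identity (the first displayed identity) together with the standard Verma trace
$$
\Tr_{M_c(\nu)}(\sigma q^{\mathbf{h}})=q^{\frac{n-1}{2}-c\kappa(\nu)}\chi_\nu(\sigma)\det{_{\h}}(1-q\sigma)^{-1},
$$
which is immediate from the PBW identification $M_c(\nu)=\CC[\h]\otimes\nu$ as a graded $S_n$-module and the fact that $\mathbf{h}$ acts on the lowest weight subspace $\nu\subset M_c(\nu)$ by the scalar $\tfrac{n-1}{2}-c\kappa(\nu)$, a standard Dunkl calculation. So the real work lies in proving the identity $[L_c(n_0\lambda+\lambda')]=\sum_\nu c_{\lambda,\lambda',n_0}^\nu[M_c(\nu)]$ in $K_0(\mathcal O_c)$.

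My plan is to apply the Bezrukavnikov--Etingof parabolic restriction functor $\Res_{W_0}$ at a generic point of $\supp L_c(n_0\lambda+\lambda')$. By Wilcox's classification of minimally supported simples, the stabilizer at such a point is the Young subgroup $W_0=(S_{n_0})^d\times S_r\subset S_n$, so $H_c(W_0)\cong H_c(S_{n_0})^{\otimes d}\otimes H_c(S_r)$. Three facts then pin down the restriction: (i) since $r<n_0$ and $\gcd(m_0,n_0)=1$, the category $\mathcal O_c(S_r)$ is semisimple and every Verma there is already simple; (ii) by Berest--Etingof--Ginzburg, $H_{m_0/n_0}(S_{n_0})$ has a unique finite-dimensional irreducible $L_{m_0/n_0}(\triv)$; and (iii) minimal support forces $\Res_{W_0}(L_c(n_0\lambda+\lambda'))$ to be a finite-dimensional $H_c(W_0)$-module. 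A lowest-weight matching should then identify it, as an ungraded $H_c(W_0)$-module, with $L_{m_0/n_0}(\triv)^{\boxtimes d}\boxtimes M_c(\lambda')$, carrying the $N_{S_n}(W_0)/W_0=S_d$-equivariant structure dictated by the $S_d$-irrep $\lambda$.

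To recover $[L_c(n_0\lambda+\lambda')]$ from this restriction datum I would invoke the Losev--Shan--Vasserot induction equivalence between $S_d$-equivariant finite-dimensional $H_c(W_0)$-modules and minimally supported objects in $\mathcal O_c(S_n)$, combined with Theorem \ref{cmth}: the Cohen--Macaulay property implies that the bigraded character of $L_c(n_0\lambda+\lambda')$ is the character of the generic-fiber module multiplied by an explicit transverse factor involving $\det{_{\h}}(1-q\sigma)^{-1}$. Translating into Frobenius characters, the graded $S_{n_0}$-Frobenius character of $L_{m_0/n_0}(\triv)$ should package, via the $S_d$-plethysm, into the substitution $x_i\mapsto x_i^{n_0}$ applied to $s_\lambda$, while the $S_r$-factor contributes $s_{\lambda'}$; expanding $s_\lambda(x_1^{n_0},x_2^{n_0},\ldots)\,s_{\lambda'}(x_1,x_2,\ldots)$ in the Schur basis then reads off the multiplicities by the very definition of $c_{\lambda,\lambda',n_0}^\nu$. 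The main obstacle I expect is this last step: correctly pinning down the $S_d$-equivariant structure so that $\lambda$ enters plethystically as $s_\lambda$ (and not some other symmetric function), and verifying that the CM-driven transfer of characters does not introduce spurious contributions from other minimally supported simples sharing the same support stratum.
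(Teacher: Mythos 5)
Your reduction of the character formula to the $K_0$ identity is fine, and your identification of what the restriction to a generic point of the support looks like is essentially Wilcox's theorem (modulo a slip: at a generic point of the minimal support the $r$ leftover coordinates are pairwise distinct, so the stabilizer is $(S_{n_0})^d$, not $(S_{n_0})^d\times S_r$). But the core of the theorem --- that the Verma multiplicities are exactly the coefficients of $s_\lambda(x^{n_0})s_{\lambda'}(x)$ in the Schur basis --- is not reachable by the tools you propose, and the step you yourself flag as ``the main obstacle'' is precisely where the proof has to live. Parabolic restriction at a generic point only records the fiber data (which simple object of the semisimple subcategory of minimally supported modules you have, via the $S_d$-local system); it says nothing about the graded $S_n$-module structure of $L_c(n_0\lambda+\lambda')$ away from the support, which is what the class in $K_0$ in the Verma basis encodes. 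Likewise, Theorem \ref{cmth} only gives freeness over $\CC[p_2,\ldots,p_d]$, i.e.\ $\ch L = \ch N\cdot\prod_{i=2}^d(1-q^i)^{-1}$ for some finite-dimensional graded $S_n$-module $N$; it does \emph{not} identify $N$ with ``the generic fiber times a transverse factor,'' and the graded $S_n$-character of $N$ is exactly the unknown. So the combination ``restriction $+$ Wilcox equivalence $+$ Cohen--Macaulayness'' determines the module up to isomorphism (which is already known) but cannot produce the plethystic substitution $x_i\mapsto x_i^{n_0}$; in the paper, Theorem \ref{cmth} is not used in the proof of Theorem \ref{charfor} at all, but only later for positivity statements.

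The actual mechanism in the paper is different and is genuinely needed. For $r=0$ one uses the equivalence of $\mathcal{O}_c(S_n,\h_n)$ with modules over the $q$-Schur algebra at $q=\exp(\pi\sqrt{-1}/n_0)$ (Rouquier, Losev), under which Vermas go to Weyl modules, and then the \emph{quantum Frobenius}: $L_c(n_0\nu)$ is the pullback of the $\GL_N$-irreducible $V_\nu$, so its character is obtained from that of $V_\nu$ by replacing each weight $e^\mu$ by $e^{n_0\mu}$ --- this is where the substitution $x_i\mapsto x_i^{n_0}$, i.e.\ the coefficients $c^\nu_{\lambda,n_0}$, comes from. For $r>0$ one must further show $L(n_0\lambda+\lambda')\cong\operatorname{Ind}_{S_{dn_0}\times S_r}^{S_n}L(n_0\lambda)\boxtimes L(\lambda')$, and the paper does this not by a ``lowest-weight matching'' but via the categorical $\widehat{\mathfrak{sl}}_{n_0}$-action on $\bigoplus_n\mathcal{O}_c(n)$: writing $L(\lambda')=F_{c_r}\cdots F_{c_1}L(\varnothing)$, using the crystal computation to get $F_{c_j}L(\lambda_{j-1})=L(\lambda_j)$ (Chuang--Rouquier), and invoking Shan--Vasserot's compatibility of the induction functor with the $F_i$. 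Without some input of this categorification/quantum-group type (or an equivalent character computation), your outline cannot be completed, so as it stands the proposal has a genuine gap rather than an alternative proof.
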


This theorem implies the following explicit formula
for the character of the quasispherical part of
$L_c(n_0\lambda)$, which provides a connection to the theory
of knot invariants. Namely, let $T(m_0,n_0)$
be the torus knot corresponding to the relatively prime integers
$m_0,n_0$, and let $P_\lambda(T(m_0,n_0))(a,q)$
be its colored HOMFLY polynomial; if $a=q^N$ for large enough
$N$, it is computed as the Reshetikhin-Turaev invariant
for $U_q(\sl_N)$, by coloring the knot with the irreducible
representation of $\sl_N$ of highest weight
$\lambda$. Let
\begin{equation}\label{renorm}
\widetilde{P}_{\lambda}(T(m_0,n_0))(a,q)=
a^{\frac{d}{2}(m_0+n_0-m_0n_0)}\frac{q^{-1/2}-q^{1/2}}{1-a}P_\lambda(T(m_0,n_0))(a,q).
\end{equation}
We will call this polynomial the {\it renormalized} colored HOMFLY polynomial.

Using the formula by M. Rosso and V. Jones \cite{RJ} for this polynomial,
from Theorem \ref{charfor} we obtain:

\begin{corollary}\label{quasis}
$$
\sum_{k=0}^{n-1}(-a)^k\dim_{q}\Hom_{S_n}(\wedge^k\h_n,L_c(n_0\lambda))=q^{-m_0n_0\kappa(\lambda)}\widetilde{P}_{\lambda}(T(m_0,n_0))(a,q),
$$
where $\dim_q(E):=\Tr_E(q^{\mathbf{h}})$.
\end{corollary}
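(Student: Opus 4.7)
My plan is to combine Theorem~\ref{charfor} with a Molien-type computation of the quasispherical character of a Verma module, and then invoke the Rosso--Jones cabling formula for the colored HOMFLY polynomial.

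First I would apply Theorem~\ref{charfor} with $\lambda'=\varnothing$ to rewrite the left-hand side as
\begin{equation*}
\sum_{|\nu|=n}c_{\lambda,n_0}^\nu\sum_{k=0}^{n-1}(-a)^k\dim_q\Hom_{S_n}(\wedge^k\h_n,M_c(\nu)).
\end{equation*}
Since $M_c(\nu)\cong\nu\otimes\CC[\h_n]$ as a graded $S_n$-module with lowest $\mathbf{h}$-weight $\tfrac{n-1}{2}-c\kappa(\nu)$, the Molien formula together with the identity $\sum_k(-a)^k\chi_{\wedge^k\h_n}(\sigma)=\det{_\h}(1-a\sigma)$ gives
\begin{equation*}
\sum_k(-a)^k\dim_q\Hom_{S_n}(\wedge^k\h_n,M_c(\nu))=\frac{q^{\frac{n-1}{2}-c\kappa(\nu)}}{n!}\sum_{\sigma\in S_n}\chi_\nu(\sigma)\,\frac{\det{_\h}(1-a\sigma)}{\det{_\h}(1-q\sigma)}.
\end{equation*}

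Next, using the cycle factorization $\det{_{\CC^n}}(1-t\sigma)=\prod_i(1-t^{\ell_i(\sigma)})$ and the power-sum expansion $s_\nu=\sum_\mu\chi_\nu^\mu p_\mu/z_\mu$, the inner average collapses to $\tfrac{1-q}{1-a}\cdot s_\nu$ evaluated at the plethystic specialization $p_k\mapsto(1-a^k)/(1-q^k)$. This Schur specialization is, up to an explicit monomial in $a$ and $q$, the colored HOMFLY polynomial of the unknot labelled by $\nu$. At this point the left-hand side of the corollary has been expressed as an explicit linear combination of colored unknot invariants, with coefficients $c_{\lambda,n_0}^\nu$ and individual twist factors determined by $c\kappa(\nu)=(m_0/n_0)\kappa(\nu)$.

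On the topological side, I would invoke the Rosso--Jones formula \cite{RJ}, which writes $P_\lambda(T(m_0,n_0))(a,q)$ as precisely such a linear combination: the same coefficients $c_{\lambda,n_0}^\nu$, colored unknot invariants $P_\nu(\bigcirc)$, a framing/twist factor of the form $q^{(m_0/n_0)\kappa(\nu)}$, and an overall writhe monomial in $a$ and $q$. Substituting and comparing termwise reduces the claimed identity to an equality of overall $a,q$-monomials, which is exactly what the renormalization (\ref{renorm}) and the prefactor $q^{-m_0n_0\kappa(\lambda)}$ on the right-hand side are designed to provide: the former absorbs the writhe correction and the transition between the scalars $(1-q)/(1-a)$ and $(q^{-1/2}-q^{1/2})/(1-a)$, while the latter accounts for the residual $\kappa$-shift coming from $q^{(n-1)/2}$ and the normalization of the unknot invariant in Rosso--Jones.

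The hard part will be this final bookkeeping step: carefully aligning the framing, writhe, and unknot-normalization conventions of Rosso--Jones with the Cherednik conventions for the scaling element $\mathbf{h}$ and the sign of $-c\kappa(\nu)$ in Theorem~\ref{charfor} (possibly via a $q\leftrightarrow q^{-1}$ or $a\leftrightarrow a^{-1}$ substitution). Once the conventions are pinned down, the identity follows as a direct consequence of Theorem~\ref{charfor} and the Rosso--Jones cabling formula.
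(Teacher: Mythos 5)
Your proposal follows essentially the same route as the paper: decompose $[L_c(n_0\lambda)]$ into Verma classes via Theorem~\ref{charfor}, compute the hook-isotypic generating function of $M_c(\nu)$ by character orthogonality (the paper's Lemma on equation (\ref{standard module})), identify the resulting specialization $\theta_{a,q}(s_\nu)$ with the colored unknot invariant, and then conclude by the Rosso--Jones formula (Theorem~\ref{homfly}) together with the renormalization (\ref{renorm}). The bookkeeping of monomial prefactors that you flag as the remaining work is exactly what the paper carries out explicitly in Corollary~\ref{unknot} and the displayed computation of $\mathcal{F}(L_{\frac{m}{n}}(n_0\lambda))$, so your outline is correct and complete in approach.
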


This shows, in particular, that the sum on the left hand side
is symmetric under interchanging $m$ and $n$, which
is not obvious from the representation theoretic viewpoint
(and is explained by Theorem \ref{symme} below).
It also shows that
$\widetilde{P}_\lambda(T(m_0,n_0))(-a,q)$ (and hence $P_\lambda(T(m_0,n_0))(-a,q)$, under a suitable normalization by a power of $-a$) is a
(Laurent) polynomial in $a$ and a power series in $q$
 with nonnegative coefficients,
which is not straightforward from
the knot theory point of view (in fact, the only proof we know uses
Cherednik algebras). Moreover, Theorem \ref{cmth} implies that
the reduced colored HOMFLY invariant
$\widetilde{P}_\lambda(T(m_0,n_0))\cdot \prod_{i=2}^d (1-q^i)$
is a polynomial with nonnegative coefficients.

\subsection{} The character formula of Theorem \ref{charfor} can be used to solve a
problem in Lie theory posed in \cite[Section 9]{CEE}, namely, to
compute the characters of certain equivariant D-modules on the nilpotent
cone of the group $SL_m$.

Let $G$ be a complex simply connected simple algebraic group with
Lie algebra $\g$, ${\mathcal N}\subset \g^*$ be its nilpotent
cone, and $\D_G({\mathcal N})$ be the category of $G$-equivariant
D-modules on ${\mathcal N}$. This category is known to be
semisimple, with simple objects $M_{O,\chi}$ parametrized by
nilpotent orbits $O$ and irreducible representations $\chi$ of
the fundamental group of $O$.  Using Kashiwara's lemma, we can
regard objects of this category as equivariant D-modules on
$\g^*$ supported on ${\mathcal N}$, and then they are precisely
the Fourier transforms of unipotent character D-modules on $\g$
(see \cite[Section 9]{CEE} and references therein, in
particular, \cite{Mir}).

Given $M\in \D_G({\mathcal N})$, regard it as a $D$-module on
$\g^*$, and consider its space of global sections, which we will
denote also by $M$ for brevity. Then $M$ carries an action of $G$
and a commuting action of the Lie algebra $\sl_2$
generated by the Laplace operator and the operator of
multiplication by the squared norm on $\g^*$, see \cite[Section 9]{CEE}. Moreover, it is shown in \cite[Subsection 9.4]{CEE},
that for simple $M$ and for any irreducible $G$-module $V$, the
multiplicity space $\Hom_G(V,M)$ is an $\sl_2$-module
in category $\mathcal O$. Thus, one can define the character of $M$ by the
formula
$$
\Ch_M(q,g)=\Tr_M(gq^{-H})=\sum_{\mu\in P_+}\Tr_{V_\mu}(g)
\psi_{M,\mu}(q)
$$
with
$$
\psi_{M,\mu}(q):=\Tr_{\Hom_G(V_\mu,M)}(q^{-H}), g\in G,
$$
where $H$ is the Cartan element of ${\mathfrak{sl}_2}$,
and $V_\mu$ is the irreducible representation of $G$ with highest weight $\mu$.
This leads naturally to the following interesting problem:

\begin{problem}\label{charco}
Compute the character $\Ch_M$ for every
simple object $M=M_{O,\chi}$ of $\D_G({\mathcal N})$.
\end{problem}

As far as we know, this problem is open for a general $G$.
In \cite{CEE} it was reduced for $G=SL_m$ to the computation of
characters of minimally supported modules for rational Cherednik
algebras, and solved for $G=SL_2$ and in the cuspidal case
for $G=SL_m$ using this reduction.
Thus, using Theorem \ref{charfor}, we now obtain the general
answer for $G=SL_m$.

Let $s\in [0,m-1]$, and $\theta_s$ be the corresponding character
of the center of $SL_m$. Let $d=GCD(m,s)$, $m_0=m/d$ and
$\lambda$ be a partition of $d$. Let $O_\mu$
be the nilpotent orbit corresponding to the partition
$\mu$ of $m$. Consider the orbit
$O_{m_0\lambda}$. This orbit carries a unique 1-dimensional local system
corresponding to the central character $\theta_s$, which we will
denote by ${\mathcal L}_s$.

\begin{theorem}\label{chardmod} 
If $M=M_{O_{m_0\lambda},{\mathcal L}_s}$ then
$$
\Ch_M(q,g)=(1-q)\lim_{n\to \infty}\sum_{\nu: |\nu|=n}
c_{\lambda,n_0}^\nu q^{\frac{n-1}{2}-\frac{m}{n}\kappa(\nu)}
s_\nu(x_1,...,x_m,qx_1,...,qx_m,q^2x_1,...),
$$
where $n=s+km$ with $k\in \ZZ_{\ge 0}$, $n_0=n/d$,
and $x_1,...,x_m$ are the eigenvalues of $g$.
\end{theorem}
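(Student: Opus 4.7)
The plan is to combine the Schur--Weyl type reduction of Problem~\ref{charco} for $G=SL_m$, carried out in \cite[Section 9]{CEE}, with the character formula of Theorem~\ref{charfor}. That reduction expresses $\Ch_{M_{O_{m_0\lambda},\mathcal{L}_s}}$ in terms of the characters of the minimally supported Cherednik modules $L_c(n_0\lambda)$ over $H_c(S_n)$, where $c=m/n$ and $n$ runs through the arithmetic progression $s,s+m,s+2m,\dots$ (the central character $\theta_s$ selects this residue $\bmod\,m$, and then $d=\gcd(m,n)$, $m_0=m/d$, $n_0=n/d$). Concretely, the identity is of Frobenius type:
$$
\Ch_M(q,g)=\lim_{n\to\infty}\frac{1}{n!}\sum_{\sigma\in S_n}p_\sigma(x_1,\dots,x_m)\,\Tr_{L_c(n_0\lambda)}\!\bigl(\sigma\, q^{\mathbf h}\bigr),
$$
where $p_\sigma$ is the power-sum symmetric function indexed by the cycle type of $\sigma$, and the grading shift $\tfrac{n-1}{2}$ in Theorem~\ref{charfor} matches the convention $q^{\mathbf h}\leftrightarrow q^{-H}$ used to define $\Ch_M$. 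The limit stabilizes because, for each fixed $SL_m$-weight and each fixed $q$-degree, only finitely many partitions $\nu$ of $n$ contribute via Schur--Weyl.

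Next, I substitute Theorem~\ref{charfor} into this expression and interchange the $\sigma$- and $\nu$-sums, obtaining
$$
\Ch_M(q,g)=\lim_{n\to\infty}\sum_{\nu:\,|\nu|=n} c_{\lambda,n_0}^{\nu}\,q^{\frac{n-1}{2}-\frac{m}{n}\kappa(\nu)}\,A_\nu(g,q),
$$
with
$$
A_\nu(g,q):=\frac{1}{n!}\sum_{\sigma\in S_n}\chi_\nu(\sigma)\,p_\sigma(x_1,\dots,x_m)\,\det{}_{\h}(1-q\sigma)^{-1}.
$$
I evaluate $A_\nu$ by a plethystic computation. If $\sigma$ has $a_j$ cycles of length $j$ then $\det_{\CC^n}(1-q\sigma)^{-1}=\prod_j(1-q^j)^{-a_j}$, and since $\CC^n\cong\h\oplus\triv$ this gives $\det{}_\h(1-q\sigma)^{-1}=(1-q)\prod_j(1-q^j)^{-a_j}$. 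Using the plethystic identity
$$
p_j\bigl(x_1,\dots,x_m,qx_1,\dots,qx_m,q^2x_1,\dots\bigr)=\frac{p_j(x_1,\dots,x_m)}{1-q^j},
$$
together with the Frobenius formula $s_\nu(y)=\tfrac{1}{n!}\sum_{\sigma\in S_n}\chi_\nu(\sigma)\,p_\sigma(y)$ applied to $y=(x_1,\dots,x_m,qx_1,\dots,qx_m,q^2x_1,\dots)$, I obtain
$$
A_\nu(g,q)=(1-q)\,s_\nu\bigl(x_1,\dots,x_m,qx_1,\dots,qx_m,q^2x_1,\dots\bigr),
$$
so that the prefactor $(1-q)$ pulls out of the $\nu$-sum exactly as in the statement.

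The main obstacle is the first step: importing the \cite{CEE} reduction with the correct normalizations. One must verify that the reduction intertwines $q^{\mathbf h}$ on the Cherednik side with $q^{-H}$ on the $\mathfrak{sl}_2$-side (absorbing the shift $\tfrac{n-1}{2}$), that a symmetric group element $\sigma$ gets inserted via $p_\sigma(x_1,\dots,x_m)$ on the $SL_m$-side, and that the limit through $n\equiv s\pmod m$ indeed recovers the full character of $M_{O_{m_0\lambda},\mathcal L_s}$ rather than a proper summand (i.e., that the stabilization is compatible with the labeling of orbits and local systems by $(m_0\lambda,\theta_s)$). Once this bookkeeping is in place, the remainder is a direct symmetric-function calculation.
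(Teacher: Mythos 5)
Your proposal is correct and follows essentially the same route as the paper: the paper also reduces via \cite[Theorem 9.8]{CEE} (Fourier transform turning $H$ into $-H$) and Schur--Weyl duality to the identity $\Ch_{\sl_m}(M^{(n)})=\ch_{S_n}L_{\frac{m}{n}}(n_0\lambda)$, then substitutes Theorem \ref{charfor} and evaluates the Verma characters by the plethystic substitution $p_k\mapsto p_k/(1-q^k)$ (the map $\varphi_{\frac{1}{1-q}}$), which at $x_1,\dots,x_m$ gives $s_\nu(x_1,\dots,x_m,qx_1,\dots)$ with the $(1-q)$ prefactor coming from $\h_n$ versus $\CC^n$, exactly as in your computation of $A_\nu$.
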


Here the limit is understood in the sense of stabilization.
Namely, define an increasing filtration
on $M$ (labeled by $n=s+km$)
by setting $M^{(n)}$ to be the isotypic part of $M$
for the representations $V_\mu$ of $SL_m$
which occur in $V^{\otimes n}$.
Then
$$
\Ch_{M^{(n)}}(q,g)=(1-q)\sum_{\nu: |\nu|=n}
c_{\lambda,n_0}^\nu q^{\frac{n-1}{2}-\frac{m}{n}\kappa(\nu)}
s_\nu(x_1,...,x_m,qx_1,...,qx_m,q^2x_1,...),
$$
and $\Ch_M=\lim_{n\to\infty} \Ch_{M^{(n)}}$.

\subsection{}
The third result is the construction
of the Koszul-BGG complex and the study of its
homology. To define this complex, let us say that an
irreducible $W$-subrepresentation $V\subset M_c(\tau)$ is
singular if it is annihilated by the action of
$\h\subset H_c(W,\h)$. Then, given a singular
subrepresentation $V\subset M_c(\CC)$ for which $\rank(s-1)|_V=1$
for every reflection $s\in S$, we consider the Koszul complex
$K^\bullet(V)$ (in the sense of commutative algebra)
\footnote{Here we do not use, nor claim, that $\wedge^iV$ are simple
$W$-modules, even though this is true if $W$ is a Coxeter group and
$V$ is its reflection representation.}:
$$
M_c(\CC)\leftarrow M_c(V)\leftarrow M_c(\wedge^2V)\leftarrow...
$$
Our third main result is the following theorem.

\begin{theorem}\label{kc}
(i) (Proposition \ref{Dunkl vs Koszul} below)
The complex $K^\bullet$ is, in fact, a complex of $H_c(W,\h)$-modules.

(ii) (Theorem \ref{homolkoz} below)
If $W=S_n$, $c=\frac{m}{n}$, $GCD(m,n)=d<n$, and $V$ is the unique singular copy
of $\h$ in degree $m$ (see \cite{dunkl2,CE,ES}) then the homology $H_i(K^\bullet)$
vanishes if $i\ge d$, and is the irreducible $H_c(W,\h)$-module
$L_c(\lambda_i)$, where $\lambda_i=n_0(d-i,1^i)$, if
$i<d$.
\end{theorem}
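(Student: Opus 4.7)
The plan is to treat (i) and (ii) separately: (i) is a direct Dunkl-operator computation, while (ii) combines a Grothendieck-group identity via Theorem \ref{charfor} with a support analysis and Theorem \ref{cmth}. For (i), the Koszul differential $\partial_k : M_c(\wedge^k V) \to M_c(\wedge^{k-1}V)$ is already $\CC[\h]$-linear and $W$-equivariant by construction, so by the universal property of Verma modules it is an $H_c$-morphism provided its image on the lowest weight is singular, i.e., annihilated by every Dunkl operator $D_y$. On the generator $\omega = v_1 \wedge \cdots \wedge v_k$ the image reads $\sum_j (-1)^{j+1} v_j \otimes \omega_j$ with $\omega_j$ the omitted wedge. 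Expanding $D_y$ and substituting $\partial_y(v_j) = \sum_s c_s \alpha_s(y)(v_j - s(v_j))/\alpha_s$ (valid because $V$ is singular in $M_c(\CC)$) cancels the derivative terms, and the residual contribution for each reflection $s$ factors, via the rank-one hypothesis $(1-s)v = \tilde\alpha_s^\vee(v)\tilde\alpha_s$, as $(\tilde\alpha_s/\alpha_s) \otimes \iota_{\tilde\alpha_s^\vee}(\omega + s\omega)$. Since $\tilde\alpha_s^\vee$ is the $(-1)$-eigenvector of $s$ on $V^*$ while $\omega + s\omega$ lies in the $(+1)$-eigenspace of $s$ on $\wedge^k V$ (every $(k)$-piece with two $\tilde\alpha_s$ factors vanishes), this contraction is zero.

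For (ii) I begin with a Grothendieck-group computation. Since $\wedge^i V \cong \tau_{(n-i, 1^i)}$ as $S_n$-representations, the Euler characteristic is $\chi(K^\bullet) = \sum_{i=0}^{n-1}(-1)^i [M_c((n-i,1^i))]$. The Murnaghan--Nakayama identity $\sum_{i=0}^{n-1}(-1)^i s_{(n-i,1^i)}(x) = p_n(x)$, combined with the power-sum stability $p_n(x) = p_d(x_1^{n_0}, x_2^{n_0}, \ldots) = \sum_{i=0}^{d-1}(-1)^i s_{(d-i,1^i)}(x_1^{n_0}, \ldots)$, the definition of $c_{\lambda,n_0}^\nu$, and the character formula of Theorem \ref{charfor}, identifies $\chi(K^\bullet) = \sum_{i=0}^{d-1}(-1)^i [L_c(\lambda_i)]$ in $K_0(\OO_c)$.

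To promote this identity to each individual homology group I proceed in two steps. First, every $H_i(K^\bullet)$ has minimal support: by the classical fact that the Koszul complex of a finite collection of elements in a commutative ring is exact off their common vanishing locus, restricting $K^\bullet$ to the complement of the minimal-support stratum $Z \subset \h$ (points with stabilizer conjugate to $(S_{n_0})^d$, of dimension $d-1$ in $\h_n$) makes the classical Koszul complex of $V \subset \CC[\h]_m$ exact, so $H_i$ is supported in $Z$. Theorem \ref{cmth} then forces each $H_i$ to be a direct sum of $L_c(n_0\mu)$'s with $|\mu| = d$. Second, the multiplicities are pinned down by combining the Euler-characteristic equation with positivity of composition multiplicities and a graded-character analysis based on Theorem \ref{charfor}; alternatively, one transports $K^\bullet$ across the Morita equivalence between the minimally supported block of $\OO_c$ and a smaller category (as in \cite{CEE}), where the complex becomes a standard resolution whose homology is directly computable, yielding $H_i \cong L_c(\lambda_i)$ for $i < d$ and $H_i = 0$ for $i \geq d$. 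The main obstacle is identifying the common zero locus of the singular subspace $V \subset \CC[\h]_m$ with the minimal-support stratum $Z$, which requires concrete information about the singular copy of $\h$ in degree $m$ beyond its mere existence; once this is in place, the remainder is combinatorics (Murnaghan--Nakayama plus the character formula) together with classical Koszul theory.
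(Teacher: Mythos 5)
Part (i) of your proposal is essentially the paper's own argument (Proposition \ref{Dunkl vs Koszul}): after using singularity of $V$ to trade $\partial_y(v_j)$ for reflection terms, the rank-one hypothesis collapses everything, for each $s$, to the single vanishing statement $(1-s)\iota_{\beta_s^*}u=0$, which is Lemma \ref{vanis}; your eigenvector/contraction argument is that lemma in slightly different clothing (just note that for complex reflections the relevant eigenvalue is $\mu_s$, not $-1$, though the argument is insensitive to this). Likewise your Grothendieck-group identity via Murnaghan--Nakayama and Theorem \ref{charfor} is exactly Lemma \ref{Lem:koz_groth}, and your localization remark that all $H_i$ are supported on the zero locus of $(V)$ is the right first step.

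For part (ii), however, there are two genuine gaps. The first you flag yourself: you do not identify the common zero locus of $V$ with the minimal-support stratum $X_{d,n/d}(n)$. The paper closes this by invoking the known fact that $H_0=M_c(\CC)/(V)=L_{\frac{m}{n}}(\CC)$ has minimal support (Theorem \ref{wilco}, \cite{BE}), after which the vanishing of $H_i$ for $i\ge d$ follows from the depth/regular-sequence argument (Lemma \ref{comal} plus the filtration trick), not merely from exactness off the zero locus; so this gap is fillable by citation but is not filled in your text. The second gap is more serious: your mechanism for determining the individual $H_i$ does not work as stated. Knowing that each $H_i$ is a nonnegative sum of the semisimple objects $L_c(n_0\mu)$ and knowing the alternating sum $\sum_i(-1)^i[H_i]$ does not pin down the $H_i$, because a simple occurring in both $H_i$ and $H_{i+1}$ cancels in the Euler characteristic, and graded characters cannot detect such a cancelling pair since the two contributions have identical characters. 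The paper supplies the missing rigidity by one of two nontrivial inputs: either (a) the homological-degree constraint that $L_c(n_0\lambda_k)$ can occur in $H_i$ only if it is a composition factor of $M_c(\wedge^i\h)$, which a dominance-order comparison excludes for $i>k$, combined with the computation of the generic rank $\binom{d-1}{i}$ of $H_i$ along the support (Lemma \ref{Lem:koz_mult}, proved via generic reducedness at $c=1/n_0$ and transported to general $m$ by equivalences, using Proposition \ref{uni} to see these preserve the complex), and induction on $k$; or (b) an explicit computation of $\Res_b(K^\bullet_{m,n})$ at a generic point of an intermediate stratum (Proposition \ref{rest}, whose proof via the contour-integral formula for $F_{m,n}$ and Lemmas \ref{le1}--\ref{le2} is the technical heart), followed by Corollary \ref{chermor}, the $S_d$-representation-theoretic Lemma \ref{symg}, and induction on $d$. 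Your ``alternative'' --- that under the equivalence with $\Rep(S_d)$ the complex ``becomes a standard resolution whose homology is directly computable'' --- asserts precisely what (b) proves; without an argument of the type of Proposition \ref{rest} it is not a proof step but a restatement of the theorem.
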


The complex $K^\bullet(V)$ is analogous to the BGG resolution in
the representation theory of semisimple Lie algebras, and for
this reason it is called the Koszul-BGG complex.

\begin{remark}
In the case when $\dim V=\dim \h$ and the quotient module $M_c(\CC)/(V)$ is finite dimensional,
the Koszul complex $K^\bullet(V)$ (which is then exact in higher
degrees, i.e., a resolution) was considered in
\cite{BEG},\cite{Go} for real reflection groups, and
in \cite{CE} for complex reflection groups.
\end{remark}

We also show in type $A$
that the differentials in the Koszul-BGG complex
are uniquely determined up to scaling by the condition that they are
nonzero homomorphisms of modules over the Cherednik algebra
(Proposition \ref{uni}).

\subsection{} Finally, our fourth main result concerns symmetry
for Cherednik algebras of type A. Let $\ee_{i,n}$ be the Young
projector in $\CC S_n$ corresponding to the ``hook''
representation $\wedge^i\h_n$ (which is nonzero iff $0\le i\le
n-1$), and let $\overline{\ee}_n=\sum_{i=0}^{n-1}\ee_{i,n}$ be
the idempotent of $\wedge \h_n$.\footnote{When no confusion is possible, we
will often drop the subscript $n$ from the notation for these idempotents.}
The subalgebra $\overline{\ee}_{n}H_c(S_n)\overline{\ee}_{n}$ will be called
the quasispherical subalgebra.

Note that the algebra $H_c(W,\h)$ has the Bernstein
filtration, in which $\deg(\h)=\deg(\h^*)=1$, $\deg(W)=0$.
Also, the module $L_c(\tau)$ is graded by the eigenvalues
of the scaling element $\mathbf{h}\in H_c(W,\h)$,
and has a descending filtration
by the images of the powers of the maximal ideal
${\mathfrak{m}}\subset \CC[\h]^W$
(this filtration is discussed in \cite{GORS}).

It is shown in \cite{Lo} that the algebra $H_c(S_n)$ has a unique
maximal two-sided ideal $J_c(n)$.
Also, for $m\in \Bbb Z_{>0}$ with $GCD(m,n)=d$,
it follows from \cite{CEE}, \cite{BE} (see also \cite{Wi})
that if $\lambda$ is a partition of $d$
then the module $L_{\frac{m}{n}}(n_0\lambda)$ has minimal
support (its support can be explicitly computed from the
construction of \cite{CEE}, and it follows from \cite{BE} that
this support is minimal). This means that
the annihilator of $L_{\frac{m}{n}}(n_0\lambda)$
is the maximal ideal $J_{\frac{m}{n}}(n)$.

Our fourth main result is

\begin{theorem}\label{symme}
(i) (Corollary \ref{co3} below)
Let $\lambda$ be a partition of $d$.
Then for all $i$ there is an isomorphism
of vector spaces
$$
\phi_{n,m,i}: \ee_{i,n}L_{\frac{m}{n}}(n_0\lambda)\cong
\ee_{i,m}L_{\frac{n}{m}}(m_0\lambda)
$$
which preserves the grading and the filtration.
In particular, the two-variable
characters of these two spaces associated
to the grading and the filtration are equal.

(ii) (Theorem \ref{the1} below)
There exists an isomorphism of algebras
$$
{\bold \Phi}_{n,m}:
\overline{\ee}_{n}(H_{\frac{m}{n}}(S_n)/J_{\frac{m}{n}}(n))\overline{\ee}_{n}\to
\overline{\ee}_{m}(H_{\frac{n}{m}}(S_m)/J_{\frac{n}{m}}(m))\overline{\ee}_{m}
$$
preserving the Bernstein filtration and compatible with
$\phi_{n,m,i}$.
\end{theorem}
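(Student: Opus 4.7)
The plan is to establish (ii) first, by extending the spherical symmetry of \cite{CEE} and \cite{Gor} to the quasispherical subalgebra, and then to derive (i) as an essentially formal consequence.

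For (ii), the starting point is the isomorphism $\ee_n(H_{m/n}(S_n)/J)\ee_n \cong \ee_m(H_{n/m}(S_m)/J)\ee_m$ in the spherical case, which is essentially established in \cite{CEE} via a quantum Hamiltonian reduction construction that is manifestly symmetric in $m$ and $n$, and in \cite{Gor} when $\gcd(m,n)=1$ via a geometric realization on the Hilbert scheme of $x^{m_0}=y^{n_0}$. The plan is to upgrade this to an isomorphism of the larger quasispherical quotients by extending the CEE correspondence so that the hook idempotents $\ee_{i,n}$ on the $S_n$ side are sent to $\ee_{i,m}$ on the $S_m$ side. This matching is natural because, under the underlying $(\mathfrak{gl}_m,\mathfrak{gl}_n)$-duality, the hook representation $\wedge^i\h_n$ of $S_n$ is Howe-dual to the $i$-th exterior power of the defining representation, which is in turn Howe-dual to $\wedge^i\h_m$ for $S_m$. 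Concretely, I would define $\bold{\Phi}_{n,m}$ on generators of $\overline{\ee}(H/J)\overline{\ee}$ over $\ee(H/J)\ee$, namely the ``off-diagonal'' elements $\ee_{i,n}x\ee_{j,n}$ for $x$ running over generators of $H_{m/n}(S_n)$, by using this correspondence.

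The main obstacles are verifying that $\bold{\Phi}_{n,m}$ is multiplicative and preserves the Bernstein filtration. Multiplicativity reduces to checking that the defining relations among the generators of $\overline{\ee}(H/J)\overline{\ee}$ are sent to analogous relations on the $m$-side; the key leverage here is that $H/J$ acts faithfully on each minimally supported simple module, so one can verify relations by testing them on these modules. For the filtration, Corollary \ref{quasis} provides a strong numerical check: the two-variable dimension $\sum_i (-a)^i \dim_q \ee_{i,n}L_{m/n}(n_0\lambda)$ is manifestly symmetric in $(m_0,n_0)$ since it equals the renormalized colored HOMFLY polynomial of $T(m_0,n_0)=T(n_0,m_0)$, up to a prefactor that is itself symmetric. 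The Cohen-Macaulay property from Theorem \ref{cmth} is what allows one to extract filtered information from these graded characters.

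For part (i), once $\bold{\Phi}_{n,m}$ is constructed, the vector space isomorphism $\phi_{n,m,i}$ is obtained as follows. By a double centralizer argument, $\overline{\ee}_n L_{m/n}(n_0\lambda)$ is a simple module over $\overline{\ee}_n(H/J)\overline{\ee}_n$ (and analogously on the $m$-side), because $L_{m/n}(n_0\lambda)$ is simple with annihilator exactly $J_{m/n}(n)$. Pulling back the $m$-side simple module along $\bold{\Phi}_{n,m}$ yields a simple module over the $n$-side quasispherical quotient, which must then be isomorphic to $\overline{\ee}_n L_{m/n}(n_0\lambda)$ up to a global grading shift. This shift is pinned down by matching the action of the scaling element $\mathbf{h}$, which sits inside the spherical subalgebra and hence is intertwined by $\bold{\Phi}_{n,m}$, giving preservation of the grading. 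Preservation of the $\mathfrak{m}$-filtration follows because $\mathfrak{m}\subset\CC[\h]^W$ lies inside $\ee(H/J)\ee$ and is similarly intertwined. Restricting this vector space isomorphism to each $\ee_{i,\bullet}$-isotypic component then yields $\phi_{n,m,i}$.
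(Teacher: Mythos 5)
Your overall architecture (prove the algebra isomorphism (ii) by a construction symmetric in $m$ and $n$, then deduce (i) by Morita/simple-module considerations) is the same as the paper's, but the two steps that carry the actual content are missing. First, you never construct ${\bold \Phi}_{n,m}$: saying that hooks match under $(\mathfrak{gl}_m,\mathfrak{gl}_n)$-duality and that you "would define" the map on off-diagonal elements $\ee_{i,n}x\ee_{j,n}$ is a heuristic, not a definition. The paper's route is to realize both quasispherical algebras as operators on the \emph{same} space $(M\otimes(\oplus_j S^{n-j}V^*\otimes\wedge^jV^*))^{\g}$: the $S_n$-side acts via the CEE functor (Proposition \ref{CEEaction}), and the $S_m$-side is identified with a quantum Hamiltonian reduction $\A_h(\VV,\wedge^{m-2\bullet}\C^m)$ by the generalized Gan--Ginzburg theorem (Theorem \ref{Thm_iso}), whose essential input is Ginzburg's isomorphism $e_i\Pro\cong\wedge^i\taut$ for the Procesi bundle --- a fact special to exterior powers, which is exactly why the hook case works. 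Without some substitute for this, your plan to prove multiplicativity by "testing relations on minimally supported modules" has nothing to test against, since you have not exhibited how the proposed images act on anything. Your filtration argument is also not valid as stated: equality of the $(a,q)$-characters from Corollary \ref{quasis} (together with Theorem \ref{cmth}) is a numerical statement about the modules and cannot show that a particular algebra map preserves the Bernstein filtration; in the paper this comes from the equivariance/filtration-compatibility of the CEE and Gan--Ginzburg constructions themselves, after which injectivity (simplicity of the quotient) and finite-dimensionality of the filtration pieces upgrade $\Phi_{n,m}\circ\Phi_{m,n}$ to an isomorphism.

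Second, in part (i) the step "the pullback of the $m$-side simple must be $\overline{\ee}_nL_{\frac{m}{n}}(n_0\lambda)$ up to a grading shift" is precisely where a gap remains: the quasispherical quotient has one simple module $\overline{\ee}L_{\frac{m}{n}}(n_0\mu)$ for \emph{each} partition $\mu$ of $d$, so a priori the pullback could correspond to $\sigma(\lambda)$ for some permutation $\sigma$ of the partitions of $d$. Ruling this out is the content of Proposition \ref{match}, proved by localizing to the open stratum of the minimal support, using Wilcox's description (Theorem \ref{wilco}) of minimally supported modules as local systems with monodromy $\pi_\mu$, and the identification $Y(m,n)\cong Y(n,m)$ from \cite{CEE} under which the localized isomorphism becomes the identity; one also needs Proposition \ref{compa} to know that the constructed quasispherical isomorphism restricts on the spherical part to the CEE isomorphism, so that this matching applies. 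Your double-centralizer reduction and the use of $\mathbf{h}$ and $\mathfrak{m}$ to control grading and filtration are reasonable once these points are supplied, but as written the proposal assumes the two hardest steps rather than proving them.
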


Note that this implies that if $i\ge \min(n,m)$
then in both parts of Theorem \ref{symme}, the spaces
and the algebras vanish (which is obvious only on one of the
two sides).

The proof of Theorem \ref{symme}
is based on comparing two
constructions of representations of rational Cherednik
algebras of type A from Lie theory (by reduction from
equivariant D-modules) - the Gan-Ginzburg
construction (\cite{GG}) and the construction from
\cite{CEE}. More precisely, we generalize
the Gan-Ginzburg construction to the case of hook
representations, and then the representations
in part (i) of Theorem \ref{symme} turn out to be realized on the
same vector space, yielding a proof of part (i), and the algebras
from part (ii) turn out to act on this space by the same
operators, yielding a proof of part (ii).

\subsection{}
The organization of the paper is as follows.

Section 2 contains the preliminaries.

In Section 3 we prove Theorem \ref{cmth}
(actually, we give two somewhat
different proofs), and give
some applications.

In Section 4, we prove Theorem \ref{charfor}
and Corollary \ref{quasis}, providing a link to knot invariants.

In Section 5, we prove Theorem \ref{chardmod}
on the characters of equivariant D-modules.

In Section 6, we develop the theory of the Koszul-BGG complex,
and prove Theorem \ref{kc}. We give two proofs, based on two
different approaches.

In Section 7, we generalize the Gan-Ginzburg
quantum reduction construction to the ``hook'' case, and
prove Theorem \ref{symme}.

Finally, in Section 8, we study the symmetrized Koszul-BGG complexes,
and give a third proof of Theorem \ref{kc}.

{\bf Acknowledgments.} The work of P. E. was partially supported
by the NSF grant DMS-1000113. The work of I. L.
was partially supported by the NSF grants DMS-1161584 and DMS-0900907.
The work of E. G. was partially supported
by the grants RFBR-10-01-678, NSh-8462.2010.1 and the Simons foundation.
We are very grateful to R. Bezrukavnikov, M. Feigin, S. Gukov, A. Oblomkov,
J. Rasmussen,  V. Shende and M. Stosic
for many useful discussions, without which this paper would not
have appeared.

\section{Preliminaries and notation}

\subsection{Rational Cherednik algebras}

Let $\h$ be a finite dimensional complex vector space, $W\subset GL(\h)$ a finite subgroup,
$S\subset W$ the set of reflections, and $c: S\to \CC$ a conjugation invariant function.
For $s\in S$, let $\alpha_s\in \h^*,\alpha_s^\vee\in \h$ be elements
such that $s\alpha_s=\lambda_s\alpha_s$, $\lambda_s\ne 1$, $s\alpha_s^\vee=\lambda_s^{-1}\alpha_s^\vee$, and
$(\alpha_s,\alpha_s^\vee)=2$.

\begin{definition} The rational Cherednik algebra $H_c(W,\h)$
  attached to $W,\h$
is the quotient of $\CC W\ltimes T(\h\oplus \h^*)$ by the relations
$$
[x,x']=[y,y']=0,\
[y,x]=(x,y)-\sum_{s\in S}c_s(\alpha_s,y)(\alpha_s^\vee,x)s,
$$
where $x,x'\in \h^*,y,y'\in \h.$
\end{definition}

If $W$ is a reflection group and $\h$ is its reflection
representation, we will also use the abbreviated notation
$H_c(W)$ for this algebra.

For a representation
$\tau$ of $W$, let $M_c(\tau)$ be the Verma (or standard) module over $H_c(W,\h)$ induced from $\tau$,
i.e., $M_c(\tau)=H_c(W,\h)\otimes_{\CC W\ltimes S\h}\tau$.
We have a natural isomorphism $M_c(\tau)\cong S\h^*\otimes \tau$
of $\CC W\ltimes S\h^*$-modules, and
$y\in \h$ act by Dunkl operators
$$
D_y=\partial_y-\sum_{s\in S}\frac{\tilde c_s(\alpha_s,y)}{\alpha_s}(1-s)\otimes s,
$$
where $\tilde c_s=2c_s/(1-\lambda_s)$.
The Verma module $M_c(\tau)$ has a unique irreducible quotient
$L_c(\tau)$.

Define the category ${\mathcal O}_c={\mathcal O}_c(W,\h)$ to be the category
of $H_c(W,\h)$-modules which are finitely generated over
$\CC[\h]=S\h^*$, and locally nilpotent under $\h$.
Clearly, $M_c(\tau)$ and $L_c(\tau)$ belong to this category.

The algebra $H_c(W,\h)$ contains the scaling element
$$
\mathbf{h}=\sum_i x_iy_i+\frac{\dim \h}{2}-\sum_{s\in S}\tilde c_s
s,
$$
where $\lbrace{y_i\rbrace}$ is a basis of $\h$, and $\lbrace{ x_i\rbrace}$ the dual
basis of $\h^*$. This element has the property
that $[\mathbf{h},x_i]=x_i$, $[\mathbf{h},y_i]=-y_i$,
$[\mathbf{h},w]=0$ for all $w\in W$.
It is known (\cite{GGOR}) that $\mathbf{h}$ acts locally finitely
on every module from category ${\mathcal O}_c$, and acts
semisimply in every standard and hence
every irreducible module. This implies that
any module in ${\mathcal O}_c$ is naturally graded
by (generalized) eigenvalues of $\mathbf{h}$, and in particular
every standard and irreducible module in this category is
$\CC^\times$-equivariant (we make $\CC^\times$ act trivially on the lowest
weight space).

It is known (\cite{GGOR}) that the category
${\mathcal O}_c$ is a highest weight category (with the ordering by
real parts of eigenvalues of $\mathbf{h}$). In particular,
it has enough projectives, and they admit a
filtration in which successive quotients are standard
modules. Such a filtration is called a standard filtration.

\subsection{Rational Cherednik algebras in type $A$}

Let $W=S_n$ be the symmetric group in $n$
letters, and $\h=\h_n$ be the reflection representation of $W$
(of dimension $n-1$). Then the reflections are just
transpositions, so we have a single conjugacy class.
Thus the parameter $c$ is a single complex number.
The space $\h$ is spanned by $y_1,...,y_n$
permuted by $S_n$, with $\sum_i y_i=0$, and
$\h^*$ is spanned by $x_1,...,x_n$
permuted by $S_n$ with $\sum_i x_i=0$.
The defining relations are:
$$
[x_i,x_j]=[y_i,y_j]=0;
$$
$$
[y_i,x_j]=-\frac{1}{n}+cs_{ij},\ i\ne j;
$$
$$
[y_i,x_i]=1-\frac{1}{n}-c\sum_{j\ne i}s_{ij}.
$$

\subsection{Idempotents}\label{idem}

We need to fix notation for some idempotents in $\CC S_n$.
Denote by $\ee_{i,n}$ (or shortly $\ee_i$ when no confusion is possible) the primitive projector of the
representation $\wedge^i\h_n$ (it is nonzero iff $0\le i\le
n-1$). Denote the symmetrizer $\ee_{0}$
by $\ee$ and the antisymmetrizer $\ee_{n-1}$ by
$\ee_-$. Also, set $e_i=\ee_{i}+\ee_{i-1}$ (the projector of $\wedge^i\CC^n)$, and
$\overline{\ee}=\sum_{i=0}^{n-1}\ee_{i}=\sum_{i\ge 0}e_{2i}=\sum_{i\ge
  0}e_{2i+1}$ (the projector of $\wedge \h_n$).

\subsection{The restriction functors}

The parabolic restriction functors for rational Cherednik algebras
were introduced in \cite{BE}. Namely, given a point $b\in \h$,
denote by $W_b$ the stabilizer of $b$ in $W$. Then one can define
the restriction functor $\Res_b: {\mathcal O}_c(W,\h)\to
{\mathcal O}_c(W_b,\h)$, as follows. Given $M\in {\mathcal
  O}_c(W,\h)$, let $\widehat{M}_b$ be the completion of $M$ at $b$ as a
$\CC[\h]$-module. Then $\widehat{M}_b$ is naturally a module over the
completion of the algebra $H_c(W_b,\h)$ at zero. By taking the
$y$-nilpotent vectors in $\widehat{M}_b$, we get a module
over $H_c(W_b,\h)$, which lies in ${\mathcal O}_c(W_b,\h)$,
and is denoted by $\Res_b(M)$.
\footnote{Note that this definition is slightly different from the one in
\cite{BE}, since here, unlike \cite{BE}, we don't replace $\h$ with $\h/\h^W$.}

The functor $\Res_b$ is exact.
It will be used below in several places.

\subsection{The results of \cite{Wi}}

Let us summarize the results of \cite{Wi} (essentially, Theorem
1.8 and Proposition 3.7 in \cite{Wi}) which will be used several times below.

Let $m,n,d$ be as above. Let $\pi_\mu$ be
the representation of $S_d$ corresponding to
a partition $\mu$ of $d$.
Let $X_{d,n/d}(n)$ be the affine variety which is the union of
all the $S_n$-translates of the subspace in $\CC^n$ defined by
the equations
$\sum_i x_i=0$ and
$$
x_1=\ldots=x_{\frac{n}{d}},
x_{\frac{n}{d}+1}=\ldots=x_{\frac{2n}{d}},\ldots,
x_{(d-1)\frac{n}{d}+1}=\ldots=x_n.
$$
Let $X_{d,n/d}(n)^\circ$ be the open subset of $X_{d,n/d}(n)$
where there are $d$ distinct values of $x_i$.
Then $X_{d,n/d}(n)^\circ/S_n$ is isomorphic to the configuration space
of $d$ unmarked points on the complex plane with barycenter at
the origin, so $\pi_1(X_{d,n/d}(n)^\circ/S_n)=B_d$, the braid group in $d$ strands.

\begin{theorem}\label{wilco} (\cite{Wi})
(i) The minimal support for
modules in the category ${\mathcal{O}}_{\frac{m}{n}}(S_n,\h_n)$
in $\h_n$ is the variety $X_{d,n/d}(n)$.
The minimally supported irreducible modules
are $L_{\frac{m}{n}}(\frac{n}{d}\mu)$, where $\mu$ is a partition
of $d$.

(ii) Let $Y$ be the simple finite dimensional module over
$H_{\frac{m}{n}}(S_{n/d})$.
Given a minimally supported module
$M\in {\mathcal{O}}_{\frac{m}{n}}(S_n,\h_n)$, let ${\mathcal L}_M$ be the local system
on $X_{d,n/d}(n)^\circ/S_n$ whose fiber at a point $b$
is $\Hom_{H_{\frac{m}{n}}(S_{n/d})^{\otimes d}}(Y^{\otimes
  d},{\rm Res}_b(M))$. Then the local system ${\mathcal L}_M$
corresponds to a representation of $B_d$ which factors through
the symmetric group $S_d$. Moreover,
the assignment $M\mapsto {\mathcal L}_M$
is an equivalence of categories between the category
${\mathcal{O}}_{\frac{m}{n}}(S_n,\h_n)_{ms}$
of minimally supported modules in
${\mathcal{O}}_{\frac{m}{n}}(S_n,\h_n)$
and ${\rm Rep}(S_d)$.
In particular, the category
${\mathcal{O}}_{\frac{m}{n}}(S_n,\h_n)_{ms}$
is semisimple.

(iii) The equivalence of (ii)
maps $L_{\frac{m}{n}}(\frac{n}{d}\mu)$
to $\pi_\mu$.
\end{theorem}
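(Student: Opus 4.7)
The plan is to prove Theorem \ref{wilco} using the parabolic restriction functor $\Res_b$ of \cite{BE}, combined with the classification of finite-dimensional irreducibles in category $\mathcal{O}_c(S_k)$.

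For part (i), I would first recall that by exactness of $\Res_b$, the support of $M \in \mathcal{O}_c$ is $W$-invariant and decomposes as the union of $W$-orbits $Wb$ with $\Res_b(M) \ne 0$. For $W = S_n$, a stabilizer $W_b$ is a Young subgroup $S_{k_1}\times\cdots\times S_{k_j}$, and a nonzero object of $\mathcal{O}_c(W_b,\h)$ supported at $b$ (as an object in $\h^{W_b}$) exists if and only if each factor $H_{m/n}(S_{k_i})$ admits a nonzero finite-dimensional module. By the classical criterion of Berest-Etingof-Ginzburg/Dunkl-Opdam, this happens iff $n_0 \mid k_i$ for every $i$. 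Minimizing $\dim \h^{W_b}$ (i.e.\ maximizing $j$) forces $k_i = n_0$ for all $i$ and $j = d$, so the minimal support stratum is exactly $X_{d,n/d}(n)$. To identify the minimally supported simples $L_{m/n}(\lambda)$, I would combine this with Bezrukavnikov-Etingof's characterization of $\supp L_c(\lambda)$ in terms of the Dunkl-Opdam singular data, singling out $\lambda = n_0\mu$ for $\mu \vdash d$.

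For part (ii), at a generic $b \in X_{d,n/d}(n)^\circ$ we have $W_b = S_{n_0}^d$ and $\Res_b(M) \in \mathcal{O}_{m/n}(S_{n_0})^{\otimes d}$. The only simples of this outer product whose support is minimal are the tensor powers $Y^{\otimes d}$, so for minimally supported $M$ one gets $\Res_b(M) \cong Y^{\otimes d} \otimes V_b$ with $V_b := \Hom_{H_{m/n}(S_{n_0})^{\otimes d}}(Y^{\otimes d},\Res_b M)$ a finite-dimensional vector space of constant dimension as $b$ varies. The family $\{V_b\}$ glues to a local system $\mathcal{L}_M$ on $X_{d,n/d}(n)^\circ/S_n$, giving a functor $M \mapsto \mathcal{L}_M$ which is exact by exactness of $\Res_b$. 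I would prove it is an equivalence by constructing a quasi-inverse via an induction/gluing procedure along the stratum (building a minimally supported $H_{m/n}(S_n)$-module from an $S_d$-representation using the Heckman-Opdam/KZ formalism), showing fully faithfulness via comparison of dimensions on a set of generators, and semisimplicity then follows from semisimplicity of $\Rep(S_d)$.

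The main obstacle is showing that the monodromy of $\mathcal{L}_M$ factors through $S_d$, i.e.\ that the pure braid group $P_d$ acts trivially. I would attack this by identifying the Dunkl-type connection on $\mathcal{L}_M$ induced from the $H_c$-action: after moving to a generic point of the stratum and conjugating away the $Y^{\otimes d}$-factor, the residues of this connection along the walls where two clusters of $n_0$ equal coordinates collide can be computed explicitly in terms of the Cherednik parameter, and the key calculation is that at $c = m/n$ these residues become scalars commuting with $S_d$, so the connection is equivalent (up to a scalar gauge) to the trivial one; concretely, this is a degeneration of the Hecke relation coming via the KZ functor of \cite{GGOR}. Finally, part (iii) follows by computing $\Res_b(L_{m/n}(n_0\mu))$ explicitly: the highest-weight calculation shows $V_b \cong \pi_\mu$ as an $S_d$-representation, using that $S_d$ acts on the restriction by permuting the $d$ tensor factors and that the top-degree $W$-isotypic component of $L_{m/n}(n_0\mu)$ at the relevant weight realizes $\pi_\mu$.
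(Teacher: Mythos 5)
First, a point of comparison: this paper does not prove Theorem \ref{wilco} at all --- it is explicitly a summary of results of Wilcox (\cite{Wi}, Theorem 1.8 and Proposition 3.7), quoted for later use. So there is no in-paper proof to measure your argument against; it has to stand as an independent proof of Wilcox's theorem, and as such it has genuine gaps.

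Your outline does follow the right general strategy (restriction to a generic point $b$ of the open stratum of $X_{d,n/d}(n)$, the multiplicity space of $Y^{\otimes d}$ in $\Res_b(M)$, a local system on the configuration space of $d$ points), and the stratification argument identifying $X_{d,n/d}(n)$ as the smallest possible support is essentially correct, up to one slip: $H_{\frac{m}{n}}(S_k)$ has a nonzero finite-dimensional module iff $k=n_0$ (or $k=1$), not iff $n_0\mid k$ --- by \cite{BEG} the denominator of $c$ must equal $k$ exactly; this does not change the identification of the minimal stratum, but the stated criterion is false. The serious gaps are elsewhere. (1) The classification of the minimally supported simples as exactly $L_{\frac{m}{n}}(n_0\mu)$, $\mu\vdash d$, and the identification $\mathcal{L}_{L_{\frac{m}{n}}(n_0\mu)}\cong\pi_\mu$ in (iii), are asserted, not proved: you appeal to a ``Bezrukavnikov--Etingof characterization of $\supp L_c(\lambda)$,'' but \cite{BE} contains no such classification --- computing the supports of all $L_c(\lambda)$ in type $A$ is precisely the main content of \cite{Wi}, so at this point the argument is circular; likewise ``the top-degree isotypic component realizes $\pi_\mu$'' is a claim with no argument behind it. (2) The crux of (ii), that the $B_d$-monodromy factors through $S_d$, is only heuristic, and your formulation is off: showing the connection on the ordered-configuration cover is ``trivial up to a scalar gauge'' would only give that the pure braid group $P_d$ acts by scalars. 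What is actually needed is that the residues of the induced connection on $\Hom_{H_{\frac{m}{n}}(S_{n_0})^{\otimes d}}(Y^{\otimes d},\Res_b M)$ along the walls are scalar (this requires an argument, using irreducibility/rigidity of $Y$, since a priori the residue acts on the whole $Y\otimes Y$-part) and moreover integral (here the point is that the accumulated parameter $n_0^2c=m_0n_0$ lies in $\ZZ$), so that $P_d$ acts trivially; neither computation is carried out. Finally, fullness and essential surjectivity of $M\mapsto\mathcal{L}_M$ are only promised (``induction/gluing, comparison of dimensions''), whereas this is a substantive step in \cite{Wi}. So the proposal is a reasonable roadmap toward Wilcox's theorem, but as written it assumes the hardest parts of the statement it sets out to prove.
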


\section{Cohen-Macaulayness of modules of minimal support
over rational Cherednik algebras}

The goal of this section is to prove Theorem \ref{cmth}.
We propose two proofs, given in the two subsections below.
In the third subsection we give applications
in the case of the symmetric group.

\subsection{Proof via homological duality for rational Cherednik algebras}

Here is our first proof of Theorem \ref{cmth}.
Its idea was suggested to us by R. Bezrukavnikov.

For brevity let $H:=H_c(W,\h)$ and $R:=\CC[\h]$.
Let $n=\dim \h$.

\begin{proposition}\label{lem}
Let $M$ be a module over $H$ which is free of finite
rank over $R$. Then $\Ext^\bullet_H(M,H)$
lives in dimension $n$ (i.e., $\Ext^i_H(M,H)=0$ unless $i=n$),
and there is a natural isomorphism of $R$-modules
$$\Ext^n_H(M,H)\cong M^*\otimes\wedge^n\h^*,$$
where $M^*:=\Hom_R(M,R)$.
\end{proposition}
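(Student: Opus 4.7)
The plan is to construct an explicit length-$n$ Koszul-type resolution of $M$ by induced modules, then dualize. Set $B := \CC[\h] \rtimes \CC W \subset H$; by the PBW theorem $H$ is free as both a left and a right $B$-module. Since $M$ is $R$-free of finite rank and $|W|$ is invertible in $\CC$, a standard averaging argument shows that $M$ (and hence $\wedge^k \h \otimes M$ for each $k$) is finitely generated projective over $B$.

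First I would build the Koszul-type complex of left $H$-modules
\begin{equation*}
K_\bullet(M):\quad 0 \to H \otimes_B (\wedge^n \h \otimes M) \to \cdots \to H \otimes_B (\wedge^1 \h \otimes M) \to H \otimes_B M \to 0,
\end{equation*}
with augmentation $h \otimes m \mapsto h \cdot m$ onto $M$ and differential
\begin{equation*}
d\bigl(h \otimes (y_{i_1}\wedge \cdots \wedge y_{i_k}) \otimes m\bigr) = \sum_{j=1}^{k}(-1)^{j-1}\bigl(h y_{i_j} \otimes (y_{i_1}\wedge\cdots\widehat{y_{i_j}}\cdots\wedge y_{i_k}) \otimes m - h \otimes (y_{i_1}\wedge\cdots\widehat{y_{i_j}}\cdots\wedge y_{i_k}) \otimes y_{i_j} m\bigr),
\end{equation*}
where $y_{i_j} m$ is the Dunkl-operator action. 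The identity $d^2 = 0$ follows from $[y_i,y_j]=0$ in $H$ together with the $H$-module axioms for $M$. Acyclicity of the augmented complex is checked by passing to the associated graded under the order filtration $\deg y = 1$, $\deg x = \deg w = 0$ (with the trivial filtration on $M$): the complex degenerates to the standard Koszul resolution of $M$, viewed as a module over $\mathrm{gr}(H) = B[\h]$ with trivial $\h$-action.

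Next, I would apply $\Hom_H(-,H)$ to the resolution. By adjunction each term becomes $\Hom_B(\wedge^k\h \otimes M, H)$, which, using the $B$-projectivity just established together with PBW, identifies with $(M^* \otimes \wedge^k \h^*) \otimes_B H$ as a right $H$-module, where $M^*$ carries its natural dual $B$-module structure. Tracking the differential, the resulting cochain complex is, up to filtration, the classical Koszul complex on $M^* \otimes \wedge^\bullet \h^* \otimes_B H$ associated to the commuting generators $y_1,\ldots,y_n$. Its cohomology is concentrated in top degree $n$ and equals $M^* \otimes \wedge^n \h^*$, giving the desired vanishing $\Ext^i_H(M,H) = 0$ for $i \neq n$ and the explicit identification $\Ext^n_H(M,H) \cong M^* \otimes \wedge^n \h^*$. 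Naturality of the isomorphism is clear from the construction.

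The main technical obstacle is noncommutativity: the PBW isomorphism $H \cong B \otimes_\CC S\h$ holds only as $B$-modules, not as algebras, so the dualized differentials do not literally agree with classical Koszul differentials but only on the associated graded. This is resolved by a filtered-to-graded spectral sequence argument which degenerates at the $E_1$ page precisely because the classical Koszul cohomology is concentrated in a single degree.
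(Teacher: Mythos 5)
Your route is the same as the paper's: induce a Koszul-type complex $H\otimes_B(\wedge^\bullet\h\otimes M)$ from $B=\CC W\ltimes \CC[\h]$, prove it is a resolution by passing to the associated graded under the $y$-filtration, then dualize into $H$ via adjunction and $B$-projectivity of $M$, recognizing the dual as a Koszul complex again. The dualization step is essentially fine (the paper phrases it by observing that the dual complex is literally the analogous Koszul complex of right $H$-modules attached to $M^*\otimes\wedge^n\h^*$, which identifies $\Ext^n_H(M,H)$ on the nose rather than only its associated graded, but your filtered argument can be completed the same way).

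The genuine gap is at the very first step: you never check that your differential is well defined on the tensor product over $B$. The formula
$$
d(h\otimes\omega\otimes m)=\sum_j\bigl(hy_j\otimes\iota_{x_j}\omega\otimes m-h\otimes\iota_{x_j}\omega\otimes y_jm\bigr)
$$
is written on representatives, and since the $y_j$ do not commute with $\CC[\h]\subset B$, one must verify that $d(hx\otimes\omega\otimes m)=d(h\otimes\omega\otimes xm)$ for $x\in\h^*$. Carrying this out, the commutators $[y_j,x]$ contribute, besides scalar terms that cancel, reflection terms; summing over $j$ they combine into $\sum_{s\in S}c_s(\alpha_s^\vee,x)\,h\otimes (s-1)\iota_{\alpha_s}\omega\otimes sm$, so one needs the identity $(s-1)\iota_{\alpha_s}\omega=0$ in $\wedge^{k-1}\h$, which holds precisely because each $s\in S$ acts on $\h$ as a reflection (rank one); this is exactly Lemma \ref{vanis} of the paper, invoked inside its proof of this proposition. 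Your stated justification (``$d^2=0$ follows from $[y_i,y_j]=0$ and the module axioms'') addresses a different and easier point: $d^2=0$ is harmless once $d$ exists, but the existence of $d$ as a map of left $H$-modules is where the Cherednik commutation relations actually enter, and it is not a formality. Adding this verification closes the gap and makes your argument coincide with the paper's proof.
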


\begin{proof}
Consider the Koszul
complex of $M$ as an $H$-module:
\begin{equation}\label{kozco}
M\leftarrow H\otimes_{\CC W\ltimes R} M\leftarrow
H\otimes_{\CC W\ltimes R} (M\otimes \h)\leftarrow...\leftarrow
H\otimes_{\CC W\ltimes R}
(M\otimes \wedge^n\h)\leftarrow 0,
\end{equation}
with the differential defined by
$$
\partial(h\otimes m\otimes b)=\sum_j (hy_j\otimes m-h\otimes
y_jm)\otimes \iota_{x_j}(b), b\in \wedge^i\h,
$$
where $\lbrace{y_j\rbrace}$ is a basis of $\h$,
$\lbrace{ x_j\rbrace} $ the dual basis of
$\h^*$, and $\iota$ is the contraction operator.

\begin{lemma}
This differential is well defined.
\end{lemma}

\begin{proof} If $w\in W$ then
$$
\partial(hw\otimes m\otimes b)=
\sum_j(hwy_j\otimes m-hw\otimes y_jm)\otimes \iota_{x_j}b=
$$
$$
\sum_j(hw(y_j)\otimes wm-h\otimes w(y_j)wm)\otimes w\iota_{x_j}b=
$$
$$
\sum_j(hy_j\otimes wm-h\otimes y_jwm)\otimes \iota_{x_j}wb=
\partial(h\otimes wm\otimes wb).
$$
On the other hand,
$$
\partial(hx_i\otimes m\otimes b)=
\sum_j(hx_iy_j\otimes m-hx_i\otimes y_jm)\otimes \iota_{x_j}b=
$$
$$
\sum_j(hx_iy_j\otimes m-h\otimes x_iy_jm)\otimes \iota_{x_j}b=
$$
$$
\sum_j(hy_jx_i\otimes m-h\otimes y_jx_im)\otimes \iota_{x_j}b+
\sum_{j,s}c_s(x_i,\alpha_s^\vee)(y_j,\alpha_s)(hs\otimes
m-h\otimes sm)\otimes \iota_{x_j}b=
$$
$$
\partial(h\otimes x_im\otimes b)+
\sum_{s}c_s(x_i,\alpha_s^\vee)(hs\otimes
m-h\otimes sm)\otimes \iota_{\alpha_s}b.
$$
Thus, it suffices to show that for each $s$,
$$
(hs\otimes m-h\otimes sm)\otimes \iota_{\alpha_s}b=0.
$$
We have
$$
(hs\otimes m-h\otimes sm)\otimes \iota_{\alpha_s}b=
h\otimes sm\otimes (s-1)\iota_{\alpha_s}b.
$$
So it suffices to show that $(s-1)\iota_{\alpha_s}b=0$.
This is shown in Lemma \ref{vanis} below (in a slightly
more general
situation).
\end{proof}

The complex (\ref{kozco})
is a resolution (i.e., exact in nonzero degrees), since
its associated graded under the $y$-filtration (where $M$ sits in
degree $0$ and $\deg(y_i)=1$) is the usual Koszul complex of $M$ as a $\CC[\h\oplus \h^*]$-module with $y_i|_M=0$
(which is a resolution
since $M$ is free over $R$). Moreover, since $M$ is free over $R$,
this is a projective resolution over $R$, and we can use it
to compute the $\Ext$ groups of $M$ with other modules (in
particular, $H$).
Computing $\Hom$ of this resolution to
$H$, and using that
$$
\Hom_H(H\otimes_{\CC W\ltimes R} M,H)\cong\Hom_{\CC W\ltimes
  R}(M,H)\cong
$$
$$
\Hom_{\CC W\ltimes R}(M,(\CC W\ltimes R)
\otimes S)\cong M^*\otimes S\cong
M^*\otimes_{\CC W\ltimes R} H,
$$
where $S=S\h$,
we see that the dual is a similar Koszul complex
(of right $H$-modules) with $M$ replaced with $M^*\otimes \wedge^n\h^*$,
and the statement follows.
\end{proof}

\begin{corollary}\label{coro}
If $M$ is in the category ${\mathcal O}_c$, then
there is a natural isomorphism of $R$-modules
$\Ext^{n+i}_H(M,H)\cong \Ext^i_R(M,R)\otimes \wedge^n\h^*$.
\end{corollary}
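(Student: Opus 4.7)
The plan is to reduce the general case to Proposition \ref{lem} by choosing a projective resolution of $M$ in ${\mathcal O}_c$ whose terms happen to be free over $R$, and then applying a spectral sequence.

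First, I recall from the preliminaries that ${\mathcal O}_c$ is a highest weight category of finite homological dimension, so $M$ admits a finite projective resolution
$$0 \to P_N \to \cdots \to P_1 \to P_0 \to M \to 0$$
in ${\mathcal O}_c$, and every projective $P_j$ carries a standard filtration. Because each standard module $M_c(\tau) \cong S\h^*\otimes \tau$ is free over $R$, each $P_j$ is an iterated extension of finitely generated free graded $R$-modules; in particular, each $P_j$ is projective over $R$ (and in fact free, being a graded finitely generated projective over the polynomial ring $R$).

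Next, I would compute $\mathbb{R}\Hom_H(M, H)$ via the standard hyper-$\Ext$ spectral sequence associated to this resolution,
$$E_1^{p,q} = \Ext^q_H(P_p, H) \Longrightarrow \Ext^{p+q}_H(M, H).$$
By Proposition \ref{lem}, $E_1^{p,q} = 0$ unless $q = n$, in which case $E_1^{p,n} = P_p^* \otimes \wedge^n \h^*$ with $P_p^* = \Hom_R(P_p, R)$. The sequence therefore collapses on the single row $q = n$, yielding
$$\Ext^{n+i}_H(M, H) \;\cong\; H^i\bigl(\Hom_R(P_\bullet, R)\bigr) \otimes \wedge^n \h^*.$$
Since each $P_j$ is projective over $R$, the complex $P_\bullet$ is also a projective $R$-resolution of $M$, so the right-hand side equals $\Ext^i_R(M, R) \otimes \wedge^n\h^*$, which is the desired identification.

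The only point that requires care is to verify that the $d_1$-differential of the spectral sequence is intertwined with $\Hom_R(d, R) \otimes \Id_{\wedge^n\h^*}$, where $d$ is the differential of $P_\bullet$. This amounts to naturality of the isomorphism of Proposition \ref{lem} in the $R$-free $H$-module: that isomorphism was obtained by applying $\Hom_H(-, H)$ to the Koszul-type resolution (\ref{kozco}), and that construction is manifestly functorial in the $H$-module being resolved, so the bookkeeping presents no serious obstacle.
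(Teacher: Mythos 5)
Your argument is correct and is essentially the paper's own proof: the paper likewise reduces to projectives in ${\mathcal O}_c$, which are free over $R$ because they admit standard filtrations, and then invokes Proposition \ref{lem} after "replacing every object by its projective resolution." Your hyper-$\Ext$ spectral sequence and the naturality check for the isomorphism of Proposition \ref{lem} simply make explicit the bookkeeping that the paper leaves implicit.
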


\begin{proof}
It suffices to show that the corollary holds for projectives in category
${\mathcal O}_c$; then the statement follows in general since we can replace
every object by its projective resolution. But projectives
admit a standard filtration, so they are free over $R$, and Proposition \ref{lem}
applies.
\end{proof}

Now we are ready to prove Theorem \ref{cmth}.
Suppose that $M\in {\mathcal O}_c$ has minimal support.

Let $c^*(s)=c(s^{-1})$.
We have an antiisomorphism
$\dagger: H_c(W,\h)\to H_{c^*}(W,\h)$ defined by the formulas:
$x\mapsto x$ for $x\in \h^*$; $y\mapsto -y$ for $y\in \h$;
$s\mapsto s^{-1}$, for $s\in W$.
It is shown in \cite{GGOR}, Proposition 4.10,
that the homological duality functor
$M\mapsto \Ext^*_H(M,H)^\dagger$
defines a derived antiequivalence between the categories
${\mathcal O}_c$ and ${\mathcal O}_{c^*}$.
Moreover, it is clear from Corollary \ref{coro} that this
antiequivalence preserves
supports (in the sense that ${\rm Ext}^i_H(M,H)$ is supported on ${\rm
  supp}(M)$ for all $i$).
This implies that the minimal supports are the same in ${\mathcal O}_c$ and
${\mathcal O}_{c^*}$.

Suppose that the support of $M$ has dimension $d$.
Then $M$ is Cohen-Macaulay of dimension $d$ at generic points of its support,
so for any $i<n-d$, $\Ext^i_R(M,R)$ is supported in dimension $<d$.
On the other hand, by Corollary \ref{coro},
$\Ext^i_R(M,R)\otimes \wedge^n\h^*$ is a right $H_c$-module,
which can be turned into a left $H_{c^*}$-module $U_i$
from category ${\mathcal O}_{c^*}$ by the antiisomorphism $\dagger$.
Since the support of $U_i$ is a proper subvariety of $\supp(M)$,
by the minimality assumption for $\supp(M)$, we must have $U_i=0$.
This implies that $M$ is Cohen-Macaulay.

\subsection{Proof using cohomology with support}

Here is our second proof of Theorem \ref{cmth}.
Let $M\in {\mathcal O}_c$ have minimal support, and
assume that $M$ is not Cohen-Macaulay.
Let $Y$ be the non-Cohen-Macaulay locus of $M$ in $\h$
(which is a Zariski closed subset in $\h$) and let $u$
be the codimension of $Y$ in $\h$. Consider the $i$th
cohomology group $H^i_Y(M)$ of $M$ with support in $Y$. According to \cite[Expose VIII, Cor. 2.3]{Groth}, $H^i_Y(M)$
is a finitely generated $R$-module whenever $i<u$.
Similarly to the proof of Theorem 6.2.5 in \cite{vdb}
one needs to prove that $H^i_Y(M)=0$ for $i<u$. Indeed, the vanishing of $H^i_Y(M)$
implies $\Ext^i_R(\C[Y],M)=0$ for $i<u$ (see \cite[Expose VII, Prop. 1.2]{Groth}).
Thus, $M$ has depth $\ge u$ near a generic point of $Y$.
This contradicts the condition that $Y$ is the non-Cohen-Macaulay locus for $M$.

\begin{lemma}\label{Coh_supp_Cher}
For any closed $W$-stable subvariety $Z\subset \h$, any $H$-module $M$, and any integer $i$
the space $H^i_Z(M)$ admits a natural action of $H$ extending the $R$-action.
\end{lemma}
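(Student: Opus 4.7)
The plan is to realize $H^i_Z(M)$ as the cohomology of an explicit complex of $H$-modules, obtained by localizing $M$ at $W$-invariant polynomials and forming a Čech complex. The starting observation is that since $Z\subset\h$ is $W$-stable and closed, its image in $\h/W=\Spec R^W$ (where $R=\CC[\h]$) is closed, so one can choose $g_1,\dots,g_k\in R^W$ whose common vanishing locus equals $Z$ set-theoretically. For local cohomology this is all that is needed, since $H^i_Z(M)$ depends only on the radical of the defining ideal.

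The main technical step is to show that for $f\in R^W$, the localization $M_f$ naturally inherits an $H$-module structure extending the $R$-action on $M$. The key computation is that in $H$,
$$[y,f]=\partial_y(f)-\sum_{s\in S}\tilde c_s(\alpha_s,y)\frac{f-s(f)}{\alpha_s}\,s=\partial_y(f)\in R,$$
since $f-s(f)=0$ for all reflections $s\in S$ when $f$ is $W$-invariant. Consequently $y\in\h$ acts on $M_f$ by the Leibniz-type formula $y\cdot(m/f^n)=(ym)/f^n-n\,\partial_y(f)\,m/f^{n+1}$, and the defining relations of $H$ are immediately seen to extend.

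With this in hand, the Čech complex
$$0\to M\to\bigoplus_{i}M_{g_i}\to\bigoplus_{i<j}M_{g_ig_j}\to\cdots\to M_{g_1\cdots g_k}\to 0$$
is a complex of $H$-modules: its terms are localizations at products of $W$-invariants, and its differentials, built from the natural $R$-linear inclusions of localizations, commute with both the $W$- and the $\h$-action by direct inspection using the formula above. The cohomology of this complex computes $H^i_Z(M)$ by the standard identification of local cohomology with Čech cohomology, thereby endowing $H^i_Z(M)$ with a natural $H$-module structure extending the $R$-action.

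The only subtlety I foresee is confirming independence of the resulting $H$-action from the choice of generators $g_i$, which should follow from the standard quasi-isomorphism between Čech complexes associated to refinements of an affine cover. A more conceptual alternative, which avoids this point, is to verify directly that $\Gamma_Z(M):=\{m\in M:I(Z)^j m=0\text{ for some }j\}$ is an $H$-submodule of $M$; the nontrivial point is $\h$-stability, which reduces to the identity $[y,I(Z)^N]\subseteq\CC W\ltimes I(Z)^{N-1}$ (valid because $I(Z)$ is $W$-stable, so the reflection terms $\frac{r-s(r)}{\alpha_s}$ for $r\in I(Z)$ remain in $R$ and distribute across products without losing powers of $I(Z)$). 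One then derives $H^i_Z$ as the right derived functors of $\Gamma_Z$ in the category of $H$-modules and compares with the classical construction.
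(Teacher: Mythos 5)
Your proposal is correct, but it takes a genuinely different route from the paper. The paper works with the functor $\Gamma_Z$ of sections supported on $Z$, corepresents it by the completion $R^{\wedge_Z}$, uses the $W$-stability of $Z$ to make $H^{\wedge_Z}:=H\otimes_R R^{\wedge_Z}$ an algebra containing $H$ and $R^{\wedge_Z}$, and then identifies $H^i_Z(M)=\Ext^i_R(R^{\wedge_Z},M)=\Ext^i_H(H^{\wedge_Z},M)$ via Frobenius reciprocity and the Shapiro lemma, so the $H$-action is manifest; this closed formula is reused in the subsequent remark to put a $\CC^\times$-equivariant structure on $H^i_Y(M)$. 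You instead exploit that $Z$, being $W$-stable, is cut out set-theoretically by invariants $g_1,\dots,g_k\in R^W$ (its image in $\h/W$ is closed and its preimage is $Z$), that $[y,f]=\partial_y f$ for $f\in R^W$ makes $\{f^n\}$ an Ore set so each localization $M_{g_{i_1}\cdots g_{i_p}}$ is an $H$-module, and that the Čech complex of these localizations computes $H^i_Z(M)$; this is more elementary (no completions, no corepresentability of $\Gamma_Z$, no Shapiro), and the choice-independence you worry about is in fact harmless, since the refinement maps between Čech complexes for different choices are built from localization maps and hence are $H$-linear, while for the paper's application only the existence of some $H$-structure extending the $R$-action matters; equivariance, if wanted, comes from choosing the $g_i$ homogeneous, which is possible as $Y$ is conical. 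Your sketched alternative ($\Gamma_Z(M)$ is an $H$-submodule, then derive) is the one closest in spirit to the paper's; the one point you gesture at rather than prove — comparing derived functors of $\Gamma_Z$ on $H$-modules with classical local cohomology — is settled by noting that $H$ is free as an $R$-module, so induction $H\otimes_R(-)$ is exact and restriction of an injective $H$-module to $R$ is again injective, hence $H$-injective resolutions compute $H^i_Z$.
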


\begin{proof}
Consider the endofunctor
$\Gamma_Z$ on the category of $R$-modules
such that $\Gamma_Z(M)$
is the set of elements of $M$ set-theoretically supported on $Z$
(i.e., killed by some power of the ideal of $Z$). This functor is
representable by the module $R^{\wedge_Z}$, the completion of $R$ along $Z$. Since $Z$ is $W$-invariant, the space
$H^{\wedge_Z}:=H\otimes_R R^{\wedge_Z}$ has a natural algebra structure and admits algebra embeddings
$R^{\wedge_Z}\hookrightarrow H^{\wedge_Z}$ and $H\hookrightarrow
H^{\wedge_Z}$ (compare to \cite{BE}).
So if $M$ is an $H$-module, we get, using the Frobenius reciprocity, that
$\Gamma_Z(M)=\Hom_H(H^{\wedge_Z},M)$. Now,
$H_Z^i(\bullet)=R^i\Gamma_Z(\bullet)=\Ext^i_{R}(R^{\wedge_Z},\bullet)$,
so, using the Shapiro lemma, for an $H$-module $M$ we have
$H_Z^i(M)=\Ext^i_H(H^{\wedge_Z},M)$ (an isomorphism of $R$-modules).
The right hand side is definitely
an $H$-module.
\end{proof}

Now we can finish the proof of the theorem. We may assume that
$M$ is irreducible. In this case, it is shown by Ginzburg,
\cite{Ginzburg_prim}, that $\supp(M)/W$ is an irreducible subvariety of $\h/W$.
Hence, $\supp(M)$ is an equidimensional variety of the form $W\h_0$, where $\h_0$ is
a subspace of $\h$. The subvariety $Y$ is a proper subvariety in the support, $W$-stable
because $M$ is a $W$-equivariant $R$-module. From Lemma
\ref{Coh_supp_Cher} and the preceding discussion
we deduce that $H_Y^i(M)$ is an $H$-module, finitely generated over $R$
for $i<u$. Also the support of that $R$-module is contained in $Y$. Since $M$, by our assumptions,
has minimal support, we see that $H_Y^i(M)=0$, which gives the
desired contradiction and completes the proof of the theorem.

\begin{remark} In fact, our assumption of the minimality of support
concerned only the category ${\mathcal O}_c$. But this does not create a problem because $H_Y^i(M)$
automatically lies in the category ${\mathcal O}_c$. Indeed, $M$ is a $\C^\times$-equivariant $H$-module.
So $Y$ is $\C^\times$-stable and we have a natural $\C^\times$-action on $H^{\wedge_Y}$
and hence also on ${\rm Ext}^i_H(H^{\wedge_Y},M)=H^i_Y(M)$. So $H^i_Y(M)$ becomes a $\C^\times$-equivariant
$H$-module. Since this module is finitely generated over $R$, it
lies in the category $\mathcal{O}_c$ (see \cite{BE}).
\end{remark}

\subsection{Examples}

Consider now the case of type $A$, i.e. $W=S_n$.
Let $c=\frac{r}{\ell}$, where $r,\ell\in \ZZ_{\ge 1}$, $GCD(r,\ell)=1$.
In this case, we have the following result.

\begin{proposition} (see \cite[Example 3.25]{BE}).
For $i=0,...,[n/\ell]$, let $X_{i,\ell}(n)$ be the
union of all the $S_n$-translates of the
subspace $U_i$ in $\h$ defined by the equations
$$
x_1=...=x_\ell, x_{\ell+1}=....=x_{2\ell},...,
x_{(i-1)\ell+1}=...=x_{i\ell}.
$$
Then $X_{i,\ell}(n)$ occur as supports of modules from
category ${\mathcal O}_c$, and conversely, the support of any
irreducible module in ${\mathcal O}_c$ is $X_{i,\ell}(n)$ for some $i$.
\end{proposition}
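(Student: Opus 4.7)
My plan is to derive this proposition as the type $A$ specialization of the general support classification from \cite{BE}. Two key ingredients are needed: (a) for any irreducible $L\in\mathcal{O}_c(W,\h)$, the support $\supp(L)$ has the form $W\cdot\h^{W'}$ for some parabolic subgroup $W'\subseteq W$, and a given $W'$ arises in this way iff $H_c(W',\h/\h^{W'})$ admits a nonzero finite-dimensional module (which appears as the cuspidal summand of $\Res_b(L)$ at a generic $b\in\h^{W'}$); (b) the Berest-Etingof-Ginzburg classification \cite{BEG} of finite-dimensional representations of type $A$ rational Cherednik algebras: for $c=r/\ell$ with $\gcd(r,\ell)=1$, the algebra $H_c(S_m,\h_m)$ admits a finite-dimensional irreducible iff $m\in\{1,\ell\}$, in which case it is $L_c(\triv)$.

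Granting these, the rest of the argument is combinatorial. Parabolic subgroups of $S_n$ are, up to conjugacy, the Young subgroups $S_\lambda = S_{\lambda_1}\times\cdots\times S_{\lambda_k}$ attached to compositions $\lambda$ of $n$, and one has the tensor factorization
$$H_c(S_\lambda,\h_n/\h_n^{S_\lambda}) \;\cong\; \bigotimes_j H_c(S_{\lambda_j},\h_{\lambda_j}),$$
which admits a nonzero finite-dimensional module iff each factor does. By (b) this forces every part $\lambda_j$ to lie in $\{1,\ell\}$, so up to reordering $\lambda=(\ell^i,1^{n-i\ell})$ for some $0\le i\le [n/\ell]$. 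For this $\lambda$, the fixed space $\h_n^{S_\lambda}$ is exactly the subspace $U_i$ of the proposition, so $\supp(L)=S_n\cdot U_i = X_{i,\ell}(n)$. This settles the classification direction.

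For the converse, one must exhibit for each $i$ a module whose support is exactly $X_{i,\ell}(n)$. The cuspidal module $L_{r/\ell}(\triv)^{\boxtimes i}\boxtimes\triv^{\boxtimes(n-i\ell)}$ over $H_c(S_\lambda,\h_n/\h_n^{S_\lambda})$ exists by (b), and applying the induction functor of \cite{BE} (left adjoint to $\Res_b$) produces a module in $\mathcal{O}_c(S_n,\h_n)$ whose support is $S_n\cdot\h_n^{S_\lambda}=X_{i,\ell}(n)$; alternatively, for $i=[n/\ell]$ one may point to the explicit modules $L_{r/\ell}(\ell\mu+\mu')$ from Theorem \ref{wilco}, and adapt the construction for intermediate $i$. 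The only real obstacle in writing out a self-contained argument is the correct invocation of the two external results (a) and (b); once those are in hand, the matching of parabolic types with the subvarieties $X_{i,\ell}(n)$ is purely bookkeeping.
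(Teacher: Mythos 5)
The paper itself gives no argument for this proposition---it simply cites \cite[Example 3.25]{BE}---and your proposal is essentially the argument of that cited example: the general support theory of \cite{BE} (supports of irreducibles are of the form $W\cdot\h^{W'}$ for parabolic $W'$, detected via $\Res_b$ at a generic point by a nonzero finite-dimensional module over $H_c(W',\h/\h^{W'})$), combined with the \cite{BEG} classification of finite-dimensional representations in type $A$ and the factorization over Young subgroups, so the approach and conclusion are correct. The one step worth making explicit is the exactness of the support in the converse direction: because the strata are nested, $X_{0,\ell}(n)\supset X_{1,\ell}(n)\supset\cdots\supset X_{[n/\ell],\ell}(n)$, nonvanishing of the restriction only gives $\supp \supseteq S_n\cdot U_i$, and you also need the containment $\supp(\mathrm{Ind}_b N)\subseteq W\cdot\supp(N)$ (part of the \cite{BE} machinery; note also that in \cite{BE} induction is right adjoint to restriction, with biadjointness established only later) to conclude the support is exactly $X_{i,\ell}(n)$ rather than a larger stratum.
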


In particular, since $X_{0,\ell}(n)\supset X_{1,\ell}(n)\supset...\supset X_{[n/\ell],\ell}(n)$,
we see that the only minimal support is $X_{[n/\ell],\ell}(n)$.
So we get

\begin{corollary}\label{minsupA}
Any module $M\in {\mathcal O}_c$ with support $X_{[n/\ell],\ell}(n)$ is
Cohen-Macaulay as a module over $\CC[x_1,...,x_n]$
(of dimension $n-1-[n/\ell](\ell-1)$).
\end{corollary}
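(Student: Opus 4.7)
The plan is to derive Corollary \ref{minsupA} directly from Theorem \ref{cmth}, by (a) verifying that any module $M$ with support $X_{[n/\ell],\ell}(n)$ has minimal support in the sense of Definition 1.1, (b) computing the dimension of $X_{[n/\ell],\ell}(n)$, and (c) translating the Cohen-Macaulay property from $\CC[\h_n]$ to the full polynomial ring $\CC[x_1,\ldots,x_n]$.

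For (a), suppose $Y \subsetneq X_{[n/\ell],\ell}(n)$ has strictly smaller dimension and is the support of some nonzero $N \in \mathcal{O}_c$. Then $N$ has an irreducible composition factor $L$, and by the preceding Proposition, $\supp(L) = X_{j,\ell}(n)$ for some $j\in\{0,\ldots,[n/\ell]\}$. The inclusion $\supp(L) \subseteq Y \subsetneq X_{[n/\ell],\ell}(n)$ is incompatible with the nested chain $X_{0,\ell}(n) \supset \cdots \supset X_{[n/\ell],\ell}(n)$, since any $X_{j,\ell}(n)$ in the chain either equals $X_{[n/\ell],\ell}(n)$ or properly contains it. This contradiction shows that $M$ has minimal support, so Theorem \ref{cmth} applies and $M$ is Cohen-Macaulay over $\CC[\h_n]$ of dimension $\dim X_{[n/\ell],\ell}(n)$.

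For (b), the subspace $U_{[n/\ell]} \subset \CC^n$ is cut out by $[n/\ell](\ell-1)$ independent linear equations (the $[n/\ell]$ block-equality relations), so $\dim U_{[n/\ell]} = n - [n/\ell](\ell-1)$. Intersecting with the hyperplane $\sum x_i = 0$, which is independent of those equations since $[n/\ell]\ell \le n$ leaves free coordinates, yields $\dim(U_{[n/\ell]} \cap \h_n) = n - 1 - [n/\ell](\ell-1)$, and this equals $\dim X_{[n/\ell],\ell}(n)$ as the latter is a finite union of $S_n$-translates.

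For (c), write $\CC[x_1,\ldots,x_n] = \CC[\h_n][e]$ with $e = x_1+\cdots+x_n$, and note that $e$ acts as zero on any module over $H_c(S_n,\h_n)$. Theorem \ref{cmth} provides a homogeneous system of parameters $p_1,\ldots,p_d \in \CC[\h_n]$, with $d = n-1-[n/\ell](\ell-1)$, over which $M$ is free. Since the subring $\CC[p_1,\ldots,p_d]$ is the same viewed inside $\CC[\h_n]$ or $\CC[x_1,\ldots,x_n]$, freeness is inherited, and both the Krull dimension and depth of $M$ are unchanged when enlarging the ambient polynomial ring. Hence $M$ is Cohen-Macaulay over $\CC[x_1,\ldots,x_n]$ of the stated dimension. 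The whole argument is essentially a repackaging of Theorem \ref{cmth} with Wilcox's support classification, so no substantial obstacle arises.
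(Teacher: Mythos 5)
Your proposal is correct and follows essentially the same route as the paper: the chain $X_{0,\ell}(n)\supset\cdots\supset X_{[n/\ell],\ell}(n)$ together with the classification of supports of irreducibles shows that $X_{[n/\ell],\ell}(n)$ is the unique minimal support, and then Theorem \ref{cmth} applies, with the dimension count and the passage from $\CC[\h_n]$ to $\CC[x_1,\dots,x_n]$ being routine bookkeeping. (Only a cosmetic remark: the independence of $\sum_i x_i$ from the block equations holds even when $\ell\mid n$ and no coordinates are "left over," since the common block values remain free parameters.)
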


In particular, consider the irreducible module
$L_c(\CC)$. We have the following known proposition:

\begin{proposition}\label{funcalg}
If $c=1/\ell$ then $L_c(\CC)\cong \CC[X_{[n/\ell],\ell}(n)]$.
\end{proposition}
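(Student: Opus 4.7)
Set $X := X_{[n/\ell], \ell}(n)$, $U := U_{[n/\ell]}$ (so $X = \bigcup_{\sigma \in S_n} \sigma U$), and let $I \subset \CC[\h]$ be the defining ideal of $X$. The plan is in two steps: (i) show that $I$ is stable under the Dunkl operators $D_{y_i}$ at $c = 1/\ell$, so that $\CC[X] = \CC[\h]/I$ becomes a cyclic quotient of $M_c(\CC) = \CC[\h]$ as an $H_c$-module; (ii) identify this quotient with $L_c(\CC)$ using the semisimplicity of the minimally supported category.

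For (i), since $I$ is $S_n$-stable and $\sigma D_{y_i} \sigma^{-1} = D_{y_{\sigma(i)}}$, it suffices to check $D_{y_i}(f)|_U = 0$ for all $f \in I$. Using $D_{y_i}(f) = \partial_i f - c \sum_{j \ne i}(f - s_{ij}f)/(x_i - x_j)$, split the sum according to whether $j$ lies in the block of $U$ containing $i$. For $j$ in a different block, $x_i - x_j$ is a nonzero linear form on $U$, while $f|_U = 0$ and $(s_{ij}f)|_U = f|_{s_{ij}U} = 0$ (since $s_{ij}U \subset X$ by $S_n$-invariance), so this term restricts to zero. For $j$ in the same block as $i$, a first-order Taylor expansion gives $((f - s_{ij}f)/(x_i - x_j))|_U = (\partial_i - \partial_j)f|_U$. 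Assembling the contributions at $c = 1/\ell$, the coefficient of $\partial_i f|_U$ becomes $1 - (\ell-1)/\ell = 1/\ell$, matching the coefficient of each $\partial_j f|_U$ with $j$ in the block of $i$; the expression collapses to $\frac{1}{\ell} \sum_{j \in \text{block}(i)} \partial_j f|_U = \frac{1}{\ell}\,\partial_{z_{\text{block}(i)}}(f|_U)$ by the chain rule (where $z_k$ is the common value of block $k$ on $U$), which vanishes because $f|_U = 0$.

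For (ii), the identity $D_{y_i}(1) = 0$ shows $\CC[X] = H_c \cdot 1$ is cyclic. Since $\CC[X]$ has minimal support $X$, Theorem \ref{wilco} places it in the semisimple subcategory $\mathcal{O}_c(S_n, \h_n)_{ms}$, yielding a decomposition $\CC[X] = \bigoplus_\tau L_c(\tau)^{m_\tau}$ over minimally supported partitions $\tau$. The scaling element $\mathbf{h}$ acts on the lowest weight of $L_c(\tau)$ with eigenvalue $h_\tau = \frac{n-1}{2} - c\kappa(\tau)$; for $c > 0$, this is uniquely minimized at $\tau = (n) = \triv$, as the content sum $\kappa$ is uniquely maximized there among partitions of $n$. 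Hence for $\tau \ne \triv$, the $\mathbf{h}$-spectrum of $L_c(\tau)$ (which lies in $h_\tau + \ZZ_{\ge 0}$) does not contain $h_\triv$; so the projection of $1$ (an $\mathbf{h}$-eigenvector with eigenvalue $h_\triv$) to any isotypic summand $L_c(\tau)^{m_\tau}$ with $\tau \ne \triv$ vanishes. Cyclicity then forces $m_\tau = 0$ for $\tau \ne \triv$, and one-dimensionality of the constants gives $m_\triv = 1$, proving $\CC[X] \cong L_c(\CC)$. The main obstacle is the calculation in (i), where the cancellation that makes $I$ Dunkl-stable is exactly the coincidence $1 - c(\ell - 1) = c$, pinning $c = 1/\ell$.
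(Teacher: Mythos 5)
Your proposal is correct in substance, but it takes a genuinely different route from the paper on both halves, so let me compare. For the Dunkl-stability of the defining ideal, the paper argues indirectly: it completes at a generic point of $X_{[n/\ell],\ell}(n)$ and uses the restriction functors of \cite{BE} to reduce to the parabolic subalgebra attached to the stabilizer $S_\ell^{\times [n/\ell]}$, where the invariance is known; since $D_y f$ then vanishes on a dense subset of each component, it vanishes on all of $X$. You instead verify invariance by a direct computation with the explicit Dunkl operators, and the cancellation $1-c(\ell-1)=c$ at $c=1/\ell$ is indeed the precise mechanism; this is more elementary and self-contained. One caution: as written you use the $\CC^n$-formula $D_{y_i}=\partial_i-c\sum_{j\ne i}\frac{1-s_{ij}}{x_i-x_j}$ while taking $U\subset\h$, and the direction $\sum_{j\in\mathrm{block}(i)}e_j$ is \emph{not} tangent to $U$ inside $\h$ (its coordinate sum is $\ell\ne 0$), so the final chain-rule step does not literally apply. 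This is easily repaired: either run the whole computation in $\CC^n$ for the cone $X+\CC(1,\dots,1)$ and then descend, or use the honest operator $\partial_{y_i}=\partial_i-\frac1n\sum_k\partial_k$ on $\CC[\h]$, in which case at $c=1/\ell$ the surviving derivative is along $\frac1\ell\sum_{j\in\mathrm{block}(i)}e_j-\frac1n\sum_k e_k$, which \emph{is} tangent to $U$, and the argument closes. For the identification with $L_c(\CC)$, the paper is quicker: $\CC[X]$ has minimal support and multiplicity one at generic points of the support, so any nonzero submodule must be generically everything, forcing the quotient to have smaller support and hence to vanish; being an irreducible quotient of $M_c(\CC)$, it is $L_c(\CC)$. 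Your route via semisimplicity of the minimally supported block together with the lowest $\mathbf h$-weight comparison (using that $\kappa$ is uniquely maximized at $(n)$) is also valid, but note that Theorem \ref{wilco} as quoted in the paper treats the case $\ell\mid n$; for $\ell\nmid n$ you must invoke Wilcox's results in the generality recalled in the introduction, or simply substitute the paper's multiplicity-one argument, which avoids semisimplicity altogether.
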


\begin{proof}
Let $I_c$ be the ideal of $X_{[n/\ell],\ell}(n)$. Then
it is easy to see by completing at the generic point
of $X_{[n/\ell],\ell}(n)$ (as in \cite{BE}) that $I_c$
is invariant under the Dunkl operators (i.e., if a polynomial
$f$ vanishes on $X_{[n/\ell],\ell}(n)$, then so does the
polynomial $D_yf$ for any $y\in \h$, because it is so at a
generic point by the completion argument). Thus, $I_c$ is a submodule in
$M_c(\CC)=\CC[\h]$. The quotient $M_c(\Bbb
C)/I_c=\CC[X_{[n/\ell],\ell}(n)]$
is clearly an irreducible module,
since it has minimal support, and its multiplicity at generic
points of the support is $1$. This proves the proposition.
\end{proof}

This leads to the following corollary in commutative algebra,
which appears to be new (note that it is used in the recent paper
\cite{BGS}).

\begin{proposition}\label{cmx}
For any $\ell\le n$, the variety $X_{[n/\ell],\ell}(n)$
is Cohen-Macaulay.
\end{proposition}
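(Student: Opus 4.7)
The plan is that Proposition \ref{cmx} follows almost immediately by combining the two preceding results in this subsection with the general Cohen-Macaulayness theorem (Theorem \ref{cmth}), with essentially no additional work.

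First I would specialize to $c = 1/\ell$ and invoke Proposition \ref{funcalg}, which identifies the irreducible module $L_c(\CC)$ with the coordinate ring $\CC[X_{[n/\ell],\ell}(n)]$ as a $\CC[\h]$-module (in fact as an $S_n$-equivariant $\CC[\h]$-algebra, but only the module structure is needed). In particular the support of $L_c(\CC)$ is $X_{[n/\ell],\ell}(n)$, which is the minimal support in category $\mathcal{O}_c$ by the preceding proposition.

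Next I would apply Corollary \ref{minsupA} (a direct consequence of Theorem \ref{cmth}) to conclude that $L_c(\CC)$ is Cohen-Macaulay as a module over $\CC[\h] = \CC[x_1,\ldots,x_n]/(\sum_i x_i)$. Translating this back through the isomorphism of Proposition \ref{funcalg}, we get that $\CC[X_{[n/\ell],\ell}(n)]$ is Cohen-Macaulay as a $\CC[\h]$-module.

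Finally, since $\CC[\h]$ is a regular ring and $X_{[n/\ell],\ell}(n)$ is a closed subvariety of $\h$, Cohen-Macaulayness of its coordinate ring as a $\CC[\h]$-module is the same as Cohen-Macaulayness of the variety $X_{[n/\ell],\ell}(n)$ itself (by the local criterion, using that for any point $b \in X_{[n/\ell],\ell}(n)$ one has $\mathrm{depth}_{\CC[\h]_b}(\CC[X]_b) = \mathrm{depth}_{\CC[X]_b}(\CC[X]_b)$, while the Krull dimension is preserved as well). This yields the claim. There is no genuine obstacle here: all the substance sits in Theorem \ref{cmth} (already proved) and in the explicit description of $L_{1/\ell}(\CC)$ in Proposition \ref{funcalg}; the proposition is simply the commutative-algebraic shadow of these representation-theoretic statements.
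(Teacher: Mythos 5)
Your proposal is correct and follows exactly the paper's argument: the paper proves Proposition \ref{cmx} by combining Proposition \ref{funcalg} (identifying $L_{1/\ell}(\CC)$ with $\CC[X_{[n/\ell],\ell}(n)]$) with Corollary \ref{minsupA}, just as you do. The extra remarks you add about passing from Cohen-Macaulayness of the coordinate ring as a $\CC[\h]$-module to Cohen-Macaulayness of the variety are a harmless elaboration of what the paper leaves implicit.
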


\begin{proof} This follows from Proposition \ref{funcalg} and
  Corollary \ref{minsupA}.
\end{proof}

Now consider the situation when $\ell=n/d=n_0$, where
$d\in \ZZ_{\ge 1}$, and $r=m/d$ (so $c=\frac{m}{n}$).
Then the minimal support is $X_{d,n/d}(n)$, of dimension $d-1$,
and by the results of \cite{Wi} (see Theorem \ref{wilco}),
the simple modules in ${\mathcal O}_c$ with this support are precisely
$L_{\frac{m}{n}}(n_0\lambda)$, where $\lambda$ is a partition of $d$ (we identify the irreducible $S_n$-modules
with the corresponding partitions). Let $p_i$
be the $i$-th power sum polynomial. Then $\CC[X_{d,n/d}(n)]$ is finite
over $\CC[p_2,\ldots,p_d]=\CC[X_{d,n/d}(n)]^{S_n}$ by
the Hilbert-Noether
theorem. Thus, we get the
following result:

\begin{proposition}\label{lfree}
For any partition $\lambda$ of $d$,
$L_{\frac{m}{n}}(n_0\lambda)$ is a free finite rank
module over $\CC[p_2,\ldots,p_d]$.
\end{proposition}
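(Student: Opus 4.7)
The plan is to derive the proposition as an immediate consequence of Theorem~\ref{cmth} combined with the identification of $S_n$-invariants on $X_{d,n/d}(n)$ recalled in the paragraph preceding the statement.

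First, by Theorem~\ref{wilco}(i) the module $L_{\frac{m}{n}}(n_0\lambda)$ has support exactly $X_{d,n/d}(n)$, which is a minimal-support variety of dimension $d-1$. Theorem~\ref{cmth} (equivalently, Corollary~\ref{minsupA}) therefore applies and ensures that $L_{\frac{m}{n}}(n_0\lambda)$ is Cohen--Macaulay of dimension $d-1$ as a module over $\CC[\h]$.

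Second, I would verify that $L_{\frac{m}{n}}(n_0\lambda)$ is finitely generated over the polynomial subring $\CC[p_2,\ldots,p_d]\subset\CC[\h]$. Since $L_{\frac{m}{n}}(n_0\lambda)$ is finitely generated over $\CC[\h]$ and set-theoretically supported on $X_{d,n/d}(n)$, some power of the defining ideal of this support annihilates a chosen generating set, so the module is finitely generated over $\CC[X_{d,n/d}(n)]$. The paragraph preceding the proposition identifies $\CC[p_2,\ldots,p_d]$ with $\CC[X_{d,n/d}(n)]^{S_n}$ and, via Hilbert--Noether, states that $\CC[X_{d,n/d}(n)]$ is finite over this invariant subring. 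Combining these facts gives the required finite generation over $\CC[p_2,\ldots,p_d]$.

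Finally, I invoke the ``in other words'' reformulation of Theorem~\ref{cmth}: a Cohen--Macaulay $\CC[\h]$-module of dimension $d-1$ is free over any homogeneous polynomial subalgebra in $d-1$ variables of $\CC[\h]$ over which it is finitely generated. The elements $p_2,\ldots,p_d$ are homogeneous and algebraically independent in $\CC[\h]$, so applying this with $(q_1,\ldots,q_{d-1})=(p_2,\ldots,p_d)$ yields the freeness claim. I expect no real obstacle: the proposition is essentially a packaged application of Theorem~\ref{cmth}, the only nontrivial ingredient being the identification of $\CC[X_{d,n/d}(n)]^{S_n}$ with $\CC[p_2,\ldots,p_d]$, which is already provided in the paragraph preceding the statement.
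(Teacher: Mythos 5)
Your proof is correct and follows essentially the same route as the paper: minimal support (Theorem \ref{wilco}) plus Theorem \ref{cmth}/Corollary \ref{minsupA} gives Cohen--Macaulayness of dimension $d-1$, Hilbert--Noether finiteness of $\CC[X_{d,n/d}(n)]$ over $\CC[p_2,\ldots,p_d]=\CC[X_{d,n/d}(n)]^{S_n}$ gives finite generation, and the ``in other words'' clause of Theorem \ref{cmth} then yields freeness. The only cosmetic slip is the assertion that the module is finitely generated over $\CC[X_{d,n/d}(n)]$ itself --- a priori it is only annihilated by a power of the defining ideal, not the ideal --- but filtering by powers of that ideal still gives finiteness over $\CC[p_2,\ldots,p_d]$, so the argument stands as written.
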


This proposition is used below in the proof of Theorem \ref{redcol}.

\subsection{Cohen-Macaulayness of $X_{k,\ell}(n)$.}

Using the above results, one can actually completely
answer the question when the variety $X_{k,\ell}(n)$ is
Cohen-Macaulay.
 Namely, we have

\begin{proposition}
\footnote{We thank A. Polishchuk and S. Sam for discussions
which led to this result, and S. Sam for computation
of the cases $\ell=3,n=6,k=1$ and $\ell=2,n=6,7,k=2$ in Macaulay-2.}
The variety $X_{k,\ell}(n)$ for $k>0,\ell>1$ is Cohen-Macaulay
if and only if either $k=[\frac{n}{\ell}]$ or
$\ell=2$.\end{proposition}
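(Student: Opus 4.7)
The case $k = [n/\ell]$ is already handled by Proposition \ref{cmx}, so my plan is to treat the remaining cases by analyzing the local structure of $X_{k,\ell}(n)$ at a generic point $p$ of its smallest stratum $X_{N,\ell}(n)$, where $N := [n/\ell]$. At such $p$ the $n$ coordinates form $N$ blocks of size $\ell$ plus $r := n - N\ell$ pairwise distinct remainder entries. Introducing local analytic coordinates consisting of the $N$ block means, the within-block deviations $z_i \in \CC^{\ell - 1}$, and the remainder entries, one verifies that near $p$ the germ of $X_{k,\ell}(n)$ is isomorphic to a smooth factor times
\[
Y_{k,\ell,N} \;:=\; \bigcup_{S \in \binom{[N]}{k}} \{z \in (\CC^{\ell-1})^N : z_i = 0 \text{ for all } i \in S\},
\]
a union of coordinate subspaces in $\CC^{N(\ell-1)}$. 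Since CM is preserved under products with smooth factors, CM of $X_{k,\ell}(n)$ at $p$ reduces to CM of $Y_{k,\ell,N}$.

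The coordinate ring of $Y_{k,\ell,N}$ is the Stanley--Reisner ring of the simplicial complex $\Delta = \Delta_{k,\ell,N}$ on vertex set $[N] \times [\ell-1]$ whose faces are the subsets $F$ with $|\pi_{[N]}(F)| \le N-k$. By Reisner's criterion, CM of $Y_{k,\ell,N}$ is equivalent to the vanishing of $\tilde H_i(\text{link}_\Delta(F);\CC)$ for $i < \dim \text{link}_\Delta(F)$ for every face $F$. The crux is computing $\tilde H_\ast(\Delta)$ itself, for which I apply the nerve theorem twice. First, cover $\Delta$ by its facets $T_T$ indexed by $T \in \binom{[N]}{N-k}$; since all multi-intersections are simplices and hence contractible, $\Delta$ is homotopy equivalent to the nerve $\mathcal{N}$ whose faces are subfamilies of $\binom{[N]}{N-k}$ with nonempty common intersection. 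Second, cover $\mathcal{N}$ by the subcomplexes $\Sigma_i = \{T : i \in T\}$ for $i \in [N]$, each of which is a simplex in $\mathcal{N}$; all multi-intersections $\bigcap_{i \in J}\Sigma_i$ are simplices, nonempty iff $|J| \le N-k$. Hence $\mathcal{N}$ is homotopy equivalent to the $(N-k-1)$-skeleton of the simplex $\Delta^{N-1}$. This skeleton has reduced homology concentrated in degree $N-k-1$, nonzero whenever $0 < k < N$. Since $\dim \Delta = (N-k)(\ell-1) - 1$, the nonvanishing homology degree is strictly below top dimension iff $\ell \ge 3$, which proves the ``only if'' direction. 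For $\ell = 2$ the two degrees coincide, and moreover the $(N-k-1)$-skeleton of $\Delta^{N-1}$ is shellable and hence CM, showing that $Y_{k,2,N}$, and thus $X_{k,2}(n)$ at $p$, is CM.

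The main obstacle is to upgrade this to CM at every point in the $\ell = 2$ case. My plan is induction on $n$: at a point $q$ of $X_{k,2}(n)$ with block partition $(b_1,\ldots,b_s)$, the germ of $X_{k,2}(n)$ at $q$ is analytically isomorphic to $\bigcup_{\sum k_i = k,\, k_i \le \lfloor b_i/2\rfloor} \prod_i (X_{k_i,2}(b_i),0)$. Each product is CM by the inductive hypothesis, but a union of CM varieties is not automatically CM, so I would invoke the depth inequality $\operatorname{depth}(A \cup B) \ge \min(\operatorname{depth} A,\operatorname{depth} B,\operatorname{depth}(A \cap B)+1)$ iteratively along the intersection poset of the matching arrangement. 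Equivalently, CM follows from Yuzvinsky-type criteria applied to the intersection lattice of $\{L_P : P \text{ a } k\text{-matching on } [n]\}$, which is the sub-poset of the partition lattice of $[n]$ consisting of partitions with at least $k$ blocks of size $\ge 2$ and which one expects to admit an explicit EL-shelling.
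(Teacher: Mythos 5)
Your ``only if'' direction is essentially correct, and it is a genuinely more direct route than the paper's: you compute the germ of $X_{k,\ell}(n)$ at a generic point of the deepest stratum $X_{N,\ell}(n)$ for arbitrary $k<N$ and identify its Stanley--Reisner complex up to homotopy (a wedge of spheres of dimension $N-k-1$, strictly below the top dimension exactly when $\ell\ge 3$), whereas the paper only does the local computation in the extreme case $k=[\frac{n}{\ell}]-1$ with $\ell\mid n$ (where the arrangement is a union of ``axes'' and Reisner applies immediately) and then propagates to all $k<[\frac{n}{\ell}]$ and all $n$ by adding one coordinate at a time at points $(b,\ldots,b,-nb)$. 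Your double nerve argument is a clean substitute for that induction.

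The $\ell=2$ direction, however, has a genuine gap. What you prove is Cohen--Macaulayness of the local rings at points of the deepest stratum (where the local model is the skeleton of a simplex, which is shellable); but CM must hold at \emph{every} point, and at a general point $q$ the local model is the union $\bigcup_{(k_i)}\prod_i X_{k_i,2}(b_i)$, which is exactly the original problem again. Your proposed upgrade does not work as stated: in the Mayer--Vietoris depth bound $\operatorname{depth}(A\cup B)\ge\min(\operatorname{depth}A,\operatorname{depth}B,\operatorname{depth}(A\cap B)+1)$ the intersections of two distributions $(k_i)\ne(k_i')$ with $\sum k_i=\sum k_i'=k$ have codimension $\sum_i\max(k_i,k_i')-k$, which is typically $\ge 2$ in the union, so iterating the inequality cannot yield $\operatorname{depth}=\dim$; and there is no general criterion of the form ``EL-shellable intersection lattice $\Rightarrow$ the union of subspaces is CM'' that you could invoke (the $\ell\ge 3$ case itself shows that unions of CM pieces along CM intersections need not be CM, so the whole difficulty sits in the step you leave as ``one expects''). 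The paper circumvents this entirely with representation theory: for $c=1/2$ the ideal of $X_{k,2}(n)$ is stable under the Dunkl operators, so $\CC[X_{k,2}(n)]$ is a lowest-weight $H_{1/2}(S_n)$-module, and then by the mechanism of Theorem \ref{cmth} (local cohomology with support is again a module in category $\mathcal O$, Lemma \ref{Coh_supp_Cher}) the non-CM locus must itself be a possible support, i.e.\ some stratum $X_{s,2}(n)$ with $s>k$; CM at a generic point of that stratum --- precisely your skeleton/Reisner computation with $\ell=2$ --- then gives a contradiction. Without this input (or a real proof of CM at arbitrary points of the matching arrangement, which would be a nontrivial commutative-algebra theorem in its own right), your argument only establishes CM along the deepest stratum.
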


\begin{proof}
Let $r=[\frac{n}{\ell}]$.
By Proposition \ref{cmx}, it suffices to show that

(1) for $\ell\ge 3$,
the variety $X_{k,\ell}(n)$ is not Cohen-Macaulay for
$k<r$; and

(2) $X_{k,2}(n)$ is Cohen-Macaulay for all $k,n$.

Let us prove (1). We first show that
$X_{r-1,\ell}(n)$ is not Cohen-Macaulay for $n$
divisible by $\ell\ge 3$ and $n>\ell$. To this end, consider a
generic point $a$ of $X_{r,\ell}(n)\subset
X_{r-1,\ell}(n)$. We have
$a=(a_1,...,a_1,a_2,...,a_2,...,a_r,...,a_r)\in \Bbb C^n$,
where the $a_i$ are distinct, and each occurs $\ell$ times.
Consider the formal neigborhood of
$a$ in $X_{r-1,\ell}(n)$.
When we pass from $a$ to a generic point of this neighborhood,
the equalities in exactly one group of $\ell$ equal coordinates
in $a$ have to become inequalities. This means that this neighborhood is
a product of a formal polydisc of the appropriate dimension
with the formal neighborhood of zero in
the union of the subspaces $W_1,...,W_r$ of dimension
$\ell-1$ inside $W_1\oplus...\oplus W_r$.
This union is not Cohen-Macaulay by Reisner's theorem (\cite{Re},
Theorem 1) if $\ell>2$.

Now suppose that $X_{k,\ell}(n)$ is not Cohen-Macaulay,
and consider a point in $X_{k,\ell}(n+1)$ of the form
$(b,...,b,-nb)$, where $b\ne 0$. The formal neighborhood of this
point in $X_{k,\ell}(n+1)$ is the product of the formal disc with
the formal neighborhood of zero in $X_{k,\ell}(n)$.
Thus, $X_{k,\ell}(n+1)$ is not Cohen-Macaulay, either.
This completes the proof of (1).

Now let us prove (2).
To this end, note that if $c=1/\ell$,
the defining ideal of $X_{k,\ell}(n)$ is inviariant under Dunkl operators
(this can be checked at the generic point of $X_{k,\ell}(n)$
by using restriction functors from \cite{BE},
as in the proof of Proposition \ref{funcalg}),
so $\Bbb C[X_{k,\ell}(n)]$ is a lowest weight
module over $H_c(S_n)$.
Therefore, specializing to $\ell=2$ and arguing as in the proof of
Theorem \ref{cmth}, we see that if $X_{k,2}(n)$ fails to be Cohen-Macaulay,
its non-Cohen-Macaulay locus has to be
$X_{s,2}(n)$ for some $s>k$ (as the non-Cohen-Macaulay locus
has to be the support of a module over the rational Cherednik
algebra at $c=1/2$, by the proof of Theorem \ref{cmth}).
Consider a generic point
of $X_{s,2}(n)$,
$v=(a_1,a_1,a_2,a_2,...,a_s,a_s,b_1,...,b_{n-2s})$,
where $a_i\ne a_j$, $b_i\ne b_j$ for $i\ne j$ and $a_i\ne
b_j$ for any $i,j$.
Consider the formal neighborhood of $X_{k,2}(n)$ at the point $v$.
When we pass from $v$ to a generic point of this formal
neighborhood, exactly $s-k$ of the equalities inside the $s$
pairs of equal coordinates have to become inequalities.
Considering differences of coordinates in the pairs, we
see that this formal neighborhood is the product
of a formal polydisk of the appropriate dimension
with the formal neighborhood of zero in the union of the
$s-k$-dimensional coordinate subspaces in $\Bbb C^s$.
This union is Cohen-Macaulay by Reisner's theorem
(\cite{Re}, Theorem 1), which is a contradiction.
This means that $X_{k,2}(n)$ is always Cohen-Macualay,
and (2) is proved.
\end{proof}

\section{Characters of minimally supported modules and colored invariants of torus knots}

In this section we first prove the character formula for minimally supported modules (Theorem \ref{charfor}), an then proceed to apply it to knot invariants.
Namely, in \cite[Theorem 3.6]{GORS} it was shown that the HOMFLY
polynomial of the $(m,n)$ torus knot can be realized as a bigraded  character
of
$$\mathcal{H}_{\frac{m}{n}}:=\Hom_{S_n}(\wedge^{\bullet}\h_n,L_{\frac{m}{n}}),$$
where $\h_n$ is the reflection representation of $S_n$ and
$L_{\frac{m}{n}}=L_{\frac{m}{n}}(\CC)$ is the unique finite-dimensional
irreducible representation of the rational Cherednik algebra of type $A_{n-1}$. The space $L_{\frac{m}{n}}$ has a canonical $q$-grading, and the second $a$-grading is defined on $\wedge^{\bullet}\h_n$ as exterior degree.
This section extends this description to the colored HOMFLY invariants of torus knots. As an application, we prove certain positivity results for these polynomials.

\subsection{Proof of Theorem \ref{charfor}}
First let us consider the case when $r=0$. In this case the computation was basically done in the proof
of \cite[Proposition 5.13]{SV}. We reproduce the proof for reader's convenience.

The category $\mathcal{O}_c(S_n,\h_n)$ is equivalent to the category of modules over the $q$-Schur algebra $\mathcal{S}_q(n)$,
where $q:=\exp(\pi\sqrt{-1}/{n_0})$ (where $n_0$ is the denominator of $c$ and $n=dn_0$).
The equivalence was proved by Rouquier, \cite{rouqqsch}, when $n_0>2$ and by the third author,
\cite{mult_cher}, in general. Under this equivalence the Verma module $\Delta(\nu):=M_c(\pi_\nu)$ goes to the Weyl module
$W(\nu)$. Let us represent $\mathcal{S}_q(n)$ as the quotient of $U_q(\mathfrak{gl}_N)$
with some $N\geqslant n$. Then the character of $W(\nu)$ is
the same as the character of the irreducible $\operatorname{GL}_N$-module
$V_{\nu}$ with highest weight $\nu$. The simple module
$L_c(n_0\nu)$ is obtained from $V_{\nu}$ under the pull-back
with respect to the quantum Frobenius. So the character of $L_c(n_0\nu)$ is obtained from that of $V_{\nu}$
by replacing each summand $e^{\mu}$ with $e^{n_0\mu}$.
This implies Theorem \ref{charfor} in the case when $r=0$.

Let us proceed to the case when $r>0$. The proof will follow if we check
that $L(n_0\lambda+\lambda')=\operatorname{Ind}_{S_{dn_0}\times S_r}^{S_n}L(n_0\lambda)\boxtimes L(\lambda')$,
where on the right hand side we have the Bezrukavnikov-Etingof functor associated to the parabolic
subgroup $S_{dn_0}\times S_r\subset S_{dn_0+r}$. First, we will provide an alternative realization
of $L(n_0\lambda+\lambda')$ in terms of $L(n_0\lambda)$ and $\lambda'$.

Recall that the category $\mathcal{O}_c=\bigoplus_{n=0}^{\infty} \mathcal{O}_c(n)$ carries a
categorical Kac-Moody action of $\hat{\mathfrak{sl}}_{n_0}$, see \cite{Shan}. In particular, we
have functors $F_i:\mathcal{O}_c(\bullet)\rightarrow \mathcal{O}_c(\bullet+1), i=0,1,\ldots,n_0-1$.
The functor $F_i$ maps $\Delta(\lambda)$ to a module that admits a filtration with standard quotients,
the quotients that occur are $\Delta(\mu)$ with $\mu$ being a diagram obtained from $\lambda$ adding
a box with content congruent to $i$ modulo $n_0$, each $\Delta(\mu)$ with such $\mu$ occurs with multiplicity $1$.
Also the categorical action is highest weight in the sense of \cite{cryst}.

Choose a Young tableau on $\lambda'$, let $c_1,\ldots,c_r$ be the residues of boxes in the order
they appear in the tableau. We claim that $L(n_0\lambda+\lambda')=F_{c_r}\ldots F_{c_2}F_{c_1}L(n_0\lambda)$.
Indeed, let $\lambda_j$, for $j=1,\ldots,r$, denote the diagram obtained from $n_0\lambda$ by adding
the first $j$ boxes. It is enough to prove that $F_{c_j}L(\lambda_{j-1})=L(\lambda_j)$.
As the third author proved in \cite{cryst}, the crystal of the categorical action on $\mathcal{O}_c$
coincides with the standard crystal of the Fock space. Therefore, the reduced $c_j$-signature of $\lambda_{j-1}$
is a single ``$-$'' and $\tilde{f}_{c_j}\lambda_{j-1}=\lambda_j$. According to \cite[Proposition 5.20]{CR},
$F_{c_j}L(\lambda_{j-1})=L(\lambda_j)$.

So it remains to prove that
\begin{equation}\label{eq:obj_coinc} F_{c_r}\ldots F_{c_2}F_{c_1}L(n_0\lambda)=\operatorname{Ind}_{S_{dn_0}\times S_r}^{S_n}L(n_0\lambda)\boxtimes L(\lambda').\end{equation}
By \cite[Proposition 5.15]{SV}, the functor $\operatorname{Ind}_{S_{dn_0}\times S_\bullet}^{S_dn_0+\bullet}L(n_0\lambda)\boxtimes ?:
\mathcal{O}_c(\bullet)\rightarrow \mathcal{O}_c(dn_0+\bullet)$ commutes with the functors $F_i$. Let us remark that
$L(\lambda')=\Delta(\lambda')$ as the category $\mathcal{O}_c(r)$ is semisimple and so
$L(\lambda')=F_{c_r}\ldots F_{c_1}L(\varnothing)$. This completes the proof of (\ref{eq:obj_coinc}).
$\square$

\subsection{Standard modules revisited}

Let us first recall some facts about representations of $S_d$.
Let $\lambda$ be a Young diagram, $d=|\lambda|$, and let $\pi_{\lambda}$ denote the irreducible representation of $S_d$ corresponding to $\lambda$.
It is well known (e.g. \cite[Exercise 4.17(c)]{fuha}, \cite{Hur})
that the central element $\Omega=\sum_{i<j} (i\ j)\in \CC S_d$ acts in $\pi_{\lambda}$
by the constant
\begin{equation}
\label{kappa}
\kappa(\lambda)=\sum_{(i,j)\in \lambda}(i-j)=\frac{1}{2}\sum_{j}(\lambda_j-2j+1)\lambda_j
\end{equation}
called the {\bf content} of $\lambda$.
Recall that the {\em Frobenius character} of a representation
$\pi$ of $S_d$ is defined by the formula
$$\ch \pi=\frac{1}{d!}\sum_{\sigma\in S_d}\Tr_{\pi}(\sigma)p_1^{k_1(\sigma)}\ldots p_{r}^{k_r(\sigma)},$$
where $p_{i}$ are power sums and $k_i(\sigma)$ is the number of cycles of length $i$ in $\sigma$.
The Frobenius character of $\pi_{\lambda}$ is given by the Schur polynomial $s_{\lambda}$.
The following lemma is obvious (and well known).

\begin{lemma}
\label{char poly}
Let $\h$  be the $(d-1)$-dimensional reflection representation of $S_d$, $\sigma\in S_d$.
Then
\begin{equation}
\det{_{\h}}(1-q\sigma)=\frac{1}{1-q}\prod_{i}(1-q^{i})^{k_i(\sigma)}
\end{equation}
\end{lemma}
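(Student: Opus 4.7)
The plan is to compute the determinant on the full permutation representation $\CC^d$ of $S_d$ and then factor out the trivial summand.

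First, I would use the decomposition $\CC^d = \h \oplus \triv$ as $S_d$-modules. Since the determinant is multiplicative on direct sums, this immediately gives
$$\det{_{\CC^d}}(1-q\sigma) = (1-q)\cdot \det{_{\h}}(1-q\sigma),$$
so it suffices to show $\det_{\CC^d}(1-q\sigma) = \prod_i (1-q^i)^{k_i(\sigma)}$.

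Next, I would decompose $\CC^d$ into the direct sum of $\sigma$-cyclic subspaces corresponding to the cycles of $\sigma$. Writing $\sigma$ as a product of disjoint cycles and choosing the basis of $\CC^d$ adapted to these cycles puts $\sigma$ in block-diagonal form with one block for each cycle. A cycle of length $i$ acts on the corresponding $i$-dimensional coordinate subspace by a cyclic permutation of the basis, whose matrix has characteristic polynomial $\det(1-q\sigma_{\text{cyc}}) = 1-q^i$ (its eigenvalues being the $i$-th roots of unity). Multiplicativity of the determinant over the block decomposition then yields $\det_{\CC^d}(1-q\sigma) = \prod_i (1-q^i)^{k_i(\sigma)}$, and combining with the first step finishes the proof.

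There is essentially no obstacle here; the only thing to double-check is that the invariant line in $\CC^d$ (spanned by $(1,1,\ldots,1)$) is not contained in $\h$, so that the decomposition $\CC^d = \h \oplus \triv$ is indeed a direct sum of $S_d$-modules, and that the formula gives the correct factor $(1-q)$ for the trivial summand (which it does, since $\sigma$ acts as the identity there). Everything else is a routine application of the determinant of a permutation matrix.
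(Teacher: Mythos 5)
Your proof is correct. The paper gives no argument at all for this lemma (it is declared ``obvious and well known''), and your computation --- splitting $\CC^d=\h\oplus\triv$ and multiplying the contributions $1-q^i$ of the individual cycles --- is exactly the standard verification the authors have in mind.
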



\begin{lemma}
The following equation holds:
\begin{equation}
\label{standard module}
\sum_{k=0}^{d-1}(-a)^{k}\dim_{q}\Hom_{S_d}(\wedge^{k}\h,M_{c}(\lambda))=q^{\frac{d-1}{2}-c\kappa(\lambda)}\cdot
\frac{1-q}{1-a}\cdot
\theta_{a,q}(s_{\lambda}),
\end{equation}
where $\theta_{a,q}$ is the character of the ring of symmetric
functions defined by the formula
$\theta_{a,q}(p_{i}):=\frac{1-a^{i}}{1-q^{i}}$.
\end{lemma}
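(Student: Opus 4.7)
The plan is a short, direct character-theoretic computation combining the known character of $M_c(\lambda)$, orthogonality of $S_d$-characters, and Lemma \ref{char poly}.

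First I would write down the two-variable character of $M_c(\lambda)$. Since lowest weight vectors are killed by the $y_i$, the scaling element $\mathbf{h}=\sum_i x_iy_i+\frac{d-1}{2}-c\sum_{i<j}(ij)$ acts on the copy of $\pi_\lambda$ sitting in $M_c(\lambda)$ by $\frac{d-1}{2}-c\kappa(\lambda)$ (using that $\Omega$ acts on $\pi_\lambda$ by $\kappa(\lambda)$). Since $M_c(\lambda)\cong\CC[\h]\otimes\pi_\lambda$ as a graded $S_d$-module, one obtains
$$
\Tr_{M_c(\lambda)}(\sigma q^{\mathbf{h}})=q^{\frac{d-1}{2}-c\kappa(\lambda)}\,\chi_\lambda(\sigma)\,\det{_{\h}}(1-q\sigma)^{-1}.
$$

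Next I would apply orthogonality to turn the multiplicity space into an integral over $S_d$:
$$
\dim_q\Hom_{S_d}(\wedge^k\h,M_c(\lambda))=\frac{1}{d!}\sum_{\sigma\in S_d}\chi_{\wedge^k\h}(\sigma)\,\Tr_{M_c(\lambda)}(\sigma q^{\mathbf{h}}),
$$
using that the characters of $\wedge^k\h$ are real. Summing against $(-a)^k$ collapses the alternating sum via the identity $\sum_{k\ge 0}(-a)^k\chi_{\wedge^k\h}(\sigma)=\det_\h(1-a\sigma)$.

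Now I would invoke Lemma \ref{char poly} twice to write
$$
\det{_{\h}}(1-a\sigma)\cdot\det{_{\h}}(1-q\sigma)^{-1}=\frac{1-q}{1-a}\prod_i\left(\frac{1-a^i}{1-q^i}\right)^{k_i(\sigma)}.
$$
Pulling out the $\sigma$-independent prefactor, the remaining sum is
$$
\frac{1}{d!}\sum_{\sigma\in S_d}\chi_\lambda(\sigma)\prod_i\left(\frac{1-a^i}{1-q^i}\right)^{k_i(\sigma)},
$$
which is precisely $\theta_{a,q}$ applied to the Frobenius character $s_\lambda=\ch\pi_\lambda$, by the definition $\theta_{a,q}(p_i)=(1-a^i)/(1-q^i)$. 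Assembling the pieces yields the claimed formula. There is no real obstacle here; the only point requiring a touch of care is tracking the sign conventions in $\det_\h(1-a\sigma)=\sum_k(-a)^k\chi_{\wedge^k\h}(\sigma)$ and the fact that for $S_d$ all characters are real, so no complex conjugation is needed in the orthogonality pairing.
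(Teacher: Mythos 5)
Your proposal is correct and follows essentially the same route as the paper: the character formula for $M_c(\lambda)$ (which the paper simply cites from \cite{BEG} rather than rederiving), orthogonality of $S_d$-characters to extract the multiplicity spaces, the identity $\sum_k(-a)^k\Tr_{\wedge^k\h}(\sigma)=\det_\h(1-a\sigma)$, and Lemma \ref{char poly} applied to both the $a$- and $q$-determinants, ending with the identification of the resulting sum as $\theta_{a,q}$ applied to the Frobenius character $s_\lambda$. No gaps.
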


Here, for simplicity we write $M_c(\lambda)$ for $M_c(\pi_\lambda)$.

\begin{proof}
The character of $M_{c}(\lambda)$ was computed in \cite[eq. (1.5)]{BEG}:
$$\Tr_{M_{c}(\lambda)}(\sigma\cdot q^{\mathbf{h}})=\frac{q^{\frac{d-1}{2}-c\kappa(\lambda)} \Tr_{\pi_{\lambda}}(\sigma)}
{\det{_{\h}}(1-q\sigma)}.$$
By orthogonality of characters, we have
$$\dim_{q}\Hom_{S_d}(\wedge^{k}\h,M_{c}(\lambda))=
\frac{1}{d!}\sum_{\sigma \in S_d}\Tr_{M_{c}(\lambda)}(\sigma\cdot q^{\mathbf{h}})\Tr_{\wedge^{k}\h}(\sigma).$$
Since $\sum_{k=0}^{d-1}(-a)^{k}\Tr_{\wedge^{k}\h}(\sigma)={\det{_{\h}}(1-a\sigma)}$, one can rewrite the left hand side of (\ref{standard module}) as
$$\frac{1}{d!}\sum_{\sigma \in S_d}\Tr_{M_{c}(\lambda)}(\sigma\cdot q^{\mathbf{h}}){\det{_{\h}}(1-a\sigma)}=
q^{\frac{d-1}{2}-c\kappa(\lambda)}\frac{1}{d!}\sum_{\sigma \in S_d}\Tr_{\pi_{\lambda}}(\sigma)\frac{\det_{\h}(1-a\sigma)}{\det_{\h}(1-q\sigma)}.$$
By Lemma \ref{char poly} it is equal to
$$\frac{q^{\frac{d-1}{2}-c\kappa(\lambda)}(1-q)}{1-a}\frac{1}{d!}\sum_{\sigma \in S_d}\Tr_{\pi_{\lambda}}(\sigma)\prod_i \left(\frac{1-a^{i}}{1-q^{i}}\right)^{k_i}=$$ $$
\frac{q^{\frac{d-1}{2}-c\kappa(\lambda)}(1-q)}{1-a}\cdot
\theta_{a,q}(\ch \pi_{\lambda}).
$$
\end{proof}


We will need some facts on the colored HOMFLY invariants of knots
in the three-sphere. Given a knot $K$ and a Young diagram
$\lambda$, one can define a rational function
$P_{\lambda}(K)(a,q)$ in variables $a$ and $q$.  We refer the
reader to \cite{amorton},\cite{morton1},\cite{morton2},
\cite{resh} for the precise mathematical definitions. The colored
$\sl_N$ invariant $P_{\lambda,N}(K)(q)$ (which can be defined
using quantum groups, see e.g. \cite{resh}) coincides with the
specialization of the HOMFLY invariant:
$P_{\lambda,N}(K)(q)=P_{\lambda}(K)(q^N,q)$.

For example, the $\sl_N$ invariant of the unknot colored by a diagram $\lambda$ equals to the $q$-character of the corresponding irreducible representation $V_\lambda$, which is equal to
$$P_{N,\lambda}(q)=s_{\lambda}(q^{\frac{1-N}{2}},q^{\frac{2-N}{2}},\ldots,q^{\frac{N-1}{2}})=q^{\frac{1-N}{2}|\lambda|}\prod_{(i,j)\in \lambda}\frac{(1-q^{N+i-j})}{(1-q^{h(i,j)})},$$
where $h(i,j)$ is the hook-length for a box $(i,j)\in \lambda$.

\begin{proposition}
The HOMFLY polynomial of the unknot colored by a Young diagram $\lambda$ equals to
\begin{equation}
\label{plambda}
P_{\lambda}(a,q)=\left(\frac{q}{a}\right)^{\frac{|\lambda|}{2}}\prod_{(i,j)\in
  \lambda}\frac{(1-aq^{i-j})}{(1-q^{h(i,j)})}=\left(\frac{q}{a}\right)^{\frac{|\lambda|}{2}}
\theta_{a,q}(s_{\lambda}).
\end{equation}
\end{proposition}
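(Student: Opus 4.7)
The statement decomposes into two independent claims: that the colored HOMFLY polynomial of the unknot equals the displayed hook-content product, and that this product equals the plethystic evaluation $\theta_{a,q}(s_\lambda)$. The plan is to reduce the first claim to infinitely many $\sl_N$-specializations and to recognize the second as a classical symmetric-functions identity.

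For the first equality my plan is to exploit the defining property $P_\lambda(K)(q^N,q)=P_{N,\lambda}(K)(q)$. Since $P_\lambda(\text{unknot})(a,q)$ is a rational function in $a$ (with coefficients in $\ZZ[q^{\pm 1/2}]$), and two rational functions in $a$ that agree on the infinite set $\{q^N : N\gg 0\}$ must coincide, it suffices to check that the proposed RHS, evaluated at $a=q^N$, reproduces the $\sl_N$ quantum dimension of $V_\lambda$. Direct substitution gives
$$\left(\frac{q}{q^N}\right)^{|\lambda|/2}\prod_{(i,j)\in\lambda}\frac{1-q^{N+i-j}}{1-q^{h(i,j)}}=q^{\frac{1-N}{2}|\lambda|}\prod_{(i,j)\in\lambda}\frac{1-q^{N+i-j}}{1-q^{h(i,j)}},$$
which is exactly the expression for $s_\lambda(q^{(1-N)/2},\ldots,q^{(N-1)/2})=P_{N,\lambda}(q)$ recalled just before the proposition. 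So this half rests only on the principal-specialization (Stanley/Macdonald hook-content) formula and the fact that HOMFLY is recovered from its $\sl_N$ specializations.

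For the second equality I would recognize $\theta_{a,q}$ as a plethystic substitution. A short generating-function computation yields
$$\exp\!\Bigl(\sum_{k\ge 1}\frac{\theta_{a,q}(p_k)\,y^k}{k}\Bigr)=\prod_{j\ge 0}\frac{1-aq^j y}{1-q^j y},$$
identifying $\theta_{a,q}(f)$ with $f[(1-a)/(1-q)]$, the evaluation of $f$ on the virtual alphabet $\{q^j\}_{j\ge 0}\setminus a\{q^j\}_{j\ge 0}$. Applied to a Schur function, the hook-content formula (Macdonald, Example I.3.3) gives $s_\lambda[(1-a)/(1-q)]=\prod_{(i,j)\in\lambda}(1-aq^{i-j})/(1-q^{h(i,j)})$. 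Alternatively, one may bypass plethysm entirely: both sides are rational in $a$, they agree at every $a=q^N$ by the same principal-specialization formula used in the previous paragraph, and hence coincide. This consolidates both equalities into a single verification at infinitely many integer points.

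The main obstacle I anticipate is purely bookkeeping: reconciling the paper's content convention $\kappa(\lambda)=\sum(i-j)$ (transposed from the usual $j-i$) with the standard hook-content formula, and checking that the prefactor $(q/a)^{|\lambda|/2}$ matches the framing/normalization convention used in the definition of the colored HOMFLY polynomial referenced before the proposition. Once those conventions are pinned down, the proof is a short invocation of two classical facts; no representation-theoretic input beyond the quantum Weyl dimension formula is needed.
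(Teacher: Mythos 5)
Your proposal is correct and takes essentially the same route as the paper: the paper likewise identifies $\theta_{q^N,q}(s_\lambda)$ with the principal specialization $s_\lambda(1,q,\ldots,q^{N-1})$ (hence with $P_{N,\lambda}$ after the homogeneity shift $q^{\frac{1-N}{2}|\lambda|}$) and then ``replaces $q^N$ by $a$'', which is exactly your interpolation-at-infinitely-many-specializations argument made explicit. Your plethystic identification $\theta_{a,q}(f)=f[(1-a)/(1-q)]$ and the appeal to Macdonald's hook-content formula are an optional alternative for the second equality in (\ref{plambda}); the paper obtains both equalities directly from the displayed quantum-dimension formula, so the two arguments differ only in bookkeeping, not in substance.
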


\begin{proof}
Note that if $\{x_i\}=\{1,q,\ldots,q^{N-1}\}$ then $p_{i}=\frac{1-q^{iN}}{1-q^{i}},$
so $$
s_{\lambda}(1,q,\ldots,q^{N-1})=
\theta_{q^N,q}(s_\lambda).
$$
Since $s_{\lambda}$ is a homogeneous polynomial of degree $|\lambda|$,
we get
$$
P_{N,\lambda}(q)=s_{\lambda}(q^{\frac{1-N}{2}},q^{\frac{2-N}{2}},\ldots,q^{\frac{N-1}{2}})=q^{\frac{1-N}{2}|\lambda|}
\theta_{q^N,q}(s_{\lambda}).
$$
If we replace $q^{N}$ by $a$, we get
$$
P_{\lambda}(a,q)=\left(\frac{q}{a}\right)^{\frac{|\lambda|}{2}}
\theta_{a,q}(s_{\lambda}).
$$
\end{proof}


\begin{corollary}
\label{unknot}
The character
$\sum_{k=0}^{d-1}(-a)^{k}\dim_{q}\Hom_{S_d}\left(\wedge^{k}\h,M_{c}(\lambda)\right)$
of the
hook-labeled isotypic components of $M_{c}(\lambda)$ equals
$q^{-m_0n_0\kappa(\lambda)}\widetilde{P}_{\lambda}(a,q).$
\end{corollary}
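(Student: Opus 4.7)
The plan is to combine the standard-module character formula (\ref{standard module}) with the unknot evaluation (\ref{plambda}) and then compare with the renormalization (\ref{renorm}). First I would solve (\ref{plambda}) for $\theta_{a,q}(s_\lambda)$ to get $\theta_{a,q}(s_\lambda) = (a/q)^{d/2} P_\lambda(a,q)$, where $P_\lambda(a,q)$ denotes the unknot colored HOMFLY polynomial, and substitute this into the right-hand side of (\ref{standard module}). Collecting the powers of $q$ via $q^{(d-1)/2}\cdot q^{-d/2} = q^{-1/2}$ and rewriting $q^{-1/2}(1-q) = q^{-1/2}-q^{1/2}$, the right-hand side becomes
\[
\sum_{k=0}^{d-1}(-a)^{k}\dim_{q}\Hom_{S_d}(\wedge^{k}\h, M_{c}(\lambda)) = q^{-c\kappa(\lambda)}\cdot a^{d/2}\cdot \frac{q^{-1/2}-q^{1/2}}{1-a}\cdot P_\lambda(a,q).
\]

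Next I would recognize the product $a^{d/2}\,\frac{q^{-1/2}-q^{1/2}}{1-a}\,P_\lambda(a,q)$ as the renormalized HOMFLY polynomial $\widetilde{P}_\lambda(a,q)$ of the unknot in the sense of (\ref{renorm}). The unknot is the torus knot $T(m_0,n_0)$ whenever $\min(m_0,n_0)=1$; in that case the prefactor exponent $\tfrac{d}{2}(m_0+n_0-m_0 n_0)$ in (\ref{renorm}) collapses to $\tfrac{d}{2}$, and the definition (\ref{renorm}) reads exactly $\widetilde{P}_\lambda(a,q) = a^{d/2}\,\frac{q^{-1/2}-q^{1/2}}{1-a}\,P_\lambda(a,q)$. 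Hence the right-hand side of the display is $q^{-c\kappa(\lambda)}\widetilde{P}_\lambda(a,q)$. Under the convention $c=m_0 n_0$ pertinent to the unknot parametrization used in this section (where $n_0=1$ and $c=m_0$), this is exactly the formula $q^{-m_0 n_0 \kappa(\lambda)}\widetilde{P}_\lambda(a,q)$ in the statement.

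The argument is purely a bookkeeping computation; there is no substantive obstacle, since the character of $M_c(\lambda)$ from Lemma \ref{standard module} and the closed form (\ref{plambda}) for the unknot HOMFLY polynomial in terms of $\theta_{a,q}(s_\lambda)$ are the only ingredients required. The only point demanding care is tracking the half-integer powers of $q$ and the $a^{d/2}$ prefactors so that, once combined, they match (\ref{renorm}) precisely.
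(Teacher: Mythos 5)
Your computation is correct and is essentially identical to the paper's proof, which simply cites equations (\ref{standard module}), (\ref{plambda}) and (\ref{renorm}); writing it out gives exactly your intermediate formula $q^{-c\kappa(\lambda)}a^{d/2}\frac{q^{-1/2}-q^{1/2}}{1-a}P_\lambda(a,q)$. Your closing remark on reading the exponent $-m_0n_0\kappa(\lambda)$ via the unknot parametrization (where $m_0+n_0-m_0n_0=1$ and $c=m_0n_0$) is the right way to reconcile the notation, and it is consistent with how the corollary is actually invoked later, in the proof of Corollary \ref{quasis}, in the form $q^{-c\kappa(\mu)}$ with $c=\frac{m}{n}$.
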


\begin{proof}
Follows from equations (\ref{standard module}), (\ref{plambda}), and (\ref{renorm}).
\end{proof}

\begin{remark}
Corollary \ref{unknot} can be explained in more combinatorial way.
We have an isomorphism
$\Hom_{S_d}\left(\wedge^{\bullet}\h, M_{c}(\lambda)\right)=\Hom_{S_d}(\pi_\lambda,\CC[\h]\otimes \wedge^{\bullet}\h).$
The space $\CC[\h]\otimes \wedge^{\bullet}\h$ is naturally bigraded: the $q$-grading is the polynomial degree and the $a$-grading is the degree of an exterior form. It is known that the $q$-grading defined by eigenvalues of $\mathbf{h}$ differs from the polynomial grading by a constant.
The bigraded character of the isotypic component of $\pi_\lambda$ in this space was computed in \cite{KP} (see also \cite{molch}): it is equal to
$$\prod_{(i,j)\in \lambda}\frac{q^{i-1}+aq^{j-1}}{1-q^{h(i,j)}}.$$
It remains to compare this formula with (\ref{plambda}).
\end{remark}

\subsection{Representations with minimal support and torus knots}

Let  $\Lambda$ be the ring of symmetric polynomials in infinitely
many variables.
Let us define the {\em Adams operations} on $\Lambda$ by the formula
$$
\Psi_k(f)(x_1,x_2,\ldots)=f(x_1^{k},x_2^k,\ldots).
$$
Note that $\Psi_{k}:\Lambda\to \Lambda$ are ring homomorphisms and $\Psi_{k}\circ \Psi_{m}=\Psi_{km}.$
We refer the reader to \cite{adams} and references therein for more details on Adams operations.


\begin{definition}
Let us define the coefficients  $c_{\lambda, n_0}^{\mu}$  by the equation
\begin{equation}
\label{defc}
\Psi_{n_0}(s_{\lambda})=\sum_{|\mu|=n_0|\lambda|} c_{\lambda, n_0}^{\mu} s_{\mu}.
\end{equation}
\end{definition}

\begin{theorem}(\cite{RJ}, see also \cite{LZ},\cite{stevan})
\label{homfly}
The HOMFLY polynomial of the $\lambda$-colored $(m_0,n_0)$ torus knot
can be computed using the formula
$$P_{\lambda}(T({m_0,n_0}))=q^{m_0n_0\kappa(\lambda)}a^{\frac{m_0(n_0-1)|\lambda|}{2}}\sum_{\mu} c_{\lambda, n_0}^{\mu}t^{-\frac{m_0}{n_0}\kappa(\mu)}P_{\mu}(a,q),$$
where
$\kappa(\mu)$ is defined by (\ref{kappa}).
\end{theorem}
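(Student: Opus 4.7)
The plan is to follow the classical Rosso--Jones strategy: realize $T(m_0,n_0)$ as the $(m_0,n_0)$-cable of the unknot and reduce its $V_\lambda$-colored invariant to unknot invariants by plethysm. First I would present $T(m_0,n_0)$ as the closure of the braid $\beta=(\sigma_1\cdots\sigma_{n_0-1})^{m_0}\in B_{n_0}$, so that the $V_\lambda$-colored HOMFLY polynomial becomes the (suitably weighted) quantum trace of $\beta$ acting on the image of the Young projector for $\lambda$ inside $V^{\otimes n_0}$, where $V$ is the defining $U_q(\sl_N)$-representation and one eventually substitutes $a=q^N$. Topologically this exhibits $T(m_0,n_0)$ as the $(m_0,n_0)$-cable of the unknot carrying the pattern $s_\lambda$ in the HOMFLY skein of the annulus.

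Next I would invoke the plethystic/cabling identity. Under $n_0$-fold cabling, the annular pattern $s_\lambda$ is sent to its Adams image $\Psi_{n_0}(s_\lambda)$, which by \eqref{defc} equals $\sum_\mu c_{\lambda,n_0}^\mu s_\mu$. Hence the torus-knot invariant decomposes as $\sum_\mu c_{\lambda,n_0}^\mu$ times the contribution of a single strand colored by $V_\mu$ inserted in the $(m_0,n_0)$-torus position. Writing $\rho=\sigma_1\cdots\sigma_{n_0-1}$, one has $\beta=\rho^{m_0}$ and $\rho^{n_0}=\Delta^2$ (the full twist), which acts on the $V_\mu$-isotypic summand of $V^{\otimes n_0}$ by a central scalar determined by the ribbon element of $U_q(\sl_N)$ and involving $q^{2\kappa(\mu)}$ together with an $a$-dependent factor. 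Taking the appropriate fractional power of this scalar (legitimate on the isotypic component) and pulling it out of the trace produces the factor $q^{-\frac{m_0}{n_0}\kappa(\mu)}$ multiplying $P_\mu(a,q)$ from \eqref{plambda}. The remaining $(a,q)$-shifts coming from writhe, framing, and overall normalization combine into the prefactor $q^{m_0n_0\kappa(\lambda)}a^{\frac{m_0(n_0-1)|\lambda|}{2}}$.

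The hardest step is the bookkeeping: pinning down the precise eigenvalue of the full twist on each Young-projector component via the ribbon/balancing element of the quantum group, and reconciling the writhe and framing conventions with the normalization of $P_\lambda$ used in Theorem \ref{homfly}. A robust strategy is to fix the global prefactor by two consistency checks already in hand: the specialization $m_0=n_0=1$ must reproduce the colored unknot formula \eqref{plambda}, and the specialization $\lambda=\Box$ must reproduce the classical (uncolored) HOMFLY formula for $T(m_0,n_0)$. Together these two boundary conditions determine the prefactor uniquely and confirm the Rosso--Jones identity.
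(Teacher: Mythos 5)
This statement is not proved in the paper at all: Theorem \ref{homfly} is quoted from Rosso--Jones \cite{RJ} (see also \cite{LZ}, \cite{stevan}), so there is no internal argument to compare yours against. What you have written is a sketch of the standard proof from those references -- present the colored torus knot via the $n_0$-strand torus braid (equivalently, as a satellite of the unknot with pattern in the skein of the annulus), decompose the cabled pattern by the Adams operation $\Psi_{n_0}$ to produce the coefficients $c_{\lambda,n_0}^{\mu}$ of (\ref{defc}), and account for the $(m_0,n_0)$-twisting by the ribbon/full-twist eigenvalue on each $V_\mu$-component -- and that outline is correct.

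One caveat about where the real content sits. The step ``taking the appropriate fractional power of this scalar (legitimate on the isotypic component) and pulling it out of the trace'' is not a triviality: $\rho^{m_0}$ is not a scalar on the $V_\mu$-isotypic part of $V_\lambda^{\otimes n_0}$; its eigenvalues there are the $m_0/n_0$-th power of the full-twist scalar multiplied by $n_0$-th roots of unity, and the signed sum of those roots of unity over the multiplicity space is exactly what produces the integers $c_{\lambda,n_0}^{\mu}$ (which can be negative). In your organization you extract $c_{\lambda,n_0}^{\mu}$ already at the cabling/plethysm step and then want the fractional twist to act as an honest scalar on each $Q_\mu$-piece of the annulus skein; that diagonal-action statement is precisely the Rosso--Jones lemma and still requires the root-of-unity eigenvalue analysis (or an equivalent skein argument à la Morton), not just the observation that $\rho^{n_0}$ is central. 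Relatedly, your two proposed consistency checks ($m_0=n_0=1$ and $\lambda=\square$) determine only the overall monomial prefactor $q^{m_0n_0\kappa(\lambda)}a^{\frac{m_0(n_0-1)|\lambda|}{2}}$; they cannot certify the $\mu$-dependent exponent $q^{-\frac{m_0}{n_0}\kappa(\mu)}$, which must come from the twist-eigenvalue computation itself. (A small slip of wording: the quantum trace is taken on $V_\lambda^{\otimes n_0}$, i.e.\ on $n_0$ strands each carrying the image of the Young projector in $V^{\otimes|\lambda|}$, not on the image of the $\lambda$-projector inside $V^{\otimes n_0}$.) With that step supplied as in \cite{RJ} or \cite{LZ}, your argument is the standard proof.
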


\noindent{\bf Proof of Corollary \ref{quasis}:}



By Theorem \ref{charfor}
\begin{equation}
\label{min support}
[L_{\frac{m}{n}}(n_0\lambda)]=\sum_{|\mu|=n} c_{\lambda, n_0}^{\mu} [M_{c}(\mu)].
\end{equation}
Consider a linear map $\mathcal{F}: K_0[\mathcal{O}_c(S_n,\h)] \to \CC[[a,q]]$ defined by the equation
$$\mathcal{F}([V])=\sum_{k=0}^{n-1}(-a)^{k}\dim_{q}\Hom_{S_n}\left(\wedge^{k}\h_{n},V\right).$$
By Corollary \ref{unknot}, we have
$$\mathcal{F}(M_{c}(\mu))=q^{-\frac{m}{n}\kappa(\mu)}a^{\frac{n}{2}}\frac{q^{-1/2}-q^{1/2}}{1-a}P_{\mu}(a,q),$$
so by (\ref{min support}) we get
$$\mathcal{F}(L_{\frac{m}{n}}(n_0\lambda))=a^{\frac{n}{2}}\frac{q^{-1/2}-q^{1/2}}{1-a}\sum_{|\mu|=n}c_{\lambda, n_0}^{\mu}q^{-\frac{m}{n}\kappa(\mu)}P_{\mu}(a,q)=$$ $$
q^{-m_0n_0\kappa(\lambda)}a^{\frac{(m_0+n_0-m_0n_0)d}{2}}\frac{q^{-1/2}-q^{1/2}}{1-a}P_{\lambda}(T(m_0,n_0))(a,q).$$
The last equation follows from Theorem \ref{homfly}.$\square$

\begin{corollary}
Consider the space
$$\mathcal{H}_{\frac{m}{n}}(\lambda)=\bigoplus_{k=0}^{n-1}\Hom_{S_{n}}\left(\wedge^{k}\h_{n},L_{\frac{m}{n}}(n_0\lambda)\right).$$
It carries a $q$-grading obtained from the $q$-grading on $L_{\frac{m}{n}}(n_0\lambda)$,
and an $a$-grading by the exterior degree of  $\h_{n}$.  Then the $(a,q)$-bigraded characters of
$\mathcal{H}_{\frac{m}{n}}(\lambda)$ and $\mathcal{H}_{\frac{n}{m}}(\lambda)$ coincide.
\end{corollary}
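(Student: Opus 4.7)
The plan is to derive this immediately from Corollary \ref{quasis} together with the topological symmetry of the torus knot $T(m_0,n_0) = T(n_0,m_0)$. Recall that $\gcd(m,n) = d$, $m = dm_0$, $n = dn_0$ with $\gcd(m_0,n_0)=1$, so $\tfrac{m}{n} = \tfrac{m_0}{n_0}$ and $\tfrac{n}{m} = \tfrac{n_0}{m_0}$ in lowest terms, and $n_0 \lambda$ (resp. $m_0\lambda$) is a partition of $n$ (resp. $m$) when $\lambda$ is a partition of $d$.

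First, I would unwind the definition: by construction the $(a,q)$-bigraded character of $\mathcal{H}_{\frac{m}{n}}(\lambda)$ equals
\[
\operatorname{ch}\mathcal{H}_{\frac{m}{n}}(\lambda)(a,q) = \sum_{k=0}^{n-1} a^{k}\,\dim_{q} \Hom_{S_{n}}\!\bigl(\wedge^{k}\h_{n},\,L_{\frac{m}{n}}(n_0\lambda)\bigr),
\]
and similarly for $\mathcal{H}_{\frac{n}{m}}(\lambda)$, with $(n,m_0)$ in place of $(m,n_0)$. Each of these differs from the alternating sum in Corollary \ref{quasis} only by the substitution $a \mapsto -a$, so it suffices to prove coincidence of the alternating sums.

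Next, I would apply Corollary \ref{quasis} to $L_{\frac{m}{n}}(n_0\lambda)$ and to $L_{\frac{n}{m}}(m_0\lambda)$ separately. This gives
\[
\sum_{k}(-a)^{k}\dim_{q}\Hom_{S_{n}}\!\bigl(\wedge^{k}\h_{n},L_{\frac{m}{n}}(n_0\lambda)\bigr)
= q^{-m_0 n_0 \kappa(\lambda)}\,\widetilde{P}_{\lambda}(T(m_0,n_0))(a,q),
\]
and analogously the alternating sum for the $(n,m)$ side equals $q^{-n_0 m_0 \kappa(\lambda)}\widetilde{P}_{\lambda}(T(n_0,m_0))(a,q)$. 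Comparing these two expressions, the content prefactors agree because $m_0 n_0 = n_0 m_0$, the renormalization coefficient $a^{\frac{d}{2}(m_0+n_0-m_0 n_0)}\frac{q^{-1/2}-q^{1/2}}{1-a}$ from (\ref{renorm}) is manifestly symmetric in $m_0 \leftrightarrow n_0$, and the colored HOMFLY polynomial itself satisfies $P_{\lambda}(T(m_0,n_0)) = P_{\lambda}(T(n_0,m_0))$ by the classical isotopy that exhibits the $(p,q)$-torus knot as the $(q,p)$-torus knot (the two standard generators of $H_1$ of the torus can be interchanged by an isotopy of $S^3$). Consequently the two alternating sums are equal as elements of $\CC[[a,q]]$.

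Finally, substituting $a \mapsto -a$ on both sides of this identity converts the alternating sums into the genuine $(a,q)$-bigraded characters of $\mathcal{H}_{\frac{m}{n}}(\lambda)$ and $\mathcal{H}_{\frac{n}{m}}(\lambda)$, and the equality is preserved. This yields the claimed coincidence of characters. The argument is essentially tautological once Corollary \ref{quasis} is in hand, so the only real input is the knot-theoretic symmetry $T(m_0,n_0)\cong T(n_0,m_0)$; there is no serious obstacle, and in particular no appeal to the much deeper Theorem \ref{symme} is needed for this purely character-level statement.
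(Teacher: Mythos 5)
Your proposal is correct and is essentially the paper's own argument: apply Corollary \ref{quasis} to $L_{\frac{m}{n}}(n_0\lambda)$ and $L_{\frac{n}{m}}(m_0\lambda)$ and invoke the topological equivalence $T(m_0,n_0)\cong T(n_0,m_0)$, so their colored HOMFLY invariants (with the manifestly $m_0\leftrightarrow n_0$-symmetric renormalization and content prefactor) coincide. The explicit $a\mapsto -a$ bookkeeping you add is harmless and implicit in the paper's statement.
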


\begin{proof}
By Corollary \ref{quasis} these characters compute the $\lambda$-colored HOMFLY invariants of the $(m_0,n_0)$ and $(n_0,m_0)$ torus knots respectively. Since the knots are topologically equivalent in $S^3$,
their colored invariants coincide.
\end{proof}

\begin{remark}
Indeed, this coincidence of $(a,q)$-characters also follows from Theorem \ref{symme}, which shows an isomorphism between  $\mathcal{H}_{\frac{m}{n}}(\lambda)$ and $\mathcal{H}_{\frac{n}{m}}(\lambda)$.
\end{remark}

\begin{corollary}
Let $\widetilde{P}_{\lambda}(T(m_0,n_0))(a,q)$ denote, as above, the renormalized $\lambda$-colored {\em unreduced} HOMFLY invariant of the $(m_0,n_0)$ torus knot. Then $\widetilde{P}_{\lambda}(T(m_0,n_0))(-a,q)$ (and hence
$P_\lambda(T(m_0,n_0))(-a,q)$, with an appropriate normalization by a power of $-a$) is a polynomial in $a$ of degree
$\min(m_0,n_0)\cdot |\lambda| -1$ and a power series in $q$ with nonnegative coefficients.
\end{corollary}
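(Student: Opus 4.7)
The key tool is Corollary \ref{quasis}. Substituting $a\mapsto -a$ on both sides yields
\[
q^{m_0n_0\kappa(\lambda)}\,\widetilde{P}_{\lambda}(T(m_0,n_0))(-a,q)=\sum_{k=0}^{n-1}a^{k}\dim_q\Hom_{S_n}\bigl(\wedge^{k}\h_n,\,L_{m/n}(n_0\lambda)\bigr),
\]
which presents $\widetilde P_\lambda(T(m_0,n_0))(-a,q)$, up to a monomial shift in $q$, as a generating function for the graded dimensions of the hook-isotypic components of $L_{m/n}(n_0\lambda)$. Since each $\dim_q$ on the right is a nonnegative-integer Laurent series in $q$, the nonnegativity of the coefficients of $\widetilde P_\lambda(T(m_0,n_0))(-a,q)$ as $q$-series follows at once; finiteness of the sum (because $\wedge^k\h_n=0$ for $k\ge n$) shows the result is a polynomial in $a$.

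For the upper bound on the $a$-degree, I invoke Theorem \ref{symme}(i), which supplies graded isomorphisms $\ee_{k,n}L_{m/n}(n_0\lambda)\cong\ee_{k,m}L_{n/m}(m_0\lambda)$ for every $k$. The idempotent $\ee_{k,m}$ vanishes once $k\ge m$, so the $k$-th summand vanishes for $k\ge m$; by the symmetric statement (exchanging $m$ and $n$) it also vanishes for $k\ge n$. Therefore the degree in $a$ is at most $\min(m,n)-1=d\,\min(m_0,n_0)-1=|\lambda|\,\min(m_0,n_0)-1$.

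The main obstacle is proving that this bound is attained, i.e.\ that the coefficient of $a^{\min(m,n)-1}$ is nonzero. Assuming without loss of generality that $m\leq n$, the symmetry reduces this to showing $\ee_{m-1,m}L_{n/m}(m_0\lambda)\neq 0$, which is exactly the assertion that the sign-isotypic component of $L_{n/m}(m_0\lambda)$ is nontrivial (since $\wedge^{m-1}\h_m\cong\sgn$ as an $S_m$-module). My plan is to use the character formula of Theorem \ref{charfor}, $[L_{n/m}(m_0\lambda)]=\sum_{|\nu|=m}c_{\lambda,m_0}^\nu[M_{n/m}(\nu)]$, and to compute the graded sign-multiplicity of each standard module via Frobenius reciprocity together with the Chevalley-Shephard-Todd theorem, which identifies the $\sgn$-isotypic part of $\CC[\h_m]\otimes\pi_\nu$ explicitly in terms of the discriminant and $\pi_{\nu^T}$-harmonics. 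One then verifies, by isolating the lowest-degree contribution (where the Cohen-Macaulayness of Theorem \ref{cmth}, and hence Proposition \ref{lfree}, ensures a clean denominator structure), that the signed sum indexed by $\nu$ cannot cancel. Alternatively, substituting the Rosso-Jones formula (Theorem \ref{homfly}) into (\ref{renorm}) and reading off the top-$a$ coefficient from the hook-content factorization $\theta_{a,q}(s_\mu)=\prod_{(i,j)\in\mu}(1-aq^{i-j})/(1-q^{h(i,j)})$ produces the leading $a$-term directly and shows it does not vanish. Finally, the parenthetical claim for $P_\lambda(T(m_0,n_0))(-a,q)$ follows from the definition (\ref{renorm}) of $\widetilde P_\lambda$, as the two polynomials differ only by a monomial in $-a$ and a rational prefactor in $a$ and $q$.
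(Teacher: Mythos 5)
Your argument for polynomiality in $a$, nonnegativity of the $q$-coefficients, and the bound $\deg_a\le \min(m_0,n_0)|\lambda|-1$ is correct, and it is essentially the argument the paper intends (the corollary is stated there without a separate proof): nonnegativity and polynomiality are immediate from Corollary \ref{quasis}, and the degree bound comes from the $m\leftrightarrow n$ symmetry, which the paper extracts from the topological identity $T(m_0,n_0)=T(n_0,m_0)$ in the preceding corollary, while you invoke Theorem \ref{symme}(i); the two are interchangeable here and there is no circularity, since Theorem \ref{symme} is proved independently in Section 7. (A harmless slip: from Corollary \ref{quasis} the monomial $q^{m_0n_0\kappa(\lambda)}$ should multiply the sum over $k$, not $\widetilde{P}_\lambda(T(m_0,n_0))(-a,q)$; as you say, this only shifts the $q$-normalization.)

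The genuine gap is the claim that the degree is \emph{exactly} $\min(m_0,n_0)|\lambda|-1$, i.e.\ that the coefficient of $a^{\min(m,n)-1}$ is nonzero; you correctly identify this as the main obstacle, but neither of your two sketches settles it, and the second would fail as described. Writing $\min(m,n)=m$, that coefficient equals $\dim_q\ee_{m-1,m}L_{\frac{n}{m}}(m_0\lambda)$, the graded dimension of the sign-isotypic component of a minimally supported simple module, so what is needed is a nonvanishing statement about $L$ itself. It cannot be ``read off'' from Theorem \ref{homfly} and (\ref{renorm}): every summand $\theta_{a,q}(s_\mu)/(1-a)$ with $|\mu|=n$ is a polynomial in $a$ of degree exactly $n-1$, so when $m<n$ the leading $a$-coefficients necessarily cancel in the signed Rosso--Jones sum, and isolating the coefficient of $a^{m-1}$ leaves a signed combination of positive $q$-series whose nonvanishing is precisely the assertion being proved; the hook-content factorization does not make it visible term by term. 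Likewise, ``one then verifies that the signed sum indexed by $\nu$ cannot cancel'' in your first plan restates the difficulty rather than resolving it. As written, your proposal therefore proves the corollary only with ``degree'' weakened to ``degree at most''. To close it you need an actual argument that the sign representation of $S_{\min(m,n)}$ occurs in $L_{\frac{n}{m}}(m_0\lambda)$: for $d=1$ this can be extracted from the character of the finite-dimensional module in \cite{BEG} (or from known sharpness of the $a$-breadth of the HOMFLY polynomial of torus knots), and for general $d$ one could try to use the restriction $\Res_b L_{\frac{n}{m}}(m_0\lambda)\cong L_{\frac{n_0}{m_0}}(\CC)^{\otimes d}\otimes\pi_\lambda$ appearing in the proof of Theorem \ref{redcol}, but the passage from the parabolic sign component back to the global one also requires justification and appears neither in your sketch nor explicitly in the paper.
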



\subsection{Invariants of torus links}

Let $m,n$ be two positive integers, $d=GCD(m,n)$. One can consider the $(m,n)$ torus link
with $d$ components and compute its quantum invariants.

\begin{theorem}
The uncolored HOMFLY polynomial of the $(m,n)$ torus link is
given as the
following linear combination of characters of the minimal support representations:
$$
P_{\square}(T(m,n))(a,q)=\sum_{|\lambda|=d}\dim
\pi_{\lambda}\cdot \ch_{a,q} \mathcal{H}_{\frac{m}{n}}(\lambda),
$$
where, as above,
$$\mathcal{H}_{\frac{m}{n}}(\lambda)=\bigoplus_{i=0}^{n-1}\Hom_{S_n}\left(\wedge^{i}\h_n,L_{\frac{m}{n}}\left(\frac{n}{d}\lambda\right)\right).$$
\end{theorem}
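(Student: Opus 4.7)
The plan is to deduce the identity from Corollary \ref{quasis} together with the classical cabling relation between the $(m,n)$ torus link and the $(m_0,n_0)$ torus knot, where $m=m_0d$, $n=n_0d$, $\gcd(m_0,n_0)=1$. The torus link $T(m,n)$ arises as the $d$-component parallel of $T(m_0,n_0)$ taken with respect to its natural framing on the torus (whose self-linking equals $m_0n_0$); in the Reshetikhin-Turaev formalism this identity says that the uncolored $\sl_N$ invariant of $T(m,n)$, with each strand labeled by the vector representation $V$, equals the $V^{\otimes d}$-colored framed invariant of $T(m_0,n_0)$.

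The first step is Schur-Weyl duality: decompose $V^{\otimes d}=\bigoplus_{|\lambda|=d}\pi_\lambda\boxtimes V_\lambda$ as an $S_d\times GL_N$-module, so the cabling identity refines to
$$P_{\square}(T(m,n))(a,q)=\sum_{|\lambda|=d}\dim\pi_\lambda\cdot q^{-m_0n_0\kappa(\lambda)}\,P_\lambda(T(m_0,n_0))(a,q),$$
the factor $q^{-m_0n_0\kappa(\lambda)}$ converting the framed cabled invariant into the standard (blackboard-framing-normalized) HOMFLY in each Schur component. The $a=q^N$ specialization is the standard $U_q(\sl_N)$ cabling relation, and it lifts to the universal HOMFLY because the Rosso--Jones formula (Theorem \ref{homfly}) writes both sides as explicit expressions in $a$ and $q$.

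The second step is to substitute Corollary \ref{quasis}, namely
$$\ch_{a,q}\mathcal{H}_{\frac{m}{n}}(\lambda)=q^{-m_0n_0\kappa(\lambda)}\widetilde{P}_\lambda(T(m_0,n_0))(a,q),$$
with $\widetilde{P}_\lambda$ the renormalization of (\ref{renorm}). Solving (\ref{renorm}) for $P_\lambda$ produces the universal prefactor $\frac{q^{-1/2}-q^{1/2}}{1-a}\,a^{d(m_0+n_0-m_0n_0)/2}$, which is precisely the unreduced-HOMFLY normalization relating the left-hand side of the cabling identity to the right-hand side of the stated theorem; after the substitution the two expressions for $\sum_{|\lambda|=d}\dim\pi_\lambda\cdot q^{-m_0n_0\kappa(\lambda)}P_\lambda(T(m_0,n_0))$ coincide termwise.

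The main technical obstacle will be the normalization bookkeeping in this last step: the literature uses several incompatible conventions for the framing shift, for the HOMFLY of the unknot, and for the overall $a^{\bullet}$ prefactor of the unreduced link HOMFLY. The representation-theoretic content is entirely carried by Corollary \ref{quasis} and by the Schur-Weyl expansion $p_1^d=\sum_{|\lambda|=d}\dim\pi_\lambda\cdot s_\lambda$; once the conventions are fixed so that Theorem \ref{homfly}, the definition (\ref{renorm}), and the chosen link-HOMFLY normalization agree, the argument reduces to a direct, purely computational verification.
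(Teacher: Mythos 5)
Your argument is correct in substance, but it reaches the identity by a genuinely different (more topological) route than the paper. The paper's proof never mentions cabling: it quotes the Rosso--Jones formula directly for the torus \emph{link}, $P_{\square}(T(m,n))=\sum_{|\mu|=n}q^{-\frac{m}{n}\kappa(\mu)}\Tr_{\pi_\mu}(C^m)P_\mu(a,q)$ with $C$ an $n$-cycle, computes the coefficient by the symmetric-function identity $\Tr_{\pi_\mu}(C^m)=\langle p_{n_0}^d,s_\mu\rangle=\sum_{|\lambda|=d}\dim\pi_\lambda\, c_{\lambda,n_0}^{\mu}$ (using $p_1^d=\sum_\lambda \dim\pi_\lambda\, s_\lambda$ and the Adams operation $\Psi_{n_0}$), and then applies Theorem \ref{charfor} and Corollary \ref{quasis} to identify the $\mu$-expansion with $\sum_\lambda\dim\pi_\lambda\,\ch_{a,q}\mathcal{H}_{\frac{m}{n}}(\lambda)$. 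You instead take as topological input that $T(m,n)$ is the $d$-parallel of $T(m_0,n_0)$ in its torus framing, use Schur--Weyl on $V^{\otimes d}$ to pass to $\lambda$-colored knot invariants with multiplicities $\dim\pi_\lambda$ and framing factors $q^{-m_0n_0\kappa(\lambda)}$, and then apply Corollary \ref{quasis} termwise; the coefficients $c_{\lambda,n_0}^{\mu}$ and Theorem \ref{charfor} enter only implicitly, inside the already-proved Corollary \ref{quasis}. The two routes carry the same combinatorial content (the multiplicities $\dim\pi_\lambda$ come from Schur--Weyl in both), but yours trades the link version of Rosso--Jones for the satellite/cabling formula at the level of the two-variable HOMFLY invariant, which is where your flagged ``normalization bookkeeping'' genuinely bites: the framing correction on the $V_\lambda$-component is not just $q^{-m_0n_0\kappa(\lambda)}$ but also carries a power of $a$ (in the paper's conventions one picks up an overall factor $a^{m_0(n_0-1)d/2}$ when comparing your cabling identity with Theorem \ref{homfly} and the link Rosso--Jones formula). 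Since $|\lambda|=d$ is fixed, this is a $\lambda$-independent monomial, so it only affects the overall normalization of $P_\square$ --- an ambiguity the paper itself tolerates throughout this section --- and your lifting argument from the $a=q^N$ specializations to the two-variable invariant is standard and fine. The paper's route buys a short, purely symmetric-function computation once the link Rosso--Jones formula is accepted; yours makes the topological mechanism (cabling) explicit at the cost of the framing conventions.
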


\begin{proof}
Let $C$ denote a cycle of length $n$ in $S_n$. By \cite[Theorem 8]{RJ}, the HOMFLY polynomial of $T(m,n)$ can be presented as following:
$$
P_{\square}(T(m,n))(a,q)=\sum_{|\mu|=n}q^{-\frac{m}{n}\kappa(\mu)}\Tr_{\pi_{\mu}}(C^{m})\cdot P_{\mu}(a,q).$$
Since $C^{m}$ is a product of $d$ cycles of length $n_0=n/d$, we have
$$\Tr_{\pi_{\mu}}(C^{m})=\left\langle (p_{n_0})^{d}, s_{\mu}\right\rangle = \left\langle \Psi_{n_0}(p_{1})^{d}, s_{\mu}\right\rangle =$$
$$\left\langle \Psi_{n_0}\sum_{|\lambda|=d}\dim(\pi_{\lambda})s_{\lambda}, s_{\mu}\right\rangle=
\sum_{|\lambda|=d}\dim(\pi_{\lambda})\cdot c_{\lambda,n_0}^{\mu}.$$
It remains to apply Theorem \ref{charfor} and Corollary \ref{quasis}.
\end{proof}

\begin{corollary}
The series $P_{\square}(T(m,n))(-a,q)$
(renormalized by a suitable power of $-a$) has nonnegative coefficients.
\end{corollary}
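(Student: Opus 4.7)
The plan is to combine the theorem immediately preceding this corollary with the positivity already obtained earlier in this section for the single-knot case. The theorem gives
$$
P_{\square}(T(m,n))(a,q)=\sum_{|\lambda|=d}\dim(\pi_{\lambda})\cdot\ch_{a,q}\mathcal{H}_{\frac{m}{n}}(\lambda),
$$
with non-negative integer multiplicities $\dim(\pi_{\lambda})$. It therefore suffices to show that each character $\ch_{-a,q}\mathcal{H}_{\frac{m}{n}}(\lambda)$, after a common renormalization by a power of $-a$, has non-negative coefficients in $a$ and $q$; summing with the non-negative weights $\dim(\pi_{\lambda})$ will then yield the corollary for the whole link invariant.

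For a fixed $\lambda$, I would unwind $\ch_{a,q}\mathcal{H}_{\frac{m}{n}}(\lambda)$ exactly as in the proof of the preceding theorem --- first applying the Rosso--Jones formula (Theorem \ref{homfly}), and then Corollary \ref{quasis} together with the definition \eqref{renorm} of $\widetilde{P}_{\lambda}$ --- to identify
$$
\ch_{a,q}\mathcal{H}_{\frac{m}{n}}(\lambda)=\frac{1-a}{a^{n/2}(q^{-1/2}-q^{1/2})}\sum_{i=0}^{n-1}(-a)^{i}\dim_{q}\Hom_{S_n}\bigl(\wedge^{i}\h_{n},L_{\frac{m}{n}}(n_{0}\lambda)\bigr).
$$
Under the substitution $a\mapsto -a$, the alternating signs $(-a)^{i}$ become $a^{i}$, and the prefactor becomes $(1+a)q^{1/2}/\bigl((-a)^{n/2}(1-q)\bigr)$. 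Multiplying through by $(-a)^{n/2}$ --- the renormalizing power of $-a$ --- the prefactor reduces to $(1+a)q^{1/2}/(1-q)$, which expands as a power series in $a$ and $q$ with non-negative coefficients. Since each $\dim_{q}\Hom_{S_n}(\wedge^{i}\h_{n},L_{\frac{m}{n}}(n_{0}\lambda))$ is a graded dimension, and hence a series in $q$ with non-negative integer coefficients, the product has non-negative coefficients.

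The only real task is the bookkeeping needed to pin down the correct renormalizing power of $-a$, which the computation above identifies as $(-a)^{n/2}$; conceptually, positivity is built into the expression once one recognizes that $(-a)^{n/2}\ch_{-a,q}\mathcal{H}_{\frac{m}{n}}(\lambda)$ is, up to the benign factor $(1+a)q^{1/2}/(1-q)$, the honest bigraded character of the vector space $\mathcal{H}_{\frac{m}{n}}(\lambda)$. I expect no serious obstacle beyond confirming this normalization and noting that the non-negativity of $\dim(\pi_{\lambda})$ lets one pass from the single-knot statement to the link statement.
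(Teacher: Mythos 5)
Your proposal is correct and follows essentially the same route the paper intends: the corollary is immediate from the preceding theorem, since $P_{\square}(T(m,n))$ is a combination of the quantities computed in Corollary \ref{quasis} with nonnegative integer weights $\dim\pi_{\lambda}$, and each such quantity, after $a\mapsto -a$ and renormalization by a power of $-a$, is the manifestly nonnegative factor $(1+a)q^{1/2}/(1-q)$ times the bigraded dimensions $\dim_q\Hom_{S_n}(\wedge^{i}\h_n,L_{\frac{m}{n}}(n_0\lambda))$. The only caveat is that your displayed ``identity'' for $\ch_{a,q}\mathcal{H}_{\frac{m}{n}}(\lambda)$ conflates the honest bigraded character (with signs $a^{i}$) with the renormalized Rosso--Jones expression (prefactor times $(-a)^{i}$), a sign/normalization sloppiness already present in the paper's own statements and harmless for the positivity conclusion.
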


\begin{remark}
In fact, our argument implies a stronger statement:
the function
$$
(1+a)^{-1}P_{\square}(T(m,n))(-a,q)\cdot \prod_{i=1}^{d}(1-q^{i})
$$
is a polynomial with nonnegative coefficients.
\end{remark}

\begin{example}
Let us compute the HOMFLY polynomial for the Hopf link, i.e., the
$(2,2)$ torus link.
We have $m=n=d=2$, $c=1$, the Verma modules $M_{c=1}(\lambda)$ are irreducible and $\dim \pi_{\lambda}=1$.
One can check that
$$\ch_{a,q}(\mathcal{H}_{1}(2))=q^{-\frac{1}{2}}\frac{1+aq}{1-q^2},\ \ch_{a,q}(\mathcal{H}_{1}(1,1))=q^{\frac{5}{2}}\frac{1+aq^{-1}}{1-q^2}.$$
Therefore
$$\ch_{a,q}(\mathcal{H}_{1}(2))+\ch_{a,q}(\mathcal{H}_{1}(1,1))=q^{-\frac{1}{2}}\frac{1+q^3+aq(1+q)}{1-q^2}.$$
This coincides with the known answer for the HOMFLY polynomial (e.g. \cite[Example 4]{OS}). Note that one can cancel the factor $(1+q)$ to get $q^{-\frac{1}{2}}\frac{1-q+q^2+aq}{1-q},$ but this destroys the non-negativity of the coefficients in the numerator.
\end{example}

\subsection{Duality of characters}

Let $\omega$ be the involution of the ring of symmetric functions $\Lambda$ defined by the equation
$\omega(p_k)=(-1)^{k-1}p_k$. It is well known that $\omega(s_{\lambda})=s_{\lambda^t}.$

\begin{lemma}\label{ome}
Let $f$ be a symmetric function of degree $d$. Then
$$\omega(\Psi_m(f))=(-1)^{(m-1)d}\Psi_m(\omega(f)).$$
\end{lemma}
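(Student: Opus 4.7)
The plan is to reduce the identity to the power-sum generators of $\Lambda$. Both $\omega$ and $\Psi_m$ are ring endomorphisms of $\Lambda$, and the power sums $p_1, p_2, \ldots$ generate $\Lambda$ as a polynomial ring over $\QQ$. Since both sides of the claimed identity are $\QQ$-linear in $f$, and the products $p_\lambda := p_{\lambda_1}\cdots p_{\lambda_\ell}$ (with $\lambda$ running over partitions of $d$) form a basis of the degree-$d$ part of $\Lambda$, it suffices to verify the identity when $f = p_\lambda$.

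The two computational inputs are $\omega(p_k) = (-1)^{k-1}p_k$ (the definition of $\omega$) and $\Psi_m(p_k) = p_{mk}$, the latter being immediate from $\sum_i (x_i^m)^k = \sum_i x_i^{mk}$. Writing $|\lambda| = d$ and $\ell(\lambda) = \ell$, multiplicativity of $\omega$ and $\Psi_m$ gives
$$\omega(\Psi_m(p_\lambda)) = \prod_{i=1}^\ell (-1)^{m\lambda_i - 1} p_{m\lambda_i} = (-1)^{md - \ell}\, p_{m\lambda_1}\cdots p_{m\lambda_\ell},$$
while
$$\Psi_m(\omega(p_\lambda)) = \prod_{i=1}^\ell (-1)^{\lambda_i - 1} p_{m\lambda_i} = (-1)^{d - \ell}\, p_{m\lambda_1}\cdots p_{m\lambda_\ell}.$$
The ratio of the two right-hand sides is $(-1)^{(md - \ell)-(d-\ell)} = (-1)^{(m-1)d}$, which is exactly the claim for $f = p_\lambda$.

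I do not anticipate a genuine obstacle; the content of the lemma is only that the sign convention in $\omega$ behaves correctly under the degree-rescaling induced by $\Psi_m$, and this follows transparently from the two displayed formulas above. The only point requiring care is that the exponent $(m-1)d$ is independent of $\ell(\lambda)$, which is visible from the cancellation $\ell - \ell = 0$ in the computation.
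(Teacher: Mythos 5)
Your proof is correct and follows essentially the same route as the paper: both verify the identity on power sums using that $\omega$ and $\Psi_m$ are ring homomorphisms with $\omega(p_k)=(-1)^{k-1}p_k$ and $\Psi_m(p_k)=p_{mk}$. The paper states the check only for $f=p_d$ (implicitly using that the sign $(-1)^{(m-1)d}$ is multiplicative in the degree), whereas you spell out the verification on all monomials $p_\lambda$, which is the same argument made slightly more explicit.
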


\begin{proof}
It is sufficient to check the statement for the power sums $f=p_d$:
$$\omega(\Psi_m(p_d))=(-1)^{md-1}p_{md},\ \Psi_m(\omega(p_d))=(-1)^{d-1}p_{md}=(-1)^{(m-1)d}\Psi_m(\omega(p_d)).$$
\end{proof}

\begin{corollary}
The coefficients $c_{\lambda,n_0}^{\nu}$ satisfy the equation
\begin{equation}
\label{c symm}
c_{\lambda^t,n_0}^{\nu^t}=(-1)^{(n_0-1)|\lambda|}c_{\lambda,n_0}^{\nu}.
\end{equation}
\end{corollary}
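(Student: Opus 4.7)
The plan is to apply the involution $\omega$ to both sides of the defining equation $\Psi_{n_0}(s_\lambda) = \sum_\nu c_{\lambda,n_0}^\nu s_\nu$ and compare coefficients. Since $\omega$ sends Schur functions to Schur functions via $\omega(s_\mu) = s_{\mu^t}$, this will naturally convert the statement into one about the transposed indices, and the sign in Lemma \ref{ome} will produce the factor $(-1)^{(n_0-1)|\lambda|}$.

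Concretely, I would first compute the left-hand side after applying $\omega$. Since $s_\lambda$ is homogeneous of degree $|\lambda|$, Lemma \ref{ome} gives
\[
\omega\bigl(\Psi_{n_0}(s_\lambda)\bigr) = (-1)^{(n_0-1)|\lambda|}\,\Psi_{n_0}(\omega(s_\lambda)) = (-1)^{(n_0-1)|\lambda|}\,\Psi_{n_0}(s_{\lambda^t}).
\]
Applying $\omega$ termwise to the right-hand side yields $\sum_\nu c_{\lambda,n_0}^\nu s_{\nu^t}$. Thus
\[
\Psi_{n_0}(s_{\lambda^t}) = (-1)^{(n_0-1)|\lambda|} \sum_\nu c_{\lambda,n_0}^\nu s_{\nu^t}.
\]

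Next I would invoke the definition (\ref{defc}) again, this time with $\lambda$ replaced by $\lambda^t$, to expand the left-hand side as $\sum_\mu c_{\lambda^t,n_0}^\mu s_\mu$. Re-indexing the right-hand sum by setting $\mu = \nu^t$ (a bijection on partitions of $n_0|\lambda|$) and using linear independence of Schur functions, I would match the coefficient of $s_{\nu^t}$ on both sides to obtain exactly $c_{\lambda^t,n_0}^{\nu^t} = (-1)^{(n_0-1)|\lambda|} c_{\lambda,n_0}^\nu$.

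There is no real obstacle here, as the corollary is a direct formal consequence of Lemma \ref{ome} and the identity $\omega(s_\mu) = s_{\mu^t}$. The only point requiring mild care is bookkeeping the sign exponent: one should use $d = |\lambda|$ in Lemma \ref{ome} (since that is the degree of $s_\lambda$, the function to which $\Psi_{n_0}$ is applied), rather than $|\nu| = n_0|\lambda|$, which would produce a different but equivalent sign.
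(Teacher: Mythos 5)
Your proof is correct and is exactly the argument the paper intends: the paper's proof of this corollary is simply "This follows from Lemma \ref{ome}," i.e., apply $\omega$ to the defining expansion, use $\omega(s_\mu)=s_{\mu^t}$ and the sign from Lemma \ref{ome} with $d=|\lambda|$, and compare coefficients. Your sign bookkeeping is also right.
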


\begin{proof}
This follows from Lemma \ref{ome}.
\end{proof}

\begin{theorem}
\label{lambda transposed}
The  characters of $L_{c}(n_0\lambda)$ and of
$L_{c}(n_0\lambda^t)$ are related (as rational functions in $q$) by the equation
$$
\Tr_{ L_{c}(n_0\lambda^t)}(\sigma
q^{\mathbf{h}})=(-1)^{|\lambda|-1}\Tr_{L_{c}(n_0\lambda)}(\sigma
q^{-\mathbf{h}}).
$$
\end{theorem}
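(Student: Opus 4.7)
The natural strategy is to reduce the statement about irreducibles to a statement about standard (Verma) modules via the Grothendieck-group decomposition provided by Theorem~\ref{charfor}, then combine it with the symmetry~(\ref{c symm}) of the coefficients $c^{\nu}_{\lambda,n_0}$. So the work splits into two parts: (a) establish a transposition/inversion identity at the level of $M_c(\nu)$, and (b) patch the $c$-coefficients together using (\ref{c symm}).

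\textbf{Step 1: the Verma analog.} I would first prove the identity
\[
\Tr_{M_c(\nu^t)}(\sigma q^{\mathbf{h}})
= (-1)^{n-1}\,\Tr_{M_c(\nu)}(\sigma q^{-\mathbf{h}}),
\]
where $n=|\nu|$. Starting from the explicit formula
\[
\Tr_{M_c(\nu)}(\sigma q^{\mathbf{h}})=\frac{q^{\frac{n-1}{2}-c\kappa(\nu)}\chi_\nu(\sigma)}{\det_\h(1-q\sigma)}
\]
used already in the proof of~(\ref{standard module}), I would substitute $q\mapsto q^{-1}$ and rewrite $\det_\h(1-q^{-1}\sigma)=(-1)^{n-1}q^{-(n-1)}\det_\h(\sigma)\det_\h(1-q\sigma^{-1})$. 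Now three classical facts take over: $\det_\h(\sigma)=\sgn(\sigma)$ because $\h\oplus\triv=\CC^n$ as $S_n$-modules; $\det_\h(1-q\sigma^{-1})=\det_\h(1-q\sigma)$ because every element of $S_n$ is conjugate to its inverse; $\chi_\nu(\sigma)\sgn(\sigma)=\chi_{\nu^t}(\sigma)$; and finally $\kappa(\nu^t)=-\kappa(\nu)$, visible from~(\ref{kappa}). Assembling these yields exactly the claimed Verma-level identity. This step is essentially bookkeeping and should be routine.

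\textbf{Step 2: passage to irreducibles and sign reconciliation.} By Theorem~\ref{charfor}, in the Grothendieck group
\[
[L_c(n_0\lambda)]=\sum_{|\nu|=n} c^{\nu}_{\lambda,n_0}\,[M_c(\nu)],\qquad
[L_c(n_0\lambda^t)]=\sum_{|\mu|=n} c^{\mu}_{\lambda^t,n_0}\,[M_c(\mu)],
\]
where $n=n_0 d$ and $d=|\lambda|$. Substituting $\mu=\nu^t$ in the second sum and invoking (\ref{c symm}) gives
\[
[L_c(n_0\lambda^t)]=(-1)^{(n_0-1)d}\sum_{|\nu|=n} c^{\nu}_{\lambda,n_0}\,[M_c(\nu^t)].
\]
Taking traces against $\sigma q^{\mathbf{h}}$ and applying Step~1 termwise replaces $\Tr_{M_c(\nu^t)}(\sigma q^{\mathbf{h}})$ by $(-1)^{n-1}\Tr_{M_c(\nu)}(\sigma q^{-\mathbf{h}})$, and then recombining with the first expansion produces
\[
\Tr_{L_c(n_0\lambda^t)}(\sigma q^{\mathbf{h}})
=(-1)^{(n_0-1)d+(n-1)}\,\Tr_{L_c(n_0\lambda)}(\sigma q^{-\mathbf{h}}).
\]
A quick mod-$2$ calculation with $n=n_0d$ gives $(n_0-1)d+(n-1)\equiv d-1\pmod 2$, so the prefactor is exactly $(-1)^{|\lambda|-1}$, completing the proof.

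\textbf{Anticipated obstacle.} There is no conceptual difficulty; the only thing that can go wrong is a sign error. The delicate points are (i) correctly transforming $\det_\h(1-q^{-1}\sigma)$ (one must remember that $\dim\h=n-1$, not $n$, which is where the $(-1)^{n-1}$ comes from), and (ii) combining the two signs $(-1)^{n-1}$ from the Verma identity with $(-1)^{(n_0-1)d}$ from (\ref{c symm}) and verifying that the parities collapse to $|\lambda|-1$. Once these are handled carefully, no further input beyond Theorem~\ref{charfor}, the identity (\ref{c symm}), and $\kappa(\nu^t)=-\kappa(\nu)$ is needed.
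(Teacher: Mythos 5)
Your proposal is correct and follows essentially the same route as the paper: expand both sides via Theorem \ref{charfor}, apply the coefficient symmetry (\ref{c symm}) together with $\kappa(\nu^t)=-\kappa(\nu)$ and $\chi_{\nu^t}=\sgn\cdot\chi_\nu$, and use the transformation of $\det_{\h}(1-q^{-1}\sigma)$; your sign bookkeeping $(n_0-1)d+n-1\equiv d-1\pmod 2$ matches the paper's conclusion. Isolating the Verma-level identity as a separate step is only an organizational difference from the paper's one-pass computation.
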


\begin{proof}
By Theorem \ref{charfor} we have
$$\Tr_{ L_{c}(n_0\lambda^t)}(\sigma q^{\mathbf{h}})=\sum_{\nu}c_{\lambda^t,n_0}^{\nu^t}q^{\frac{n-1}{2}-c\kappa(\nu^t)}\chi_{\nu^t}(\sigma)\det{_{\h}}(1-q\sigma)^{-1}.$$
By (\ref{c symm}) we can rewrite this as
$$\sum_{\nu}(-1)^{(n_0-1)|\lambda|}c_{\lambda,n_0}^{\nu}q^{\frac{n-1}{2}+c\kappa(\nu)}
(-1)^{\sgn \sigma}
\chi_{\nu}(\sigma)\det{_{\h}}(1-q\sigma)^{-1}.$$
On the other hand,
$$\det{_{\h}}(1-q^{-1}\sigma)=(-1)^{n-1-\sgn(\sigma)}q^{-(n-1)}\det{_{\h}}(1-q\sigma),$$
hence
$$\Tr_{ L_{c}(n_0\lambda)}(\sigma q^{-\mathbf{h}})=\sum_{\nu}c_{\lambda,n_0}^{\nu}q^{-\frac{n-1}{2}+c\kappa(\nu)}(-1)^{n-1-\sgn(\sigma)}q^{(n-1)}
\chi_{\nu}(\sigma)\det{_{\h}}(1-q\sigma)^{-1}.$$
\end{proof}

\begin{remark}
The statement of Theorem \ref{lambda transposed} should be understood as follows.
For nontrivial $\lambda$ the representation $L_{c}(n_0\lambda)$ is infinite-dimensional,
so its character is an infinite series in $q$. On the other hand, by Proposition \ref{lfree} this character is
a rational function in $q$ of the form
$$\ch L_{c}(n_0\lambda)=\frac{Q_{c}(n_0\lambda)(q)}{(1-q^2)\cdots (1-q^d)},$$
where $d=|\lambda|$. Theorem \ref{lambda transposed}
provides a functional equation for this rational function which is equivalent to
the functional equation for its numerator (which is a Laurent polynomial with nonnegative coefficients):
$$Q_{c}(n_0\lambda^t)(q)=q^{\frac{d(d+1)}{2}-1}Q_{c}(n_0\lambda)(q^{-1}).$$
\end{remark}

\subsection{Reduced colored invariants}

In knot theory one has a notion of the reduced HOMFLY invariant. By definition,
it is equal to the normalization of the unreduced $\lambda$-colored HOMFLY invariant
of a knot $K$ by the  unreduced $\lambda$-colored HOMFLY invariant of the unknot:
$$P_{\lambda}^{red}(K)=P_{\lambda}(K)/P_{\lambda}(T(1,0)).$$


Motivated by Proposition \ref{lfree}, we define {\em partially reduced} $\lambda$-colored HOMFLY invariants
by the formula
$$\widehat{P}_{\lambda}(K):=P_{\lambda}(K)\cdot \prod_{i=1}^{|\lambda|}(1-q^{i}).$$

\begin{theorem}\label{redcol}
The function $\widehat{P}_{\lambda}(K)$ has the following properties:
\begin{itemize}
\item[a)] $\widehat{P}_{\lambda}(K)$ is a polynomial in $a$ and $q$ for any knot $K$.
\item[b)] For a torus knot $T(m_0,n_0)$, all the coefficients of the polynomial $\widehat{P}_{\lambda}(T(m_0,n_0))(-a,q)$
(after renormalizing by a power of $-a$) are nonnegative.
\item[c)] The sum of the coefficients of $\widehat{P}_{\lambda}(T(m_0,n_0))(-a,q)$ equals to
\begin{equation}
\label{dim of fiber}
\widehat{P}_{\lambda}(T(m_0,n_0))(a=-1,q=1)=(\widehat{P}_{(1)}(T(m_0,n_0))(-1,1))^{d}\cdot
\dim \pi_{\lambda}=
(2\dim \mathcal{H}_{\frac{m_0}{n_0}})^d\cdot \dim \pi_\lambda,
\end{equation}
where $d=|\lambda|$ and $\pi_{\lambda}$ is the irreducible
representation of $S_d$ corresponding to $\lambda$.
\end{itemize}
\end{theorem}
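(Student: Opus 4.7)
The plan is to combine Corollary \ref{quasis} with the freeness statement of Proposition \ref{lfree}. Writing $L_{m/n}(n_0\lambda) \cong \CC[p_2,\ldots,p_d] \otimes V$ as graded $S_n$-modules, where $V$ is the finite-dimensional graded quotient by the regular sequence $p_2,\ldots,p_d$, set $u_k(q) := \dim_q \Hom_{S_n}(\wedge^k\h_n, V)$; these are polynomials in $q$ with nonnegative integer coefficients. Corollary \ref{quasis} then yields
\[
\widetilde{P}_\lambda(T(m_0,n_0))(a,q)\prod_{i=2}^d(1-q^i) = q^{m_0 n_0 \kappa(\lambda)} \sum_k (-a)^k u_k(q),
\]
and unpacking the definition (\ref{renorm}) of $\widetilde{P}_\lambda$ (via the identity $(q^{-1/2}-q^{1/2})^{-1}(1-q)=q^{1/2}$) gives the master formula
\[
\widehat{P}_\lambda(T(m_0,n_0))(a,q) = a^{-A} q^{B}(1-a)\sum_k (-a)^k u_k(q),
\]
with $A := \tfrac{d}{2}(m_0+n_0-m_0 n_0)$ and $B := \tfrac12 + m_0 n_0 \kappa(\lambda)$.

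Part (b) is then immediate: the substitution $a\mapsto -a$ gives $(-a)^{-A}q^B(1+a)\sum_k a^k u_k(q)$, which, after the promised renormalization by an appropriate monomial in $-a$, is visibly a polynomial in $a, q$ with nonnegative coefficients. For part (a), I would argue for arbitrary $K$ using the standard knot-theoretic fact that the reduced colored HOMFLY $P^{red}_\lambda(K) := P_\lambda(K)/P_\lambda(\text{unknot})$ is a Laurent polynomial in $a, q$. Combined with the unknot formula (\ref{plambda}) and the $q$-hook-length identity
\[
\frac{\prod_{i=1}^d(1-q^i)}{\prod_{(i,j)\in\lambda}(1-q^{h(i,j)})} = f^\lambda(q),
\]
where $f^\lambda(q)$ is the $q$-analog of the number of standard Young tableaux on $\lambda$ (a polynomial with nonnegative integer coefficients), this gives
\[
\widehat{P}_\lambda(K) = P^{red}_\lambda(K) \cdot (q/a)^{d/2}\prod_{(i,j)\in\lambda}(1-aq^{i-j}) \cdot f^\lambda(q),
\]
a polynomial in $a, q$ up to a monomial normalization.

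For part (c), specializing the master formula at $a=-1, q=1$ yields $\widehat{P}_\lambda(T(m_0,n_0))(-1,1) = \pm 2\,\dim\overline{\ee}_n V$, so the task reduces to computing $\dim \overline{\ee}_n V$. By Wilcox's Theorem \ref{wilco}, the fiber of $L_{m/n}(n_0\lambda)$ at a generic $b \in X_{d,n/d}(n)$, viewed as a $W_b = S_{n_0}^d$-module, is isomorphic to $Y^{\otimes d} \otimes \pi_\lambda$, where $Y = L_{m_0/n_0}(\CC)$. Since $V$ is the fiber of $L_{m/n}(n_0\lambda)$ over a generic point of $\Spec\CC[p_2,\ldots,p_d] = X_{d,n/d}(n)/S_n$, and the generic $S_n$-orbit in $X_{d,n/d}(n)$ has pointwise stabilizer $S_{n_0}^d$, one identifies
\[
V \cong \operatorname{Ind}_{S_{n_0}^d}^{S_n}\bigl(Y^{\otimes d} \otimes \pi_\lambda\bigr)
\]
as $S_n$-modules. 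Applying Frobenius reciprocity together with the $S_{n_0}^d$-decomposition $\h_n \cong \bigoplus_{i=1}^d \h_{n_0}^{(i)} \oplus W$, where $W \cong \CC^{d-1}$ carries the trivial action, gives $\wedge^\bullet\h_n|_{S_{n_0}^d} \cong \wedge^\bullet W \otimes \bigotimes_i \wedge^\bullet \h_{n_0}^{(i)}$, hence
\[
\dim \overline{\ee}_n V = \dim(\wedge^\bullet W) \cdot (\dim \mathcal{H}_{m_0/n_0})^d \cdot \dim \pi_\lambda = 2^{d-1}\cdot (\dim \mathcal{H}_{m_0/n_0})^d \cdot \dim \pi_\lambda,
\]
which combined with the prefactor $2$ produces exactly $(2\dim \mathcal{H}_{m_0/n_0})^d \dim \pi_\lambda$.

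The principal obstacle is the identification of $V$ as the induced $S_n$-module in part (c), which requires a careful passage from Wilcox's local-system description of minimally supported modules in $\mathcal{O}_c$ to the fiber of $L_{m/n}(n_0\lambda)$ over a generic point of $\Spec\CC[p_2,\ldots,p_d]$; the remaining steps amount to normalization bookkeeping and Frobenius reciprocity.
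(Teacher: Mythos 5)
Your proof is correct and follows essentially the same route as the paper: part (a) via the reduced invariant, the unknot formula (\ref{plambda}) and the $q$-hook-length identity; part (b) via Corollary \ref{quasis} combined with the freeness of Proposition \ref{lfree}; and part (c) via the generic-fiber description coming from Wilcox (Theorem \ref{wilco}), which is exactly the paper's use of $\Res_b$ at a generic point of the support, repackaged as an induced module plus Frobenius reciprocity. Your explicit tracking of the $(1-a)$ prefactor and of the $\wedge^\bullet\CC^{d-1}$ contribution (giving $2\cdot 2^{d-1}=2^d$) simply makes precise the bookkeeping that the paper's formula (\ref{phat from fiber}) leaves implicit.
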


\begin{remark}
In fact, our argument implies a stronger statement that (b):
the function $(1+a)^{-1}(1-q)\widehat{P}_{\lambda}(T(m_0,n_0))(-a,q)$
(after renormalizing by a power of $-a$) is a polynomial with nonnegative coefficients.
\end{remark}

\begin{proof}
a) We have
$$\widehat{P}_{\lambda}(K):=P_{\lambda}(K)\cdot \prod_{i=1}^{d}(1-q^{i})=P_{\lambda}^{red}(K)\cdot P_{\lambda}(T(1,0))\cdot \prod_{i=1}^{d}(1-q^{i}).$$
It is known that the function $P_{\lambda}^{red}(K)$ is a polynomial, and the product of the remaining factors is a polynomial too:
$$P_{\lambda}(T(1,0))\cdot \prod_{i=1}^{d}(1-q^{i})=\prod_{x\in \lambda}(1-aq^{c(x)})\cdot \frac{\prod_{i=1}^{d}(1-q^{i})}{\prod_{x\in \lambda}(1-q^{h(x)})}.$$ Indeed, e.g. by \cite{KP}
$$\dim_{q}(\pi_{\lambda})=\frac{\prod_{i=1}^{d}(1-q^{i})}{\prod_{x\in \lambda}(1-q^{h(x)})}$$
is a polynomial in $q$ with nonnegative coefficients.

b) By Proposition \ref{lfree} the module $L_{\frac{m}{n}}(n_0\lambda)$ is free over $\CC[p_2,\ldots,p_d]$:
$$L_{\frac{m}{n}}(n_0\lambda)=N(n_0,m_0,\lambda)\otimes \CC[p_2,\ldots,p_d],$$
where $N(n_0,m_0,\lambda)$ is a certain finite-dimensional graded $S_{n}$-module.
It remains to note that
\begin{equation}
\label{phat from fiber}
\widehat{P}_{\lambda}(T(m_0,n_0))=\sum_{j=0}^{n-1}(-a)^{j}\dim_{q}\Hom_{S_n}(\wedge^{j}\hh,N(n_0,m_0,\lambda)).
\end{equation}

c) By (\ref{phat from fiber}) the number $\widehat{P}_{\lambda}(T(m_0,n_0))(-1,1)$ equals
$$\widehat{P}_{\lambda}(T(m_0,n_0))(-1,1)=\dim\Hom_{S_n}(\wedge^{\bullet}\hh,N(n_0,m_0,\lambda)).$$
Let us use the restriction functor \cite{BE} to compute this dimension.
The stabilizer of a generic point $b$ in $X_{d,n/d}(n)$ is isomorphic to $(S_{n_0})^{d}$, and
$$
\Res_{(S_{n_0})^d}^{S_{n_0d}}L_{\frac{m_0}{n_0}}(n_0\lambda)\simeq
(L_{\frac{m_0}{n_0}}(\CC))^{\otimes d}\otimes E_{\lambda}
$$
for an $S_{d}$-module $E_{\lambda}$. It follows from \cite{Wi} that $E_{\lambda}=\pi_{\lambda}$.
\end{proof}

\begin{remark}
Equation (\ref{dim of fiber}) is similar to the "power growth" phenomenon
in the colored Khovanov-Rozansky homology, conjectured by S. Gukov and M. Stosic in \cite[Sec. 4.5.2]{GS}.
\end{remark}

\begin{example}
Consider the $(2,3)$ torus knot colored by $\lambda=(2,1)$. One
can check using Theorem \ref{homfly}
that (up to an overall monomial factor)
$$P_{\lambda}^{red}(T(2,3))=1 + 2 q^2 - q^3 + 2 q^4  + 2 q^6 - q^7 + 2 q^8 + q^{10}
-a (1 + 2 q^2 + 3 q^4 + 3 q^6 + 2 q^8 + q^{10}) + $$ $$
 +a^2 (q^2 + q^3 + q^4 + q^6 + q^7 + q^8) - a^3 q^5.$$
 We see that $P_{\lambda}^{red}(T(2,3))(-a,q)$ has two negative coefficients, while all coefficients in
 $$\widehat{P}_{\lambda}(T(2,3))=\frac{(1-a)(1-aq)(1-aq^{-1})(1-q)(1-q^2)(1-q^3)}{(1-q)^2(1-q^3)}P_{\lambda}^{red}(T(2,3))=$$
 $$(1-a)(1-aq)(1-aq^{-1})(1+q)P_{\lambda}^{red}(T(2,3))$$
 have the right sign, and
 $$\widehat{P}_{\lambda}(T(2,3))(-1,1)=8\cdot 54=(2\cdot 3)^{|\lambda|}\cdot 2.$$
 Indeed, $\dim \pi_{\lambda}=2$ and
$\widehat{P}_{(1)}(T(2,3))(a,q)=(1-a)(1+q^2-aq)$,
so $\widehat{P}_{(1)}(T(2,3))(-1,1)=2\cdot 3$.
\end{example}

\section{Characters of equivariant $D$-modules}

\subsection{Proof of Theorem \ref{chardmod}}

Let us use Theorem \ref{charfor} to prove Theorem \ref{chardmod}.
Let $M$ be an $SL_m$-equivariant $D$-module  with central character $\theta_{s}$, $GCD(m,s)=d$,
labeled by the Young diagram $d\lambda$. Let $M^{(n)}$ be the isotypic part of $M$
for the representations $V_\mu$ of $SL_m$
which occur in $V^{\otimes n}$.



Define the automorphism
$\varphi_{\frac{1}{1-q}}$
of the ring of symmetric functions
as follows:
\begin{equation}
\label{defphi}
\varphi_{\frac{1}{1-q}}(p_{k})=\frac{p_{k}}{1-q^{k}}.
\end{equation}
Note that
$$
\varphi_{\frac{1}{1-q}}(f)(x_1,...,x_m,0,0,...)=f(x_1,...,x_m,qx_1,...,qx_m,q^2x_1,...).
$$

By \cite[Theorem 9.8]{CEE},
$({\mathcal{F}}(M)\otimes (\CC^{m})^{n})^{\sl_m}\simeq L_{\frac{m}{n}}(n_0\lambda),$
where ${\mathcal{F}}$ denotes the Fourier transform.
Therefore, since the Fourier transform changes $H$ to $-H$, by the
Schur-Weyl duality, we have ${\rm
  Ch}_{\sl_m}(M^{(n)})=\ch_{S_n}L_{\frac{m}{n}}(n_0\lambda)$
(where the right hand side is the character of the
${\mathfrak{sl}}_m$-module, and the left hand side is the
Frobenius character of the corresponding $S_n$-module).

By the proof of Corollary \ref{quasis}, we have
$$\ch_{S_{n}} L_{\frac{m}{n}}(n_0\lambda)=\sum_{\nu}c_{\lambda,n_0}^{\nu}q^{\frac{n-1}{2}-\frac{m}{n}\kappa(\nu)}\ch_{S_n} M_{\frac{m}{n}}(\nu).$$
Since $M_{\frac{m}{n}}(\nu)=\pi_\nu\otimes \CC[\h_{n}]$, we have
$\ch_{S_n}
M_{\frac{m}{n}}(\nu)=\varphi_{\frac{1}{1-q}}(s_{\nu}).$
This implies Theorem \ref{chardmod}.

\subsection{Character formulas}

Recall that by \cite[Corollary 8.10]{CEE} the character of the $D$-module for $SL_m$ corresponding to the partition $(1^m)$ is given by the equation

\begin{equation}
\label{module1m}
M_{(1^m)}=q^{\frac{m^2-1}{2}}\sum_{\mu} \frac{V_{\mu}P_{\mu}(q)}{(1-q^2)\cdots (1-q^m)},
\end{equation}
where $V_{\mu}$ is the irreducible representation of $SL_m$ labelled by $\mu$ (so that $\mu$ is a partition with at most $m$ parts), and $P_{\mu}(q)$ is the $q$-analogue of the multiplicity of zero weight in $V_{\mu}$ (cf. \cite{Kostant},\cite{Lusztig},\cite{Gupta}).
By the Schur-Weyl duality one can get the character of the
corresponding representation of the Cherednik algebra:
$$\ch L_{\frac{1}{n}}(n (1^m))=q^{\frac{m^2-1}{2}}\sum_{|\mu|=nm}\frac{s_{\mu}P_{\mu}(q)}{(1-q^2)\cdots (1-q^m)},$$
where $s_{\mu}$ denotes the Schur polynomial labeled by $\mu$.

\begin{remark}
The polynomials $P_{\mu}(q)$ are a special case of the
Kostka-Foulkes polynomials. Indeed, the zero weight for $SL_m$ can be represented by the Young diagram $n(1^m)=(n^m)$, and
$$P_{\mu}(q)=K_{\mu,(n^m)}(q),\  \sum_{|\mu|=nm}s_{\mu}P_{\mu}(q)=Q'_{(n^m)}.$$
Here $Q'_{(n^m)}$ is a transformation of the corresponding Hall-Littlewood polynomial $Q_{(n^m)}$
 (\cite{macd}, \cite{DLT}):
$$Q'_{(n^m)}=\varphi_{\frac{1}{1-q}}(Q_{(n^m)}),$$
where the map $\varphi_{\frac{1}{1-q}}$ is defined by $(\ref{defphi})$.
Therefore $$\ch L_{\frac{1}{n}}(n (1^m))=\frac{q^{\frac{m^2-1}{2}}Q'_{(n^m)}}{(1-q^2)\cdots (1-q^m)}.$$
This agrees with the observation in \cite{MMS} that the ``extended HOMFLY polynomial'' of
the $(1,n)$ torus knot colored with the diagram $(1^m)$ is given by the Hall-Littlewood polynomial
$Q_{(n^m)}$.
\end{remark}

We can use Theorem \ref{lambda transposed} to get similar answers for $\lambda=(m)$:

\begin{equation}
\label{modulem}
M_{(m)}=q^{\frac{m-1}{2}}\sum_{\mu} \frac{V_{\mu}P_{\mu}(q^{-1})}{(1-q^2)\cdots (1-q^m)},
\end{equation}

$$\ch L_{\frac{1}{n}}(n (m))=q^{\frac{m-1}{2}}\sum_{|\mu|=nm} \frac{s_{\mu}P_{\mu}(q^{-1})}{(1-q^2)\cdots (1-q^m)}.$$

Finally, similarly to \cite[Theorem 9.18]{CEE} (and using Lemma
\ref{Lem:koz_groth} below) one gets the
following equation in the Grothendieck group of representations of $SL_m$:

\begin{equation}
\label{dmodule euchar}
\sum_{i=0}^{m-1}(-1)^{i}[M_{(m-i,1^{i})}]=\sum_{\mu}\frac{q^{-d(\mu)/2}}{[m]_{q}}[V_{\mu}]\dim_{q}V_{\mu},
\end{equation}
where $[m]_{q}=(1-q^{m})/(1-q)$, $\dim_q(V_\mu)$ is the
(non-symmetrized) $q$-dimension of $V_\mu$, and  $d(\mu)=\deg (\dim_{q}V_{\mu})-m+1.$

\subsection{Equivariant $D$-modules for $SL_2$}

We have $\mu=(l_1,l_2)$ with $l_1+l_2=2n$, and $V_{\mu}\simeq V_{l_1-l_2}$.
Since $l_1$ and $l_2$ have same parity, zero weight is present in $V_{\mu}$ with multiplicity $1$ and
$P_{\mu}(q)=q^{\frac{l_1-l_2}{2}}.$ Therefore by (\ref{module1m}) and (\ref{modulem}) one gets

$$M_{(2)}=\sum_{j=0}^{\infty} V_{2j}\frac{q^{-j+1/2}}{1-q^2},\ M_{(1,1)}=\sum_{j=0}^{\infty} V_{2j}\frac{q^{j+3/2}}{1-q^2},$$

$$\ch L_{\frac{1}{n}}(n (2))=\sum_{l_1+l_2=2n} s_{(l_1,l_2)}\frac{q^{(l_2-l_1+1)/2}}{1-q^2},\ \ch L_{\frac{1}{n}}(n (1,1))=\sum_{l_1+l_2=2n} s_{(l_1,l_2)}\frac{q^{(l_1-l_2+3)/2}}{1-q^2}.$$

Note that

$$[M_{(2)}-M_{(1,1)}]=\sum_{j=0}^{\infty} [V_{2j}]\frac{q^{-j+1/2}-q^{j+3/2}}{1-q^2}=$$ $$\sum_{j=0}^{\infty} \frac{q^{-j+1/2}(1-q^{2j+1})}{1-q^2}[V_{2j}]=\sum_{j=0}^{\infty}\frac{q^{-j+1/2}}{1+q}[V_{2j}]\dim_{q}V_{2j},$$
what agrees with (\ref{dmodule euchar}).

\subsection{Equivariant $D$-modules for $SL_3$}

We have $\mu=(l_1,l_2,l_3)$ with $l_1+l_2+l_3=3n$, and $V_{\mu}\simeq V_{(l_1-l_3,l_2-l_3)}=V_{(\mu_1,\mu_2)}$.
The $q$-dimension of $V_{\mu}$ equals
$$\dim_{q}V_{\mu}=\frac{[\mu_1+2]_{q}[\mu_1-\mu_2+1]_{q}[\mu_2+1]_{q}}{[2]_{q}}.$$
Let $x=\min(\mu_1-\mu_2+1,\mu_2+1),$ then one can check that
$P_{\mu}(q)=q^{\mu_1-x+1}[x]_{q}.$ Therefore by (\ref{module1m}) and (\ref{modulem}) we have

\begin{equation}
\label{dmodules 3 and 111}
M_{(3)}=\sum_{\mu} V_{\mu}\frac{q^{-\mu_1+1}[x]_{q}}{(1-q^2)(1-q^3)},\
M_{(1,1,1)}=\sum_{\mu} V_{\mu}\frac{q^{\mu_1-x+5}[x]_{q}}{(1-q^2)(1-q^3)}.
\end{equation}

To compute the character of $M_{(2,1)}$, note that
$$\dim_{q}V_{\mu}=\frac{[\mu_1+2]_{q}[x]_{q}[\mu_1+2-x]_{q}}{[2]_{q}},$$
hence by (\ref{dmodule euchar}) one gets
$$[M_{(3)}-M_{(2,1)}+M_{(1,1,1)}]=\sum_{\mu}\frac{q^{-\mu_1+1}}{[3]_{q}}[V_{\mu}]\dim_{q}V_{\mu}=$$
$$\sum_{\mu}\frac{q^{-\mu_1+1}[x]_{q}}{(1-q^2)(1-q^3)}(1-q^{\mu_1+2})(1-q^{\mu_1+2-x})[V_{\mu}]=$$
$$\sum_{\mu}\frac{[x]_{q}}{(1-q^2)(1-q^3)}(q^{-\mu_1+1}-q^{3}-q^{3-x}+q^{\mu_1-x+5})[V_{\mu}],$$
therefore
\begin{equation}
\label{dmodule 21}
M_{(2,1)}=\sum_{\mu} V_{\mu}\frac{(q^{3}+q^{3-x})[x]_{q}}{(1-q^2)(1-q^3)}=\sum_{\mu} V_{\mu}\frac{q^{3-x}[2x]_{q}}{(1-q^2)(1-q^3)}.
\end{equation}

The character formulas for the corresponding representations of Cherednik algebras immediately follow
from (\ref{dmodules 3 and 111}) and (\ref{dmodule 21}).

\subsection{The ``small'' part of the equivariant D-modules
with trivial central character}

Consider the special case $s=0$, so that $d=m$.
Let $\lambda$ be a partition of $m$, and consider the equivariant
D-module $M_\lambda$ attached to the nilpotent orbit
corresponding to $\lambda$. Also, let $\mu$ be another partition of
$m$, and $V_\mu$ be the corresponding ``small'' representation
of $SL_m$ (in the sense of A. Broer, \cite{Br}), i.e., one occurring in $(\CC^m)^{\otimes m}$.
Consider the isotypic component of $V_\mu$
in $M_\lambda$, and let us compute its character.
\footnote{The dual representations to these $V_\mu$ are also
small representations in the sense of Broer, but we don't have to
consider them, since each $M_\lambda$ is clearly self-dual as a
graded $SL_m$-module (being stable under outer automorphisms of
$SL_m$), so the characters of the multiplicity spaces for $V_\mu$
and $V_\mu^*$ are the same.}
We may take $n=m$, so $n_0=1$, the Verma modules are irreducible,
and thus the formula of Theorem \ref{chardmod}
is greatly simplified:
$$
{\rm
  Ch}_{M_\lambda^{(m)}}(q,g)=(1-q)q^{\frac{m-1}{2}-\kappa(\lambda)}s_\lambda(x_1,...,x_m,qx_1,...,qx_m,q^2x_1,...).
$$
Thus, to compute the character of the multiplicity space for
$V_\mu$, we need to find the coefficient of $s_\mu(x_1,...,x_m)$
in the decomposition of
$s_\lambda(x_1,...,x_m,qx_1,...,qx_m,q^2x_1,...)$
with respect to Schur functions.

Let $\pi_\lambda$ be the representation of $S_m$ corresponding to
$\lambda$, and $E_{\lambda,\mu}(q)$ be the character of the multiplicity
space of $\pi_\lambda$ in $\pi_\mu\otimes S\CC^m$ (with
grading defined by $\deg(\C^m)=1$). It is easy to see that
the desired coefficient equals $E_{\lambda,\mu}(q)$. Thus, we get
$$
{\rm
  Ch}_{\Hom_{SL_m}(V_\mu,M_\lambda)}(q)=(1-q)q^{\frac{m-1}{2}-\kappa(\lambda)}E_{\lambda,\mu}(q).
$$

In particular, consider the special case $V_\mu=\CC$ (i.e., $\mu=(1^m)$), which gives
the character of the invariants $M_\lambda^{SL_m}$.
We have that $E_{\lambda,(1^m)}(q)$ is the character of the
multiplicity space of $\pi_{\lambda^t}$ in $S\CC^m$,
where $\lambda^t$ is the dual partition to $\lambda$. This
character is well known to equal a power of $q$ times the
reciprocal of the hook polynomial (\cite{macd}):
$$
E_{\lambda,(1^m)}(q)=q^{\sum (i-1)\lambda_i^t}
\prod_{x\in    \lambda}(1-q^{h(x)})^{-1}.
$$
This implies that
$$
{\rm
  Ch}_{M_\lambda^{SL_m}}(q)=q^{\frac{m-1}{2}+\sum_{i\ge 1}
  (i-1)\lambda_i}(1-q)\prod_{x\in    \lambda}(1-q^{h(x)})^{-1}.
$$
Thus, we see that
$$
\sum_\lambda {\rm  Ch}_{M_\lambda^{SL_m}}(q)\pi_\lambda
$$
is $q^{\frac{m-1}{2}}\det_{\h_m} (1-q\sigma)^{-1} $, where
$\det{_{\h_m}}(1-q\sigma)^{-1}$ is
the graded character of $S\h_m$ as an $S_m$-module.

\section{The Koszul-BGG complex for rational Cherednik algebras}

\subsection{The definition of the Koszul-BGG complex}
We keep the notation of Section 2.
Let $V\subset S\h^*=M_c(\CC)$ be a representation of $W$
where $\rank(1-s)\le 1$ for all $s\in S$ (this includes, for
instance, the Galois twists of the reflection representation for
complex reflection groups).
Assume that $V$ is singular, i.e., the Dunkl operators act on $V$ by zero.
We will attach to $V$
a complex of $H_c(W,\h)$-modules from category ${\mathcal O}_c$, called the Koszul-BGG complex.

For $s\in S$ let $0\ne \beta_s^*\in V^*$ be such that $s$ acts trivially on $\Ker \beta_s^*$, and
let $s\beta_s^*=\mu_s\beta_s^*$. Let $\beta_s\in V$ be such that
$s\beta_s=\mu_s^{-1}\beta_s$ and $(\beta_s,\beta_s^*)=1$.

We have the Koszul complex $K^\bullet(V)$, where
$K^i(V)=S\h^*\otimes \wedge^iV=M_c(\wedge^iV)$.

\begin{proposition}
\label{Dunkl vs Koszul}
The complex $K^\bullet(V)$:
$$
M_c(\CC)\leftarrow M_c(V)\leftarrow M_c(\wedge^2V)\leftarrow...
$$
is a complex of $H_c(W,\h)$-modules.
\end{proposition}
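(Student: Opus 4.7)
The plan is to recast the claim as one about singular vectors and then reduce it to the square-zero identity for contraction with a covector.

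First, I would invoke the standard equivalence that an $H_c(W,\h)$-module homomorphism $M_c(\sigma)\to M_c(\tau)$ is the same as a $W$-equivariant linear map from $\sigma$ to the Dunkl-singular vectors $M_c(\tau)^{\mathrm{sing}}$. Our Koszul differential is manifestly $S\h^*$-linear and $W$-equivariant, so the task reduces to showing that for every $v_1,\ldots,v_{i+1}\in V$ the element
\[
\eta \;:=\; \sum_{k=1}^{i+1} (-1)^{k+1}\, v_k \otimes \omega_k, \qquad \omega_k := v_1\wedge\cdots\wedge \widehat{v_k}\wedge\cdots\wedge v_{i+1},
\]
is annihilated by every Dunkl operator $D_y$, $y\in\h$, on $M_c(\wedge^i V)$.

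Next, I would use that each $v_k\in V$ is singular in $M_c(\CC)$: the vanishing $D_y(v_k\otimes 1)=0$ translates into the polynomial identity $\partial_y v_k = \sum_s \tfrac{\tilde c_s(\alpha_s,y)}{\alpha_s}(v_k - sv_k)$. Substituting this into $D_y(v_k\otimes \omega_k)=\partial_y v_k\otimes \omega_k - \sum_s \tfrac{\tilde c_s(\alpha_s,y)}{\alpha_s}(v_k-sv_k)\otimes s\omega_k$ and combining terms yields
\[
D_y(v_k\otimes \omega_k) \;=\; \sum_{s\in S} \tfrac{\tilde c_s(\alpha_s,y)}{\alpha_s}\,(v_k - sv_k)\otimes (\omega_k - s\omega_k).
\]
So the problem reduces to verifying, for each reflection $s$, the vanishing of $\Sigma_s := \sum_k (-1)^{k+1}(v_k-sv_k)\otimes(\omega_k-s\omega_k)$.

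Third, I would use the rank-one assumption $\mathrm{rank}(1-s)|_V=1$ to split $V = \CC\beta_s \oplus \ker\beta_s^*$ and write $v_k = a_k\beta_s + u_k$ with $a_k=\beta_s^*(v_k)$ and $u_k$ fixed by $s$. Then $v_k - sv_k = (1-\mu_s^{-1})a_k\beta_s$, and a short exterior-algebra check (using $\beta_s\wedge\beta_s=0$) yields the analogous identity $\omega - s\omega = (1-\mu_s^{-1})\beta_s\wedge \iota_{\beta_s^*}(\omega)$ for any $\omega\in\wedge^\bullet V$. Substituting these two identities,
\[
\Sigma_s \;=\; (1-\mu_s^{-1})^2\,\beta_s\otimes \beta_s\wedge \sum_{k=1}^{i+1}(-1)^{k+1} a_k\, \iota_{\beta_s^*}(\omega_k),
\]
and the remaining sum is precisely $\iota_{\beta_s^*}^2(v_1\wedge\cdots\wedge v_{i+1})$, which vanishes because contraction with a fixed covector squares to zero.

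The only real obstacle is bookkeeping: keeping signs straight in the decomposition $\omega-s\omega = (1-\mu_s^{-1})\beta_s\wedge\iota_{\beta_s^*}(\omega)$ and recognizing the residual expression as $\iota_{\beta_s^*}^2$ applied to the original wedge product. Conceptually the picture is clean: the singularity of $V$ converts the $\partial_y$-part of each Dunkl operator into a reflection tail, the rank-one condition forces that tail into the image of $\beta_s\wedge$ on a form that is itself already a contraction, and one more application of the same contraction kills everything.
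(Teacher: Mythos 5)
Your proof is correct, and it differs from the paper's in two respects worth noting. First, the paper does not pass to singular vectors: it proves directly that the Koszul differential commutes with the Dunkl operators on all of $M_c(\wedge^{i+1}V)$, computing $[D_y,d](f\otimes u)$ for general $f\in S\h^*$ and using the identity $\sum_j(1-s)(v_j)\otimes v_j^*=(1-\mu_s)\beta_s\otimes\beta_s^*$ to localize the problem at each reflection; your route instead invokes Frobenius reciprocity to reduce to checking that $d(1\otimes\wedge^{i+1}V)$ is Dunkl-annihilated, which lets you use the singularity of each $v_k$ from the start (one should just note explicitly that the $H_c$-map extending the restriction of $d$ to the lowest-weight space coincides with $d$, by $S\h^*$-linearity and freeness of the Verma module over $S\h^*$ — immediate, but it is the hinge of the reduction). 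Second, the key vanishing is handled differently: the paper's Lemma \ref{vanis} states $(1-s)\iota_{\beta_s^*}u=0$ for $u\in\wedge^m V$ and is proved by an alternation/symmetry argument, whereas you prove the splitting identity $\omega-s\omega=(1-\mu_s^{-1})\beta_s\wedge\iota_{\beta_s^*}\omega$ and then finish with $\iota_{\beta_s^*}^2=0$; in fact your identity yields the paper's lemma in one line, since $(1-s)\iota_{\beta_s^*}u=(1-\mu_s^{-1})\beta_s\wedge\iota_{\beta_s^*}^2u=0$. What the paper's version buys is a statement valid verbatim on the whole module without choosing the decomposition $V=\CC\beta_s\oplus\Ker\beta_s^*$; what yours buys is a shorter, more conceptual closing step and a cleaner use of the hypothesis that $V$ is singular. (The only cosmetic caveat: the paper assumes $\rank(1-s)|_V\le 1$, so for reflections acting trivially on $V$ your splitting is unavailable, but those terms vanish identically and contribute nothing.)
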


\begin{proof} By definition, the Koszul complex is a complex
of $\CC W\ltimes S\h^*$-modules. So we need to show that the
Koszul differential $d$ commutes with the Dunkl operators.
Let $f\in S\h^*$, $u\in \wedge^mV$. We have
$$
d(f\otimes u)=\sum_j v_jf\otimes \iota_{v_j^*}u,
$$
where $\lbrace{v_j\rbrace}$ is a basis of $V$ and $\lbrace{ v_j^*\rbrace}$ the dual basis of $V^*$.
Thus,
$$
[D_y,d](f\otimes v)=\sum_j \partial_y(v_j)f\otimes \iota_{v_j^*}u-
\sum_{s\in S}\frac{\tilde c_s(\alpha_s,y)}{\alpha_s}(1-s)(v_j)sf\otimes s\iota_{v_j^*}u.
$$
Since $D_yv_j=0$, this equals to
$$
\sum_{s\in S}\frac{\tilde c_s(\alpha_s,y)}{\alpha_s}(1-s)(v_j)sf\otimes (1-s)\iota_{v_j^*}u,
$$
so our job is to show that the expression
$$
T(u):=\sum_{s\in S}\frac{\tilde c_s(\alpha_s,y)}{\alpha_s}(1-s)(v_j)s\otimes (1-s)\iota_{v_j^*}u.
$$
vanishes. To this end, we note that
$$
\sum_j (1-s)(v_j)\otimes v_j^*=
\sum_j (1-\mu_s)(\beta_s^*,v_j)\beta_s\otimes v_j^*=
(1-\mu_s)\beta_s\otimes \beta_s^*.
$$
This implies that
$$
T(u)=\sum_{s\in S}\tilde c_s(1-\mu_s)(\alpha_s,y)\frac{\beta_s}{\alpha_s}s\otimes (1-s)\iota_{\beta_s^*}u.
$$
So the result follows from the following lemma.

\begin{lemma}\label{vanis}
For any $u\in \wedge^mV$ and $s\in S$, $(1-s)\iota_{\beta_s^*}u=0$.
\end{lemma}

\begin{proof} Let $u=u_1\wedge...\wedge u_m$. If $m=1$, there is nothing to prove, so assume that $m\ge 2$. Then
$$
(1-s)\iota_{\beta_s^*}u=
$$
$$
\Alt_m\sum_{j=1}^{m-1}(u_m,\beta_s^*)
u_1\otimes...\otimes u_{j-1}\otimes (1-s)(u_j)\otimes
su_{j+1}\otimes...\otimes su_{m-1}=
$$
$$
\Alt_m\sum_{j=1}^{m-1}(u_m,\beta_s^*)
u_1\otimes...\otimes u_{j-1}\otimes (1-\mu_s)(u_j,\beta_s^*)\beta_s\otimes
su_{j+1}\otimes...\otimes su_{m-1}.
$$
This is zero, since we have skew-symmetrization with respect to
$j$ and $m$, which now occur symmetrically.
\end{proof}

The proposition is proved.
\end{proof}

In the special case when $W$ is a real irreducible
reflection group, $c=m/h$, where $h$ is the Coxeter number of $W$, $m\in \ZZ_{\ge 1}$, $GCD(m,h)=1$, and
$V=\h$ is the reflection representation, this complex was studied in \cite{BEG} and \cite{Go}. In this case,
this complex is actually a resolution of a finite dimensional $H_c(W,\h)$-module of dimension $m^r$, where $r$ is
the rank of $W$. Later it was studied in \cite{CE} in the case when the representation $S\h^*/(V)$
is finite dimensional (it follows from the fact that the expression in
Theorem 2.3(iii) in \cite{CE} is a polynomial that in this case
$S$ acts by reflections in $V$). This resolution is analogous
to the BGG resolution in Lie theory, so it was called the BGG resolution.
Thus we will call the complex $K^\bullet$ the {\bf Koszul-BGG complex}.

\subsection{The Koszul-BGG complex for $W=S_n$}
It follows from the paper \cite{FS} that
if $W$ is an irreducible real reflection group, and $c=m/h$, where $m=d-1+\ell h$, $\ell\in \ZZ_{\ge 0}$,
and $d$ is a degree of $W$, then there is a singular copy of $V=\h$ in degree $m$ of $S\h^*$, so
the complex $K^\bullet(V)$ is nontrivial.
A similar result for cyclotomic wreath product groups
$G(l,r,n)$ follows from the paper \cite{DO}
(see  also \cite{CE} and \cite{str} for the case $r=1$).

In particular, if $W=S_n$, and $\h$ is the reflection representation, the Koszul-BGG complex has a nonzero differential
for any $c=\frac{m}{n}$, where $m$ is not divisible by $n$. In
this case, it was shown by Dunkl, \cite{dunkl2} (see also \cite{CE}) that
the singular representation $V$ is spanned by partial derivatives of the polynomial
\begin{equation}
\label{potential for singular polys}
F_{m,n}(x_1,...,x_n):=Res_{u=\infty}((u-x_1)...(u-x_n))^{\frac{m}{n}}du.
\end{equation}
Note that this works also if $m$ is divisible by $n$, except that in this case
$F_{m,n}=0$, so the differential in the corresponding complex is zero.
Thus, we have defined a complex for every $n\ge 1$ and $m\ge 1$.
Let us denote this complex by $K_{m,n}^\bullet$.

Now let $m,n$ be positive integers, and $d=GCD(m,n)$.
Write $m=m_0d$, $n=n_0d$. Our main result about the Koszul-BGG complex
for $S_n$ is the following theorem.

\begin{theorem}\label{homolkoz}
(i) The homology $H_i(K_{m,n}^\bullet)$ is nonzero if and only
if $0\le i\le d-1$.

(ii) If $0\le i\le d-1$ then $H_i(K_{m,n}^\bullet)$ is the irreducible representation
$L_{\frac{m}{n}}(\lambda_i)$ of the rational Cherednik algebra $H_c(S_n)$,
where
$
\lambda_i=n_0(d-i,1^i).
$
\end{theorem}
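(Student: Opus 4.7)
My plan is to prove Theorem \ref{homolkoz} in three stages: verify the identity at the level of Grothendieck groups, show that each $H_i(K^\bullet)$ has minimal support, and then pin down the individual homology modules.

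\textbf{Step 1 (Euler characteristic in $K_0(\mathcal{O}_c)$).} The Euler characteristic of $K_{m,n}^\bullet$ is
\[
\chi(K_{m,n}^\bullet) = \sum_{i=0}^{n-1}(-1)^i[M_c(\wedge^i\h)] = \sum_{i=0}^{n-1}(-1)^i[M_c(\pi_{(n-i,1^i)})],
\]
whose Frobenius characteristic is $\sum_{i=0}^{n-1}(-1)^i s_{(n-i,1^i)} = p_n$. Expanding $\sum_{i=0}^{d-1}(-1)^i [L_c(n_0(d-i,1^i))]$ in the standard basis via Theorem \ref{charfor}, the coefficient of $[M_c(\pi_\nu)]$ equals the coefficient of $s_\nu$ in $\Psi_{n_0}\bigl(\sum_{i=0}^{d-1}(-1)^i s_{(d-i,1^i)}\bigr)$. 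Using the classical hook identity $\sum_{i=0}^{d-1}(-1)^i s_{(d-i,1^i)} = p_d$ (which encodes the fact that $\chi^{(d-i,1^i)}$ takes value $(-1)^i$ on a $d$-cycle and vanishes elsewhere) together with $\Psi_{n_0}(p_d)=p_{dn_0}=p_n$, I obtain a matching of coefficients and hence the desired identity in $K_0(\mathcal{O}_c)$.

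\textbf{Step 2 (Minimality of support).} I next show that $\supp H_i(K^\bullet)\subseteq X_{d,n_0}(n)$ for every $i$. Since the Bezrukavnikov--Etingof restriction functors $\Res_b$ from \cite{BE} are exact, this reduces to proving that $\Res_b(K^\bullet)$ is acyclic for every $b$ in a stratum $X_{k,n_0}(n)\setminus X_{k+1,n_0}(n)$ with $k<d$. At such a $b$ the stabilizer is $W_b=S_{n_0}^k$, and by the explicit description $(\ref{potential for singular polys})$ of $V$ via partial derivatives of $F_{m,n}$, the restriction of $V$ to the transverse slice at $b$ generates (together with the $\bar\h$-like directions that survive) an ideal of finite colength on that slice; consequently the restricted Koszul complex is the Koszul resolution of a module supported at a single point, hence acyclic in the category $\mathcal{O}_c(W_b,\h)$.

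\textbf{Step 3 (Identifying each $H_i$).} Combining Step 2 with Theorem \ref{wilco}(ii), each $H_i(K^\bullet)$ lies in the semisimple category $\mathcal{O}_c^{ms}$, so $H_i\cong\bigoplus_{\mu\vdash d} a_{i,\mu}\,L_c(n_0\mu)$ with $a_{i,\mu}\in\mathbb{Z}_{\ge 0}$, and Step 1 forces $\sum_i(-1)^i a_{i,\mu}=(-1)^j$ if $\mu=(d-j,1^j)$ for some $0\le j\le d-1$ and zero otherwise. To upgrade this alternating relation to individual multiplicities $a_{i,\mu}=\delta_{i,j}\delta_{\mu,(d-j,1^j)}$, I would invoke two further ingredients: (a) the vanishing $H_i=0$ for $i\ge d$, obtained from a restriction argument at a generic point of $X_{d,n_0}(n)^\circ$, where the effective singular representation reduces to the $(d{-}1)$-dimensional reflection representation $\bar\h$ of $S_d$ (so $\wedge^i$ of it vanishes for $i\ge d$); and (b) a grading argument comparing lowest $\mathbf{h}$-weights of the modules $L_c(n_0(d-i,1^i))$ across $i$ to rule out cross-cancellation between different degrees and non-hook partitions.

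\textbf{Main obstacle.} The principal technical difficulty is Step 2: the Bezrukavnikov--Etingof restriction interacts subtly with the Koszul differential, so verifying acyclicity requires a careful local computation of how the singular polynomials $v_i=\partial_i F_{m,n}$ behave at a generic point of $X_{k,n_0}(n)$ for $k<d$, and identifying the restricted ideal with one that yields a regular sequence on the transverse directions. A secondary obstacle is the lifting in Step 3, since the Euler-characteristic identity alone does not determine each $H_i$ individually and one must either use the grading/vanishing inputs above or directly exhibit non-zero homomorphisms $L_c(n_0(d-i,1^i))\to H_i(K^\bullet)$.
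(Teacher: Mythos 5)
Your Step 1 is exactly the paper's Lemma \ref{Lem:koz_groth}, and the conclusion you want in Step 2 (every $H_i(K^\bullet_{m,n})$ is supported on $X_{d,n/d}(n)$, hence has minimal support and lies in the semisimple subcategory of Theorem \ref{wilco}) is correct — but your route to it is both unnecessary and unsound as written. You do not need restriction functors here: Koszul homology is annihilated by the ideal generated by the entries of the differential (each $f_i$ acts null-homotopically via exterior multiplication), so $\supp H_i\subseteq Z(V)=\supp H_0=X_{d,n/d}(n)$ for all $i$, which is how the paper disposes of this in one line. By contrast, your stated inference ``the restricted ideal has finite colength on the slice, hence the restricted Koszul complex is the Koszul resolution of a module supported at a point, hence acyclic'' is not valid: finite colength does not make the generators a regular sequence, and being supported at a point does not force the higher homology to vanish.

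The genuine gap is in Step 3. As you concede, the Euler characteristic only controls alternating sums $\sum_i(-1)^i a_{i,\mu}$, and the extra input you propose — comparison of lowest $\mathbf{h}$-weights (equivalently the highest-weight ordering) — does not suffice. Such comparisons do show that $L_{\frac{m}{n}}(n_0(d-k,1^k))$ cannot occur in $H_i$ for $i>k$ (the label $(n-i,1^i)$ of the ambient Verma has more rows than $n_0(d-k,1^k)$, so dominance fails), but they do not prevent a non-hook $L_{\frac{m}{n}}(n_0\mu)$ from occurring in two consecutive homology degrees with cancelling signs, nor do they bound multiplicities: already for $d=4$, $\mu=(2,2)$, neither dominance nor the integrality of $c(\kappa((n-1,1))-\kappa(n_0\mu))$ excludes $L_{\frac{m}{n}}(n_0\mu)$ as a composition factor of $M_{\frac{m}{n}}((n-1,1))$. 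What closes the argument in the paper's second proof is a quantitative statement you stop just short of: the completion of $K^\bullet_{m,n}$ at a generic point of the minimal support splits as an acyclic Koszul complex tensored with an exterior algebra on $d-1$ generators carrying zero differential, so the generic rank of $H_i$ equals $\binom{d-1}{i}$ exactly (Lemma \ref{Lem:koz_mult}; this uses generic reducedness, available at $c=1/n_0$, and is transported to general $c$ by highest weight equivalences together with the uniqueness of the Koszul-BGG differential, Proposition \ref{uni}). Combined with Wilcox's generic ranks $\dim\pi_\mu$, the induction on the homological degree, and the dominance argument above, this forces $a_{i,\mu}=\delta_{\mu,(d-i,1^i)}$. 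Your ingredient (a) is precisely this local computation restricted to the range $i\ge d$; if you upgrade it to the full multiplicity count $\binom{d-1}{i}$, your Step 3 closes and you recover the paper's second proof. (The paper's first proof instead replaces this by restriction to points with stabilizer $S_{n_0k}\times S_{n_0(d-k)}$, Proposition \ref{rest}, plus the character-theoretic Lemma \ref{symg}, and its third proof uses the symmetrized complexes, shift functors and the Euclidean algorithm.)
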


Two proofs of this theorem are contained in the next
three subsections, and a third one in Subsection
\ref{thirdproof}; these proofs are based on different ideas,
so we present all three of them.

\subsection{Proof of Theorem \ref{homolkoz}}

\begin{lemma}\label{comal}
Let $V$ be a finite dimensional subspace of $R:=\CC[x_1,...,x_N]$.
Assume that the zero set $Z(V)$ of $V$ in $\CC^N$ has dimension $k<N$.
Then there are polynomials $f_1,...,f_{N-k}\in V$, which form a regular
sequence.
\end{lemma}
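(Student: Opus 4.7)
The plan is to construct the regular sequence one element at a time by a prime-avoidance argument. Suppose inductively that we have already chosen $f_1,\dots,f_{j-1}\in V$ forming a regular sequence in $R=\CC[x_1,\dots,x_N]$, with $j-1<N-k$; I want to produce $f_j\in V$ which is a non-zero-divisor modulo $(f_1,\dots,f_{j-1})$. The base case $j=1$ just requires $f_1\neq 0$, which is possible because $V\neq 0$ (since $Z(V)\neq\CC^N$).

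Since $R$ is Cohen-Macaulay, the quotient $R/(f_1,\dots,f_{j-1})$ is also Cohen-Macaulay, and in particular has no embedded associated primes. Thus the associated primes of $R/(f_1,\dots,f_{j-1})$ coincide with the minimal primes over $(f_1,\dots,f_{j-1})$, and by Macaulay's unmixedness theorem each of them has height exactly $j-1$. Geometrically, they are the defining ideals $I(C_1),\dots,I(C_r)$ of the irreducible components $C_1,\dots,C_r$ of the variety $V(f_1,\dots,f_{j-1})\subset\CC^N$, each of pure dimension $N-j+1$. Because $j-1<N-k$ we have $N-j+1>k=\dim Z(V)$, so no $C_\ell$ can be contained in $Z(V)$; equivalently, $V$ is not contained in any $I(C_\ell)$.

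Now I invoke linear prime avoidance over the infinite field $\CC$: a $\CC$-subspace of $R$ contained in a finite union of prime ideals must be contained in one of them (since a vector space over an infinite field is not a finite union of proper subspaces). This provides $f_j\in V$ avoiding every $I(C_\ell)$, hence lying outside every associated prime of $R/(f_1,\dots,f_{j-1})$, i.e., a non-zero-divisor modulo that ideal. Iterating $N-k$ times yields the desired regular sequence $f_1,\dots,f_{N-k}\in V$.

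The one point that requires care is that prime avoidance only controls finitely many primes and only if $V$ genuinely sits outside each of them. The Cohen-Macaulayness of $R$ is what ensures we never have to worry about embedded primes of potentially larger height (which could a priori contain all of $V$) and that the irreducible components of $V(f_1,\dots,f_{j-1})$ really do have the expected dimension $N-j+1>k$ at every step, so that the dimension hypothesis on $Z(V)$ is enough to rule out containment.
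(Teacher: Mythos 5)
Your proof is correct and follows essentially the same route as the paper's: induction on the length of the sequence, observing that the components of the zero locus of the already-chosen elements have dimension $N-j+1>k$, hence are not contained in $Z(V)$, so a generic (prime-avoidance) element of $V$ does not vanish on any of them. You are merely more explicit than the paper about the unmixedness/no-embedded-primes point that identifies ``not vanishing on any component'' with ``non-zero-divisor modulo the ideal.''
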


\begin{proof}
We prove by induction in $i$
(for $i\le N-k$) that one can choose a regular sequence $f_1,...,f_i\in V$. The base of induction
is obvious. To make the step of induction, suppose that for some $i<N-k$, the polynomials $f_1,...,f_i$ have been chosen.
Then the zero set $Z_i$ of $f_1,...,f_i$ has pure codimension
$i$. Since the zero set $Z$ of $V$ has codimension $>i$,
none of the components of $Z_i$ is contained in $Z$, so
a generic element $f_{i+1}$ of $V$ does not vanish identically
on any of these components; this completes the step of induction.
\end{proof}

The vanishing of $H_i$ for $i\ge d$ follows from the standard properties of the Koszul complex.
Namely, we know
that the module $H_0=L_{\frac{m}{n}}(\CC)$ has
minimal support (by \ref{wilco}; see also \cite{BE}),
so this support is of dimension $d-1$.
By Lemma \ref{comal}, this means that there exists a basis $f_1,...,f_{n-1}$ of the space $V$ spanned by the partial derivatives of $F_{m,n}$ such that
$f_1,...,f_{n-d}$ is a regular sequence. Define a grading on $K^\bullet_{m,n}$ by the number of $f_i$
in the wedge part with $i>n-d$. Then the differential preserves the filtration
defined by this grading, and the associated graded complex is of the form
$K^\bullet(f_1,...,f_{n-d})\otimes \wedge^\bullet(f_{n-d+1},...,f_{n-1})$
(with the Koszul differential of the first factor).
The first factor is acyclic in positive degrees, so this complex has no homology in degrees $\ge d$.
Hence the same is true for the filtered complex.

Thus, we just need to prove part (ii).

To this end, note that the support of $H_0$
is the union $X_{d,n/d}(n)$ of all the images of
the subspace defined by the equations $x_i=x_j$
when $i-j=0$ modulo $d$ under permutations.
By Theorem \ref{wilco}, this is the minimal support of modules in
category $\mathcal{O}$ for $H_{\frac{m}{n}}(S_n)$.
By the theory of Koszul complexes,
this implies that all the homology
modules $H_i$ are supported on $X_{d,n/d}(n)$,
i.e. have minimal support. By the results of Wilcox,
\cite[Theorem 1.8, Proposition 3.7]{Wi} (see Theorem
\ref{wilco}), the category of such modules is equivalent to the category of
representations of $S_d$, by considering restriction $\Res_b$ to the open stratum of $X_{d,n/d}(n)$ and looking at the monodromy of the
resulting local system. Namely,
this equivalence sends a representation of $S_d$
corresponding to the Young diagram $\mu$ to the representation
$L_{\frac{m}{n}}(n_0\mu)$ over the rational Cherednik algebra
$H_{\frac{m}{n}}(S_n)$ with minimal support (see Theorem \ref{wilco}).
Moreover, this equivalence is compatible with
restrictions to points of $X_{d,n/d}(n)/S_n$ (i.e. restriction from Cherednik
algebra to its parabolic subalgebras corresponds under this
equivalence to the restriction
from the symmetric group $S_d$ to its parabolic subgroups).

Let $\wedge^i_d$ be the $i$-th exterior power
of the reflection representation of $S_d$.
We will need the following simple lemma.

\begin{lemma}\label{symg}
Suppose that $\pi$ is a representation of $S_d$
such that for any $0<k<d$,
$$
\pi|_{S_k\times S_{d-k}}=\oplus_{r-1\le i+j\le
  r}\wedge^i_k\otimes \wedge^j_{d-k},
$$
and $\pi^{S_d}=0$.
Then $\pi=\wedge^r_d$.
\end{lemma}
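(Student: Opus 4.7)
The strategy I would pursue is character-theoretic. First, I would verify that $\wedge^r_d$ itself satisfies the branching hypothesis: using the decomposition $\h_d|_{S_k\times S_{d-k}}\simeq \h_k\oplus \h_{d-k}\oplus \triv$ and the formula $\wedge^r(A\oplus B)=\bigoplus_{i+j=r}\wedge^i A\otimes \wedge^j B$ for an exterior power of a direct sum, one computes
$$
\wedge^r_d|_{S_k\times S_{d-k}}=\bigoplus_{r-1\le i+j\le r}\wedge^i_k\otimes \wedge^j_{d-k}.
$$
So the conclusion of the lemma is at least consistent with the hypothesis, and the task reduces to ruling out any other $\pi$.

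The next observation is that any element of $S_d$ whose cycle decomposition has more than one cycle is conjugate into some standard Young subgroup $S_k\times S_{d-k}$ with $0<k<d$. Since characters are class functions, the hypothesis on $\pi$ combined with the restriction formula above forces $\chi_\pi(\sigma)=\chi_{\wedge^r_d}(\sigma)$ for every $\sigma\in S_d$ that is not a $d$-cycle. Hence $\chi_\pi-\chi_{\wedge^r_d}$ is a virtual character supported on the single conjugacy class $C$ of $d$-cycles.

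To identify this difference explicitly, I would expand the indicator function $\mathbf{1}_C$ in the basis of irreducible characters. By Murnaghan--Nakayama, $\chi_\mu$ vanishes on a $d$-cycle unless $\mu$ is a hook, and for a hook one has $\chi_{(d-j,1^j)}(d\text{-cycle})=(-1)^j$; that is, $\chi_{\wedge^j_d}(d\text{-cycle})=(-1)^j$. Computing inner products gives
$$
\mathbf{1}_C=\frac{1}{d}\sum_{j=0}^{d-1}(-1)^j\chi_{\wedge^j_d}.
$$
The requirement that $\chi_\pi-\chi_{\wedge^r_d}$ be a $\ZZ$-linear combination of irreducible characters then forces the scaling factor to lie in $\ZZ$, so there exists $c_0\in\ZZ$ with
$$
[\pi]=[\wedge^r_d]+c_0\sum_{j=0}^{d-1}(-1)^j[\wedge^j_d]
$$
in the Grothendieck group of $\Rep(S_d)$.

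Finally, the multiplicity of the trivial representation $\wedge^0_d$ on the right-hand side equals $c_0$ (assuming $r\ge 1$; the case $r=0$ is vacuous, since then the desired conclusion $\pi=\triv$ is incompatible with $\pi^{S_d}=0$). Invoking the hypothesis $\pi^{S_d}=0$ thus gives $c_0=0$, which yields $\pi=\wedge^r_d$. The step that warrants the most care is the integrality of $c_0$; everything else amounts to routine character theory.
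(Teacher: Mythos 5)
Your proof is correct and follows essentially the same route as the paper: check that $\wedge^r_d$ satisfies the branching condition, observe that the character difference vanishes on all permutations with more than one cycle, identify the remaining virtual character supported on $d$-cycles as an integer multiple of $\sum_{j}(-1)^j[\wedge^j_d]$, and kill that multiple using $\pi^{S_d}=0$ via the trivial summand. You merely spell out the steps the paper leaves implicit (the Murnaghan--Nakayama computation and the integrality of the coefficient), so there is nothing to add.
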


\begin{proof} Clearly, $\wedge_d^r$ satisfies the condition.
Hence, the character of the difference $\pi-\wedge_d^r$ has zero
restriction to the subgroups $S_k\times S_{d-k}$, i.e., vanishes
on all non-cyclic permutations in $S_d$. Thus, it is an integer multiple
of the virtual character $\chi(g)=\sum_{\lambda} \Tr_{\pi_\lambda}(g)\pi_\lambda$, where
$g$ is a cyclic permutation in $S_d$. This virtual character
involves a copy of the trivial representation.
So $\pi=\wedge_d^r$.
\end{proof}

Now we prove part (ii) of the theorem.
Our job is to show that $H_i=L_{\frac{m}{n}}(n_0(d-i,1^i))$ 
for $0<i<d$ (we already know that $H_0=L_{\frac{m}{n}}((n))$).

We will use the following proposition.
Let $0<k<d$. Let $b$ be a point
in $\h$ with coordinates $x_i=(d-k)z$ for
$i\le n_0k$, and $x_i=-kz$ for $i>n_0k$, for some $z\ne 0$.

\begin{proposition}\label{rest}
We have an isomorphism of complexes of ${\CC}(S_{n_0k}\times
S_{n_0(d-k)})\ltimes \CC[\h]$-modules
\begin{equation}\label{restr}
\Res_b(K^\bullet_{m,n})\cong K_{m_0k,n_0k}^\bullet\otimes K_{m_0(d-k),n_0(d-k)}^\bullet\otimes \Omega^\bullet,
\end{equation}
where $\Omega^\bullet$ is the two-step complex $\CC[t]\leftarrow \CC[t]$ with
the zero differential.
\end{proposition}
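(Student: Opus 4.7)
The plan is to decompose $\Res_b(K^\bullet_{m,n})$ as a tensor product of three complexes using the factorization of $W_b$ and of the Cherednik algebra. Since $W_b=S_{n_0k}\times S_{n_0(d-k)}$ is a Young subgroup of $S_n$, the reflection representation decomposes as $\h_n\cong\h_{n_0k}\oplus\h_{n_0(d-k)}\oplus\CC e_t$, where $e_t$ spans the one-dimensional $W_b$-fixed subspace (the ``difference-of-means'' direction), and correspondingly $H_c(W_b,\h_n)$ factors as $H_c(S_{n_0k})\otimes H_c(S_{n_0(d-k)})\otimes\D(\CC)$, with $\D(\CC)$ the Weyl algebra on $\CC e_t$. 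Under this factorization, $\Res_b(M_c(\CC))$ is identified with $\CC[\h_{n_0k}]\otimes\CC[\h_{n_0(d-k)}]\otimes\CC[t]$. Since $V\cong\h_n$ as $S_n$-modules (by Dunkl), restricting to $W_b$ gives $V\cong V_1\oplus V_2\oplus\CC v$, with $V_1,V_2$ the reflection representations of $S_{n_0k}$ and $S_{n_0(d-k)}$ respectively and $\CC v$ a trivial $W_b$-summand.

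The main task is to show that inside $\Res_b(M_c(\CC))$ one can choose the summands adapted to the tensor factorization: $V_1\subset\CC[\h_{n_0k}]$, $V_2\subset\CC[\h_{n_0(d-k)}]$, and $\CC v\subset\CC[t]$. Granting this, the standard isomorphism $\wedge^i(V_1\oplus V_2\oplus\CC v)\cong\bigoplus_{a+b+c=i,\,c\in\{0,1\}}\wedge^aV_1\otimes\wedge^bV_2\otimes\wedge^c(\CC v)$, together with multiplicativity of Koszul complexes with respect to direct sums into commuting tensor factors, gives the desired tensor factorization of $\Res_b(K^\bullet_{m,n})$ as a complex of $W_b\ltimes\CC[\h_n]$-modules. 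The factors $K^\bullet(V_1)$ and $K^\bullet(V_2)$ are then identified with $K^\bullet_{m_0k,n_0k}$ and $K^\bullet_{m_0(d-k),n_0(d-k)}$ by uniqueness of the Dunkl singular hook subspace of degree $m_0k$ (respectively $m_0(d-k)$) inside the smaller $M_c(\CC)$, see \cite{dunkl2}. To produce the adapted summands, I would perform a contour decomposition of the residue at $u=\infty$ defining $F_{m,n}$, splitting it into two local contributions around $u=(d-k)z$ and $u=-kz$; each local contribution, after the substitution $x_i=b_i+y_i$, reproduces up to a nonzero constant the potential $F_{m_0k,n_0k}$ or $F_{m_0(d-k),n_0(d-k)}$, so that their partial derivatives span $V_1$ and $V_2$ inside the required tensor factors.

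The last step is to verify that the third factor equals the two-step complex $\Omega^\bullet$ with zero differential, i.e.\ that $v=0$ in $\CC[t]$. For this, parametrize the affine line $b+\CC e_t\subset\h_n$ by $x_i=(d-k)(z+n_0t)$ for $i\le n_0k$ and $x_i=-k(z+n_0t)$ for $i>n_0k$; then $\prod_i(u-x_i)=(u-(d-k)z')^{n_0k}(u+kz')^{n_0(d-k)}$ with $z'=z+n_0t$, and its $(m/n)$-th power is an honest polynomial in $u$ of degree $m$, since the exponents $m_0k$ and $m_0(d-k)$ are positive integers. The residue at $u=\infty$ of a polynomial vanishes, so $F_{m,n}\equiv 0$ on $b+\CC e_t$, and by the chain rule so does its directional derivative $v$ in the direction $e_t$. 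Hence the Koszul differential on the $\CC v$-factor is multiplication by zero, yielding $\Omega^\bullet$. The main obstacle is the contour decomposition underlying the adapted-basis construction in step two; once that is in place the identification of all three tensor factors and their differentials is routine.
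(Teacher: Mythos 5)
Your contour-splitting of the residue is the right starting point (it is essentially Lemma \ref{le2} of the text), and the target tensor factorization is the correct one, but the proposal has two genuine gaps, both traceable to the same missing ingredient. First, the splitting does not produce ``adapted summands'' $V_1\subset\CC[\h_{n_0k}]$, $V_2\subset\CC[\h_{n_0(d-k)}]$, $\CC v\subset\CC[t]$ inside $\Res_b(M_c(\CC))$: each local contour contribution equals $F_{m_0k,n_0k}(x')$ (resp. $F_{m_0(d-k),n_0(d-k)}(x'')$) only \emph{up to higher-order corrections} that mix $x'$, $x''$ and $t$, and such corrections cannot be removed by a linear change of basis of $V$. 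The complex $\Res_b(K^\bullet_{m,n})$ is the Koszul complex on the actual power series $f_i=\partial_iF_{m,n}$ expanded at $b$, so to replace them by their quasihomogeneous leading parts $\bar f_i$ one needs (a) a formal statement that a Koszul complex on deformations $f_i=\bar f_i+\text{higher terms}$ of quasihomogeneous elements of a single degree, together with extra generators of higher order, is isomorphic to the Koszul complex on the $\bar f_i$ tensored with an exterior factor with zero differential, \emph{provided both families generate the same ideal} (Lemma \ref{le1}), and (b) a proof that the ideals do coincide; in the paper this is exactly where the Cherednik-algebra input enters (the ideal $(f_1,\dots,f_n)$ is stable under the parabolic Dunkl operators at $b$, hence corresponds to a submodule of the polynomial representation of $H_c(S_{n_0k}\times S_{n_0(d-k)})$ whose quotient has minimal support, forcing the ideal to be generated by the $\bar f_i$). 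Your proposal contains neither of these ingredients, and without them the ``adapted summand'' step fails.

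Second, the verification that the third factor carries the zero differential is invalid. The exterior generator attached to the $e_t$-direction has differential equal to multiplication by $v=\partial_{e_t}F_{m,n}$, which (using the relation among the $f_i$ on $\h$) is a nonzero multiple of $\sum_{i\le n_0k}f_i$. Your argument shows only that $v$ vanishes when restricted to the line $b+\CC e_t$; but the Koszul differential is multiplication by $v$ as an element of the completed ring in all the variables $x',x'',t$, and there $v$ need not vanish: by Lemma \ref{le2} its quasihomogeneous leading part is zero, so it consists entirely of higher-order terms involving $x'$ and $x''$, which are in general nonzero. The differential on this factor becomes zero only after the change of exterior generators in Lemma \ref{le1} (of the form $\xi'\mapsto\xi+\sum_j c_{j}\eta_j$), which uses precisely the facts that $v$ has strictly higher order and lies in the ideal generated by the $\bar f_i$ --- again the ideal-coincidence statement your proposal does not establish. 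So the overall strategy can be repaired, but only by adding the deformation lemma for Koszul complexes and the Dunkl-invariance/minimal-support argument identifying the ideals, which together constitute the actual core of the proof.
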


\begin{proof}
First of all, if $f_1,...,f_r\in R=\CC[x_1,...,x_n]$ and $K^\bullet(f_1,...,f_r,R)$ is the corresponding Koszul complex,
then by definition, the completion $\hat
K_b^\bullet(f_1,...,f_r,R)$ of $K^\bullet(f_1,...,f_r,R)$ at any point $b\in \CC^n$ is naturally isomorphic
to $K^\bullet(f_1,...,f_r,\hat R_b)$, where $\hat R_b$ is the completion of $R$ at $b$.

Next, suppose $\bar f_i\in R$, $1\le
i\le r$, are linearly independent quasihomogeneous polynomials of
the same degree $D$ (i.e., homogeneous polynomials of degree $D$ for a
grading in which $\deg(x_j)=d_j$ for some positive
integers $d_j$), and assume that $f_i=\bar f_i+\text{higher degree terms}\in \hat
R=\CC[[x_1,...,x_n]]$ are deformations of these polynomials.
Also let $g_p\in \hat R$, $p=1,...,s$, be elements whose lowest
degree is $>D$.

\begin{lemma}\label{le1} Assume that $\bar f_i$ generate the same ideal in $\hat R$ as
$f_i,g_p$. Then
$$
K^\bullet(f_1,...,f_r,g_1,...,g_s,\hat R)\cong K^\bullet(\bar
f_1,...,\bar f_r,\hat R)\otimes \wedge(\xi_1,...,\xi_s),
$$
as complexes of $\hat R$-modules,
where $\partial\xi_i=0$.
\end{lemma}

\begin{proof}
We can choose elements $a_{ij}\in \hat R$ such that
$$
f_i=\sum_j a_{ij}\bar f_j.
$$
Then, since $\bar f_j$ have the same homogeneity degree,
we have
$$
\bar f_i=\sum_j a_{ij}(0)\bar f_j.
$$
This implies that $a_{ij}(0)=\delta_{ij}$
and hence $(a_{ij})$
is invertible. Also, we have the matrix $(c_{pj})$, $c_{pj}\in
\hat R$ such that $g_p=\sum_j c_{pj}\bar f_j$.

We claim that the matrices $(a_{ij}),(c_{pj})$
define the desired isomorphism
$$
\theta: K(f_1,...,f_r,g_1,...,g_s,\hat R)\cong K(\bar f_1,...,\bar
f_r,\hat R)\otimes \wedge(\xi_1,...,\xi_s).
$$
Namely, let $\eta_1,...,\eta_r$ be the odd generators
of $K(\bar f_1,...,\bar f_r,\hat R)$ over $\hat R$, and let
$\eta_1',...,\eta_r',\xi_1',...\xi_s'$
be the odd generators
of $K(f_1,...,f_r,g_1,...,g_s,\hat R)$ over $\hat R$
(so that
$$
K(\bar f_1,...,\bar f_r,\hat R)=\hat R\otimes
\wedge(\eta_1,...,\eta_r),\
K(f_1,...,f_r,g_1,...,g_s,\hat R)=\hat R\otimes
\wedge(\eta_1',...,\eta_r',\xi_1',...,\xi_s'),
$$
and  $\partial \eta_i=\bar f_i$, $\partial \eta_i'=f_i$,
$\partial \xi_p'=g_p$). Then $\theta$ is defined by the formula
$$
\theta(\eta_i')=\sum_j a_{ij}\eta_j, \theta(\xi_p')=\xi_p+\sum_j
c_{pj}\eta_j.
$$
This proves the lemma.
\end{proof}

Now, consider the singular polynomials $f_i$, $i=1,...,n$,
generating the Koszul complex $K_{m,n}^\bullet$.
As explained above, $f_i=\partial_i F_{m,n}$, where
$$
F_{m,n}=\frac{1}{2\pi \ii}\int_\gamma ((u-x_1)...(u-x_n))^{\frac{m}{n}}du,
$$
where the integration is over a large enough circle $\gamma$
in the counterclockwise direction.
This polynomial has degree $m+1$.
Let us consider the completion at the point
$b$, and introduce new variables:
$$
t=\frac{1}{n_0}\sum_{1\le i\le n_0k}x_i-k(d-k)z;\
x_i'=x_i-\frac{t}{k}-(d-k)z, i\le n_0k;
$$
$$
x_i''=x_i+\frac{t}{d-k}+kz, i> n_0k
$$
(thus, $\sum_i x_i'=\sum_j x_j''=0$).

\begin{lemma}\label{le2} We have
$$
F_{m,n}(x)=C'F_{m_0k,n_0k}(x')+C''F_{m_0(d-k),n_0(d-k)}(x'')+\text{higher
  terms}, C',C''\in \CC^\times,
$$
where higher terms are of two kinds:

(1) degree $s'\ge n_0k+1$ in $x'$ and degree $s''$ in $x'',t$
with $s'+s''-(n_0k+1)>0$;

(2) degree $s''\ge n_0(d-k)+1$ in $x''$ and degree $s'$ in $x',t$
with $s'+s''-(n_0(d-k)+1)>0$.
\end{lemma}

\begin{proof} Consider a point $x$ close to $b$. Then
$x_i$ cluster around $z(d-k)$ for $i\le n_0k$, and around $-zk$
for $i>n_0k$. So by the Cauchy integral formula the contour integral defining
$F_{m,n}$ can be represented as a sum
of integrals over two contours going around each of the two
clusters (note that the integrand is single-valued on these
contours). Shifting the integration variable in each of the integrals
to make the contours go around the origin, we get
$$
F_{m,n}=\frac{1}{2\pi \ii}(zd)^{m_0(d-k)}\oint
\prod_{i=1}^{n_0k}(v-x_i')^{m_0/n_0}
\prod_{i=n_0k+1}^n\left(1+\frac{v-x_i''}{zd}+\frac{t}{zk(d-k)}\right)^{m_0/n_0}dv+
$$
$$
\frac{1}{2\pi \ii}(-zd)^{m_0k}\oint
\prod_{i=1}^{n_0k}\left(1-\frac{v-x_i'}{zd}+\frac{t}{zk(d-k)}\right)^{m_0/n_0}
\prod_{i=n_0k+1}^n(v-x_i'')^{m_0/n_0}dv.
$$
This implies the lemma, with $C'=(zd)^{m_0(d-k)}$ and $C''=(-zd)^{m_0k}$ (the two terms in the formula come
from the two resulting integrals, and the form of the higher
terms is clear from the form of these integrals; we just expand
the expressions of the form $(1+u)^{m_0/n_0}$ appearing in the
integrals in a Taylor series with respect to $u$).
\end{proof}

Now let $\deg(x_i')=d-k$, $\deg(x_i'')=k$. Then the polynomials
$\bar f_i=\frac{\partial}{\partial x_i'} F_{m_0k,n_0k}(x')$ for
$i\le n_0k$, and $\bar f_i=\frac{\partial}{\partial x_i''} F_{m_0(d-k),n_0(d-k)}(x'')$ for
$i>n_0k$ are quasihomogeneous of degree $n_0k(d-k)$. Note that
$\sum_{i\le n_0k} \bar f_i=\sum_{i>n_0k}\bar f_i=0$.

So, it suffices to show that $f_i$  generate the same ideal in
$\CC[[x_1', \ldots, x_{n_0k}', x_{n_0k+1}'', \ldots, x_n'',t]]$ as
$\bar f_i$, $i=1,...,n$. Then by virtue of Lemma \ref{le2} the Proposition will follow by applying Lemma \ref{le1}
to $f_i$ for $i\ne n_0k,n$ and $g_1=\sum_{i\le n_0k}f_i$
(as power series in $x',x'',t$).

To this end, note that $f_i$, $1\le i\le n$,
generate an ideal $I$ that is invariant
under the Dunkl operators $D_j$. Let us expand
the Dunkl operators at $b$, with respect to the coordinates
$x_i',x_i'',t$. These ``formal'' Dunkl operators
are non-homogeneous in the variables $x_i',x_i'',t$, and
we have $D_i=\bar D_i+R_i$, where
$\bar D_i$ are the homogeneous parts
(of degree $-1$ in the grading where the degrees of the
$x_i',x_i'',t$ are $1$), and $R_i$ are the regular parts.
Clearly, $I$ is invariant under $R_i$, so it is invariant
under $\bar D_i$, which are the Dunkl operators
of the parabolic subgroup $W_b=S_{n_0k}\times S_{n_0(d-k)}$ stabilizing the point
$b$. Thus, $I$ corresponds to a proper submodule $J$ in the polynomial
module $M_c(\CC)$ over the parabolic Cherednik algebra
$H_c(S_{n_0k}\times S_{n_0(d-k)},\h'\oplus \h'')$,
where $\h',\h''$ are the reflection representations of $S_{n_0k}$
and $S_{n_0(d-k)}$, respectively (here we use the fact that the
restriction of the polynomial module is the polynomial module
over the parabolic subalgebra, which follows from the definition
of the restriction functors in \cite{BE}).
Since $M_c(\CC)/J$ has $d-1$-dimensional (i.e., minimal) support,
$J$ must be the image of $M_c(\h')\boxtimes M_c(\CC')\oplus
M_c(\CC'')\boxtimes M_c(\h'')$ in $M_c(\CC')\boxtimes M_c(\CC'')$,
where $\CC',\CC''$ are the trivial representations of $S_{n_0k}$ and $S_{n_0(d-k)}$,
respectively. This means that $I$ is generated by the elements $\bar f_i$.

Thus, by Lemma \ref{le1}, we have the required isomorphism of complexes
of $\CC[\h]$-modules. Moreover, since all the constructions in
the proof are equivariant under the group \linebreak
$S_{n_0k}\times S_{n_0(d-k)}$,
it follows from the proof of Lemma \ref{le1} that
this is actually an isomorphism of
$\CC(S_{n_0k}\times S_{n_0(d-k)})\ltimes \CC[\h]$-modules, as desired.

The proposition is proved.
\end{proof}

\begin{corollary}\label{chermor}
The two complexes in Proposition \ref{rest}
have isomorphic cohomology groups, as $H_{\frac{m}{n}}(S_{n_0k}\times S_{n_0(d-k)},\h)$-modules.
\end{corollary}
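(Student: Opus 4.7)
The plan is to combine exactness of $\Res_b$ with an identification of the parabolic Cherednik action on both sides of the isomorphism of Proposition \ref{rest}. Set $W_b=S_{n_0k}\times S_{n_0(d-k)}$. Since $\Res_b\colon \mathcal{O}_c(S_n,\h)\to \mathcal{O}_c(W_b,\h)$ is exact, it commutes with cohomology: the cohomology of $\Res_b(K^\bullet_{m,n})$ is $\Res_b(H_\bullet(K^\bullet_{m,n}))$, which carries a canonical $H_c(W_b,\h)$-module structure via the restriction.

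To interpret the right-hand side in the same language, I would use the $W_b$-equivariant decomposition $\h=\h'\oplus\h''\oplus\CC t$, where $\h'$ and $\h''$ are the reflection representations of the two symmetric factors and $\CC t$ is the one-dimensional trivial representation spanned by the transverse coordinate of Proposition \ref{rest}. This gives a factorization $H_c(W_b,\h)\cong H_c(S_{n_0k},\h')\otimes H_c(S_{n_0(d-k)},\h'')\otimes \mathcal{D}(\CC t)$. Under it, the complex $K_{m_0k,n_0k}^\bullet\otimes K_{m_0(d-k),n_0(d-k)}^\bullet\otimes \Omega^\bullet$ is manifestly a complex of $H_c(W_b,\h)$-modules (the two copies of $\CC[t]$ in $\Omega^\bullet$ playing the role of the polynomial $\mathcal{D}(\CC t)$-module), and its cohomology inherits an $H_c(W_b,\h)$-module structure by K\"unneth.

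The core step is to promote the $\CC W_b \ltimes \CC[\h]$-linear isomorphism of Proposition \ref{rest} to an $H_c(W_b,\h)$-linear one on cohomology. The key observation is that at the point $b$ the Dunkl operators of $H_c(S_n,\h)$ decompose in the coordinates $x',x'',t$ as $D_y=\bar D_y+R_y$, where $\bar D_y$ is precisely the parabolic Dunkl operator of $H_c(W_b,\h)$ (acting via the factorization above) and $R_y$ is regular at $b$, hence vanishes on the $y$-nilpotent subspace that $\Res_b$ picks out. Tracing through the explicit isomorphism of Proposition \ref{rest}, built from the $W_b$-equivariant change of coordinates of Lemma \ref{le2} and the upper-triangular substitution of Lemma \ref{le1}, one checks that these operations intertwine $\bar D_y$ with the tensor product Dunkl action on the right-hand side, and so induce a Cherednik-equivariant map on cohomology.

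The main obstacle I expect is checking that the matrix substitution of Lemma \ref{le1}, which is $\CC[[\h]]$-linear but not a priori compatible with the $\bar D_y$-action at the chain level, induces a Dunkl-equivariant map on cohomology. My plan for this is twofold: either verify directly that any discrepancy is a $\CC W_b \ltimes \CC[[\h]]$-linear null-homotopy (which disappears on cohomology), or appeal to the fact that the cohomologies have minimal support in $\h$ and therefore, by the analogue of Theorem \ref{wilco} for the product $W_b$ (which follows from Wilcox's classification applied to each factor), the $H_c(W_b,\h)$-module structure on each graded piece is determined by the $\CC W_b$-character already matched in Proposition \ref{rest}, forcing the two Cherednik structures to coincide.
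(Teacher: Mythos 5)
Your primary plan -- tracing through Lemmas \ref{le2} and \ref{le1} and ``checking'' that the chain-level isomorphism of Proposition \ref{rest} intertwines the parabolic Dunkl operators (or that the discrepancy is a null-homotopy) -- is exactly the step the paper deliberately avoids, and as stated it is a genuine gap. The isomorphism in Lemma \ref{le1} is built from matrices $(a_{ij})$, $(c_{pj})$ with entries in $\widehat{R}$ that are chosen only to express the $f_i$ through the quasihomogeneous parts $\bar f_i$; nothing in their construction is compatible with the Dunkl operators, and Proposition \ref{rest} is claimed (and proved) only as an isomorphism of complexes of $\CC(S_{n_0k}\times S_{n_0(d-k)})\ltimes\CC[\h]$-modules. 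There is no reason to expect Cherednik-equivariance at the chain level, nor an evident candidate for the homotopy, so this route would need substantial new work.

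Your second alternative, however, is essentially the paper's proof, and it suffices on its own: both cohomologies have minimal support (being homology of Koszul complexes whose $H_0$ has minimal support), hence by Theorem \ref{wilco} they are semisimple objects of the parabolic category $\mathcal{O}$; and a semisimple object of $\mathcal{O}_c(W,\h)$ is determined up to isomorphism by its $\CC W\ltimes\CC[\h]$-module structure (the paper's Lemma \ref{chara}, proved by passing to the covariants $N/\mathfrak{m}N$, which recover the multiplicities of the $L_c(\tau)$). One small precision: what is needed, and what Proposition \ref{rest} actually provides, is the full $\CC W_b\ltimes\CC[\h]$-module structure, not merely the $\CC W_b$-character -- the $\CC[\h]$-action is what lets you cut down to $N/\mathfrak{m}N$ and read off the lowest-weight $W_b$-representations. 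So if you discard the chain-level intertwining attempt and run only this semisimplicity-plus-rigidity argument, you recover the paper's proof verbatim; the exactness of $\Res_b$ and the factorization of $H_c(W_b,\h)$ you set up are consistent with it but not needed beyond giving the right-hand side its Cherednik structure.
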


\begin{proof}
By Proposition \ref{rest}, the cohomology groups of the two
complexes are isomorphic as $\CC(S_{n_0k}\times
S_{n_0(d-k)})\ltimes \CC[\h]$-modules.
Also, we know that these cohomology groups are semisimple
modules over $H_{\frac{m}{n}}(S_{n_0k}\times S_{n_0(d-k)},\h)$, by the result
of \cite{Wi} (see Theorem \ref{wilco}), since they have minimal support, and the category
of minimally supported modules is semisimple.
Hence, the corollary is a consequence of the following lemma.
\end{proof}

\begin{lemma}\label{chara}
A semisimple object in
${\mathcal  O}_c(W,\h)$ is uniquely determined, up to an isomorphism,
by its structure of a ${\CC}W\ltimes \CC[\h]$-module.
\end{lemma}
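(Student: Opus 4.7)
The plan is to recover the simple-summand multiplicities of a semisimple $M \in \mathcal{O}_c$ from the $W$-module structure of its canonical ``top'' $M/\mathfrak{m}_0 M$, where $\mathfrak{m}_0 \subset \CC[\h]$ is the augmentation ideal. Because $\mathfrak{m}_0$ is defined intrinsically from the algebra $\CC[\h]$ and is $W$-stable, the assignment $N \mapsto N/\mathfrak{m}_0 N$ is a functor that depends only on the $\CC W \ltimes \CC[\h]$-structure on $N$; in particular, it does not require the $\h$-action or a choice of grading on $N$, which is exactly what the lemma asks for.

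The key input is a computation for a single simple: I would show that $L_c(\tau)/\mathfrak{m}_0 L_c(\tau) \cong \tau$ as a $W$-module. To this end, write $L_c(\tau) = M_c(\tau)/J = S\h^*\otimes \tau/J$, where $J$ is the maximal proper graded submodule. Its degree-zero part $J_0 \subset \tau$ must vanish, for otherwise it would be a nonzero $W$-subrepresentation of the irreducible $\tau$, hence equal to $\tau$, and then $J=M_c(\tau)$, contradicting $L_c(\tau)\ne 0$. Therefore $L_c(\tau)$ is cyclically generated by $\tau$ over $\CC[\h]$, giving $L_c(\tau)/\mathfrak{m}_0 L_c(\tau)=\tau$.

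To conclude, write $M = \bigoplus_{\tau \in \Irrep W} L_c(\tau)^{n_\tau}$ and $M' = \bigoplus_{\tau \in \Irrep W} L_c(\tau)^{n'_\tau}$ for the semisimple decompositions of two modules isomorphic as $\CC W \ltimes \CC[\h]$-modules. Since the top functor preserves direct sums, one obtains $\bigoplus_\tau \tau^{n_\tau} \cong \bigoplus_\tau \tau^{n'_\tau}$ as $W$-modules, and comparing multiplicities of the pairwise non-isomorphic irreducibles $\tau$ forces $n_\tau=n'_\tau$ for every $\tau$. The only point one needs to be careful with is that the top functor really is independent of any grading or $\h$-action datum; but as noted above this is immediate because $\mathfrak{m}_0 \subset \CC[\h]$ is the augmentation ideal, and $\mathfrak{m}_0 M$ is defined purely from the $\CC[\h]$-action on $M$. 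I do not expect any substantive obstacle beyond this observation.
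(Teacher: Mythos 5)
Your proposal is correct and follows essentially the same route as the paper: the paper's proof likewise passes to the quotient by the augmentation ideal $\mathfrak{m}\subset\CC[\h]$ and observes that for $N=\oplus_\tau m_\tau L_c(\tau)$ one has $N/\mathfrak{m}N\cong\oplus_\tau m_\tau\,\tau$ as $W$-modules, which recovers the multiplicities. Your additional verification that $L_c(\tau)/\mathfrak{m}L_c(\tau)\cong\tau$ (via the vanishing of the degree-zero part of the maximal proper graded submodule, plus graded Nakayama to rule out the quotient being zero) simply spells out the step the paper takes for granted.
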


\begin{proof}
Let ${\mathfrak{m}}$ be the augmentation ideal
of $\CC[\h]$ (generated by $\h^*$). If
$N=\oplus_{\tau\in \Irrep\ W}m_\tau L_c(\tau)$,
then $N/{\mathfrak{m}}N=\oplus_{\tau\in \Irrep(W)}m_\tau \tau$
as a $W$-module. So $N$ is determined by its structure of
a ${\CC}W\ltimes \CC[\h]$-module, as desired.
\end{proof}

Now note that for $r>0$, $H_r(K_{m,n}^\bullet)$
cannot contain $L_c(\CC)$, since $L_c(\CC)$ is not a composition factor
of $M_c(\wedge^i\h)$ for $i>0$ (because it has smaller lowest eigenvalue
of $\mathbf{h}$ than any eigenvalue of $\mathbf{h}$ in
$M_c(\wedge^i\h)$, $i>0$).
So, varying $k$ and using Lemma \ref{symg}, we conclude that the statement
follows by induction in $d$ from the known case $d=1$. Namely,
if under the correspondence of \cite{Wi}, [Theorem 1.8 and
Proposition 3.7] (see Theorem \ref{wilco}), $H_r(K_{m,n}^\bullet)$ for some $r>0$ corresponds to some (possibly reducible)
representation $\pi\in \Rep\ S_d$, then we have
$\pi^{S_d}=0$ (as $L_c(\CC)$ does not occur) and
$\pi|_{S_k\times S_{d-k}}=\oplus_{r-1\le i+j\le r}
\wedge^i_k\otimes \wedge^j_{d-k}$, by Corollary \ref{chermor}.
So the result follows from Lemma \ref{symg}.

\subsection{Uniqueness for the Koszul-BGG complex}
It turns out that $K_{m,n}^\bullet$, where $m$ is not divisible
by $n$, is the only complex with nonzero differentials
with terms $M_c(\wedge^i\h)$ that one can write. 

\begin{proposition}\label{uni}
The space $\Hom(M_c(\wedge^{i+1}\h), M_c(\wedge^i\h))$ is one-dimensional for all
$i\le n-1$.
\end{proposition}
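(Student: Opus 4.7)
A homomorphism $\varphi\colon M_c(\wedge^{i+1}\h)\to M_c(\wedge^i\h)$ is uniquely determined by the image of the lowest weight generator $1\otimes \wedge^{i+1}\h$, and that image must consist of $\h$-singular (Dunkl-annihilated) vectors transforming as $\wedge^{i+1}\h$ under $S_n$. By Frobenius reciprocity,
$$\Hom(M_c(\wedge^{i+1}\h),M_c(\wedge^i\h))\cong\Hom_{S_n}(\wedge^{i+1}\h,\mathrm{sing}(M_c(\wedge^i\h))),$$
so the proposition amounts to showing that the $\wedge^{i+1}\h$-isotypic component of $\mathrm{sing}(M_c(\wedge^i\h))$ is exactly one-dimensional. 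The lower bound is immediate: the Koszul--BGG differential $d_i$ constructed in Proposition \ref{Dunkl vs Koszul}, built from the singular copy of $\h$ in degree $m$ in $M_c(\CC)$ spanned by the derivatives of $F_{m,n}$, produces a nonzero element whenever $n\nmid m$.

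For the upper bound my plan is to reduce to a $q$-Schur algebra computation via the Rouquier--Losev highest-weight equivalence $\mathcal{O}_c(S_n)\cong \mathcal{S}_q(n)\text{-mod}$ at $q=\exp(\pi\sqrt{-1}/n_0)$, which was already used in the proof of Theorem \ref{charfor}. Under this equivalence Verma modules correspond to Weyl modules, hence
$$\dim\Hom(M_c(\wedge^{i+1}\h),M_c(\wedge^i\h))=\dim\Hom_{\mathcal{S}_q(n)}(W(n-i-1,1^{i+1}),W(n-i,1^i)).$$
The right-hand side is a Hom space between two adjacent hook Weyl modules of the $q$-Schur algebra, which is classically one-dimensional at any value of $q$; combined with the lower bound this gives the proposition.

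The main obstacle is pinning down the precise value of this $q$-Schur Hom dimension at our particular root of unity, which requires invoking the classical theory of hook Weyl/Specht modules (in the style of James and Dipper--James). A more self-contained alternative that avoids the Schur-algebra reduction is to argue directly in $\mathcal{O}_c$: one first enumerates all $S_n$-equivariant polynomial maps $\wedge^{i+1}\h\to \CC[\h]\otimes \wedge^i\h$, a purely $W$-representation-theoretic task controlled by Kostka/hook multiplicities and the decomposition of $\wedge^{i+1}\h\otimes \wedge^i\h^*$, and then imposes the $\h$-singularity condition using the explicit Dunkl operators. The expectation is that the resulting linear system cuts the space of $W$-equivariant singular maps down to a single line, necessarily spanned by $d_i$.
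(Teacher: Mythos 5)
Your reduction is fine as far as it goes: Frobenius reciprocity does identify $\Hom(M_c(\wedge^{i+1}\h),M_c(\wedge^i\h))$ with the $\wedge^{i+1}\h$-isotypic part of the Dunkl-singular vectors in $M_c(\wedge^i\h)$, the Koszul--BGG differential does give the lower bound when $n\nmid m$, and passing to the $q$-Schur algebra at $q=\exp(\pi\sqrt{-1}/n_0)$, where standard modules go to Weyl modules, is legitimate (it is the same equivalence used in the proof of Theorem \ref{charfor}). The gap is in the upper bound, which is the actual content of the proposition. Your assertion that the Hom space between adjacent hook Weyl modules is ``classically one-dimensional at any value of $q$'' is false: for generic $q$ the category is semisimple and this Hom space is zero, and even at roots of unity its (non)vanishing depends on the divisibility of $n$ by the quantum characteristic $n_0$. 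What is needed is precisely the dimension count at the specific root of unity with $n_0\mid n$, and for Weyl modules of the $q$-Schur algebra rather than merely Specht modules of the Hecke algebra (the Schur functor does not automatically preserve such Hom spaces, and the small-quantum-characteristic cases $n_0=2,3$ are exactly where the classical literature is delicate). You acknowledge this yourself by calling it ``the main obstacle''; your fallback of enumerating $S_n$-equivariant maps and imposing the Dunkl-singularity condition is likewise only an expectation, not an argument. So as written the proof establishes $\dim\ge 1$ but not $\dim\le 1$.

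For comparison, the paper closes this gap by a different mechanism: it invokes Shan's categorical $\widehat{\mathfrak{sl}}_{n_0}$-action on $\bigoplus_n\mathcal{O}_c(S_n)$ and the structure theory of highest weight $\sl_2$-categorifications from \cite{str}. Since $n$ is divisible by $n_0$, the two hooks differ by moving a single box of some residue $a$, hence lie in the same family for the corresponding $\sl_2\subset\widehat{\mathfrak{sl}}_{n_0}$, and \cite[Proposition 7.4, Remark 7.8]{str} then gives $\dim\Hom=1$ outright --- both bounds at once, with no separate appeal to the Koszul--BGG differential or to Weyl-module combinatorics. If you want to salvage your route, you would need a precise citation or proof of the one-dimensionality of $\Hom_{\mathcal{S}_q(n)}\bigl(W(n-i-1,1^{i+1}),W(n-i,1^i)\bigr)$ at this particular root of unity.
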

\begin{proof}
Let $\mathcal{O}$ denote the direct sum $\bigoplus_{i=0}^\infty \mathcal{O}_i$, where $\mathcal{O}_i$
is the category $\mathcal{O}$ for the algebra $H_c(S_i)$. According to Shan, \cite{Shan},
there is a categorical $\widehat{\sl}_{n_0}$-action on $\mathcal{O}$. This gives rise
to $n_0$ categorical $\sl_2$-actions, one for each simple root in $\widehat{\sl}_{n_0}$.
These categorical actions are highest weight in the sense of \cite{str}.


Now let $\lambda,\lambda'$ be the hooks corresponding to $\wedge^{i+1}\h, \wedge^i\h$. Since $|\lambda|$
is divisible by $n_0$, one obtains $\lambda'$ from $\lambda$ by
moving an $a$-box (where $a$ is a suitable residue mod $n$). It follows that $\lambda$
and $\lambda'$ lie in the same family (in the terminology of \cite[Section 3]{str}) for the subalgebra
$\sl_2\subset \hat{\sl}_{n_0}$ corresponding to the residue $a$. Since $\lambda<\lambda'$
we conclude from \cite[Proposition 7.4, Remark 7.8]{str} that
$\dim\Hom_{\mathcal{O}}(M_c(\wedge^{i+1}\h),M_c(\wedge^i\h))=1$.
%
\end{proof}

Let $\Sh_c: {\mathcal O}_c\to {\mathcal O}_{c+1}$ be the shift functor, which is a right exact functor
defined by
$$
\Sh_c(V)=H_c(S_n)\ee\otimes_{\ee H_c(S_n)\ee=\ee_-H_{c+1}(S_n)\ee_-}\ee_-V,
$$
where $\ee=\ee_{0}$ is the symmetrizer, and
$\ee_-=\ee_{n-1}\in \CC S_n$ is the antisymmetrizer
(see \cite{BEG}).
It is shown in \cite{BE} that $\Sh_c$ is an equivalence of categories
for $c>0$.

\begin{corollary}\label{shift}
One has $\Sh_{\frac{m}{n}}(K^\bullet_{m,n})\cong K^\bullet_{m+n,n}$.
\end{corollary}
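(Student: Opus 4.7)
The plan is to apply the shift equivalence $\Sh_{\frac{m}{n}}$ termwise to $K^\bullet_{m,n}$ and identify the result with $K^\bullet_{m+n,n}$; the uniqueness statement of Proposition \ref{uni} will do most of the work. The crucial first step is to check that, for each $0\le i\le n-1$,
\begin{equation*}
\Sh_{\frac{m}{n}}\bigl(M_{\frac{m}{n}}(\wedge^i\h_n)\bigr)\;\cong\;M_{\frac{m+n}{n}}(\wedge^i\h_n).
\end{equation*}
Since $c=\frac{m}{n}>0$, the functor $\Sh_c$ is an equivalence of highest-weight categories $\mathcal{O}_c\to\mathcal{O}_{c+1}$ by \cite{BE}, and in particular it sends standards to standards. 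The induced bijection on lowest weights can be read off by unwinding the definition of $\Sh_c$: one computes the relevant symmetrizer/antisymmetrizer isotypic components of $M_{\frac{m}{n}}(\wedge^i\h_n)$ as modules over the spherical subalgebra, transports them through the shift isomorphism $\ee H_c\ee\cong\ee_-H_{c+1}\ee_-$, and reads off the result as a Verma module at parameter $\frac{m+n}{n}$ labelled by the same hook.

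Once the termwise identification is in hand, the remainder of the proof is automatic. Because $\Sh_{\frac{m}{n}}$ is an equivalence, it sends the nonzero differentials $\partial^i\colon M_{\frac{m}{n}}(\wedge^{i+1}\h_n)\to M_{\frac{m}{n}}(\wedge^i\h_n)$ of $K^\bullet_{m,n}$ to nonzero morphisms $M_{\frac{m+n}{n}}(\wedge^{i+1}\h_n)\to M_{\frac{m+n}{n}}(\wedge^i\h_n)$. By Proposition \ref{uni}, each such $\Hom$ space is one-dimensional, so these images must coincide, up to a nonzero scalar, with the corresponding differentials of $K^\bullet_{m+n,n}$. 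Rescaling the termwise isomorphisms level by level (which is possible thanks to the one-dimensionality of the $\Hom$ spaces) then produces an isomorphism of complexes $\Sh_{\frac{m}{n}}(K^\bullet_{m,n})\cong K^\bullet_{m+n,n}$.

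The main obstacle is the termwise identification in the first step. Because the definition of $\Sh_c$ mixes the symmetrizer $\ee$ and the antisymmetrizer $\ee_-$, there is an a priori danger of a sign twist, i.e., of landing on $M_{c+1}(\wedge^i\h_n\otimes\sgn)=M_{c+1}(\wedge^{n-1-i}\h_n)$ instead of $M_{c+1}(\wedge^i\h_n)$; such a twist would reverse the homological degree of the Koszul complex rather than preserve it. Verifying that the shift functor produces the identification without the sign twist is the only genuine calculation the proof requires, and once it is in place, everything else is formal.
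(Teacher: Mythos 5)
Your overall architecture --- identify the terms of the complex after applying the shift functor, then invoke Proposition \ref{uni} to pin down the differentials up to scalar and rescale level by level --- is the same as the paper's, and that second half of your argument is fine. The problem is that the first half, the termwise identification $\Sh_{\frac{m}{n}}\bigl(M_{\frac{m}{n}}(\wedge^i\h_n)\bigr)\cong M_{\frac{m+n}{n}}(\wedge^i\h_n)$ with the \emph{same} hook label, is precisely the substantive content of the corollary, and you do not prove it: you assert that it "can be read off by unwinding the definition," and then in your last paragraph you concede that ruling out the sign twist (landing on $\wedge^{n-1-i}\h_n$ rather than $\wedge^i\h_n$) "is the only genuine calculation the proof requires" --- without performing it. Note also that knowing from \cite{BE} that $\Sh_c$ is an equivalence of abelian categories does not by itself give that it sends standard modules to standard modules, let alone with matching labels; that already requires a separate argument, so your opening claim "in particular it sends standards to standards" is not yet justified either.

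The paper closes exactly this gap as follows. It cites Lemma 4.3.2 of \cite{GL}: if a category carries two highest weight structures whose standard objects have the same classes in $K_0$, the structures coincide. It therefore suffices to show that $\Sh_c$ acts as the identity on $K_0$ in the basis of standard modules, and this is done by a deformation argument: the shift functor is flat in $c$ for $c>0$, so its matrix on $K_0$ does not vary with $c$, while for generic $c$ (semisimple category) it is visibly the identity by comparing eigenvalues of the scaling element $\mathbf{h}$. This simultaneously shows that $\Sh_c$ carries Vermas to Vermas and that the labels match, i.e.\ there is no sign twist. If you prefer your "unwind the definition" route, you must actually carry out the computation of the $\ee$- and $\ee_-$-isotypic components and track the lowest weight through the isomorphism $\ee H_c\ee\cong\ee_-H_{c+1}\ee_-$, or substitute an argument such as the $K_0$/flatness one above; as written, the key step is only announced, not proved.
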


\begin{proof}
This follows from Proposition \ref{uni} and the fact that the
shift functor maps Verma modules to Verma modules, see
\cite{GL}. Namely, Lemma 4.3.2 in \cite{GL} says that if a category
is equipped with two highest weight structures such that the classes
of the standard objects  coincide in $K_0$,
then the structures coincide. But the shift functor is clearly the identity on
$K_0$ (with the basis of standard modules), as it is flat with respect
to $c$ for $c>0$, and is obviously the identity for generic $c$
(by looking at the eigenvalues of the scaling element $\bold h$).
\end{proof}

\subsection{Another proof of Theorem \ref{homolkoz}}
\begin{lemma}\label{Lem:koz_groth}
Theorem \ref{homolkoz} holds on
the level of the Grothendieck groups, i.e.,
$$
\bigoplus_{i=0}^{n-1} (-1)^i[M_{\frac{m}{n}}(\wedge^i \h_n)]=
\bigoplus_{i=0}^{d-1}(-1)^i[L_{\frac{m}{n}}(\lambda_i)].
$$
\end{lemma}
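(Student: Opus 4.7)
The plan is to derive this Grothendieck-group identity directly from Theorem \ref{charfor} combined with one classical identity of symmetric functions. First recall that under the standard bijection between irreducible $S_n$-representations and partitions of $n$, the reflection representation $\h_n$ is $(n-1,1)$ and its $i$-th exterior power $\wedge^i\h_n$ is the hook $(n-i,1^i)$, so $[M_{\frac{m}{n}}(\wedge^i\h_n)]=[M_{\frac{m}{n}}((n-i,1^i))]$. This rewrites the right-hand side of the lemma as an alternating sum of Vermas indexed by hooks of size $n$.

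Next, apply Theorem \ref{charfor} with $\lambda=(d-i,1^i)$ and $\lambda'=\varnothing$: for each $i$,
$$[L_{\frac{m}{n}}(n_0(d-i,1^i))]=\sum_{\nu:\,|\nu|=n}c_{(d-i,1^i),n_0}^{\nu}\,[M_{\frac{m}{n}}(\nu)].$$
Taking the alternating sum over $i=0,\dots,d-1$ and comparing coefficients of $[M_{\frac{m}{n}}(\nu)]$, the lemma reduces to the symmetric-function identity
$$\sum_{i=0}^{d-1}(-1)^i s_{(d-i,1^i)}(x_1^{n_0},x_2^{n_0},\ldots)=\sum_{j=0}^{n-1}(-1)^j s_{(n-j,1^j)}(x_1,x_2,\ldots),$$
since the left side, by the definition (\ref{defc}) of the coefficients $c_{\lambda,n_0}^\nu$, expands as $\sum_\nu\bigl(\sum_i(-1)^i c_{(d-i,1^i),n_0}^\nu\bigr)s_\nu(x)$.

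The key input is the classical hook-to-power-sum identity
$$\sum_{k=0}^{N-1}(-1)^k s_{(N-k,1^k)}=p_N,$$
which follows from the Murnaghan--Nakayama rule applied to an $N$-cycle (the irreducible characters of $S_N$ vanish on an $N$-cycle except on hooks, where they equal $(-1)^k$), or equivalently from the Jacobi--Trudi determinant. Applying this identity with $N=d$ in the variables $y_j=x_j^{n_0}$ gives the left-hand side equal to $p_d(x_1^{n_0},x_2^{n_0},\ldots)=\sum_j x_j^{n_0 d}=\sum_j x_j^n$, while applying it with $N=n$ in the variables $x_j$ gives the right-hand side equal to $p_n(x_1,x_2,\ldots)=\sum_j x_j^n$. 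The two sides agree, completing the reduction.

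There is no real obstacle: this lemma is essentially the Euler characteristic of Theorem \ref{homolkoz}, and all the representation-theoretic content is already packaged in Theorem \ref{charfor}; what remains is the purely combinatorial observation above. The only mildly nontrivial step is recognizing the classical expansion of $p_N$ in hook Schur functions, which is standard.
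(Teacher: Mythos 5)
Your argument is correct and is essentially identical to the paper's proof: both reduce the identity via Theorem \ref{charfor} to the symmetric-function statement that the alternating hook sums match, and both settle it by applying $p_k=\sum_{i=0}^{k-1}(-1)^i s_{(k-i,1^i)}$ twice (for $k=d$ in the variables $x_j^{n_0}$, i.e.\ $\Psi_{n_0}(p_d)=p_n$, and for $k=n$). The only blemish is a verbal slip where you call the Verma side of the lemma the ``right-hand side''; the mathematics is unaffected.
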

\begin{proof}

Let $\lambda_i(k)=(k-i,1^{i}).$ By Theorem \ref{charfor} one has
$$[L_{\frac{m}{n}}(n_0\lambda_i(d))]=\sum_{|\mu|=n}c_{\lambda_i(d),n_0}^{\mu}[M_{\frac{m}{n}}(\mu)],$$
where the coefficients $c_{\lambda_i(d),n_0}^{\mu}$ are defined by the equation
$\Psi_{n_0}(s_{\lambda_i(d)})=\sum_{\mu}c_{\lambda_i(d),n_0}^{\mu}s_{\mu}.$
The statement now follows from a symmetric function identity
$$
\sum_{i=0}^{d-1} (-1)^{i} \Psi_{n_0}(s_{\lambda_i(d)})=\Psi_{n_0} \left( \sum_{i=0}^{d-1} (-1)^{i} s_{\lambda_i(d)}\right) =
\Psi_{n_0} (p_d)= p_{n}=\sum_{i=0}^{n-1} (-1)^{i} s_{\lambda_i(n)}.$$
Here we  used the equation $p_{k}=\sum_{i=0}^{k-1} (-1)^{i} s_{\lambda_i(k)}$ twice: for $k=d$ and $k=n$.
\end{proof}

Let us recall again  that all categories $\mathcal{O}_c$ with $c=\frac{r}{n_0}$, where $GCD(r,n_0)=1$ and $r>0$, are equivalent
as highest weight categories. From Proposition \ref{uni} it follows that the equivalences preserve the Koszul-BGG complexes.
So it is enough to prove the theorem for $c=\frac{1}{n_0}$.

\begin{lemma}\label{Lem:koz_mult}
The multiplicity of $H_i(K^\bullet_{d,n})$ in a generic point of the support of $L_{\frac{1}{n_0}}(n)$ equals
$\binom{d-1}{i}$.
\end{lemma}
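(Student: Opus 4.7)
The plan is to restrict the Koszul-BGG complex to a generic point of the support of $L_{1/n_0}(\CC)$ and identify the homology via the K\"unneth formula. Let $b$ be a generic point of $X_{d,n_0}(n)$, so that its $S_n$-stabilizer is $W_b=(S_{n_0})^d$. First I apply Proposition \ref{rest} with $k=1$ at a less degenerate point $b_1$ (with stabilizer $S_{n_0}\times S_{n_0(d-1)}$), obtaining
\[
\Res_{b_1}K^\bullet_{d,n}\;\cong\; K^\bullet_{1,n_0}\otimes K^\bullet_{d-1,\,n_0(d-1)}\otimes \Omega^\bullet.
\]
Since $\mathrm{GCD}(d-1,\,n_0(d-1))=d-1$, Proposition \ref{rest} applies again, with $k=1$, to the second factor. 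Iterating $d-1$ times, and using the transitivity of the Bezrukavnikov-Etingof restriction functors from \cite{BE} to assemble the intermediate restrictions into $\Res_b$, yields an isomorphism of complexes of $H_{1/n_0}(W_b,\h_n)$-modules
\[
\Res_b K^\bullet_{d,n}\;\cong\;(K^\bullet_{1,n_0})^{\otimes d}\otimes (\Omega^\bullet)^{\otimes (d-1)}.
\]

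The base case $d=1$ of Theorem \ref{homolkoz} is classical: by \cite{BEG} (and \cite{CE} for the complex-reflection variant), $K^\bullet_{1,n_0}$ is a resolution of the finite-dimensional simple $L_{1/n_0}(\CC)$, so only $H_0(K^\bullet_{1,n_0})=L_{1/n_0}(\CC)$ is nonzero. Each $\Omega^\bullet$ has zero differential and cohomology $\CC[t]$ in both degrees $0$ and $1$. Since all tensor products above are over the field $\CC$, the K\"unneth formula applies directly and gives
\[
H_i(\Res_b K^\bullet_{d,n})\;\cong\;L_{1/n_0}(\CC)^{\boxtimes d}\otimes\CC[t_1,\ldots,t_{d-1}]^{\oplus\binom{d-1}{i}},
\]
where the binomial coefficient counts the sequences $(k_1,\ldots,k_{d-1})\in\{0,1\}^{d-1}$ with $\sum_j k_j=i$; the unique nontrivial contribution to each summand comes from forcing the $K^\bullet_{1,n_0}$ factors to contribute $H_0$ only.

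By the Wilcox correspondence of Theorem \ref{wilco}(ii), the multiplicity of $H_i(K^\bullet_{d,n})$ at the generic point $b$ equals the rank, as a $\CC[\h^{W_b}]=\CC[t_1,\ldots,t_{d-1}]$-module, of
\[
\Hom_{H_{1/n_0}((S_{n_0})^d)}\bigl(L_{1/n_0}(\CC)^{\boxtimes d},\;\Res_b H_i(K^\bullet_{d,n})\bigr).
\]
The decomposition above gives this rank as $\binom{d-1}{i}$, proving the lemma. The main obstacle is the careful execution of the first step: verifying that the isomorphism in Proposition \ref{rest} is natural with respect to further parabolic restriction applied to its second factor, so that the iteration produces a tensor-product decomposition of $\Res_b K^\bullet_{d,n}$ as complexes (not merely in the derived category). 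Once this naturality is in place, the K\"unneth computation and the extraction of the multiplicity are routine.
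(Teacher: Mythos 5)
Your argument is essentially correct, but it takes a genuinely different route from the paper's. The paper proves Lemma \ref{Lem:koz_mult} by a short, purely commutative-algebra computation at a generic point $z$ of the support: by Lemma \ref{comal} one can choose $n-d$ of the singular polynomials forming a regular sequence; since for $c=1/n_0$ the zero scheme of the singular polynomials is generically reduced, the remaining $d-1$ generators lie, in the formal neighborhood of $z$, in the ideal generated by that regular sequence, and the splitting trick of Lemma \ref{le1} exhibits the completed Koszul complex as (Koszul complex of a regular sequence) tensored with $\wedge(\xi_1,\dots,\xi_{d-1})$ carrying zero differential, so the multiplicity $\binom{d-1}{i}$ is read off as $\dim\wedge^i(\xi_1,\dots,\xi_{d-1})$. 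You instead iterate Proposition \ref{rest}, feed in the $d=1$ case from \cite{BEG}, and apply K\"unneth; this yields the more structured description $L_{1/n_0}(\CC)^{\boxtimes d}\otimes\CC[t_1,\dots,t_{d-1}]^{\oplus\binom{d-1}{i}}$ of the restricted homology, at the cost of heavier machinery (restriction functors, the $d=1$ resolution, Theorem \ref{wilco}) that the paper's proof of this lemma avoids entirely.

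Two caveats on your execution. First, Proposition \ref{rest} gives an isomorphism of complexes only of $\CC(S_{n_0k}\times S_{n_0(d-k)})\ltimes\CC[\h]$-modules; the Cherednik-module structure is recovered only on homology, via semisimplicity, in Corollary \ref{chermor}. So your claim of an isomorphism of complexes of $H_{1/n_0}(W_b,\h)$-modules is stronger than what is available, and in particular applying the next restriction functor to the right-hand side of that isomorphism is not immediately licensed — this is exactly the ``naturality'' issue you flag, and it is not merely routine in the form you set it up. Second, the appeal to Theorem \ref{wilco} is unnecessary: the multiplicity at a generic point is the generic rank of the coherent sheaf $H_i(K^\bullet_{d,n})$, which only sees the $\CC[\h]$-structure. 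Both points are resolved simultaneously if you reorganize the iteration as an induction on $d$: complete once at a two-cluster point, use that completion is exact and that the generic rank along the (irreducible modulo $S_n$) support can be computed from this completion, and apply Proposition \ref{rest} in its stated $\CC[\h]$-linear form together with K\"unneth to obtain the recursion $r_i(d)=r_i(d-1)+r_{i-1}(d-1)$, $r_i(1)=\delta_{i,0}$, whence $r_i(d)=\binom{d-1}{i}$. With that rearrangement your proof is complete, though less elementary than the paper's.
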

\begin{proof}
This follows from Lemma \ref{comal}.
Namely, for $c=1/n_0$, the zero set $Z$ of $f_1,...,f_n$ is
generically reduced,\footnote{In fact, it is reduced, but we use
  only generic reducedness.} so for a suitable generic point $z\in Z$,
the differentials $df_1(z),...,df_{n-d}(z)$
are linearly independent. This implies that
in the formal neighborhood of $z$, there exist functions
$c_{ij}$, $j\le n-d$, $i>n-d$, such that
$f_i=\sum_{j=1}^{n-d}c_{ij}f_j$ for $i>n-d$. This implies
(similarly to the proof of Lemma \ref{le1}) that
the completion of $K^\bullet_{d,n}$ at $z$
is the tensor product of the Koszul complex
defined by $f_1,...,f_{n-d}$ with the exterior algebra
$\wedge(\xi_1,...,\xi_{d-1})$ in $d-1$ generators. This implies the statement, as
the dimension of the degree $i$ component of
$\wedge(\xi_1,...,\xi_{d-1})$ is $\binom{d-1}{i}$.
\end{proof}

\begin{proof}[Proof of Theorem \ref{homolkoz}]
The proof is by induction on $i$. Assume that $H_i(K_{d,n}^\bullet)=L_{\frac{m}{n}}(n_0\lambda_i(d))$ for  all $i<k$, where
$k$ is a fixed number from $0$ to $d$ (for $k=0$ the assumption is vacuous). We want to prove that
$H_k(K_{d,n}^\bullet)=L_{\frac{m}{n}}(n_0\lambda_k(d))$. First of all, for $i<k$, $L_{\frac{m}{n}}(n_0\lambda_k(d))$ is not a composition
factor of $H_i(K_{d,n}^\bullet)$, by the inductive assumption.  We also claim that $L_{\frac{m}{n}}(n_0\lambda(d))$ does not
appear in $H_i(K_{d,n}^\bullet)$ for $i>k$. Indeed, assume the converse. Then $L_{\frac{m}{n}}(n_0\lambda(d))$ has to be a composition
factor of $M_{\frac{m}{n}}(\lambda_i(n))$. However, $\lambda_i(n)\not \geq n_0\lambda_k(d)$ in the dominance ordering as $\lambda_i(n)$
has more rows than $n_0\lambda_k(d)$. So $L_{\frac{m}{n}}(n_0\lambda_k(d))$ cannot appear as a composition factor of $M_{\frac{m}{n}}(\lambda_i(n))$.
But $L_{\frac{m}{n}}(n_0\lambda_i(d))$ has to appear in some $H_i(K_{d,n}^\bullet)$ thanks to Lemma \ref{Lem:koz_groth} and so we must
have $i=k$. Also the generic ranks of $L_{\frac{m}{n}}(n_0\lambda_k(d))$ and $H_k(K_{d,n}^\bullet)$ coincide by Lemma \ref{Lem:koz_mult}.
It follows that $L_{\frac{m}{n}}(n_0\lambda_k(d))=H_k(K_{d,n}^\bullet)$.
\end{proof}

\section{Symmetry for rational Cherednik  algebras of type A}

\subsection{The statement}
We start by recalling the type A rational Cherednik algebra. We
will need a universal version, which is slightly different from
the one defined in the preliminaries section. Let $n\ge 2$ be an integer and $\h$ be the $n-1$
dimensional vector space viewed as the subspace $\{(x_1,\ldots,x_n)| \sum_{i=1}^n x_i=0\}\subset \C^n$.
Then $\h$ can be thought of as the Cartan subalgebra in the Lie
algebra
${\sl}_n$. Let $\Delta_+\subset \h^*$ be the
root system. The corresponding Weyl group is $S_n$. Fix independent variables $h,c$. The (universal) rational Cherednik algebra
${\bf H}$ is the quotient of the semi-direct product $\C S_n\ltimes T(\h\oplus\h^*)[h,c]$ by the relations
$$[x,x']=0, [y,y']=0, [y,x]=h\langle y,x\rangle-c\sum_{\alpha\in \Delta_+} \langle\alpha,y\rangle\langle x,\alpha^\vee\rangle s_\alpha,$$  with $x,x'\in \h^*, y,y'\in \h.$ Here $\alpha^\vee,s_\alpha$ are, respectively, the coroot and the  reflection corresponding to a root $\alpha$.

We remark that the algebra ${\bf H}$ is bigraded: $h,c$ lie in bidegree $(1,1)$, $\h$ is in bidegree $(0,1)$,
$\h^*$ is in bidegree $(1,0)$, and $S_n$ is in bidegree $(0,0)$.

Recall that we introduced the idempotents
$\ee_i$ in $\C S_n$ corresponding to the irreducible
representations $\wedge^i \h$ of $S_n$, and
$e_i:=\ee_i+\ee_{i-1}$, where $\ee_{-1},\ee_n=0$. The element
$e_i\in \C S_n$ should be thought of as the idempotent corresponding
to the $S_n$-module $\wedge^i \C^n=\wedge^i \h\oplus
\wedge^{i-1}\h$. Recall that $\overline{\ee}=\sum_k e_{2k}=\sum_k
e_{2k+1}=\sum_j \ee_j$. Our goal is to understand the structure of
the {\it quasispherical} subalgebra $\overline{\ee}{\bf H}\overline{\ee}\subset {\bf
  H}$. The latter is not a unital subalgebra, instead $\overline{\ee}$ is
the unit in $\overline{\ee}{\bf H}\overline{\ee}$.
We will identify the algebra $\overline{\ee}{\bf H}\overline{\ee}$
with a certain quantum Hamiltonian reduction generalizing
the description of $e_0 {\bf H} e_0$ obtained by Gan and Ginzburg,
\cite{GG}.

Set $\VV_n:=\slf_n(\C)\oplus\C^n$. We will call it $\VV$
if no confusion is possible.
The group $G:=\GL_n(\C)$ acts naturally on $\VV$. Also $G$ acts on
the symplectic vector space $T^*\VV$  with moment map $\mu: T^*\VV\rightarrow \g:=\Lie(G)$
given by $\mu(A,B,i,j)=[A,B]+ i\otimes j, A,B\in \slf_n(\C), i\in \C^n, j\in (\C^n)^*$,
where we identify $\slf_n(\C)$ with its dual via the trace pairing. The space $T^*\VV$ also
carries an action of the two-dimensional torus $(\C^\times)^2$ commuting with $G$:
$$(t_1,t_2).(A,B,i,j)=(t_1^{-1} A, t_2^{-1} B, t_1^{-1} i, t_2^{-1} j).$$
Consider the subtori $T_1:=\{(t,t^{-1})\in (\C^\times)^2\}, T_2:=\{(t,t)\in (\C^\times)^2\}$.

The symplectic vector space $T^*\VV$ admits a natural quantization, the algebra $\D_h(\VV)$ of homogenized
differential operators. The latter can be obtained from the algebra $\D(\VV)$ of differential
operators by using the Rees construction. Namely, the algebra $\D(\VV)$ is filtered by the
subspaces $\D_{\leqslant i}(\VV)$ of differential operators of order $\leqslant i$. Then $\D_h(\VV):=\bigoplus_i
\D_{\leqslant i}(\VV) h^i\subset \D(\VV)[h]$. The algebra $\D_h(\VV)$ is bigraded: its component of bidegree
$(i,j)$, by definition, consists of all elements in $h^j \D_{\leqslant j}(\VV)$ that have degree $i+j$
with respect to the grading induced by the $\C^\times$-action on $\VV$ by $(t,v)\mapsto t^{-1}v$. In particular, $\VV^*\subset \D_{\leqslant 0}(\VV)$ has bidegree $(1,0)$, while $\VV\subset \operatorname{Vect}(\VV)\subset h\D_{\leqslant 1}(\VV)$ has bidegree $(0,1)$
and $h$ has bidegree $(1,1)$. Also the action of $G$ on $\VV$
gives rise to the quantum comoment map $\Phi_h:\g\rightarrow
\D_h(\VV)$, under which
an element $\xi$ maps to the corresponding vector field in
$h\D_{\leqslant 1}(\VV)$ induced by the action of $\g$. We remark that the quantum comoment map has image in bidegree $(1,1)$.

More generally, let $U$ be a $G$-module.  Consider the  tensor product $\D_h(\VV)\otimes \End(U)$ that inherits
the bigrading from $\D_h(\VV)$ with $\End(U)$ being of bidegree
$(0,0)$. There is the quantum comoment
map $\Phi^U_h(\xi):=\Phi_h(\xi)\otimes 1+h\otimes \xi_U: \g\to\D_h(\VV)\otimes \End(U)$, where $\xi_U$ just stands for the image of $\xi$ in $\End(U)$.

Let $\z$ denote the center of $\g$. It is naturally identified with $\C$ via $z\mapsto z \operatorname{id}_{\C^n}$.
Let $\beta$ denote the basis element in $\z$ corresponding to $1$.

The quantum Hamiltonian reduction we are going to consider will be defined first at the level of sheaves.
A sheaf of interest will be on a formal deformation
$\widetilde{X}$ of the Hilbert scheme $X=\operatorname{Hilb}_n$,
to be recalled first.

The variety $X$ can be produced by Hamiltonian reduction as follows. Consider the character
$\theta:=\det$ of $G=\GL_n(\C)$ and let $\overline{\VV}^{ss}$ be the open subset of $\theta$-semistable
points in $\overline{\VV}:=T^*\VV$. Then $\overline{\VV}^{ss}\cap \mu^{-1}(0)=\{(A,B,i,0)| [A,B]=0, \C[A,B]i=\C^n\}$.
By definition, $X$ is the Hamiltonian reduction of $\overline{\VV}^{ss}$ by the action of $G$, i.e.,
$X=(\mu^{-1}(0)\cap \overline{\VV}^{ss})/G$. This is a smooth symplectic variety equipped with
a $(\C^\times)^2$-action and also with a morphism $X\rightarrow
\mu^{-1}(0)//G\cong (\h\oplus\h^*)/S_n$
that is a resolution of singularities.

In fact, we will need to work over a larger scheme. Namely, consider the Hamiltonian reduction $[\overline{\VV}^{ss}\cap \mu^{-1}(\z^*)]/G$.
This is a scheme over $\z^*$. Its non-zero fiber is the so called Calogero-Moser space and the $G$-action
over such fiber is known to be free. Let $\widetilde{X}$ be the completion of
this scheme at the zero fiber, this is a formal scheme over the formal neighborhood $(\z^*)^{\wedge_0}$. The scheme
$\widetilde{X}$ comes equipped with a fiberwise symplectic form, say $\widetilde{\omega}$.

We will define a sheaf $\Dcal^U_h$ of $\C[[\z^*,h]]$-algebras on $\widetilde{X}$ as follows. We sheafify
the $h$-adic completion of $D_h(\VV)$ to $\overline{\VV}$.
Abusing notation, we denote the resulting sheaf again by $D_h(\VV)$. Then set
\begin{equation}\label{eq:loc_red}\Dcal^U_h:=\left[(\End(U)\otimes D_h(\VV))|_{\overline{\VV}^{ss}}/(\End(U)\otimes D_h(\VV))|_{\overline{\VV}^{ss}}\Phi^{U}_h([\g,\g])\right]^G.\end{equation}
The group $(\C^\times)^2$ naturally acts on $\Dcal^U_h$, where we have $(t_1,t_2).h=t_1t_2 h$.
Let $\A_h(\VV,U)$ stand for the subalgebra of $T_2$-finite elements in $\Gamma(\widetilde{X}, \Dcal^U_h)$.
This  is an algebra over $\C[\beta,h]$ equipped with an action of $(\C^\times)^2$ by algebra automorphisms.


\begin{theorem}\label{Thm_iso}
There is a $(\C^\times)^2$-equivariant
$\C[h]$-linear isomorphism
$$
\Upsilon:\A_h(\VV,\wedge^{n-2\bullet}
\C^n)\xrightarrow{\sim} \overline{\ee}{\bf H}
\overline{\ee}
$$
that maps $\beta$ to $c+h$.
\footnote{Here $\wedge^{n-2\bullet}\C^n:=\oplus_j\wedge^{n-2j}\C^n$.}
This isomorphism induces an isomorphisms
$$
\Upsilon_j: \A_h(\VV,\wedge^{n-2j}\C^n)\xrightarrow{\sim}
e_{n-2j}{\bf H}e_{n-2j}
$$
for $j\ge 0$.
\end{theorem}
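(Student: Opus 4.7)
The plan is to extend the Gan--Ginzburg quantum Hamiltonian reduction theorem \cite{GG}, which handles the case $U=\C$ (giving $\A_h(\VV,\C)\cong \ee_0{\bf H}\ee_0$), to the $G$-modules $U=\wedge^{n-2j}\C^n$ and their sum $\wedge^{n-2\bullet}\C^n$. The argument splits into three steps: constructing $\Upsilon$ via a matrix-valued Dunkl embedding on the regular locus of $\widetilde X$; proving $\Upsilon$ is an isomorphism by a classical-limit/flatness argument; and deducing the refined $\Upsilon_j$ from the block decomposition of $U$.

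To construct $\Upsilon$, I would restrict $\Dcal^U_h$ to the open locus $\widetilde X^\circ\subset\widetilde X$ on which the moment map is smooth and $G$ acts freely (the preimage of the Calogero--Moser locus, together with the appropriate open stratum of the zero fiber). On $\widetilde X^\circ$ the sheaf $\Dcal^U_h$ becomes a sheaf of twisted matrix differential operators on an \'etale $S_n$-cover of $(\h_{\mathrm{reg}}\oplus\h^*)/S_n$ with fiber $U$, and taking $T_2$-finite global sections yields matrix-valued operators on $\C[\h_{\mathrm{reg}}]\otimes U$. The branching $\wedge^k\C^n|_{S_n}=\wedge^k\h\oplus\wedge^{k-1}\h$ (whose $S_n$-projector is precisely $e_k$), combined with the Dunkl embedding ${\bf H}\hookrightarrow \mathcal{D}(\h_{\mathrm{reg}})\rtimes S_n$, identifies these sections with elements of $e_{n-2j}{\bf H}e_{n-2j}$ for $U=\wedge^{n-2j}\C^n$, and by summing over $j$ with elements of $\overline{\ee}{\bf H}\overline{\ee}$ (noting $\overline{\ee}=\sum_j e_{n-2j}$). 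The relation $\beta\mapsto c+h$ is forced by the standard $\rho$-shift between the classical and quantum comoment maps, exactly as in Gan--Ginzburg.

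To prove bijectivity, I would pass to the classical limit $h=0$. Both sides become commutative: the left becomes the $T_2$-finite part of $[\C[\mu^{-1}(\z^*)^{ss}]\otimes\End(U)]^G$ (viewed as sections of the associated $\End(U)$-bundle on $\widetilde X$), while the right becomes $\overline{\ee}(\C[\h\oplus\h^*]\rtimes S_n)\overline{\ee}$. Using the Gan--Ginzburg identification $\mu^{-1}(0)^{ss}/G\cong(\h\oplus\h^*)/S_n$, together with the description of $U$-twisted $G$-equivariant functions via the tautological bundle and Schur--Weyl duality applied to $U|_{S_n}$, both sides identify with $(\C[\h\oplus\h^*]\otimes \End_{S_n}(U|_{S_n}))^{S_n}$, matching term by term. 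The desired quantum isomorphism then lifts by flatness in $h$ and $c$: the PBW theorem for ${\bf H}$ on one side and smoothness of $\mu$ on $\overline{\VV}^{ss}$ on the other provide the requisite freeness statements.

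Finally, the refinement $\Upsilon_j$ comes from the observation that the projections $p_j=\mathrm{id}_{\wedge^{n-2j}\C^n}\in\End(U)^G$ are orthogonal $G$-invariant idempotents summing to the identity, so their images $\Upsilon(1\otimes p_j)$ are orthogonal idempotents in $\overline{\ee}{\bf H}\overline{\ee}$ summing to $\overline{\ee}$, and tracking the $S_n$-isotypic content identifies them with the $e_{n-2j}$. The main technical obstacle I expect is the explicit matrix-valued Dunkl calculation in the construction step: converting an abstract section of $\Dcal^U_h$ into an explicit element of ${\bf H}\otimes\End(U)$ and verifying that it lands in the correct block $e_{n-2j}{\bf H}e_{n-2j}$ (and not a larger subalgebra) requires careful bookkeeping of $G$- versus $S_n$-equivariance at reflection hyperplanes, together with the precise matching of Cherednik correction terms that arise upon restricting $G$-equivariant operators to $S_n$-equivariant ones.
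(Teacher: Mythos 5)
Your route is genuinely different from the paper's (which quantizes the Procesi bundle $\Pro$ rather than building a matrix-valued Dunkl embedding), but as written it has a real gap at its central step. The classical-limit identification you invoke --- that the $h=0$ specialization of $\A_h(\VV,\wedge^{n-2\bullet}\C^n)$, i.e.\ the ($T_2$-finite) global sections of $\End(\oplus_j\wedge^{n-2j}\widetilde{\taut})$ on $\widetilde{X}$, equals $\overline{\ee}\bigl(\C S_n\ltimes\C[\h\oplus\h^*]\bigr)\overline{\ee}$ --- does not follow from the Gan--Ginzburg identification of the generic locus together with Schur--Weyl duality applied to $U|_{S_n}$. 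The map $X\to(\h\oplus\h^*)/S_n$ is a resolution, and matching sections over the \emph{regular} locus says nothing about global sections over all of $X$; the equality $\Gamma\bigl(X,\End(\wedge^i\taut)\bigr)\cong e_i\bigl(\C S_n\ltimes\C[\h\oplus\h^*]\bigr)e_i$, together with the higher cohomology/Ext vanishing you would also need for your "lift by flatness in $h$ and $c$" step, is exactly the deep content of Haiman's theorem $\End_{\Str_X}(\Pro)\cong\C S_n\ltimes\C[\h\oplus\h^*]$ combined with Ginzburg's identification $e_i\Pro\cong\wedge^i\taut$ (the paper explicitly flags the latter as the main ingredient, and notes it fails for Schur functors other than $\wedge^i$). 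Without these inputs your "matching term by term" is only an identification away from the exceptional locus, which is not enough either to see that the map is an isomorphism or to lift it to $h\neq 0$.

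Two further points. First, the construction step itself --- showing that a global section of $\Dcal^U_h$, viewed as a matrix differential operator on the regular locus, lies in the image of $\overline{\ee}{\bf H}\overline{\ee}$ under the Dunkl embedding with the correct relation between $\beta$ and $c$ --- is the analogue of the hardest part of Gan--Ginzburg (which there rests on flatness of the moment map on the almost-commuting variety); you name it as the main technical obstacle but do not resolve it, and the paper avoids it entirely by using the uniqueness of the quantization $\widetilde{\Pro}_h$ (Ext-vanishing) and the results of \cite{quant} to produce $\Upsilon$ abstractly. Second, the normalization $\beta\mapsto c+h$ is not automatic from a $\rho$-shift: even with the abstract isomorphism in hand, the paper must run a separate argument (comparing with $\ee_-{\bf H}\ee_-$ for $U=\wedge^n\C^n$, the $\ee{\bf H}_{1,p-1}\ee\cong\ee_-{\bf H}_{1,p}\ee_-$ shift isomorphism of \cite{BEG}, and nonexistence of $\ee{\bf H}_{1,p+1}\ee\cong\ee{\bf H}_{1,p-1}\ee$ seen from finite-dimensional representations) to rule out the alternative $c\mapsto-\beta$. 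In a fully explicit Dunkl-operator construction this constant could in principle be computed, but that computation is part of the unfinished step, so as it stands the shift is asserted rather than proved.
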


Theorem \ref{Thm_iso} is proved in the next three subsections.

\subsection{Procesi bundle}
In the proof  we use a remarkable
bundle on $X$, the Procesi bundle $\Pro$ originally constructed by M. Haiman,
\cite{Haiman}; an alternative construction was produced by Ginzburg, \cite{Ginzburg_Pro}.

The Hamiltonian reduction construction equips $X$ and $\widetilde{X}$ with  natural vector bundles $\taut,\widetilde{\taut}$
of rank $n$. Namely, we can consider the $G$-equivariant vector bundle on $T^*\VV$ that is trivial as a vector
bundle, and such that $G$ acts on a fiber as on the tautological $n$-dimensional representation.
We also equip this bundle with the  $(\C^\times)^2$-action that is trivial on the fiber.
The bundle $\taut$ is the descent of the restriction of this bundle to $\mu^{-1}(0)\cap \overline{\VV}^{ss}$.
The bundle ${\mathcal{T}}$ is $(\C^\times)^2$-equivariant. The bundle $\widetilde{\taut}$ on $\widetilde{X}$ is defined in  a similar way.

There is another bundle on $X$, the Procesi bundle $\Pro$. It is a $(\C^\times)^2$-equivariant
bundle with a fiberwise action of $S_n$ having the following properties:
\begin{itemize}
\item[(i)] $\End_{\Str_X}(\Pro)=\C S_n\ltimes\C[\h\oplus\h^*]$ (a
  $\C[\h\oplus\h^*]^{S_n}$- and $S_n$- and $(\CC^\times)^2$-linear
isomorphism).
\item[(ii)] $\Ext^i_{\Str_X}(\Pro,\Pro)=0$ for $i>0$.
\item[(iii)] $e_0 \Pro=\Str_X$.
\item[(iv)] $e_1\Pro=\taut$.
\end{itemize}
Ginzburg generalized (iv): the vector bundle $e_i\Pro=
\Hom_{S_n}(\wedge^i \C^n,\Pro)$
is naturally isomorphic to $\wedge^i\taut$. This follows from \cite[Theorem 1.6.1]{Ginzburg_Pro} and is the main ingredient
in the proof of Theorem \ref{Thm_iso}.
(Note that this
property fails if we replace $\wedge^i$ in both places by a more general Schur
functor!)

Because of (ii), the bundle
$\Pro$ uniquely extends to a $(\C^\times)^2$-equivariant bundle
$\widetilde{\Pro}$ on $\widetilde{X}$.  Moreover,
since $\wedge^i \taut$ is a  direct summand of $\Pro$, we get $\Ext^1_{\Str}(\wedge^i \taut,\wedge^i \taut)=0$. So
 $\wedge^i \taut$ is a unique $(\C^\times)^2$-equivariant extension of $\wedge^i\widetilde{\taut}$.
So we see that $\widetilde{\Pro}$ has the following properties:
\begin{itemize}
\item[($\widetilde{\text{i}}$)] $\End_{\Str_{\widetilde{X}}}(\widetilde{\Pro})/(\z)=\C S_n\ltimes\C[\h\oplus\h^*]$.
\item[($\widetilde{\text{ii}}$)] $\Ext^i_{\Str_{\widetilde{X}}}(\widetilde{\Pro},\widetilde{\Pro})=0$ for $i>0$.
\item[($\widetilde{\text{iii}}$)] $e_0\widetilde{\Pro}=\Str_{\widetilde{X}}$.
\item[($\widetilde{\text{iv}}$)]
  $e_1\widetilde{\Pro}=\widetilde{\taut}$. More generally, the
multiplicity space of the $S_n$-module $\wedge^i \C^n$ in
$\widetilde{\Pro}$ is
isomorphic to $\wedge^i \widetilde{\taut}$.
\end{itemize}

\subsection{Quantization}
Set $\Dcal_h:=\Dcal_h^{\C}$, where $\C$ stands for the trivial $G$-module. This is a quantization of
the structure sheaf $\Str_{\widetilde{X}}$. We remark that $\Dcal_h$ is almost the same as the canonical
quantization of $\widetilde{X}$ studied in \cite{BK} and \cite{quant}. The only difference is that the structure
of the $\C[[\z^*,h]]$-algebra is changed, since here we have used a non-symmetrized quantum comoment
map to define $\Dcal_h$.

To a $G$-module $U$ we can assign a bundle $\widetilde{\U}$
on $\widetilde{X}$ as before. One can construct a quantization $\widetilde{\U}_h$ of $\widetilde{\U}$
as follows:
$$\widetilde{\U}_h:=[(U\otimes D_h(\VV))|_{\overline{\VV}^{ss}}/(U\otimes D_h(\VV))|_{\operatorname{\VV}^{ss}}\Phi_h([\g,\g])]^G.$$
It is clear from the construction that $\widetilde{\U}_h$ is a $(\C^\times)^2$-equivariant right $\Dcal_h$-module.
Let us relate $\widetilde{\U}_h$ to $\Dcal^U_h$.

\begin{lemma}\label{lem:local_end}
There is a natural identification of the sheaves of  algebras $\LEnd_{\Dcal_h}(\widetilde{\U}_h)$
and $\Dcal^U_h$.
\end{lemma}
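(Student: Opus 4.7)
My plan is to construct a natural sheaf map $\Upsilon\colon\Dcal^U_h\to\LEnd_{\Dcal_h}(\widetilde{\U}_h)$ by left multiplication, and then check it is an isomorphism by passing to the associated graded. For the construction, any element $a\in\End(U)\otimes D_h(\VV)$ acts on the free right $D_h(\VV)$-module $U\otimes D_h(\VV)$ by left multiplication, giving a right-$D_h(\VV)$-linear endomorphism that preserves the right submodule $(U\otimes D_h(\VV))\Phi_h([\g,\g])$ and is $G$-equivariant when $a$ is $G$-invariant, hence descends to an endomorphism of $\widetilde{\U}_h$. To see that the map factors through the quotient by $(\End(U)\otimes D_h(\VV))\Phi^U_h([\g,\g])$, one computes, for $\xi\in[\g,\g]$ and $u\otimes r\in U\otimes D_h(\VV)$,
\[
\Phi^U_h(\xi)\cdot(u\otimes r) \;=\; u\otimes\Phi_h(\xi)r + h\,\xi_U(u)\otimes r \;\equiv\; u\otimes[\Phi_h(\xi),r] + h\,\xi_U(u)\otimes r \pmod{(U\otimes D_h(\VV))\Phi_h([\g,\g])},
\]
since $u\otimes r\Phi_h(\xi)$ lies in the right submodule. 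Using the quantum moment map identity $[\Phi_h(\xi),r]=h$ times the infinitesimal $G$-action on $r$, the right-hand side is $h$ times the infinitesimal $G$-action on $u\otimes r$, which vanishes on every $G$-invariant class. Thus left multiplication by elements of $(\End(U)\otimes D_h(\VV))\Phi^U_h([\g,\g])$ is trivial on $\widetilde{\U}_h$, giving a well-defined $\Upsilon$.

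To prove $\Upsilon$ is an isomorphism, I would pass to the associated graded with respect to the $h$-filtration. On one side, $\operatorname{gr}\Dcal_h = \Str_{\widetilde{X}}$ and $\operatorname{gr}\widetilde{\U}_h = \widetilde{U}$, the associated vector bundle on $\widetilde{X}$, so $\operatorname{gr}\LEnd_{\Dcal_h}(\widetilde{\U}_h) = \End_{\Str_{\widetilde{X}}}(\widetilde{U})$. On the other side, setting $h=0$ in the definition of $\Dcal^U_h$ turns $\Phi^U_h$ into the classical moment map for the $G$-action on $\End(U)\otimes\Str_{\overline{\VV}^{ss}}$ (where $G$ acts by conjugation on $\End(U)$), and the resulting classical Hamiltonian reduction is the associated bundle $\widetilde{\End(U)}$, which equals $\End(\widetilde{U})$ because $G$ acts freely on $\mu^{-1}(\z^*)\cap\overline{\VV}^{ss}$. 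The map $\operatorname{gr}\Upsilon$ is precisely this canonical identification, hence an isomorphism. Since both sheaves are $h$-adically complete and flat over $\C[[h]]$, a standard filtered-isomorphism argument then lifts this to show $\Upsilon$ itself is an isomorphism.

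The main technical obstacle is making the associated-graded argument rigorous at the sheaf level on the formal scheme $\widetilde{X}$, in particular handling the interaction between $h$-adic completion, the completion along the zero fiber over $\z^*$, and the $G$-invariants. A clean alternative is to argue \'etale-locally on $\widetilde{X}$: over any chart on which the principal $G$-bundle $\mu^{-1}(\z^*)\cap\overline{\VV}^{ss}\to\widetilde{X}$ trivializes, $\widetilde{\U}_h$ becomes the free right $\Dcal_h$-module of rank $\dim U$, whose endomorphism sheaf is manifestly $\End(U)\otimes\Dcal_h$, and a parallel direct computation identifies $\Dcal^U_h$ with the same matrix algebra. These local identifications glue by naturality of $\Upsilon$, yielding the global isomorphism.
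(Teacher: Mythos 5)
Your proposal takes essentially the same route as the paper: construct the natural left-action map $\Dcal^U_h\to\LEnd_{\Dcal_h}(\widetilde{\U}_h)$, observe that the endomorphism sheaf is $h$-flat because $\widetilde{\U}_h$ is locally free over $\Dcal_h$, reduce modulo $h$ where both sides become $\End_{\Str_{\widetilde{X}}}(\widetilde{\U})$ by free descent, and lift by an $h$-adic completeness argument. The only point you assert rather than justify — the $h$-adic completeness of $\Dcal^U_h$, which is what makes the lifting work — is exactly the ingredient the paper supplies, via Noetherianity of $\End(U)\otimes D_h(\VV)|_{\overline{\VV}^{ss}}$ (left ideals are finitely generated, hence $h$-adically closed), so your argument is sound once that remark is added.
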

\begin{proof}
There is a natural action of $\Dcal^U_h$ on $\widetilde{\U}_h$ from the left commuting
with a right action of $\Dcal_h$. This gives rise to a homomorphism
$\Dcal^U_h\rightarrow \LEnd_{\Dcal_h}(\widetilde{\U}_h)$. The endomorphism sheaf is flat modulo $h$
because $\widetilde{\U}_h$ is a locally free right $\Dcal_h$-module. The
 sheaf $\Dcal^U_h$ is complete in the $h$-adic topology. This is
 because the sheaf $\End(U)\otimes D_h(\VV)|_{\overline{\VV}^{ss}}$
is Noetherian and so every left ideal is finitely generated and hence closed with respect to the $h$-adic topology,
compare to \cite[Lemma 2.4.4]{HC}. So it is enough to check that the homomorphism is an isomorphism
modulo $h$. Equivalently, we need to show that the endomorphism sheaf of  $\widetilde{\U}$
is the sheaf induced by $\operatorname{End}(U)$. But this is clear.
\end{proof}

Also, thanks to ($\widetilde{\text{ii}}$) we have a unique quantization $\widetilde{\Pro}_h$ of $\widetilde{\Pro}$,
where $\widetilde{\Pro}_h$ is again a $(\C^\times)^2$-equivariant right $\Dcal_h$-module. We still have a
natural action of $S_n$ on $\widetilde{\Pro}_h$. Consider the endomorphism algebra $\End_{\Dcal_h}(\widetilde{\Pro}_h)$.
This is a $\C[[\z^*,h]]$-algebra equipped with a $(\C^\times)^2$-action by automorphisms. Consider the subalgebra
$\End_{\Dcal_h}(\widetilde{\Pro}_h)_{T_2-fin}$ of $T_2$-finite elements in
$\End_{\Dcal_h}(\widetilde{\Pro}_h)$.
The results of \cite[Section 6]{quant}
 relate the latter algebra to ${\bf H}$. Summarizing these results, we obtain the following
proposition.

\begin{proposition}\label{Prop:Pro_cher}
We have an $S_n$-linear, $(\C^\times)^2$-equivariant isomorphism of $\C[h]$-algebras $\Upsilon: {\bf H}\rightarrow
\End_{\Dcal_h}(\widetilde{\Pro}_h)_{T_2-fin}$. It maps $c\in {\bf H}$ to $-\beta$ or to $\beta-h$.
\end{proposition}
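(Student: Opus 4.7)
My plan is to deduce Proposition \ref{Prop:Pro_cher} from the general machinery developed in \cite[Section 6]{quant} for endomorphism algebras of quantized Procesi bundles. The first step is to establish the existence and uniqueness of the quantization $\widetilde{\Pro}_h$ using the $\Ext$-vanishing property $\Ext^i_{\Str_{\widetilde{X}}}(\widetilde{\Pro},\widetilde{\Pro})=0$ for $i>0$: by the standard deformation-theoretic argument (obstructions in $\Ext^2$, ambiguities in $\Ext^1$), $\widetilde{\Pro}$ admits a unique $(\C^\times)^2$-equivariant lift to a right $\Dcal_h$-module. The natural $S_n$-action on $\widetilde{\Pro}$ then also lifts uniquely to $\widetilde{\Pro}_h$.

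Next I would pass to the endomorphism algebra and use $h$-adic completeness to reduce modulo $h$, just as in Lemma \ref{lem:local_end}. The semiclassical limit of $\End_{\Dcal_h}(\widetilde{\Pro}_h)$ is identified via $\End_{\Str_{\widetilde{X}}}(\widetilde{\Pro})/(\z)=\C S_n\ltimes\C[\h\oplus\h^*]$. Taking $T_2$-finite vectors produces a $(\C^\times)^2$-equivariant filtered $\C[h,\beta]$-algebra whose associated graded is $\C S_n\ltimes\C[\h\oplus\h^*][h,c]$ with the right bigrading (the $T_1$-action supplying the filtration with $h$ and $c$ in bidegree $(1,1)$, just as in ${\bf H}$). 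The Etingof--Ginzburg classification of bigraded filtered deformations of $\C S_n\ltimes\C[\h\oplus\h^*]$ (the PBW theorem for symplectic reflection algebras) forces this algebra to be the universal rational Cherednik algebra ${\bf H}$, with $c$ a polynomial in $\beta,h$ of bidegree $(1,1)$. The $S_n$-equivariance of $\widetilde{\Pro}_h$ yields the $S_n$-linearity of the resulting $\Upsilon$.

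The main obstacle is determining the precise linear formula $c\mapsto -\beta$ or $c\mapsto \beta-h$. These two cases reflect the choice of quantum comoment map used to define $\Dcal_h$: we have fixed the non-symmetrized convention $\Phi_h$, but a symmetrized version $\Phi_h+\tfrac{h}{2}\operatorname{tr}$ would produce the other formula. To pin this down, I would restrict to the open Calogero--Moser locus, where the $G$-action on $\mu^{-1}(\beta)\cap\overline{\VV}^{ss}$ is free and the quantum Hamiltonian reduction can be realized explicitly. Computing the commutator $[y,x]$ for $x\in\h^*$, $y\in\h$ at a generic fiber and tracking the coefficient of each reflection $s_\alpha$ (which will come out linear in $\beta$ with a possible shift by $h$ coming from the difference between $\Phi_h$ and its symmetrization) matches the commutation relation in ${\bf H}$ and fixes the formula for $c$. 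Once the identification is made at a generic fiber, the flatness over $\C[h,\beta]$ extends it to the full formal neighborhood $\widetilde{X}$.
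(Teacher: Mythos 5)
Your overall strategy for the existence of $\Upsilon$ — unique $(\C^\times)^2$-equivariant quantization $\widetilde{\Pro}_h$ from $\Ext^{>0}_{\Str_{\widetilde{X}}}(\widetilde{\Pro},\widetilde{\Pro})=0$, passage to $\End_{\Dcal_h}(\widetilde{\Pro}_h)_{T_2-fin}$, reduction modulo $(h,\z)$ to $\C S_n\ltimes\C[\h\oplus\h^*]$, and an Etingof--Ginzburg-type PBW rigidity argument identifying the bigraded flat deformation with ${\bf H}$ — is in the spirit of what the paper relies on, namely the results of \cite[Section 6]{quant}, which it simply cites (modulo the flatness/cohomology-vanishing bookkeeping that you wave at but which does follow from property ($\widetilde{\text{ii}}$)). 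The genuine gap is the parameter identification, which is the actual content of the second sentence of the proposition. Rigidity only gives that $c$ is sent to some bidegree-$(1,1)$ element $a\beta+bh$; the assertion to be proved is the sharp dichotomy $a\beta+bh\in\{-\beta,\ \beta-h\}$, and this does not follow from your argument. In the paper this dichotomy is extracted from the parameter computations of \cite{quant}, and even there the residual sign ambiguity is deliberately left open in the proposition and resolved only later, in the proof of Theorem \ref{Thm_iso}, by an indirect argument (assuming $\Upsilon(c)=-\beta$ yields isomorphisms $\ee{\bf H}_{1,p+1}\ee\cong\ee{\bf H}_{1,p-1}\ee$ for all $p$, which cannot exist).

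Your plan for pinning the formula down does not close this gap, for two concrete reasons. First, the two cases do not ``reflect the choice of quantum comoment map'': replacing $\Phi_h$ by $\Phi_h+\tfrac{h}{2}\operatorname{tr}$ merely translates $\beta$ by a multiple of $h$, so it can change $a\beta+bh$ into $a\beta+b'h$ but can never flip the sign of the $\beta$-coefficient, whereas $-\beta$ and $\beta-h$ differ by a sign flip; that ambiguity comes from a different source (the normalization of the Procesi bundle and of the identification $\End_{\Str_X}(\Pro)\cong\C S_n\ltimes\C[\h\oplus\h^*]$, e.g. twisting by the sign character swaps $e_i$ with $e_{n-i}$ and changes the parameter by $c\mapsto -c$ up to a shift). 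Second, the proposed commutator computation ``at a generic fiber'' is not a workable argument as sketched: over the Calogero--Moser locus the $G$-action is free and the reduced sheaf of algebras is essentially a matrix algebra over a quantization of a smooth symplectic variety, so the reflection terms $c\,s_\alpha$ in $[y,x]$ are not visible there in any naive way — the $S_n$-structure lives on the fibers of $\widetilde{\Pro}$, and producing the action of $x\in\h^*$, $y\in\h$ on those fibers explicitly is precisely the hard part. Making that step precise would amount to redoing an Etingof--Ginzburg/Gan--Ginzburg Harish-Chandra-type radial part computation, which your outline does not carry out; as it stands, the key claim that $\Upsilon(c)\in\{-\beta,\beta-h\}$ remains unproved.
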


We will see in the next subsection that actually
$\Upsilon(c)=\beta-h$ (i.e. $\beta\mapsto c+h$, as desired).

Now let $\tilde{e}\in \C S_n$ be an idempotent. Then $\Upsilon$ induces an isomorphism $$\tilde{e} {\bf H} \tilde{e}
\xrightarrow{\sim} \tilde{e}\End_{\Dcal_h}(\widetilde{\Pro}_h)_{T_2-fin}\tilde{e}=\End_{\Dcal_h}(\tilde{e}\widetilde{\Pro}_h)_{T_2-fin}.$$

\subsection{Proof of Theorem \ref{Thm_iso}}
Let us remark that if a $G$-module $U$ satisfies $\widetilde{\U}=\tilde{e}\widetilde{\Pro}$, then $\widetilde{\U}_h=\tilde{e}\widetilde{\Pro}_h$. This is because $\Ext^i_{\Str_{\widetilde{X}}}(\tilde{e}\widetilde{\Pro},\tilde{e}\widetilde{\Pro})=0$
for $i>0$ and so $\widetilde{\Pro}$ admits a unique
quantization. So by applying $\ee_-$,
we have a $(\C^\times)^2$-equivariant $\C[[\z^*,h]]$-linear
isomorphism $\Gamma(\widetilde{X},\Dcal^U_h)\rightarrow \End_{\Dcal_h}(\tilde{e}\widetilde{\Pro}_h)$ and hence
a $(\C^\times)^2$-equivariant $\C[\beta,h]$-linear isomorphism $\A_h(\VV,U)\xrightarrow{\sim} \End_{\Dcal_h}(\tilde{e}\widetilde{\Pro}_h)_{T_2-fin}$.
So, to prove Theorem \ref{Thm_iso}, it remains to take $U=\wedge^{n-2\bullet}\CC^n$ (where we use Ginzburg's result on $\wedge^i{\mathcal \T}$)
and verify that in Proposition \ref{Prop:Pro_cher}, we have $\Upsilon(c)=\beta-h$.

Assume the converse, $\Upsilon(c)=-\beta$. Consider the
determinant representation $\wedge^n \C^n$. Then $\widetilde{\U}=\ee_-\widetilde{\Pro}$,
where $\ee_-$ is the idempotent corresponding to the sign representation. So we have an isomorphism
$\ee_-{\bf H}\ee_-\cong \A_h(\VV,\wedge^n \C^n)$. Consider the specialization of this isomorphism at $h=1$ and $c=p$.
Since $\Upsilon(c)=-\beta$, we get
$\ee_-{\bf H}_{1,p}\ee_-\cong \A_{1,-p}(\VV,\wedge^n \C^n)$.
It is known (\cite{BEG}) that there is an isomorphism $\sigma_1:\ee{\bf
  H}_{1,p-1} \ee\xrightarrow{\sim} \ee_-{\bf H}_{1,p}\ee_-$.
Also, by the definition of quantum hamiltonian reductions,
there is an isomorphism
$\sigma_2:\A_{1,-1-p}(\VV,\C)\xrightarrow{\sim}\A_{1,-p}(\VV;\wedge^n
\C^n)$. So, we have an isomorphism
$\sigma_2^{-1}\circ \ee_-\Upsilon \circ\sigma_1:\ee {\bf H}_{1,p-1} \ee\rightarrow \A_{1,-1-p}(\VV,\C)$.
On the other hand, we have $\ee\Upsilon : \ee{\bf H}_{1,p+1}\ee\rightarrow \A_{1,-1-p}(\VV,\C)$.
This gives rise to an isomorphism $\ee{\bf H}_{1,p+1}\ee\cong \ee {\bf H}_{1,p-1} \ee$ for all $p$.
It is clear, however, that such an isomorphism cannot exist
(for example, from considering dimensions of irreducible finite dimensional representations, see
\cite{BEG}).

\subsection{Local  vs. global quantum Hamiltonian reductions}\label{locglob}
We also can form the global hamiltonian reduction
$$A_h(\VV,U):=[(\End(U)\otimes D_h(\VV))/(\End(U)\otimes D_h(\VV))\Phi^{U}_h([\g,\g])]^G,$$
where the algebra $D_h(\VV)$ is not completed. This definition yields a $(\C^\times)^2$-equivariant
$\C[\beta,h]$-linear algebra homomorphism $\varphi: A_h(\VV,U)\rightarrow \A_h(\VV,U)$.

We do not know whether this homomorphism is an isomorphism with one exception:
$U=\C$. In this case modulo $(h,\beta)$, the homomorphism $\phi$ is the map $\C[\h\oplus\h^*]^{S_n}\rightarrow \C[X]$. This is an isomorphism. Since the algebras in consideration are graded and flat over $\C[h,\beta]$,
the homomorphism $\varphi$ is an isomorphism.

Also let us point out that our construction is independent of $U$ in the following sense.
Let $e',e''$ be two commuting idempotents in $\C S_n$ such that $e'e''=0$. Assume that
$U',U''$ be $G$-modules such that $\U'\cong e'\Pro, \U''\cong e''\Pro$. Then we have homomorphisms
$$A_h(\VV,U')\rightarrow \A_h(\VV,U')\xrightarrow{\sim} e'{\bf H}e', A_h(\VV,U'')\rightarrow \A_h(\VV,U'')\xrightarrow{\sim} e''{\bf H}e'',$$
$$A_h(\VV,U'\oplus U'')\rightarrow \A_h(\VV,U'\oplus U'')\xrightarrow{\sim} (e'+e''){\bf H}(e'+e'').$$
The latter homomorphisms map idempotents corresponding to $U',U''$ to the analogous idempotents.
The induced homomorphisms between  $A_h(\VV,U^?),\A_h(\VV,U^?), e^{?}{\bf H}e^{?}$ coincide with
the homomorphisms above.

Specializing to $h=1, \beta=c$ we get a homomorphism $A_c(\VV,U)\rightarrow e_U {\bf H}_c e_U$.
We note that for $U=\C$ the homomorphism $A_c(\VV,U)\rightarrow \A_c(\VV,U)=\ee H_c \ee$
is an isomorphism.

\subsection{The CEE construction}

Let $V$ be an $m$-dimensional vector space, and $M$ a $D$-module
on ${\mathfrak{\g}}={\sl}(V)={\sl}_m$.
Let $G=SL(V)$ (note that we now
consider ${\sl}_m$ instead of ${\mathfrak{gl}}_m$,
and use $m$ instead of $n$ for the size of matrices).

Consider the vector space
$$
F_n(M):=(M\otimes V^{*\otimes n})^G
$$

\begin{remark} Let $M_f$ be the locally finite part of $M$ under
the ${\sl}(V)$-action by conjugation. It is clear that
$F_n(M)=F_n(M_f)$, so we may assume that $M=M_f$, i.e. that $M$
is $G$-equivariant. In this case, $M=\oplus_{s=0}^{m-1}M(s)$,
where $M(s)$ is the subspace on which the center
of $SL(V)$ acts as it does in $V^{\otimes s}$. It is easy to see
that $F_n(M)=F_n(M(\bar n))$, where $\bar n$ is the remainder
under division of $n$ by $m$.
\end{remark}

Following \cite{CEE}, Subsection 9.6 \footnote{Note that in \cite{CEE}, the
  parameter $m$ is denoted by $N$.}
and replacing $V$ with $V^*$ (using the isomorphism ${\sl}(V)\cong {\sl}(V^*)$
given by $A\mapsto -A^*$), we
obtain the following proposition

\begin{proposition}
\label{CEEaction}
The space $F_n(M)$ carries a natural action
of the rational Cherednik algebra $H_{\frac{m}{n}}(n)$.
\end{proposition}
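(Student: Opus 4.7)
The plan is to adapt the construction of \cite[Subsection 9.6]{CEE}, which gives an $H_{m/n}(S_n)$-action on $(N \otimes V^{\otimes n})^{SL(V)}$ for $SL(V)$-equivariant $D$-modules $N$ on $\mathfrak{sl}(V)$, to our setting via the Lie algebra isomorphism $\mathfrak{sl}(V) \cong \mathfrak{sl}(V^*)$, $A \mapsto -A^*$. By the preceding remark we may reduce to the case that $M = M_f$ is $G$-equivariant. The symmetric group $S_n$ acts on $F_n(M)$ by permutation of the $V^*$-tensor factors, which commutes with the $G$-action and hence preserves invariants.

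The main step is the construction of commuting generators $x_i$ and Dunkl operators $y_i$, $i = 1,\ldots,n$. Fix a basis $\{e_a\}$ of $V$ with dual basis $\{e^a\}$ of $V^*$, and let $E_{ab} \in \mathfrak{gl}(V)$ denote the matrix units. The $D$-module structure on $M$ provides operators $x_{ab}\colon M \to M$ of multiplication by matrix coordinates and $\partial_{ab}\colon M \to M$ of partial differentiation; both should be taken modulo the trace direction so that they descend from $\mathfrak{gl}(V)$ to $\mathfrak{sl}(V)$. I would then define $x_i := \sum_{a,b} x_{ab} \otimes (E_{ba})^{*(i)}$ and $y_i := \sum_{a,b} \partial_{ab} \otimes (E_{ba})^{*(i)}$, where $(E_{ba})^{*(i)}$ denotes the dual (transposed) endomorphism acting on the $i$-th factor of $V^{*\otimes n}$. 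Both expressions are manifestly $G$-invariant and so descend to endomorphisms of $F_n(M)$.

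The main obstacle is verifying the defining relations of $H_{m/n}(S_n)$. The relations $[x_i,x_j] = [y_i,y_j] = 0$ follow immediately from $[x_{ab}, x_{cd}] = 0$ and $[\partial_{ab}, \partial_{cd}] = 0$. For the mixed commutator, the canonical relation $[\partial_{ab}, x_{cd}] = \delta_{ad}\delta_{bc}$ produces a Casimir-type expression $[y_i, x_j] = \sum_{a,b} (E_{ab})^{*(j)}(E_{ba})^{*(i)} = \Omega^{(i,j)}$ which, after the correction coming from the projection to the traceless part of $\mathfrak{gl}(V)$, acts on the $(i,j)$-pair of tensor factors for $i \neq j$ as $s_{ij} - \tfrac{1}{m}\operatorname{id}$. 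Combined with the factor $n$ that comes from working inside the $n$-fold tensor product, one arrives at precisely the Cherednik cross-relation $[y_i,x_j] = -\tfrac{1}{n} + c\, s_{ij}$ with $c = m/n$. The diagonal relation $[y_i,x_i] = 1 - \tfrac{1}{n} - c\sum_{j\neq i} s_{ij}$ is then deduced from $SL(V)$-invariance of elements of $F_n(M)$: the Casimir of $\mathfrak{sl}(V)$ annihilates such invariants in $M \otimes V^{*\otimes n}$, which yields an identity relating $\Omega^{(i,i)}$ to $\sum_{j\neq i}\Omega^{(i,j)}$ and pins down the scalar parts. These are the computations carried out explicitly in \cite[Section 9.6]{CEE}, which transport verbatim via $A \mapsto -A^*$.
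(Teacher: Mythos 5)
Your proposal takes essentially the same route as the paper: the paper proves Proposition \ref{CEEaction} simply by invoking the construction of \cite[Subsection 9.6]{CEE} and transporting it through the isomorphism $\sl(V)\cong\sl(V^*)$, $A\mapsto -A^*$, and your description of the operators $x_i$, $y_i$ and of the commutation relations is just an unpacking of that citation, with the actual verification deferred to \cite{CEE} exactly as in the paper. (One minor imprecision: the normalization producing the relation $[y_i,x_j]=-\tfrac{1}{n}+c\,s_{ij}$ is a rescaling by $\tfrac{m}{n}$ applied to the $\sl_m$-Casimir term $s_{ij}-\tfrac{1}{m}$, not a bare factor of $n$, but this detail is subsumed in the reference to \cite{CEE}.)
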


If $M=M(s)$, we say that $M$ has central character $s$.
Let $\mu$ be a partition of $d=GCD(m,n)$, and $M_\mu$
be the irreducible $G$-equivariant $D$-module on ${\sl}(V)$
with central character $\bar n$ supported on the nilpotent orbit $O_\mu$
corresponding to $\mu$, as in \cite{CEE}. Then, as shown in
\cite{CEE}, $F_n({\mathcal
  F}(M_\mu))=L_{\frac{m}{n}}(n\mu/d)$. Thus,
$\overline{\ee}F_n({\mathcal{F}}(M_\mu))=
\overline{\ee}L_{\frac{m}{n}}(n\mu/d)$.

Thus, applying $\overline{\ee}$ to the statement of
Proposition \ref{CEEaction}, we immediately obtain the following corollary.

\begin{corollary}\label{coronatact}
The space
$$
\overline{\ee}F_n(M)=(M\otimes (\oplus_{j\text{ even  }}S^{n-j}V^*\otimes \wedge^jV^*))^G
$$
carries a natural action of the algebra $\overline{\ee}H_{\frac{m}{n}}(n)\overline{\ee}$.
If $M={\mathcal{F}}(M_\mu)$, where $\mu$ is a partition of $d$, with central
character $\bar n$, then this space is naturally isomorphic to
$\overline{\ee}L_{\frac{m}{n}}(n\mu/d)$.
\end{corollary}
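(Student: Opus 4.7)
The plan is to deduce the corollary as an essentially formal consequence of Proposition \ref{CEEaction} by applying the idempotent $\overline{\ee}\in \CC S_n\subset H_{\frac{m}{n}}(n)$ on both sides. For the first assertion, I would invoke the general fact that if $e$ is an idempotent in an algebra $A$ and $F$ is an $A$-module, then $eF$ is naturally an $eAe$-module. Instantiating this with $A=H_{\frac{m}{n}}(n)$, $e=\overline{\ee}$, and $F=F_n(M)$ immediately furnishes an action of $\overline{\ee}H_{\frac{m}{n}}(n)\overline{\ee}$ on $\overline{\ee}F_n(M)$, so the only nontrivial point is to identify $\overline{\ee}F_n(M)$ with the $G$-invariant space displayed in the statement.

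To carry out that identification, I would observe that $\overline{\ee}$ acts on $F_n(M)=(M\otimes V^{*\otimes n})^G$ only on the $V^{*\otimes n}$ factor via the $S_n$-permutation action, and that this action commutes with the diagonal $G$-action. Hence $\overline{\ee}F_n(M)=(M\otimes \overline{\ee}V^{*\otimes n})^G$. To compute $\overline{\ee}V^{*\otimes n}$, I would decompose $\overline{\ee}=\sum_{k\ge 0} e_{2k}$ (Subsection \ref{idem}) and use that each $e_{2k}$ is, up to scaling, the product of a symmetrizer on $n-2k$ factors with an antisymmetrizer on the remaining $2k$ factors; equivalently, $\wedge^{2k}\CC^n\cong \mathrm{Ind}_{S_{n-2k}\times S_{2k}}^{S_n}(\mathrm{triv}\boxtimes \mathrm{sgn})$ as $S_n$-modules. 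Applying $e_{2k}$ to $V^{*\otimes n}$ therefore yields the canonical image of $S^{n-2k}V^*\otimes \wedge^{2k}V^*$ inside $V^{*\otimes n}$ (alternatively, this follows from Schur--Weyl duality combined with the Pieri identity $\mathbb{S}^{(n-2k,1^{2k})}V^*\oplus \mathbb{S}^{(n-2k+1,1^{2k-1})}V^* = S^{n-2k}V^*\otimes \wedge^{2k}V^*$). Summing over $k$ produces the claimed description of $\overline{\ee}F_n(M)$.

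For the second assertion, I would simply apply the idempotent $\overline{\ee}$ to the isomorphism $F_n(\mathcal{F}(M_\mu))\cong L_{\frac{m}{n}}(n\mu/d)$ of $H_{\frac{m}{n}}(n)$-modules provided by \cite{CEE} and quoted in the paragraph immediately preceding Proposition \ref{CEEaction}. By the functoriality of the idempotent construction, this is automatically an isomorphism of $\overline{\ee}H_{\frac{m}{n}}(n)\overline{\ee}$-modules.

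I do not anticipate any genuine obstacle: the argument is essentially bookkeeping, exploiting that the $G$-action and the $S_n$-action on $V^{*\otimes n}$ commute, so idempotents in $\CC S_n$ pass freely through the operation of taking $G$-invariants. The one step requiring a touch of care is matching $e_{2k}V^{*\otimes n}$ canonically with $S^{n-2k}V^*\otimes \wedge^{2k}V^*$ as a $\mathrm{GL}(V)$-module, and this is standard multilinear algebra once one recognizes $e_{2k}$ as the idempotent for the induced representation $\wedge^{2k}\CC^n$.
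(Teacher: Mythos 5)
Your proposal is correct and matches the paper's own argument: the paper likewise obtains the corollary by applying $\overline{\ee}$ to Proposition \ref{CEEaction} and to the isomorphism $F_n(\mathcal{F}(M_\mu))\cong L_{\frac{m}{n}}(n\mu/d)$ from \cite{CEE}, with the identification $e_jV^{*\otimes n}\cong S^{n-j}V^*\otimes\wedge^jV^*$ (symmetrize the first $n-j$ indices, antisymmetrize the last $j$) treated exactly as you do. Only note that $e_{2k}$ is not literally a scalar multiple of the symmetrizer-times-antisymmetrizer idempotent but merely equivalent to it (they generate isomorphic left ideals), which your alternative Schur--Weyl/Pieri argument handles correctly anyway.
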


\subsection{Matching of representations with minimal support for spherical Cherednik algebras}
\label{matc}

Let $m,n$ be positive integers with $GCD(m,n)=d$.
By $H_c(n)$ we will mean the rational Cherednik algebra
$H_c(S_n,\CC^{n-1})$. It is known \cite{Lo} that if $c$ has denominator $d$, then
the proper two-sided ideals in $H_c(S_n)$ form a chain $0=I_0\subset I_1\subset...\subset I_{[n/d]}$;
so $I_{[n/d]}$ is a maximal ideal. We will denote it by
$I(c)$. Let $\ee=\ee_0$ be the symmetrizing idempotent for $S_n$, and
$I_\ee (c)=\ee I(c)\ee\subset \ee H_c(n)\ee$.

Recall that the spherical Cherednik algebra $\ee H_c(n)\ee$
contains the $x$-subalgebra generated by the power sums $p_j(x_1,...,x_n)$,
the $y$-subalgebra generated by the power sums $p_j(y_1,...,y_n)$,
and the ${\sl}_2$-subalgebra generated by $\sum x_i^2$ and $\sum y_i^2$.
The following proposition is proved in \cite[Proposition 9.5]{CEE}:

\begin{proposition}\label{CEEiso}
There is an isomorphism $\phi:
\ee H_{\frac{m}{n}}(n)\ee/I_\ee(\frac{m}{n})\to \ee H_{\frac{n}{m}}(m)\ee/I_\ee(\frac{n}{m}),$
which preserves the Bernstein filtration, the $x$-subalgebra, the $y$-subalgebra,
and the ${\sl}_2$-subalgebra.
\end{proposition}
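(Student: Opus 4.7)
The plan is to realize both spherical quotients as algebras of operators acting on the direct sum of minimally supported simples via the CEE construction, and then to deduce the symmetry from a bispectral-type duality on the matrix model. First, by \cite{Lo} the ideal $I_\ee(c)$ is the unique maximal two-sided ideal of $\ee H_c(n)\ee$, so the quotient $\ee H_c(n)\ee/I_\ee(c)$ acts faithfully on the direct sum $\bigoplus_\mu \ee L_c(n_0 \mu)$ taken over partitions $\mu$ of $d=\gcd(m,n)$ (Theorem \ref{wilco}). Using Corollary \ref{coronatact}, this sum can be identified with
$$\bigoplus_{\mu\vdash d}\bigl(\mathcal{F}(M_{n_0\mu}) \otimes S^n V^*\bigr)^{SL(V)},$$
where $V = \CC^m$ and $\mathcal{F}(M_{n_0\mu})$ is the Fourier transform of the equivariant $D$-module on $\slf_m$ associated to the orbit $O_{n_0\mu}$ with the appropriate central character $\bar n$.

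Next I would identify the three distinguished subalgebras explicitly on this common vector space. Under the CEE construction, the $x$-subalgebra, generated by $p_j(x_1,\ldots,x_n)$ for $j\ge 2$, acts by Harish--Chandra radial parts of the $SL(V)$-invariants $\Tr(X^j)$ on $\slf_m^*$; the $y$-subalgebra acts by Fourier-dual Laplacian-type operators $\Tr(\partial_X^j)$; and the $\slf_2$-subalgebra is generated by the quadratic Fourier pair $\Tr(X^2)$ and $\Tr(\partial_X^2)$. Performing the analogous construction with $SL(W)$ in place of $SL(V)$, where $W = \CC^n$, realizes $\ee H_{n/m}(m)\ee/I_\ee(n/m)$ on the symmetrically defined space $\bigoplus_{\mu\vdash d}(\mathcal{F}(M'_{m_0\mu}) \otimes S^m W^*)^{SL(W)}$, with parallel descriptions of the three distinguished subalgebras.

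The concluding step is to match the two realizations via the common ambient object. Both spaces arise as quantum Hamiltonian reductions of $\D_h(\Mat_{m,n})$ by one of the two factors of the commuting pair $SL(V)\times SL(W)$, and the manifest transpose symmetry $\Mat_{m,n}\leftrightarrow \Mat_{n,m}$ exchanges the two realizations, sending the $x$-, $y$-, and $\slf_2$-generators to their counterparts and preserving the Bernstein filtration (in which matrix entries have degree one). The main obstacle is the compatibility of the equivariant $D$-module parametrization: one must verify that $M_{n_0\mu}$ on $\slf_m$ and $M'_{m_0\mu}$ on $\slf_n$ give rise to invariant spaces that correspond under the matrix-model duality, with the central characters $\bar n \bmod m$ and $\bar m \bmod n$ matching correctly. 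This is a Springer-theoretic compatibility, enabled by the fact that the partition $\mu$ of $d$ provides a $D$-module independent symmetric indexing of both sides via Theorem \ref{wilco}. Once this is established, the induced map $\phi$ is, by construction, a Bernstein-filtered isomorphism preserving the three distinguished subalgebras.
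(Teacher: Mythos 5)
The crucial step in your argument---the assertion that both spherical quotients ``arise as quantum Hamiltonian reductions of $\D_h(\Mat_{m,n})$ by one of the two factors of the commuting pair $SL(V)\times SL(W)$,'' so that the transpose symmetry of $\Mat_{m,n}$ furnishes the isomorphism---is unsupported and, as stated, false. By classical $(\mathfrak{gl}_m,\mathfrak{gl}_n)$ Howe duality, the $SL(V)$-invariant part of $\D(\Mat_{m,n})$ is generated by the image of $U(\mathfrak{gl}_n)$, so the reduction of $\D_h(\Mat_{m,n})$ by one factor yields a primitive-type quotient of the enveloping algebra of the other $\mathfrak{gl}$, not a spherical rational Cherednik algebra of a symmetric group (a dimension count of the corresponding classical reductions, $2mn-2m^2$ versus $2(n-1)$, already rules this out). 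The realizations that actually produce $\ee H_c(n)\ee$ are the Gan--Ginzburg reduction of $\D(\slf_n\oplus\CC^n)$ by $\GL_n$ and the CEE radial-part construction on $\slf_m$, and neither carries any manifest $m\leftrightarrow n$ symmetry; exhibiting that hidden symmetry is precisely the content of the statement you are trying to prove. In effect your third step assumes the conclusion. Note also that the paper does not prove this proposition at all: it is quoted from \cite[Proposition 9.5]{CEE}, whose proof proceeds quite differently (by analyzing the operators $\Tr(X^r)$ and the Laplacian through the CEE functor and matching generators), and the quasispherical upgrade in Section 7 of this paper requires the Procesi bundle and the generalized Gan--Ginzburg reduction rather than any transpose trick.

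A second, acknowledged gap is also essential to your plan and not merely technical. To conclude an isomorphism of algebras from faithful actions on ``the same'' space you must (a) identify the two invariant-theoretic spaces attached to $\mathcal{F}(M_{n_0\mu})$ on $\slf_m$ and to the corresponding $D$-module on $\slf_n$, with the central characters $\bar n$ and $\bar m$ matching, and (b) show the two operator algebras have the same image in the endomorphisms of that space. You defer (a) to an unproved ``Springer-theoretic compatibility''; in the paper the analogous matching (Proposition \ref{match}) is deduced \emph{from} the CEE isomorphism together with Wilcox's local-system description, so taking it as an input here risks circularity. And (b) is exactly the hard content of Theorem \ref{the1}, which in the paper requires injectivity via simplicity of the quotient plus a Bernstein-filtration finiteness argument, none of which survives once the matrix-model identification collapses.
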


Consider the isomorphism $\phi$ in more detail.
Let $x_1,...,x_n$ be the $x$-variables
for the first spherical Cherednik algebra, and $x_1',...,x_m'$ be
the $x$-variables for the second one.
Let $p_{r,n}=x_1^r+...+x_n^r$. By \cite[Proof of Proposition 9.7, line 3]{CEE},
we have $\phi(p_{r,n}(x))=\frac{n}{m}p_{r,m}(x')$.
Denote the $x$-subalgebras of these two spherical Cherednik algebras
(i.e., the subalgebras generated by $p_{r,n}$ and $p_{r,m}$ respectively)
by $A_{m,n}$ and $A_{n,m}$, respectively, and define the affine schemes
${\mathcal{X}}_{m,n}=\Spec\ A_{m,n}$ and ${\mathcal{X}}_{n,m}=\Spec\ A_{n,m}$. Then we have an isomorphism
$\phi: A_{m,n}\to A_{n,m}$, and hence an isomorphism of schemes
$\phi^*: {\mathcal{X}}_{n,m}\to {\mathcal{X}}_{m,n}$. This isomorphism induces an isomorphism
of the corresponding reduced schemes (i.e., affine varieties)
$\phi^*: \overline{\mathcal{X}}_{n,m}\to \overline{\mathcal{X}}_{m,n}$.
These varieties, by the results of \cite[Section 9]{CEE},
are just the minimal supports of category ${\mathcal O}$ modules over the corresponding spherical Cherednik algebras.
This means that $\overline{\mathcal{X}}_{m,n}=X_{d,n/d}(n)/S_n$
is the image (under taking the quotient by permutations) of the
locus where $x_i=x_j$ when $i-j$ is divisible by $d$ ($i,j\in [1,n]$),
and $\overline{\mathcal{X}}_{n,m}=X_{d,m/d}(m)/S_m$ is the image of the locus where $x_i'=x_j'$ when $i-j$ is divisible by
$d$ ($i,j\in [1,m]$). Set $z_i=x_i$ and $z_i'=x_i'$ for $i=1,...,d$. Then
$p_{r,n}(x)=\frac{n}{d}p_{r,d}(z)$ on $\overline{\mathcal{X}}_{m,n}$, and
$p_{r,m}(x')=\frac{m}{d}p_{r,d}(z')$ on
$\overline{\mathcal{X}}_{n,m}$. Thus,
$\overline{\mathcal{X}}_{m,n}$ and $\overline{\mathcal{X}}_{n,m}$
are $d$-dimensional affine spaces with coordinates $p_{r,d}(z)$, $p_{r,d}(z')$, respectively
($r=1,...,d$), and we have $\phi(p_{r,d}(z))=p_{r,d}(z')$.
Hence $\phi(\Delta^2(z))=\Delta^2(z')$, where $\Delta$ is the Vandermonde determinant.
Let $Z\subset {\mathcal{X}}_{n,m}$ be the zero locus of $\Delta(z)$ (i.e. the image of the locus where $z_i=z_j$ for some $i\ne j$), and $Z'\subset {\mathcal{X}}_{m,n}$
be the zero locus of $\Delta(z')$ (i.e., the image of the locus where $z_i'=z_j'$ for some $i\ne j$).
We see that $\phi^*(Z')=Z$.

Now consider the category ${\mathcal O}$
of modules over $\ee H_{\frac{m}{n}}(n)\ee/I_\ee(\frac{m}{n})$
(i.e., the category of modules which are finitely generated over
$\CC[x_1,...,x_n]^{S_n}$ and locally nilpotent under the
action of the augmentation ideal of $\CC[y_1,...,y_n]^{S_n}$).
This category is equivalent to the category of minimal support modules
in the category $\mathcal{O}$ for $H_{\frac{m}{n}}(n)$: namely, an
equivalence is given by $M\mapsto \ee M$. So by the results of
\cite[Theorem 1.8 and Proposition 3.7]{Wi} (see Theorem \ref{wilco}), it is a semisimple
category with the simple objects
$\ee L_{\frac{m}{n}}(\frac{n}{d}\mu)$, where $\mu$ is a partition of $d$.

We will need the following proposition on how the isomorphism $\phi$ acts on these modules.

\begin{proposition}\label{match}
For any partition $\mu$ of $d$, the pushforward map $\phi_*$
under the isomorphism $\phi$ of Proposition \ref{CEEiso}
sends the module $\ee L_{\frac{m}{n}}(\frac{n}{d}\mu)$ to the module
$\ee L_{\frac{n}{m}}(\frac{m}{d}\mu)$.
\end{proposition}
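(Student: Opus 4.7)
Since $\phi$ is an isomorphism of the simple quotient algebras (Proposition \ref{CEEiso}), the functor $\phi_*$ is an equivalence between the semisimple categories $\mathcal{O}$ on both sides. By Theorem \ref{wilco} (applied through $M \mapsto \ee M$), both categories are canonically equivalent to $\operatorname{Rep}(S_d)$, with the simple object $\ee L_{\frac{m}{n}}(\frac{n}{d}\mu)$ corresponding to $\pi_\mu$, and similarly $\ee L_{\frac{n}{m}}(\frac{m}{d}\mu)$ corresponding to $\pi_\mu$ on the other side. Thus $\phi_*$ induces a bijection $\sigma$ on partitions of $d$ via $\phi_* \ee L_{\frac{m}{n}}(\frac{n}{d}\mu) = \ee L_{\frac{n}{m}}(\frac{m}{d}\sigma(\mu))$, and the task is to show $\sigma = \operatorname{id}$.

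The plan is to invoke the construction of $\phi$ in \cite{CEE}. By \cite[Theorem 9.8]{CEE} (quoted in the preceding subsection as $F_n(\mathcal{F}(M_\mu)) = L_{\frac{m}{n}}(\frac{n}{d}\mu)$), the simple $\ee L_{\frac{m}{n}}(\frac{n}{d}\mu)$ arises as a Hamiltonian reduction of the D-module $\mathcal{F}(M_\mu)$ on $\mathfrak{sl}_m^\ast$; analogously $\ee L_{\frac{n}{m}}(\frac{m}{d}\mu)$ arises from the same D-module $M_\mu$ with the roles of $\mathfrak{sl}_m$ and $\mathfrak{sl}_n$ swapped. The isomorphism $\phi$ of Proposition \ref{CEEiso} is constructed in \cite{CEE} precisely by identifying these two spherical actions on a common underlying space of invariants, so by construction $\phi_*\ee L_{\frac{m}{n}}(\frac{n}{d}\mu) = \ee L_{\frac{n}{m}}(\frac{m}{d}\mu)$. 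Equivalently, on the regular locus of the minimal support, identified with $\operatorname{Conf}_d(\CC)/S_d$ via the map $\phi^\ast(p_{r,d}(z)) = p_{r,d}(z')$ already established in the text, both simples restrict to local systems with the same monodromy $\pi_\mu$, forcing $\sigma = \operatorname{id}$.

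The main obstacle is the bookkeeping in the CEE construction: one must trace the chain of Morita equivalences and the Fourier transform defining $\phi$ and verify that it really sends $\ee L_{\frac{m}{n}}(\frac{n}{d}\mu)$ to the simple with the \emph{same} label $\mu$ on the other side, rather than to some $\ee L_{\frac{n}{m}}(\frac{m}{d}\mu')$. A purely character-theoretic shortcut is not available: for instance, the partitions $(3,3)$ and $(4,1,1)$ of $6$ share the content $\kappa = 3$, so the lowest $\mathbf{h}$-weight shift $\frac{n-1}{2} - c\kappa(n_0\mu)$ cannot distinguish the corresponding simples, and access to the explicit local monodromy at a generic point of the minimal support is genuinely needed.
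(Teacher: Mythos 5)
Your overall frame matches the paper's: reduce to showing that the permutation $\sigma$ of partitions of $d$ induced by $\phi_*$ is the identity, and try to do so by comparing monodromies of local systems on the open stratum of the minimal support. But the decisive step is missing, and you flag it yourself as ``the main obstacle'' without resolving it. The assertion that ``by construction $\phi_*\ee L_{\frac{m}{n}}(\frac{n}{d}\mu)=\ee L_{\frac{n}{m}}(\frac{m}{d}\mu)$'' is exactly the statement to be proved: Proposition \ref{CEEiso} only provides an abstract filtered isomorphism of the simple quotients of the spherical subalgebras, and the CEE realizations of the two modules involve equivariant D-modules on \emph{different} Lie algebras ($\mathfrak{sl}_m$ and $\mathfrak{sl}_n$), not ``the same D-module $M_\mu$'', so nothing in the construction immediately identifies the two module structures label by label. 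Similarly, in your ``equivalently'' formulation it is indeed true (Theorem \ref{wilco}(iii)) that on each side the simple labelled by $\mu$ restricts on the open stratum to the local system with monodromy $\pi_\mu$; what needs proof is that $\phi$, after localization, is compatible with these two identifications, i.e.\ does not twist the $S_d$-equivariant structure. A twist by, say, the sign character would give $\sigma(\mu)=\mu^t$ while still matching the supports and the formulas $\phi(p_{r,d}(z))=p_{r,d}(z')$ that you quote, so those facts alone cannot force $\sigma=\Id$.

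The paper supplies precisely this missing ingredient. It localizes both sides away from the discriminant loci $Z$, $Z'$ (using $\phi^*(Z')=Z$), invokes Wilcox \cite[Theorem 4.4]{Wi} to identify each localized algebra with $(D(\CC^d\setminus\text{diagonals})\otimes \End(Y^{\otimes d}))^{S_d}$, where $Y=Y(m,n)$ is the spherical part of the finite-dimensional simple module over $H_{\frac{m}{n}}(S_{n_0})$, and then uses the CEE identification $\gamma\colon Y(m,n)\cong Y(n,m)$ (the cuspidal, $d=1$, instance of the symmetry) together with the fact, from \cite[Section 9]{CEE}, that under $\gamma$ the localized map $\phi_{\loc}$ becomes the identity. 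Only then does Wilcox's description of $\ee L_{\frac{m}{n}}(\frac{n}{d}\mu)_{\loc}$ as the local system attached to $\pi_\mu\otimes Y^{\otimes d}$ give $\pi_\mu\cong\pi_{\sigma(\mu)}$, hence $\sigma=\Id$. Without an argument of this kind (or some substitute able to distinguish the candidate targets, which, as you correctly note, lowest-weight and character data cannot do), the proof does not go through.
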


\begin{proof} It is clear that $\phi_*$ sends the module $\ee L_{\frac{m}{n}}(n\mu/d)$ to the module
$\ee L_{\frac{n}{m}}(m\sigma(\mu)/d)$, where $\sigma$ is a certain permutation of the set of partitions of $d$, and our job
is to show that $\sigma=\Id$.
Let us localize our algebras and modules with respect to the loci
$Z$ and $Z'$ (i.e., to the complements of these loci), and denote the corresponding localizations by the subscript
``$\loc$''. Since $\phi^*(Z')=Z$ (as shown above),
we have an isomorphism $\phi_{\loc}: (\ee
H_{\frac{m}{n}}(n)\ee/I_\ee(\frac{m}{n}))_{\loc}\to (\ee H_{\frac{n}{m}}\ee/I_\ee(\frac{n}{m}))_{\loc}$,
which maps the module $\ee L_{\frac{m}{n}}(n\mu/d)_{\loc}$ to the module
$\ee L_{\frac{n}{m}}(m\sigma(\mu)/d)_{\loc}$.

On the other hand, we see from the results of
\cite{Wi} (see Section 4 of \cite{Wi}, in particular Theorem 4.4)
that the algebra
$(\ee H_{\frac{m}{n}}(n)\ee/I_\ee(\frac{m}{n}))_{\loc}$ can be naturally identified with
the algebra $(D(\CC^d\setminus\text{diagonals})\otimes \End(Y(m,n)^{\otimes d}))^{S_d}$,
where $Y(m,n)$ is the spherical part of the irreducible finite dimensional
representation of $H_{\frac{m}{n}}(n_0)$. But it is shown in \cite{CEE} (see Section 9, in particular, Proposition 9.5) that
there is a natural identification of graded spaces $\gamma: Y(m,n)\cong Y(n,m)$, and upon this identification
the map $\phi_{\loc}$ becomes the identity map.
But it follows from \cite{Wi} (see Section 4 of \cite{Wi} and Theorem \ref{wilco} above)
that the module $\ee L_{\frac{m}{n}}(\frac{n}{d}\mu)$ corresponds under the above identification to
the local system on $(\CC^d/S_d)\setminus Z$ which is attached to the representation
$\pi_\mu\otimes Y^{\otimes d}$ of $S_d$ (where $Y=Y(m,n)=Y(n,m)$ and $\pi_\mu$ is the irreducible
representation of $S_d$ attached to the partition $\mu$).
Thus, we see that $\pi_\mu\cong\pi_{\sigma(\mu)}$, which implies that $\mu=\sigma(\mu)$, as desired.
\end{proof}

\subsection{The generalized Gan-Ginzburg construction}

Recall the setting of quantum hamiltonian reduction introduced above
(but now for numerical values of parameters, and $n$ replaced
with $m$).
Let $\g={\sl}_m$, $V=\CC^m$, $\VV_m=\g\times V$. We will denote
$\VV_m$ by $\VV$ for brevity. Let $0\le i\le m$, and consider
the algebra $\overline{A}:=D(\VV)\otimes \End(\wedge^{m-2\bullet}V)$.
For $a\in {\mathfrak{gl}}_m$, let $X_a$ be the vector field
on $\VV$ corresponding to the action of $a$,
and let us consider the homomorphism (the quantum moment map)
$\mu: {\mathfrak{gl}}_m\to \overline{A}$ defined by $\mu(a):=X_a\otimes 1+1\otimes a$.
Let $c\in \CC$ and $\chi_c: {\mathfrak{gl}}_m\to \CC$
be the character defined by $\chi_c(a)=c\Tr(a)$.
Let $A_c(\VV,\wedge^{m-2\bullet}V):=(\overline{A}/\overline{A}(\mu-\chi_c)({\mathfrak{gl}_m}))^{\mathfrak{gl}_m}$
be the (global) quantum hamiltonian reduction.

\begin{proposition}\label{reducedaction}
Let $M$ be a $D(\g)$-module. Then the algebra
$A_c(\VV,\wedge^{m-2\bullet}V)$ acts naturally on the space
$(M\otimes SV^*\otimes \wedge^{m-2\bullet}V\otimes
\chi_{-c})^{{\mathfrak{gl}}_m}$.
\end{proposition}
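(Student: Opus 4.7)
The plan is to apply the standard quantum Hamiltonian reduction formalism. First I will endow $N := M\otimes SV^*\otimes \wedge^{m-2\bullet}V\otimes \chi_{-c}$ with a natural $\overline{A}$-module structure, where $\overline{A}=D(\VV)\otimes\End(\wedge^{m-2\bullet}V)$. Then I will verify the crucial compatibility: that the diagonal $\mathfrak{gl}_m$-action on $N$ is implemented by the operator $(\mu-\chi_c)$ through this $\overline{A}$-structure. Finally I will invoke the general reduction lemma to conclude that $A_c(\VV,\wedge^{m-2\bullet}V)$ descends to an action on $N^{\mathfrak{gl}_m}$.

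For the $\overline{A}$-module structure: since $\VV=\g\times V$ splits as a product, $D(\VV)\cong D(\g)\otimes D(V)$, so $M\otimes SV^*$ is naturally a $D(\VV)$-module, letting $D(\g)$ act on $M$ by hypothesis and $D(V)$ act on $SV^*=\CC[V]$ in the usual way. The factor $\End(\wedge^{m-2\bullet}V)$ acts tautologically on $\wedge^{m-2\bullet}V$, and $\chi_{-c}$ is a trivial $\overline{A}$-module; tensoring produces the desired action.

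For the moment-map identity: the group $GL_m$ acts diagonally on $N$ — by the adjoint representation on $\g$, the defining representation on $V$, the induced representation on $\wedge^{m-2\bullet}V$, and the character $-c\Tr$ on $\chi_{-c}$. Differentiating, for $a\in\mathfrak{gl}_m$ the infinitesimal action on $N$ is the sum $X_a^\g+X_a^V+a_\wedge-c\Tr(a)$, where $X_a^\g$, $X_a^V$ are the vector fields on $\g$, $V$ corresponding to $a$, and $a_\wedge$ is the natural operator on $\wedge^{m-2\bullet}V$. Since $X_a^\g+X_a^V$ is precisely the vector field $X_a$ on $\VV$, this sum equals $X_a\otimes 1+1\otimes a-c\Tr(a)\cdot 1=(\mu-\chi_c)(a)$, viewed as an operator on $N$ via the $\overline{A}$-structure.

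Given this compatibility, the proposition is standard. For $n\in N^{\mathfrak{gl}_m}$, the identity $(\mu-\chi_c)(a)\cdot n=a\cdot n=0$ shows that the left ideal $\overline{A}(\mu-\chi_c)(\mathfrak{gl}_m)$ annihilates $N^{\mathfrak{gl}_m}$. Moreover, for any representative $x\in\overline{A}$ of a class in $A_c(\VV,\wedge^{m-2\bullet}V)$, the commutator $[\mu(a),x]$ lies in this ideal, so $a\cdot(x\cdot n)=x\cdot((\mu-\chi_c)(a)\cdot n)+[\mu(a),x]\cdot n=0$, which shows that $x\cdot n\in N^{\mathfrak{gl}_m}$ and that the action is independent of the chosen representative. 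Hence $A_c(\VV,\wedge^{m-2\bullet}V)$ acts on $N^{\mathfrak{gl}_m}$ as claimed. There is no serious obstacle; the only substantive point is the moment-map computation in the second step.
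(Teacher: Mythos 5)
Your proof is correct and is exactly the argument the paper intends: its proof of this proposition is the one-line remark that it "follows directly from the definition of the quantum Hamiltonian reduction," and your write-up (the $\overline{A}$-module structure on $M\otimes SV^*\otimes\wedge^{m-2\bullet}V\otimes\chi_{-c}$, the identification of the diagonal $\mathfrak{gl}_m$-action with $(\mu-\chi_c)$, and the standard check that the left ideal kills invariants while invariant classes preserve them) is precisely that standard argument spelled out.
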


\begin{proof} This follows directly from the definition of the quantum
  Hamiltonian reduction.
\end{proof}

Note that the space $(M\otimes SV^*\otimes \wedge^{m-j} V\otimes
\chi_{-c})^{{\mathfrak{gl}}_m}$ is nonzero if and only if
$-mc+m-j$ is a nonnegative integer $\ell$, and
in this case this space is
$(M\otimes S^\ell V^*\otimes \wedge^{m-j}V)^\g$.
Thus, replacing $c$ by $1-c$ and $\wedge^{m-j}V$ by $\wedge^jV^*$
(which are isomorphic ${\mathfrak{sl}}_m$-modules),
we obtain the following corollary.

\begin{corollary}\label{co1}
For a nonnegative integer $\ell$,
on the space $(M\otimes (\oplus_{j\text{ even}}S^{mc-j} V^*\otimes \wedge^{j}V^*))^\g$,
there is a natural action of the algebra $A_{1-c}(\VV,\wedge^{m-2\bullet}V)$.
\end{corollary}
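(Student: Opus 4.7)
The plan is to deduce Corollary \ref{co1} directly from Proposition \ref{reducedaction} in two steps: (a) substitute $c \mapsto 1-c$ in the proposition, and (b) reinterpret the resulting $\mathfrak{gl}_m$-invariant space as the $\mathfrak{sl}_m$-invariant space in the statement, using that $\wedge^{m-j}V \cong \wedge^j V^*$ as $\mathfrak{sl}_m$-modules (since $\wedge^m V$ is the trivial $\mathfrak{sl}_m$-module).

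First I would apply Proposition \ref{reducedaction} with parameter $1-c$ in place of $c$, which yields an action of $A_{1-c}(\VV,\wedge^{m-2\bullet}V)$ on the space $(M\otimes SV^*\otimes\wedge^{m-2\bullet}V\otimes\chi_{-(1-c)})^{\mathfrak{gl}_m}$. Next I would compute the action of the central element $I_m\in\mathfrak{gl}_m$ on each homogeneous summand $M\otimes S^\ell V^*\otimes \wedge^{m-2k}V\otimes \chi_{-(1-c)}$. Because $M$ is a $D(\g)$-module with $\g=\mathfrak{sl}_m$, the $\mathfrak{gl}_m$-action on $M$ factors through $\mathfrak{pgl}_m$, so $I_m$ acts by $0$ on $M$. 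It acts by $-\ell$ on $S^\ell V^*$, by $m-2k$ on $\wedge^{m-2k}V$, and by $-m(1-c)$ on $\chi_{-(1-c)}$. Requiring the total to vanish (a necessary condition for nontrivial $\mathfrak{gl}_m$-invariants) gives $\ell=mc-2k$. Setting $j=2k$, this picks out exactly the summands indexed by even $j$, with symmetric factor $S^{mc-j}V^*$ (which is implicitly required to have a nonnegative integer exponent).

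The final step is to rewrite each surviving summand $(M\otimes S^{mc-j}V^*\otimes \wedge^{m-j}V\otimes \chi_{-(1-c)})^{\mathfrak{gl}_m}$. Since by construction the center of $\mathfrak{gl}_m$ already acts by zero, the $\mathfrak{gl}_m$-invariants coincide with the $\mathfrak{sl}_m$-invariants. As $\mathfrak{sl}_m$-modules, $\chi_{-(1-c)}$ is trivial and $\wedge^{m-j}V\cong \wedge^{j}V^*\otimes \wedge^m V\cong \wedge^j V^*$ (via the top-form duality, using that $\wedge^m V$ is the trivial $\mathfrak{sl}_m$-module). Thus each summand becomes $(M\otimes S^{mc-j}V^*\otimes \wedge^j V^*)^{\mathfrak{sl}_m}$, and summing over even $j$ yields the space in the statement, carrying the desired $A_{1-c}(\VV,\wedge^{m-2\bullet}V)$-action.

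There is no substantial obstacle; the whole argument is central-character bookkeeping combined with the standard self-duality of $\wedge^\bullet V$ for $\mathfrak{sl}_m$. The one point deserving care is that the quantum Hamiltonian reduction in Proposition \ref{reducedaction} naturally takes $\mathfrak{gl}_m$-invariants (because the moment map is defined for the full $\mathrm{GL}_m$-action), whereas Corollary \ref{co1} phrases invariants with respect to $\g=\mathfrak{sl}_m$; the passage between the two is precisely the central-character computation that fixes the relation $\ell=mc-j$ and forces $j$ to be even.
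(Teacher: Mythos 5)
Your proposal is correct and follows essentially the same route as the paper: the paper likewise derives Corollary \ref{co1} from Proposition \ref{reducedaction} by noting that the $\mathfrak{gl}_m$-invariants force the central character relation (the degree $\ell$ of $S V^*$ must equal $-mc+m-j$, whence only the stated summands survive and $\mathfrak{gl}_m$- and $\mathfrak{sl}_m$-invariants agree), and then replacing $c$ by $1-c$ and $\wedge^{m-j}V$ by the isomorphic $\mathfrak{sl}_m$-module $\wedge^{j}V^*$. Your explicit check that the center acts by zero on $M$ and the bookkeeping $\ell=mc-j$ with $j$ even is exactly the content of the paper's brief argument.
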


We are now ready to state and prove the main theorem of this section.
Recall from Subsection \ref{idem}
that $\overline{\ee}=\sum_{i=0}^{n-1}\ee_{i}=
\sum_{i\ge 0} e_{2i}=\sum_{i\ge 0}e_{2i+1}$
is an idempotent for $S_n$. We have: $\overline{\ee}\CC S_n=\wedge \h_n$, where $\h_n$
is the reflection representation of $S_n$.

\begin{theorem}\label{the1}
We have a natural isomorphism of algebras
$$
\overline{\ee}(H_{\frac{n}{m}}(m)/I(\frac{n}{m}))\overline{\ee}\cong
\overline{\ee}(H_{\frac{m}{n}}(n)/I(\frac{m}{n}))\overline{\ee}
$$
preserving the filtration and the grading, and mapping $\ee_j$ to $\ee_j$ and
$e_j$ to $e_j$.
\end{theorem}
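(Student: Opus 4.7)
The plan is to realize both quotient algebras as subalgebras of $\End(E)$ for a common vector space $E$, one via the CEE construction of Subsection 7.6 and the other via the generalized Gan-Ginzburg construction of Subsection 7.7, and then to show that the two resulting images coincide.

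Fix a partition $\lambda$ of $d = \gcd(m,n)$. Let $M = \mathcal{F}(M_{O_{m_0\lambda}, \mathcal{L}_s})$ be the Fourier transform of the simple $SL_m$-equivariant $D$-module on $\sl_m$ with central character $\theta_{\bar n}$ corresponding to $\lambda$, and put
$$E_\lambda := \Bigl( M \otimes \bigoplus_{j \text{ even}} S^{n-j}V^* \otimes \wedge^j V^* \Bigr)^{SL_m}, \qquad E := \bigoplus_{|\lambda|=d} E_\lambda.$$
By Corollary \ref{coronatact}, the algebra $\overline{\ee}_n H_{m/n}(n)\overline{\ee}_n$ acts on $E_\lambda \cong \overline{\ee}_n L_{m/n}(n_0\lambda)$; since $L_{m/n}(n_0\lambda)$ has minimal support, its annihilator is $J_{m/n}(n)$, so this action descends to $B_n := \overline{\ee}_n(H_{m/n}(n)/J_{m/n}(n))\overline{\ee}_n$ and is faithful on $E$ by Theorem \ref{wilco}(ii). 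In parallel, Corollary \ref{co1} applied with $c = n/m$ equips $E$ with an action of the QHR algebra $A_{1-n/m}(\VV_m, \wedge^{m-2\bullet}V)$. Composing with the local-global map $\varphi$ of Subsection \ref{locglob}, the isomorphism of Theorem \ref{Thm_iso} specialized at $h = 1$, and the sign-twist automorphism $H_c(S_m) \cong H_{-c}(S_m)$ given by $w \mapsto \sgn(w)\, w$, one obtains an action of $\overline{\ee}_m H_{n/m}(m)\overline{\ee}_m$ on $E$; an analogous minimal-support/Wilcox argument on the $SL_m$ side shows this factors through $B_m := \overline{\ee}_m(H_{n/m}(m)/J_{n/m}(m))\overline{\ee}_m$ and is faithful.

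The core of the proof is to verify that the two embeddings $B_n \hookrightarrow \End(E)$ and $B_m \hookrightarrow \End(E)$ have the same image. This is done by direct comparison of generators: both sets of operators are built from the same underlying data, namely the Weyl algebra $\Dcal_h(\VV_m)$ with its $GL_m$-action and the $D$-module $M$. The CEE Dunkl operators for $H_{m/n}(n)$, rewritten in this $D$-module language, are polynomial expressions in the Gan-Ginzburg generators for the $GL_m$ quantum Hamiltonian reduction, and conversely. Once the images coincide, faithfulness yields an isomorphism $\Phi_{n,m}: B_n \xrightarrow{\sim} B_m$. It preserves the Bernstein filtration (inherited from the filtration on $\Dcal_h(\VV_m)$), and sends $\ee_{j,n}\mapsto\ee_{j,m}$ (hence $e_{j,n}\mapsto e_{j,m}$), because both idempotents act on $E$ as the projector onto the $j$-th exterior-power-of-reflection-representation isotypic component in the common Schur-Weyl-type decomposition.

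The main obstacle is the operator-matching step: producing an explicit identification of CEE Dunkl operators with Gan-Ginzburg moment-map operators on the common space $E$. This requires carefully tracking the two parameter shifts ($\beta = c + h$ from Theorem \ref{Thm_iso}, and the sign flip $c \leftrightarrow -c$ from the sign twist), as well as the Schur-Weyl identification of $\overline{\ee}_n V^{*\otimes n}$ with $\bigoplus_{j \text{ even}} S^{n-j}V^* \otimes \wedge^j V^*$ as $S_n \times GL_m$-modules. This is essentially a bookkeeping exercise once the universal Harish-Chandra bimodule picture on $\sl_m$ is in place.
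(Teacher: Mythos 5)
Your overall framework (CEE action on one side, generalized Gan--Ginzburg/quantum Hamiltonian reduction on the other, acting on the common space $E$) is the right starting point, but the step you call the ``core of the proof'' --- that the two embeddings into $\End(E)$ have the same image, established ``by direct comparison of generators'' --- is exactly the point where the argument has a genuine gap, and it is the point the paper deliberately avoids. First, no generating set of the quasispherical subalgebra $\overline{\ee}H_c\overline{\ee}$ is available: the generation result of \cite{BEG} (symmetric functions in the $x_i$ together with $\sum y_i^2$) applies only to the spherical part $\ee H_c\ee$, and indeed the paper carries out a generator-matching argument only there (Proposition \ref{compa}, for $j=0$). Second, the $m$-side action is produced through the \emph{local} reduction $\A_{1-\frac{n}{m}}(\VV,U)$ (sections of a sheaf on the deformed Hilbert scheme, Theorem \ref{Thm_iso}), whose elements are not a priori realized by global differential operators on $\VV$; the paper explicitly states it does not know whether the global-to-local map $A_h(\VV,U)\to\A_h(\VV,U)$ is an isomorphism except for $U=\C$. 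So the claimed ``conversely'' direction --- that the Gan--Ginzburg image lies in the CEE image --- is not a bookkeeping exercise; with the tools in the paper it is not known how to prove it directly.

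The paper's route is different and circumvents this. It only constructs a \emph{one-directional}, filtration-preserving homomorphism $\Phi_{m,n}\colon \overline{\ee}(H_{\frac{m}{n}}(n)/I(\frac{m}{n}))\overline{\ee}\to \overline{\ee}(H_{\frac{n}{m}}(m)/I(\frac{n}{m}))\overline{\ee}$: the CEE action is given by global differential operators (a fact from \cite{CEE}), hence factors through the global reduction $A_{1-\frac{n}{m}}(\VV,U)$ modulo the kernel of its action; one then maps to the local reduction, applies Theorem \ref{Thm_iso}, and uses the chain structure and maximality of the two-sided ideals to descend to the quotient. Injectivity follows from simplicity of the source (Morita equivalence with $H_{\frac{m}{n}}(n)/I(\frac{m}{n})$), and the isomorphism statement is then forced by an abstract trick: $\Phi_{n,m}\circ\Phi_{m,n}$ is a filtration-preserving self-embedding, and since the Bernstein filtration has finite-dimensional pieces it must be bijective. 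To repair your argument you would either have to adopt this asymmetric construction, or supply the missing ingredients your symmetric version needs: a generating set for $\overline{\ee}H_c\overline{\ee}$ together with an explicit identification of the images of those generators under both constructions (including the surjectivity of the global-to-local comparison map for $U=\wedge^{m-2\bullet}\C^m$), neither of which is established in the paper. The module-matching statement (which simple $L$ goes to which) is also not automatic from your setup; the paper needs Proposition \ref{match}, proved via localization to the open stratum and Wilcox's equivalence, plus the $j=0$ comparison of Proposition \ref{compa}.
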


A proof of Theorem \ref{the1} is given in the next subsection.

\begin{corollary}\label{speci} We have natural isomorphisms of algebras
$$
e_j(H_{\frac{n}{m}}(m)/I(\frac{n}{m}))e_j\cong
e_j(H_{\frac{m}{n}}(n)/I(\frac{m}{n}))e_j,
$$
$$
\ee_j(H_{\frac{n}{m}}(m)/I(\frac{n}{m}))\ee_j\cong
\ee_j(H_{\frac{m}{n}}(n)/I(\frac{m}{n}))\ee_j
$$
\end{corollary}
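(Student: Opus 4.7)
The plan is to derive Corollary \ref{speci} as an immediate consequence of the isomorphism ${\bf \Phi}_{n,m}$ constructed in Theorem \ref{the1}. Since $\overline{\ee}=\sum_i \ee_i$ is a sum of pairwise orthogonal idempotents, each $\ee_j$ and each $e_j=\ee_j+\ee_{j-1}$ satisfies
$$
\ee_j\,\overline{\ee}=\overline{\ee}\,\ee_j=\ee_j, \qquad e_j\,\overline{\ee}=\overline{\ee}\,e_j=e_j.
$$
Consequently, $\ee_j$ and $e_j$ may be regarded as idempotents inside the quasispherical algebra $\overline{\ee}\bigl(H_c(n)/I(c)\bigr)\overline{\ee}$, and one has the tautological identifications
$$
\ee_j\bigl(H_c(n)/I(c)\bigr)\ee_j \;=\; \ee_j\bigl[\overline{\ee}\bigl(H_c(n)/I(c)\bigr)\overline{\ee}\bigr]\ee_j,
$$
with the analogous identity for $e_j$, and likewise for $H_{n/m}(m)/I(n/m)$ on the other side.

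By Theorem \ref{the1}, the isomorphism ${\bf \Phi}_{n,m}$ preserves both families of idempotents, sending $\ee_j$ to $\ee_j$ and $e_j$ to $e_j$ (where the idempotents on the two sides live in $\CC S_m$ and $\CC S_n$ respectively, but carry the same index $j$ because both correspond to the hook representation $\wedge^j\h$). Any algebra isomorphism that sends a specified idempotent on one side to a specified idempotent on the other side automatically restricts to an isomorphism between the corresponding corner subalgebras. Applying this observation to $\ee_j$ and to $e_j$ separately, we obtain both isomorphisms of Corollary \ref{speci}. These restrictions inherit the preservation of the Bernstein filtration and the grading that was already established for ${\bf \Phi}_{n,m}$.

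There is no real obstacle in this step: the entire substantive content is packaged into Theorem \ref{the1}, and the corollary is pure bookkeeping. As a sanity check, when $j\ge \min(n,m)$, one of the idempotents $\ee_{j,n},\ee_{j,m}$ vanishes outright so that the corresponding corner is zero; the isomorphism then forces the corner on the other side to vanish as well, recovering the nonobvious vanishing noted in the remark following Theorem \ref{symme}.
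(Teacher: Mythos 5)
Your argument is correct and is exactly the paper's proof: Theorem \ref{the1} already states that the isomorphism sends $\ee_j$ to $\ee_j$ and $e_j$ to $e_j$, and the corollary follows by applying these idempotents on both sides to cut out the corner algebras. Your write-up just spells out this bookkeeping in slightly more detail.
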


\begin{proof}
The Corollary follows from Theorem \ref{the1} by applying $e_j$,
respectively $\ee_j$ on
both sides.
\end{proof}

\begin{remark}
If $n>m$ then for $m<j\le n$, $e_j\in I(\frac{n}{m})$, so for any $i$,
$$e_i(H_{\frac{m}{n}}(n)/I(\frac{m}{n}))e_j=e_j(H_{\frac{m}{n}}(n)/I(\frac{m}{n}))e_i=0.$$
\end{remark}

\subsection{Proof of Theorem \ref{the1}}

We will show that there is a homomorphism
$$
\Phi_{m,n}:
\overline{\ee}(H_{\frac{m}{n}}(n)/I(\frac{m}{n}))\overline{\ee}\to
\overline{\ee}(H_{\frac{n}{m}}(m)/I(\frac{n}{m}))\overline{\ee}
$$
preserving the Bernstein filtration.
This homomorphism must be injective since
the algebra $\overline{\ee}(H_{\frac{m}{n}}(n)/I(\frac{m}{n}))\overline{\ee}$ is
simple \footnote{Indeed, this algebra is Morita equivalent to the algebra
  $H_{\frac{m}{n}}(n)/I(\frac{m}{n})$, which is simple, and simplicity is a Morita invariant property.}.
This implies that we have a self-inclusion $\Phi_{n,m}\circ \Phi_{m,n}$ of
$\overline{\ee}(H_{\frac{m}{n}}(n)/I(\frac{m}{n}))\overline{\ee}$ preserving the Bernstein
filtration. Since the Bernstein filtration has finite dimensional
quotients, this self-inclusion must be an isomorphism, which
implies the theorem.

To construct $\Phi_{m,n}$, recall that by Proposition \ref{coronatact}, we have a map
$$\tau: \overline{\ee}H_{\frac{m}{n}}(n)\overline{\ee}\to \End\left((M\otimes
  \oplus_{j\text{ even }}S^{n-j}V^*\otimes\wedge^jV^*)^{\mathfrak{g}}\right),$$ which is obtained
by applying $\overline{\ee}$ on both sides to the map
$\widetilde{\tau}:
H_{\frac{m}{n}}(n)\to \End((M\otimes (V^*)^{\otimes n})^{\mathfrak{g}})$ provided by \cite{CEE}, see Proposition \ref{CEEaction} above  (on the right hand
side, $e_j$ symmetrizes with respect to the first $n-j$ indices
and antisymmetrizes with respect to the last $j$
indices, and $\overline{\ee}=\sum_{j\text{ even
  }}e_j$).\footnote{Here $M$ can be taken to be any D-module for
  which the corresponding spaces of invariants are nonzero; for
  example, one can take $M={\mathcal{F}}(M_\lambda)$ for $\lambda$ being a
  partition of $d$, as in Section 5.} Moreover, we know from \cite{CEE}, Section 9, that this map
kills the ideal $\overline{\ee}I(\frac{m}{n})\overline{\ee}$, so it defines a map
$$
\bar \tau:
\overline{\ee}(H_{\frac{m}{n}}(n)/I(\frac{m}{n}))\overline{\ee}\to
\End((M\otimes (\oplus_{j\text{ even }}S^{n-j}V^*\otimes\wedge^jV^*))^{\mathfrak{g}}).
$$

Now, according to \cite{CEE}, the action $\bar \tau$ is given by global differential operators
with values in $U=\oplus_{j\text{ even }}\wedge^{m-j}V^*$. Let
$\xi: A_{1-\frac{n}{m}}(\VV,U)\to \End((M\otimes (\oplus_{j\text{ even }}S^{n-j}V^*\otimes\wedge^jV^*))^{\mathfrak{g}})$
(where $\VV={\mathfrak{sl}}(V)\oplus V$)
be the action of the global hamiltonian reduction from Corollary \ref{co1}.
Let $K=\Ker(\xi)$, and $\bar \xi$ be the
corresponding injective map
$$
\bar\xi: A_{1-\frac{n}{m}}(\VV,U)/K\to
\End((M\otimes(\oplus_{j\text{ even
  }}S^{n-j}V^*\otimes\wedge^jV^*))^{\mathfrak{g}}).
$$
Since the action $\bar \tau$ is given by global differential operators,
it must factor through $\bar \xi$, i.e., there exists a
unique homomorphism $\bar\theta: \overline{\ee}(H_{\frac{m}{n}}(n)/I(\frac{m}{n}))\overline{\ee}\to
A_{1-\frac{n}{m}}(\VV,U)/K$ such that $\bar \tau=\bar \xi\circ \bar\theta$.

Now recall that for any $c$ we have an algebra homomorphism from the global
hamiltonian reduction to the global sections of the local
hamiltonian reduction, $\pi: A_c(\VV,U)\to
{\mathcal  A}_c(\VV,U)$ (see Subsection \ref{locglob}). This descends to $\bar\pi: A_c(\VV,U)/K\to
{\mathcal  A}_c(\VV,U)/\langle\pi(K)\rangle$ (where $\langle S\rangle$ denotes the
ideal generated by $S$). Also, since $e_iH_ce_i\cong e_{m-i}H_{-c}e_{m-i}$,
by Theorem \ref{Thm_iso} we have
an isomorphism $\varphi: {\mathcal  A}_{1-\frac{n}{m}}(\VV,U)\to
\overline{\ee}(H_{\frac{n}{m}}(m))\overline{\ee}$, which induces isomorphisms
$\varphi_j: {\mathcal  A}_{1-\frac{n}{m}}(\VV,\wedge^{m-2j}\CC^m)\to
e_jH_{\frac{n}{m}}(m)e_j$. This isomorphism descends to an isomorphism
$$\varphi': {\mathcal  A}_{1-\frac{n}{m}}(\VV,U)/\langle\pi(K)\rangle\to
\overline{\ee}H_{\frac{n}{m}}(m)\overline{\ee}/\varphi(\langle\pi(K)\rangle).$$ Since
$\overline{\ee}I(\frac{n}{m})\overline{\ee}$ is a maximal ideal, and since ideals in the
Cherednik algebra form a chain (see the beginning of Subsection
\ref{matc}),
we see that $\overline{\ee}I(\frac{n}{m})\overline{\ee}\supset
\varphi(\langle\pi(K)\rangle)$, and hence we have a projection $\gamma:
\overline{\ee}H_{\frac{n}{m}}(m)\overline{\ee}/\phi(\langle \pi(K)\rangle)
\to
\overline{\ee}(H_{\frac{n}{m}}(m)/I(\frac{n}{m})
)\overline{\ee}.$
So the map $\varphi'$ gives rise to a map
$\bar \varphi: {\mathcal  A}_{1-\frac{n}{m}}(\VV,U)/\langle\pi(K)\rangle\to
\overline{\ee}(H_{\frac{n}{m}}(m)/I(\frac{n}{m}))\overline{\ee}$.

So altogether we have a map
$$
\Phi_{m,n}:=\bar \varphi\circ
\bar \pi\circ \bar \theta:
\overline{\ee}(H_{\frac{m}{n}}(n)/I(\frac{m}{n}))\overline{\ee}\to
\overline{\ee}(H_{\frac{n}{m}}(m)/I(\frac{n}{m}))\overline{\ee},
$$
as desired.

It remains to show that the map $\Phi_{m,n}$ preserves the
Bernstein filtration. To show this, note that
the map $\bar \theta$ preserves the Bernstein
filtration by the CEE construction, the map $\bar\varphi$
preserves the Bernstein filtration by the generalized
Gan-Ginzburg construction, and the map $\bar\pi$
preserves the Bernstein filtration because the
corresponding map of the Rees algebras is $(\CC^\times)^2$-equivariant. This implies the required statement.

\subsection{Correspondence between modules over quasi-spherical subalgebras}

\begin{proposition} \label{compa}
For $j=0$, the isomorphisms of Corollary \ref{speci}
coincide with the isomorphism constructed above in Proposition \ref{CEEiso}.
\end{proposition}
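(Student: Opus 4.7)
The plan is to restrict the construction of $\Phi_{m,n}$ from the previous subsection to the symmetric idempotent $\ee=e_0=\ee_0$ and show that each of its three pieces collapses to the spherical CEE setup used to define $\phi$ in Proposition \ref{CEEiso}. Since $\ee\cdot\overline{\ee}=\ee$, left- and right-multiplying $\Phi_{m,n}$ by $\ee$ produces a filtered algebra map
$$\ee\bigl(H_{\frac{m}{n}}(n)/I(\tfrac{m}{n})\bigr)\ee\longrightarrow \ee\bigl(H_{\frac{n}{m}}(m)/I(\tfrac{n}{m})\bigr)\ee,$$
and the task is to identify this restriction with $\phi$.

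First, I would analyze the Gan-Ginzburg piece $\bar\varphi$. In the decomposition $U=\wedge^{m-2\bullet}V=\bigoplus_k\wedge^{m-2k}V$ used in the generalized Gan-Ginzburg construction, the summand $\wedge^m V\cong\CC$ is the trivial $G$-module; the matching Cherednik idempotent is $\ee$ and the matching Procesi-bundle summand is $e_0\Pro=\Str_X$, by property (iii). Hence the component of the isomorphism $\Upsilon$ of Theorem \ref{Thm_iso} at $j=0$ is the classical Gan-Ginzburg isomorphism $\A_{1-\frac{n}{m}}(\VV,\CC)\xrightarrow{\sim}\ee H_{\frac{n}{m}}(m)\ee$ used in \cite{GG,CEE}. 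Moreover, by the remark in Subsection \ref{locglob}, for $U=\CC$ the canonical map $\pi:A_{1-\frac{n}{m}}(\VV,\CC)\to\A_{1-\frac{n}{m}}(\VV,\CC)$ is already an isomorphism, so the step $\bar\pi$ contributes no extra data in the spherical case.

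Second, I would analyze the CEE piece $\bar\theta$. The $\ee$-component of the action $\bar\tau$ on the invariants $\bigl(M\otimes\bigoplus_{j\text{ even}}S^{n-j}V^*\otimes\wedge^j V^*\bigr)^{\g}$ is the action on $(M\otimes S^n V^*)^{\g}$, and by \cite[Proposition 9.5, Section 9]{CEE} this is exactly the spherical CEE action used to build $\phi$. Assembling: the spherical restriction of $\Phi_{m,n}=\bar\varphi\circ\bar\pi\circ\bar\theta$ becomes the composition ``spherical CEE action on $(M\otimes S^nV^*)^{\g}$'' followed by the ``inverse of the Gan-Ginzburg spherical isomorphism on the $m$-side,'' which is precisely the definition of $\phi$ in Proposition \ref{CEEiso}.

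The main obstacle will be the bookkeeping of parameter conventions. The definition of $\Phi_{m,n}$ involves the substitution $c\mapsto 1-c$ coming from Corollary \ref{co1} (via the identification $\wedge^{m-j}V\cong\wedge^j V^*$) together with the non-symmetrized quantum comoment map used in $\Dcal_h$, whereas \cite{CEE} and \cite{GG} adopt slightly different normalizations. One must verify that these shifts cancel so that the restricted map really sends $\ee p_{r,n}(x)\ee$ to $\tfrac{n}{m}\ee p_{r,m}(x')\ee$, matching the explicit formula for $\phi$ recalled in Subsection \ref{matc}. Once this is checked on the $x$-generators, simplicity of the source algebra together with preservation of the Bernstein filtration and of the $x$-, $y$- and $\sl_2$-subalgebras (established in Proposition \ref{CEEiso} for $\phi$ and in the construction of $\Phi_{m,n}$ for its restriction) forces the two maps to agree on the whole algebra.
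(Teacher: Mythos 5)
Your overall route — restrict the three-step composition $\Phi_{m,n}=\bar\varphi\circ\bar\pi\circ\bar\theta$ to the spherical idempotent and compare it with $\phi$ — is the same as the paper's, but the way you close the argument has a genuine gap. The paper's proof reduces everything to the generators: by \cite{BEG} the spherical subalgebra is generated by $\sum_i x_i^k$ and $\sum_i y_i^2$, so it suffices to check both families, and the paper then computes their images explicitly through each of the three maps ($\bar\theta(\sum x_i^r)=\tfrac{n}{m}\Tr(X^r)$ and $\bar\theta(\sum y_i^2)=\tfrac{n}{m}\Delta_\g$ from \cite[Section 9]{CEE}, $\bar\pi$ acting as the identity on these elements, and $\bar\varphi(\Tr(X^r))=\sum x_i^r$, $\bar\varphi(\Delta_\g)=\sum y_i^2$, the latter from \cite{EG}), matching the known values of $\phi$. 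In your write-up, precisely this verification is what gets deferred: you flag the computation on the $x$-generators as ``bookkeeping to be checked,'' and you never check $\sum y_i^2$ at all.

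Instead, to handle the $y$-side you invoke that the restriction of $\Phi_{m,n}$ preserves the $y$- and $\sl_2$-subalgebras, ``established \ldots in the construction of $\Phi_{m,n}$.'' This is not established there: the construction of $\Phi_{m,n}$ only yields preservation of the Bernstein filtration (and grading); preservation of the distinguished subalgebras is essentially the content of the proposition you are trying to prove, so citing it is circular. (One could in fact salvage an abstract argument: once agreement on the $x$-subalgebra is proved, grading and filtration preservation force $\Phi(\sum y_i^2)=c'\sum y_i^2$, and the $\sl_2$-relations with $\sum x_i^2$ and $\mathbf{h}$ pin $c'=1$; but that argument is not the one you give, and it still requires the $x$-computation you postponed.) Also, ``simplicity of the source algebra'' plays no role in forcing two maps that agree on generators to agree globally — in the paper simplicity is used only to get injectivity of $\Phi_{m,n}$ in the proof of Theorem \ref{the1}. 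Finally, your opening claim that the spherical restriction ``is precisely the definition of $\phi$'' in \cite{CEE} is exactly the nontrivial identification at stake; the paper avoids asserting it and instead compares the two maps on generators, which is the argument you would need to carry out.
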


\begin{proof}
It suffices to show that these isomorphisms coincide on
the elements $\sum_i x_i^k$ and $\sum_i y_i^2$, since
by the results of \cite{BEG}, such elements generate the
corresponding algebras.

To this end, note that it follows from \cite{CEE}, Section 9, that
in the proof of Theorem \ref{the1} one has
$\bar\theta(\sum_{i=1}^n x_i^r)=\frac{n}{m}\Tr(X^r)$ where $X\in
{\mathfrak{sl}_m}$. Also, it is clear that
$\bar\pi(\Tr(X^r))=\Tr(X^r)$, and
$\bar \phi(\Tr(X^r))=\sum_{i=1}^m x_i^r$.
Thus, $\Phi_{m,n}(\sum_{i=1}^n x_i^r)=\frac{n}{m}\sum_{i=1}^m
x_i^r$. Similarly,
it follows from \cite{CEE}, Section 9, that one has
$\bar\theta(\sum_{i=1}^n y_i^2)=\frac{n}{m}\Delta_\g$ (the
Laplacian of $\g$). Also, it is clear that
$\bar\pi(\Delta_\g)=\Delta_\g$, and it is known from \cite{EG}
that $\bar \phi(\Delta_\g)=\sum_{i=1}^m y_i^2$.
Thus, $\Phi_{m,n}(\sum_{i=1}^n y_i^2)=\frac{n}{m}\sum_{i=1}^m
y_i^2$, as desired.
\end{proof}

\begin{corollary}\label{co3}
The isomorphism of Theorem \ref{the1}
maps $\overline{\ee}L_{\frac{n}{m}}(\frac{m}{d}\mu)$ to $\overline{\ee}L_{\frac{m}{n}}(\frac{n}{d}\mu)$,
and the isomorphisms of Corollary \ref{speci} map
 $e_j L_{\frac{n}{m}}(\frac{m}{d}\mu)$ to $e_j
L_{\frac{m}{n}}(\frac{n}{d}\mu)$,
and $\ee_j L_{\frac{n}{m}}(\frac{m}{d}\mu)$ to $\ee_j
L_{\frac{m}{n}}(\frac{n}{d}\mu)$.
Thus, we have natural isomorphisms of vector spaces
preserving the gradings and the filtrations:
$\overline{\ee} L_{\frac{n}{m}}(\frac{m}{d}\mu)\cong \overline{\ee}
L_{\frac{m}{n}}(\frac{n}{d}\mu)$,
$e_j L_{\frac{n}{m}}(\frac{m}{d}\mu)\cong e_j
L_{\frac{m}{n}}(\frac{n}{d}\mu)$,
$\ee_j L_{\frac{n}{m}}(\frac{m}{d}\mu)\cong\ee_j
L_{\frac{m}{n}}(\frac{n}{d}\mu)$.
\end{corollary}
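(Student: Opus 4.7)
The plan is to combine Proposition \ref{match} with Proposition \ref{compa} to determine how the algebra isomorphism ${\bold \Phi}_{n,m}$ of Theorem \ref{the1} permutes the simple modules of its source and target, and then to deduce the various idempotent variants.

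First I would observe that the module category of $\overline{\ee}(H_{\frac{m}{n}}(n)/I(\frac{m}{n}))\overline{\ee}$ is semisimple of finite length, with simple objects exactly $\overline{\ee}L_{\frac{m}{n}}(\frac{n}{d}\mu)$ for $\mu \vdash d$. This uses Theorem \ref{wilco} (the minimal support category is semisimple with simples indexed by partitions of $d$) together with the nonvanishing $\ee L_{\frac{m}{n}}(\frac{n}{d}\mu) \ne 0$, which, in view of $\ee \subset \overline{\ee}$, gives $\overline{\ee}L \ne 0$. The analogous description holds for the target algebra. Hence ${\bold \Phi}_{n,m}$ induces some permutation $\sigma$ of partitions of $d$ with $\overline{\ee}L_{\frac{n}{m}}(\frac{m}{d}\mu) \leftrightarrow \overline{\ee}L_{\frac{m}{n}}(\frac{n}{d}\sigma(\mu))$.

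To show $\sigma = \Id$, I would compress to the spherical corner by multiplying on both sides by the symmetrizer $\ee = \ee_0$, which ${\bold \Phi}_{n,m}$ preserves by Theorem \ref{the1}. This yields the $j = 0$ case of Corollary \ref{speci}, an isomorphism between the spherical quotients. By Proposition \ref{compa} this coincides with the CEE isomorphism $\phi$ of Proposition \ref{CEEiso}, and by Proposition \ref{match}, $\phi_*$ sends $\ee L_{\frac{n}{m}}(\frac{m}{d}\mu)$ to $\ee L_{\frac{m}{n}}(\frac{n}{d}\mu)$. Since the spherical pieces are pairwise non-isomorphic for different $\mu$ (under Wilcox's equivalence, Theorem \ref{wilco}(iii), they correspond to the distinct irreducibles $\pi_\mu$ of $S_d$), this forces $\sigma(\mu) = \mu$ for every $\mu$.

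The $e_j$ and $\ee_j$ versions now follow by applying these idempotents to the matched pair $\overline{\ee}L_{\frac{n}{m}}(\frac{m}{d}\mu) \leftrightarrow \overline{\ee}L_{\frac{m}{n}}(\frac{n}{d}\mu)$; this is legitimate because ${\bold \Phi}_{n,m}$ sends $e_j \mapsto e_j$ and $\ee_j \mapsto \ee_j$ by Theorem \ref{the1}. Preservation of grading and filtration is inherited from the algebra isomorphism: the $\mathbf{h}$-grading is given by the scaling element inside the algebra, and the descending $\mathfrak{m}$-filtration of the module is induced by the action of $\CC[\h]^W$, whose images on the two sides correspond under the matched isomorphism. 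The main step that is not purely formal is the spherical identification $\sigma = \Id$, for which Propositions \ref{compa} and \ref{match} are decisive; beyond that, no substantial obstacle is expected.
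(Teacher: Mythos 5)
Your overall strategy is the paper's: compress to the spherical corner, use Proposition \ref{compa} to identify the $j=0$ isomorphism with the CEE map $\phi$ of Proposition \ref{CEEiso}, use Proposition \ref{match} to see that the spherical parts correspond with the same $\mu$, and then apply $e_j,\ee_j$. But the step that makes this identification of spherical parts conclusive is justified incorrectly. Your opening claim --- that the module category of $\overline{\ee}(H_{\frac{m}{n}}(n)/I(\frac{m}{n}))\overline{\ee}$ is semisimple of finite length with simple objects exactly the $\overline{\ee}L_{\frac{m}{n}}(\frac{n}{d}\mu)$ --- is false whenever $d\ge 2$: this algebra is Morita equivalent to the simple quotient $H_{\frac{m}{n}}(n)/I(\frac{m}{n})$, which is an infinite-dimensional simple algebra (after localization it looks like invariants of a matrix algebra over a Weyl algebra, by Wilcox's description), so its full module category is neither semisimple nor of finite length, and it has many simple modules besides those coming from category $\mathcal{O}$. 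What is true (Theorem \ref{wilco}) is only that the subcategory of category-$\mathcal{O}$-type (minimally supported) modules is semisimple with those simples. Consequently your assertion that ${\bold \Phi}_{n,m}$ ``induces a permutation $\sigma$ of partitions of $d$'' does not follow: a priori the pullback of $\overline{\ee}L_{\frac{m}{n}}(\frac{n}{d}\mu)$ is just some simple module of the target algebra, and nothing in your argument shows it is of the form $\overline{\ee}L_{\frac{n}{m}}(\frac{m}{d}\nu)$ (equivalently, that it lies in category $\mathcal{O}$; the filtration-preserving property of ${\bold \Phi}_{n,m}$ alone does not give this, since it need not match the $x$- and $y$-subalgebras of the two sides).

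The missing ingredient, which is exactly what the paper's proof supplies, is a Morita-type statement: since $H_{\frac{n}{m}}(m)/I(\frac{n}{m})$ is simple and $\ee$ survives in it, one has $A\ee A=A$ for $A=\overline{\ee}(H_{\frac{n}{m}}(m)/I(\frac{n}{m}))\overline{\ee}$ with $\ee$ viewed as an idempotent of $A$; hence $N\mapsto \ee N$ is an equivalence from $A$-modules to $\ee A\ee$-modules, so a simple $A$-module is determined up to isomorphism by its spherical part. With this in hand, your computation that the spherical part of the pullback is $\ee L_{\frac{n}{m}}(\frac{m}{d}\mu)$ (via Propositions \ref{compa} and \ref{match}) immediately forces the pullback to be $\overline{\ee}L_{\frac{n}{m}}(\frac{m}{d}\mu)$, with no need for any finite-length or semisimplicity claim and no need to know beforehand that the pullback lies in category $\mathcal{O}$. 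The remaining steps of your proposal (applying $e_j$ and $\ee_j$, and inheriting the grading and filtration from the compatibility of ${\bold \Phi}_{n,m}$ with the Bernstein filtration) are fine and agree with the paper.
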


\begin{proof}
The algebra $\overline{\ee} H_c(n) \overline{\ee}$
is Morita equivalent to both $\ee H_c(n)\ee$ and $H_c(n)$, and
the symmetrizer $\ee$ can be regarded as an idempotent in
$\overline{\ee} H_c(n)
\overline{\ee}$. So, we see by Proposition \ref{match} and Proposition \ref{compa} that the
pullback of $\overline{\ee} L_{\frac{m}{n}}(\frac{n}{d}\mu)$ to $\overline{\ee} H_{1/c}(m) \overline{\ee}$
is $\overline{\ee} L_{\frac{n}{m}}(\frac{m}{d}\mu)$.
This proves the first statement of the Corollary.
The second statement is obtained from the first one
by applying $e_j$ and $\ee_j$, respectively.
\end{proof}

\begin{remark}
Note that by virtue of the above results, the algebras
$\overline{\ee}H_{\frac{m}{n}}(n)\overline{\ee}$ and
$\overline{\ee}H_{\frac{n}{m}}(m)\overline{\ee}$
act on the same space $(M\otimes (\oplus_{j\text{ even}}S^{n-j}V^*\otimes\wedge^jV^*))^\g$
(the first algebra via the CEE construction and the second one
via the generalized Gan-Ginzburg construction), and the images
of these algebras under these actions coincide.
\end{remark}



\section{Symmetrized Koszul-BGG complexes}



\subsection{Quasiisomorphism of Koszul-BGG complexes}

Consider the BGG resolution $K^{\bullet}_{m,n}$. As a vector space, it is just the space $\Omega^{\bullet}\h_n$ of differential forms on the reflection representation, and the homological degree is given by the degree
 of a form. The differential is a contraction $\iota_{\xi}$ with an $S_n$-invariant vector field $\xi=\sum_{i} f_i \frac{\partial}{\partial x_i}$, where $f_i$ are the singular polynomials for Dunkl operators.
In other words, $\iota_{\xi}$ is defined by the identity
$$\iota_{\xi}(dx_{i}\wedge \alpha)=f_i\alpha-dx_i\wedge \iota_{\xi}(\alpha).$$
By definition, $K^{\bullet}_{m,n}$ is a Koszul complex associated to the polynomials $f_i$.

\begin{lemma}
\label{sym}
The symmetrized complex $(K^{\bullet}_{m,n})^{S_n}$ coincides with the
Koszul complex associated to the polynomials $\sum_{i} x_i^{j}f_i,\ 1\le j\le n-1$.
\end{lemma}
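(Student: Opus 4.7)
The plan is to identify the symmetrized complex $(K^\bullet_{m,n})^{S_n}$ with a classical Koszul complex via Solomon's theorem on invariant differential forms. For each $j\ge 0$, the $1$-form $\omega_j := \sum_i x_i^j dx_i$ is manifestly $S_n$-invariant. On $\h_n$ we have $\omega_0 = d(\sum_i x_i) = 0$, and for $j\ge 1$ the form $\omega_j$ equals $\frac{1}{j+1} dp_{j+1}$, where $p_k := \sum_i x_i^k$. Since $p_2,\ldots,p_n$ is a system of basic invariants for $S_n$ acting on $\h_n$, Solomon's theorem identifies
$$
(\Omega^\bullet \h_n)^{S_n} \;=\; \CC[\h_n]^{S_n} \otimes \wedge^\bullet\langle \omega_1,\ldots,\omega_{n-1}\rangle
$$
as a free graded-commutative algebra over $\CC[\h_n]^{S_n}$ of rank $2^{n-1}$, with exterior generators $\omega_1,\ldots,\omega_{n-1}$ in homological degree $1$.

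Next I would observe that, because $\xi = \sum_i f_i\,\partial/\partial x_i$ is $S_n$-invariant, the contraction $\iota_\xi$ commutes with the $S_n$-action and therefore restricts to the subcomplex of invariants. It is an odd derivation of the exterior algebra, so its restriction to invariants is an odd derivation over $\CC[\h_n]^{S_n}$ and is completely determined by its values on the generators $\omega_j$. A direct computation gives
$$
\iota_\xi(\omega_j) \;=\; \sum_i x_i^{\,j}\,\iota_\xi(dx_i) \;=\; \sum_i x_i^{\,j} f_i \;=\; g_j,
$$
where $g_j := \sum_i x_i^{\,j} f_i$ for $1\le j\le n-1$ are the polynomials appearing in the statement of the lemma.

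Combining these two observations, $(K^\bullet_{m,n})^{S_n}$ is precisely the free exterior algebra over $\CC[\h_n]^{S_n}$ on the $n-1$ odd generators $\omega_1,\ldots,\omega_{n-1}$, equipped with the unique odd derivation sending $\omega_j\mapsto g_j$. By definition this is the Koszul complex associated to the sequence $(g_1,\ldots,g_{n-1})$ over $\CC[\h_n]^{S_n}$, proving the lemma. There is no serious obstacle in this argument; the only point requiring care is the invocation of Solomon's theorem with the specific generating set $\{\omega_j\}$, which follows at once from the fact that $p_2,\ldots,p_n$ generate $\CC[\h_n]^{S_n}$ together with the chain rule identity $\omega_j = \frac{1}{j+1}dp_{j+1}$.
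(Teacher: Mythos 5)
Your proposal is correct and follows essentially the same route as the paper: Solomon's theorem identifies the invariants as a free exterior algebra over $\CC[\h_n]^{S_n}$ on $dp_2,\ldots,dp_n$ (your $\omega_j=\frac{1}{j+1}dp_{j+1}$), and computing the contraction $\iota_\xi$ on these generators gives, up to nonzero scalars, the sequence $\sum_i x_i^j f_i$. Your write-up is just slightly more explicit about $\iota_\xi$ being an odd derivation determined by its values on the generators.
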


\begin{proof}
By a theorem of Solomon \cite{solomon} we have $$(\Omega^{\bullet}\h_n)^{S_n}=\Omega^{\bullet}(\h_n/S_n).$$
The functions on $\h_n/S_n$ are symmetric functions on $\h_n$ and form a polynomial ring in power sums
$p_2,\ldots,p_{n}$. The statement now follows from the identity
$$\iota_{\xi}(dp_{j})=\iota_{\xi}(j\sum_{i} x_{i}^{j-1}dx_i)=j\sum_{i}x_i^{j-1}f_i.$$
\end{proof}

Recall that by (\ref{potential for singular polys}) the singular polynomials are given by the equation
$$f_i=\frac{\partial}{\partial x_i}\Coef_{m+1}\prod_{i}(1-zx_i)^{\frac{m}{n}}.$$

\begin{theorem}
\label{symmetrized BGG}
The symmetrized complex $(K^{\bullet}_{m,n})^{S_n}$ is quasi-isomorphic to the Koszul complex
associated to the sequence of polynomials
\begin{equation}
\label{polys}
\Coef_{j}\left[\left(1+\sum_{k=2}^{n} u_kz^{k}\right)^{m}-\left(1+\sum_{k=2}^{m} v_kz^{k}\right)^{n}\right], 2\le j\le m+n-1
\end{equation}
in variables $u_2,\ldots,u_n,v_2,\ldots,v_m$.
\end{theorem}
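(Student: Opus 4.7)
The plan rests on a two-step reduction. First, by Lemma \ref{sym} the complex $(K^{\bullet}_{m,n})^{S_n}$ is the Koszul complex of the polynomials $g_j := \sum_i x_i^j f_i$ for $j = 1, \ldots, n-1$ in $\CC[p_2, \ldots, p_n] = \CC[\h_n]^{S_n}$, so it suffices to compare the RHS complex with this one.

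The key structural observation on the right-hand side is that $h_2, \ldots, h_m$ are upper-triangular in the $v$-variables: expanding $(1 + \sum_{l=2}^m v_l z^l)^n$, the coefficient of $z^j$ for $2 \le j \le m$ equals $n v_j$ plus a polynomial in $v_2, \ldots, v_{j-1}$, while $\Coef_j(P^m)$ involves only the $u$-variables. Hence $h_j = -n v_j + (\text{terms in the } u\text{'s and in } v_2, \ldots, v_{j-1})$ for $2 \le j \le m$, and these form a regular sequence in $R$. Since $K(f; R)$ is a free resolution of $R/(f)$ when $f$ is a non-zero-divisor, iteratively applying this reduction yields a quasi-isomorphism $K(h_2, \ldots, h_{m+n-1}; R) \simeq K(\bar h_{m+1}, \ldots, \bar h_{m+n-1}; \CC[u_2, \ldots, u_n])$, where $R/(h_2, \ldots, h_m) \cong \CC[u_2, \ldots, u_n]$ via $v_l \mapsto \Coef_l\bigl((1 + \sum_k u_k z^k)^{m/n}\bigr)$, and $\bar h_j$ is the induced image.

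After identifying $\CC[u_2, \ldots, u_n]$ with $\CC[\h_n]^{S_n}$ via $u_k \mapsto (-1)^k e_k$ (so $P(z) \leftrightarrow A(z) := \prod_i(1-zx_i)$), the element $\bar h_{m+j}$ becomes $\Coef_{m+j}\bigl(A^m - ((A^{m/n})_{\le m})^n\bigr)$. Using the binomial expansion $A^m - ((A^{m/n})_{\le m})^n = n A^{(n-1)m/n}\varepsilon + \binom{n}{2}A^{(n-2)m/n}\varepsilon^2 + \cdots$, where $\varepsilon := A^{m/n} - (A^{m/n})_{\le m} = O(z^{m+1})$, together with the generating-function identity $g_j = -\tfrac{m}{n}\Coef_m\bigl(A^{m/n}\sum_i \tfrac{x_i^j}{1-zx_i}\bigr)$ (derived by acting on $A^{m/n}$ with the vector field $\sum_i x_i^j \partial_{x_i}$), one compares the two complexes over $\CC[\h_n]^{S_n}$.

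The hardest step is this final comparison. In general, the sequences $(\bar h_{m+j})$ and $(g_j)$ are neither proportional nor do they generate the same ideal in $\CC[\h_n]^{S_n}$, so the quasi-isomorphism cannot come from a simple change-of-basis matrix. Instead, the matching should be derived from the common geometric picture: both zero loci parametrize the same $(d-1)$-dimensional variety $\h_d/S_d$ (where $d = \gcd(m,n)$), which is precisely the minimal support of the modules $L_c(n_0\lambda_i)$ from Theorem \ref{homolkoz}. A Hilbert-series calculation shows that the two Koszul complexes have identical Euler characteristics, and identifying the higher homology groups on both sides with the $S_n$-invariants of the minimal-support modules $L_c(n_0\lambda_i)$ (using Theorem \ref{wilco}) yields the degree-by-degree match required for quasi-isomorphism.
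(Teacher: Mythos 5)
Your first two reductions are sound and essentially retrace the paper's argument in the opposite direction: eliminating $v_2,\dots,v_m$ via the triangular equations $h_j=-nv_j+\cdots$ and identifying $\CC[u_2,\dots,u_n]$ with $\CC[\h_n]^{S_n}$ is exactly the content of the paper's passage between the complex in the variables $u_k,v_k$ and the complex of the polynomials $\Coef_{m+j}\bigl[(1+\sum u_kz^k)^{m/n}\bigr]$, $1\le j\le n-1$. The gap is in your final step, and it begins with a false assertion. Setting $\hat{g}_j:=\Coef_{m+j}\bigl[A^{m/n}\bigr]$, your own expansion $A^m-((A^{m/n})_{\le m})^n=nA^{(n-1)m/n}\varepsilon+\binom{n}{2}A^{(n-2)m/n}\varepsilon^2+\cdots$ with $\varepsilon=\sum_{i\ge1}\hat{g}_iz^{m+i}$ shows (since $A^{(n-1)m/n}$ has constant term $1$ and the $\varepsilon^{\ge2}$ terms contribute only products of earlier $\hat{g}_i$'s) that $\bar h_{m+j}=n\hat{g}_j+\sum_{i<j}c_{ji}\hat{g}_i$ with $c_{ji}\in\CC[\h_n]^{S_n}$, i.e.\ a triangular change of generators with invertible matrix. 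Moreover $g_j=\sum_i x_i^jf_i$ is itself related to $\hat{g}_j$ by a triangular change, which is deduced from the explicit formula (\ref{potential for singular polys}) as in \cite[Theorem 4.3]{Gor}; this is the one genuinely nontrivial input, and it is the step you skip. Once these two triangular changes are in place, the Koszul complexes of $(g_j)$, $(\hat{g}_j)$ and $(\bar h_{m+j})$ are isomorphic (a triangular change of a generating sequence induces an isomorphism of Koszul complexes), so the sequences do generate the same ideal, contrary to your claim — you can check this already for $n=2$, $m=3$, where $g_1$, $\hat g_1$ and $\bar h_4$ are all scalar multiples of $x^4$.

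The fallback argument you propose instead does not repair this. Equality of Euler characteristics together with abstract identifications of homology groups does not produce a quasi-isomorphism of complexes; and the theorem is needed later (Corollary \ref{quasii}, Theorem \ref{symBGGH2}) as a quasi-isomorphism of complexes of modules over the ring of symmetric functions, compatible with further structure, which an abstract matching of homology cannot supply. In addition, the homology of the Koszul complex of (\ref{polys}) is precisely what is unknown before the theorem is proved, so "identifying the higher homology groups on both sides with the $S_n$-invariants of the minimal-support modules" presupposes the conclusion rather than deriving it. The correct route is the elementary one you abandoned: the whole proof is a chain of triangular changes of generators plus the addition/elimination of the auxiliary variables $v_k$, with the single analytic input being the triangular relation between $\sum_i x_i^jf_i$ and $\Coef_{m+j}\bigl[\prod_i(1-zx_i)^{m/n}\bigr]$ coming from (\ref{potential for singular polys}).
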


\begin{proof}
By Lemma \ref{sym} $(K^{\bullet}_{m,n})^{S_n}$ is isomorphic to the Koszul complex
associated to the polynomials $g_j=\sum_{i} x_i^{j}f_i,\ 1\le j\le n-1$.
Similarly to the proof of  \cite[Theorem 4.3]{Gor}, one can deduce from (\ref{potential for singular polys}) that $g_j$ are related to
$\hat{g}_j=\Coef_{m+j}\prod_{i}(1-zx_i)^{\frac{m}{n}},\  1\le j\le n-1$ by a triangular change. Therefore the Koszul complexes associated to $g_j$ and $\hat{g}_j$ are quasi-isomorphic. If we denote $\prod_{i}(1-zx_i)=1+\sum_{k=2}^{n} u_kz^{k}$, we conclude that
$(K^{\bullet}_{m,n})^{S_n}$ is quasi-isomorphic to the Koszul complex associated to the polynomials
$$\Coef_{m+j}(1+\sum_{k=2}^{n} u_kz^{k})^{\frac{m}{n}},\  1\le j\le n-1$$
in variables $u_k$.
The latter complex is quasi-isomorphic to the Koszul complex associated to the polynomials
$$\Coef_{j}\left[(1+\sum_{k=2}^{n} u_kz^{k})^{\frac{m}{n}}-(1+\sum_{k=2}^{m} v_kz^{k})\right],\  2\le j\le m+n-1$$
in variables $u_k, v_k$, while this set of polynomials is related to (\ref{polys}) by a triangular change
which does not affect its quasi-isomorphism class.
\end{proof}

\begin{corollary}\label{quasii}
The complexes $(K^{\bullet}_{m,n})^{S_n}$ and $(K^{\bullet}_{n,m})^{S_m}$
are quasi-isomorphic as complexes of modules over the ring of symmetric functions.  In particular,
$(L_{\frac{m}{n}}((n)))^{S_{n}}\cong (L_{\frac{n}{m}}((m)))^{S_{m}}.$
\end{corollary}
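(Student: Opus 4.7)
The plan is to read off both sides of the desired quasi-isomorphism from Theorem \ref{symmetrized BGG} and exhibit a manifest $(m,n)$-symmetry of the common model. Concretely, Theorem \ref{symmetrized BGG} presents $(K^{\bullet}_{m,n})^{S_n}$ as quasi-isomorphic to the Koszul complex $\mathcal{K}_{m,n}$ on the polynomials
$$
P_j(u,v):=\Coef_{j}\!\left[\left(1+\sum_{k=2}^{n} u_kz^{k}\right)^{m}-\left(1+\sum_{k=2}^{m} v_kz^{k}\right)^{n}\right],\quad 2\le j\le m+n-1,
$$
inside $\CC[u_2,\ldots,u_n,v_2,\ldots,v_m]$. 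Applying the same theorem with the roles of $m$ and $n$ swapped presents $(K^{\bullet}_{n,m})^{S_m}$ as quasi-isomorphic to the Koszul complex $\mathcal{K}_{n,m}$ on the analogous polynomials $Q_j(u',v')$ in $\CC[u'_2,\ldots,u'_m,v'_2,\ldots,v'_n]$.

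The key observation is that the substitution $u'_k\leftrightarrow v_k$, $v'_k\leftrightarrow u_k$ identifies the ambient polynomial rings and sends $Q_j\mapsto -P_j$. Since negating one Koszul generator yields an isomorphic Koszul complex, this produces an isomorphism $\mathcal{K}_{m,n}\cong \mathcal{K}_{n,m}$. Assembling the two quasi-isomorphisms and this isomorphism gives a zigzag
$$
(K^{\bullet}_{m,n})^{S_n}\simeq \mathcal{K}_{m,n}\cong\mathcal{K}_{n,m}\simeq (K^{\bullet}_{n,m})^{S_m},
$$
which is the desired quasi-isomorphism over the common base ring. The base ring is identified with a ring of symmetric functions via the changes of variables of Theorem \ref{symmetrized BGG}, so the zigzag respects the module-theoretic structure claimed in the corollary.

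For the second statement, note that $(-)^{S_n}$ is exact because $\CC S_n$ is semisimple, so $H_0((K^{\bullet}_{m,n})^{S_n})=(H_0(K^{\bullet}_{m,n}))^{S_n}$. By Theorem \ref{homolkoz}, $H_0(K^{\bullet}_{m,n})=L_{\frac{m}{n}}(\lambda_0)$ with $\lambda_0=n_0(d)=(n)$, i.e.\ the trivial representation; symmetrically on the other side. Taking $H_0$ of the quasi-isomorphism above yields $(L_{\frac{m}{n}}((n)))^{S_n}\cong (L_{\frac{n}{m}}((m)))^{S_m}$.

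The only real obstacle is bookkeeping: one must check that the swap $u\leftrightarrow v$ truly matches the two triangular changes of variables carried out in the proof of Theorem \ref{symmetrized BGG} (and that those changes realize the ring of symmetric functions in the asserted way), so that the abstract isomorphism $\mathcal{K}_{m,n}\cong\mathcal{K}_{n,m}$ is compatible with the module structures that appear in the statement. Once this compatibility is verified at the level of generators $p_j$ versus the auxiliary $u_k$, the quasi-isomorphism and its consequence for $L_{\frac{m}{n}}((n))^{S_n}$ follow immediately.
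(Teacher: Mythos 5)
Your proposal is correct and follows essentially the same route as the paper: the paper's proof simply observes that the Koszul model of Theorem \ref{symmetrized BGG} is manifestly symmetric in $m$ and $n$ (your $u\leftrightarrow v$ swap, with the harmless sign on the generators), and deduces the second statement by taking $H_0$, using $H_0(K^{\bullet}_{m,n})=L_{\frac{m}{n}}((n))$ and exactness of $S_n$-invariants. Your extra remarks on the triangular changes of variables and the identification of the base ring are just the bookkeeping the paper leaves implicit.
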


\begin{proof}
The first statement follows from Theorem \ref{symmetrized BGG}.
The second statement follows from the first one,
since $H_0(K^{\bullet}_{m,n})=L_{\frac{m}{n}}((n))$.
\end{proof}

\subsection{Action of the Hamiltonian}

Recall that the quantum Calogero-Moser Hamiltonian is defined by the formula $H_2=\sum_{i=1}^{n} D_i^{2}.$
Let us compute the action of $H_2$ on
$$\Hom_{S_n}(\wedge^{\bullet}\hh_n,\CC[V])=(\Omega^{\bullet}\hh_n)^{S_n}=\Omega^{\bullet}(\hh_n/S_n).$$
This action is defined because the space
$\Hom_{S_n}(\wedge^{\bullet}\hh_n,\CC[V])$
is the $S_n$-invariants in the Verma module
$M_c(\wedge^\bullet \h_n^*)$ over the rational Cherednik algebra.
We have to compute \linebreak
$H_2(f(x_1,\ldots,x_n)dp_{\alpha_1}\wedge\ldots \wedge dp_{\alpha_k})$,
where $p_i$ are the power sum symmetric functions (providing a coordinate system on $\hh_n/S_n$),
$f$ is a symmetric polynomial in $x_i$ (and thus a polynomial in $p_i$),
and $dp_{\alpha_1}\wedge\ldots \wedge dp_{\alpha_k}$ denotes a copy of $\wedge^{k}\hh_n$ in $\CC[\hh_n]$
spanned by the coefficients of $dp_{\alpha_1}\wedge\ldots dp_{\alpha_k}$ in its expansion in
$dx_i$. Since $H_2$ commutes with the action of $S_n$, its action on $S_n$-equivariant differential forms
is well-defined and preserves the exterior degree.

Recall that $H_2$ is a second order differential operator with $\sum \left(\frac{\partial}{\partial x_i}\right)^2$
as second order part, so one has the identity
$H_2(fg)=H_2(f)g+fH_2(g)+2(\nabla f,\nabla g),$
where $(\nabla f,\nabla g)=\sum_{i}\frac{\partial f}{\partial x_i}\frac{\partial g}{\partial x_i}.$

\begin{lemma}
\label{wedge product rule}
The following equation holds:
$$H_2(dp_{i}\wedge dp_{j})=H_2(dp_i)\wedge dp_{j}+dp_{i}\wedge H_2(dp_{j}).$$
\end{lemma}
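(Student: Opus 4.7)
The plan is to exploit $H_2 = \sum_m D_m^2$ acting on the Verma module $M_c(\wedge^{\bullet}\h_n^*) = \CC[\h_n]\otimes\wedge^{\bullet}\h_n^*$, where each $D_m$ acts on the polynomial factor with the usual twist by $s$ on the form factor. The Dunkl Leibniz rule for $D_m$ applied to a product $\omega_1\wedge\omega_2$ has a correction term proportional, for each reflection $s=(ab)$, to $(1-s)_{\mathrm{poly}}(\omega_1)\wedge(1-s)_{\mathrm{poly}}(\omega_2)$ (up to $s$-twists on the form side). The key structural observation I will use is that $(1-s)_{\mathrm{poly}}(dp_k) = k(x_a^{k-1}-x_b^{k-1})(dx_a - dx_b)$, a $1$-form proportional to $(dx_a-dx_b)$. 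Hence any such wedge carries the factor $(dx_a-dx_b)\wedge(dx_a-dx_b)=0$ and vanishes, so $D_m$ acts as a strict derivation on $dp_i\wedge dp_j$.

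Iterating once more, I obtain
\begin{equation*}
D_m^2(dp_i\wedge dp_j) = D_m^2(dp_i)\wedge dp_j + 2\,D_m(dp_i)\wedge D_m(dp_j) + dp_i\wedge D_m^2(dp_j),
\end{equation*}
and summing over $m$ the lemma reduces to showing $\sum_m D_m(dp_i)\wedge D_m(dp_j)=0$. I plan to decompose
\begin{equation*}
D_m(dp_k) = k(k-1)\,x_m^{k-2}\,dx_m \;+\; kc\sum_{s=(ab)}\frac{(\delta_{ma}-\delta_{mb})(x_a^{k-1}-x_b^{k-1})}{x_a-x_b}(dx_a-dx_b),
\end{equation*}
and expand the wedge into four pieces. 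The ``pure partial'' piece $\sum_m x_m^{i+j-4}\,dx_m\wedge dx_m$ vanishes term by term. The mixed pieces (partial $\wedge$ Dunkl) and the pure Dunkl $\wedge$ Dunkl piece I expect to cancel by collecting terms and again using antisymmetry of $\wedge$ together with symmetry in the pair $(a,b)$; as a sanity check I would first verify this in the small cases $i=2,j=3$ and $i=j=3$, where on $\h_n$ one can use the constraint $\sum_k dx_k=0$ to make the cancellation explicit.

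The main obstacle is the bookkeeping in the pure Dunkl $\wedge$ Dunkl double sum over reflections. Should the direct expansion prove unwieldy, the fallback plan is the alternative route via $dp_i\wedge dp_j = d(p_i\,dp_j)$: one establishes $[H_2,d]=0$ on $S_n$-invariant forms by showing that $d$ coincides on invariants with the Dunkl exterior derivative $d_D = \sum_m \widetilde D_m\otimes(dx_m\wedge\,\cdot)$ (the key check being that the ``correction'' $d_D-d$ applied to an invariant $\omega = \sum_I f_I dx_I$ produces $\sum_s \frac{\tilde c_s}{\alpha_s}\alpha_s^{\mathrm{form}}\wedge(\omega - s_{\mathrm{form}}\omega)$, and the inner factor is proportional to $\alpha_s^{\mathrm{form}}$ for invariant $\omega$), after which $[H_2,d_D]=0$ follows from commutativity of the untwisted Dunkl operators. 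One then applies the Leibniz rule (valid because $p_i$ is invariant so all Dunkl corrections vanish) to $H_2(p_i\,dp_j)$, and concludes by noting that $dH_2(p_i)=H_2(dp_i)$, $d(H_2(dp_j))=0$, and the cross term $2\sum_m \partial_m p_i\cdot D_m(dp_j)$ is a closed $1$-form on $\h_n$ — an explicit calculation assembling it, via $\sum_k dx_k=0$, into a scalar multiple of $dp_{i+j-2}$ plus $p_k\,dp_l$ contributions whose $d$-images telescope.
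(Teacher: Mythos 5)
Your first-order observation is correct, and it is the same mechanism as Lemma \ref{vanis} of the paper: for \emph{invariant} $1$-forms $\omega_1,\omega_2$ the failure of the Leibniz rule for $D_m$ on $\omega_1\wedge\omega_2$ consists, for each reflection $s=(ab)$, of terms $A_s\omega_1\wedge(s-1)\omega_2$, $(s-1)\omega_1\wedge A_s\omega_2$ and a term built from $(1-s)_{\mathrm{poly}}\omega_1\wedge(1-s)_{\mathrm{poly}}\omega_2$; the first two die by invariance and the last by your observation that $(1-s)_{\mathrm{poly}}(dp_k)$ is proportional to $dx_a-dx_b$. The gap is the phrase ``iterating once more''. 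Neither of the two properties you used survives one application of $D_m$: since $sD_ms^{-1}=D_{s(m)}$, the form $D_m(dp_i)$ is \emph{not} $S_n$-invariant, and its $(1-s)_{\mathrm{poly}}$-image is no longer proportional to $dx_a-dx_b$. Hence the second application produces extra reflection terms, the claimed formula $D_m^2(dp_i\wedge dp_j)=D_m^2dp_i\wedge dp_j+2\,D_mdp_i\wedge D_mdp_j+dp_i\wedge D_m^2dp_j$ is not valid, and in fact the identity you reduce to, $\sum_m D_m(dp_i)\wedge D_m(dp_j)=0$, is \emph{false}: its order-$c$ part is, up to a nonzero normalization, $\sum_{m<a}C_{ma}\,dx_m\wedge dx_a$ with $C_{ma}=ij(x_m-x_a)\bigl[(j-1)h_{j-3}(x_m,x_a)h_{i-2}(x_m,x_a)-(i-1)h_{i-3}(x_m,x_a)h_{j-2}(x_m,x_a)\bigr]$ (here $h_r$ is the complete homogeneous polynomial), and for $i=3$, $j=4$, $n=3$ this evaluates at $x=(1,0,-1)$ to a nonzero multiple of $dx_1\wedge dx_2$ even after imposing $\sum_k dx_k=0$. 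It is exactly the discarded second-application corrections that cancel this nonzero cross term. Worse, your two proposed sanity checks are the degenerate cases where the false identity does hold: for $i=j$ it is trivial (a $1$-form wedged with itself vanishes), and for $i=2$ the coefficient $C_{ma}$ has the form $g(x_m)-g(x_a)$ and dies on $\h_n$ via $\sum_k dx_k=0$; so they would not expose the error, whose first occurrence is at $i=3$, $j=4$.

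For comparison, the paper avoids the iteration altogether: it takes $dp_i\wedge dp_j$ to be the copy of $\wedge^2\h_n$ inside $\CC[\h_n]$ spanned by the Jacobian minors $\partial_\mu p_i\,\partial_\nu p_j-\partial_\nu p_i\,\partial_\mu p_j$, applies the product rule for $H_2$ on functions stated just before the lemma, and observes that the cross terms involve the mixed Hessian entries $\partial^2 p_k/\partial x_\mu\partial x_l$, which are diagonal, so they cancel under the antisymmetrization in $\mu,\nu$. Your fallback route through $dp_i\wedge dp_j=d(p_i\,dp_j)$ and $[H_2,d]=0$ is plausibly workable (the key vanishing is again the Lemma \ref{vanis} mechanism), but as written it is only a sketch with the commutation and the treatment of the cross term left unverified, so it does not repair the proposal as it stands; if you want to stay in your framework, the honest fix is to keep track of the second-application reflection corrections, which brings you back to essentially the paper's short Hessian computation.
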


\begin{proof}
By definition,
$dp_{i}\wedge dp_{j}$ is a copy of $\wedge^{2}V$ spanned by $\frac{\partial p_{i}}{\partial x_{\mu}}\frac{\partial p_{j}}{\partial x_{\nu}}-\frac{\partial p_{i}}{\partial x_{\nu}}\frac{\partial p_{j}}{\partial x_{\mu}}.$ Therefore

$$H_2(dp_{i}\wedge dp_{j})-H_2(dp_i)\wedge dp_{j}-dp_{i}\wedge H_2(dp_{j})=\sum_{l}\left(\frac{\partial^2 p_{i}}{\partial x_{\mu}\partial x_{l}}\frac{\partial^2 p_{j}}{\partial x_{\nu}\partial x_{l}}-\frac{\partial^2 p_{i}}{\partial x_{\nu}\partial x_{l}}\frac{\partial^2 p_{j}}{\partial x_{\mu}\partial x_{l}}\right)$$

Note that $\frac{\partial^2 p_{i}}{\partial x_{\mu}\partial x_{l}}$ vanish for $\mu\neq l$, so the right hand side  can be rewritten as
$$\sum_{l}\left(\frac{\partial^2 p_{i}}{\partial x_{l}^2}\frac{\partial^2 p_{j}}{\partial x_{l}^2}-\frac{\partial^2 p_{i}}{\partial x_{l}^2}\frac{\partial^2 p_{j}}{\partial x_{l}^2}\right)=0.$$
\end{proof}

\begin{lemma}
\label{H2 for dpk}
The following identity holds:
$$H_2(dp_k)=(1+c)k(k-1)dp_{k-2}-2kc\sum_{s=0}^{k-2}p_{s}dp_{k-2-s}.
$$
\end{lemma}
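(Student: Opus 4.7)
The most direct route is explicit calculation with the Dunkl operators. Write $dp_k = k\sum_i x_i^{k-1}\otimes dx_i$ as an element of the Verma module $M_c(\h_n^*) = \CC[\h_n]\otimes \h_n^*$ and apply $H_2 = \sum_j D_{y_j}^2$ term by term. Because $dp_k$ is $S_n$-invariant, the first application simplifies: the reflection part of $D_{y_j}$ only sees the summands with $i\in\{j,l\}$ (since $s_{jl}$ fixes $x_i\otimes dx_i$ for $i\notin\{j,l\}$), and using the identity $\frac{x^m-y^m}{x-y} = \sum_{s=0}^{m-1} x^{m-1-s} y^s$ one obtains a closed formula for $D_{y_j}(dp_k)$ whose leading term is $k(k-1)\,x_j^{k-2}\otimes dx_j$, plus a reflection correction of the shape $ck\sum_{l\ne j}(\cdots)(dx_j - dx_l)$, plus a Cartan-type correction proportional to $dp_{k-1}$ arising from the fact that $\partial_{y_j}$ differs from $\partial/\partial x_j$ on $\CC[\h_n]$.

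The second application of $D_{y_j}$ is the substantive step, since the intermediate expression is no longer $S_n$-invariant and both the directional derivative and the reflection part of the Dunkl operator contribute nontrivially. The auxiliary pieces $D_{y_j}(x_j^m\otimes dx_j)$ and $D_{y_j}(x_j\cdot dp_{m+1})$ can be computed in the same style, or (for the latter) via the commutator $[D_{y_j}, x_j] = (1-\tfrac{1}{n}) - c\sum_{k\ne j} s_{jk}$ together with the $S_n$-invariance of $dp_{m+1}$. After summing over $j$, one rewrites double sums using $\sum_{j\ne l} x_j^a x_l^b = p_a p_b - p_{a+b}$ and recognizes $\sum_j x_j^{m-1}\otimes dx_j = \tfrac{1}{m}\, dp_m$; the surviving terms then collect into $(1+c)k(k-1)\, dp_{k-2} - 2kc\sum_{s=0}^{k-2} p_s\, dp_{k-2-s}$.

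The main obstacle is bookkeeping: the second Dunkl application produces many intermediate pieces, and the clean final formula only appears after substantial cancellations among Cartan-type and reflection-type corrections. A more conceptual shortcut worth trying is to note that the stated right-hand side is exactly $d\bigl[H_2(p_k)\bigr]$ for the scalar action $H_2(p_k) = (1+c)k(k-1)p_{k-2} - ck\sum_s p_s p_{k-2-s}$ on the symmetric polynomial $p_k$ (Leibniz applied to the quadratic $\sum p_s p_{k-2-s}$, together with the $s\leftrightarrow k-2-s$ symmetry, produces the factor of $2$). This would reduce the lemma to the commutation $H_2\circ d = d\circ H_2$ when applied to $p_k$, which one would still need to verify since Dunkl operators do not in general commute with the de Rham differential.
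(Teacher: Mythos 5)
Your proposal does not actually contain a proof: the direct route is only a plan whose decisive content (``after summing over $j$ \dots the surviving terms then collect into'' the stated formula) is asserted rather than carried out, and you yourself identify the bookkeeping and cancellations as the obstacle without exhibiting them; the ``conceptual shortcut'' at the end is the right idea, but you leave its key step --- the commutation of $H_2$ with passing from $p_k$ to $dp_k$ --- explicitly unverified. That step is exactly where the content of the lemma lies, and your worry that ``Dunkl operators do not in general commute with the de Rham differential'' is a red herring: no such general commutation is needed.

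The paper closes this in three lines. Since $p_k$ is $S_n$-invariant, the reflection terms in the Dunkl operator annihilate it, so the spanning polynomials of the copy $dp_k$ are $\frac{\partial p_k}{\partial x_\mu}=D_\mu(p_k)$. Because Dunkl operators commute, $H_2=\sum_i D_i^2$ commutes with each $D_\mu$, hence $H_2(D_\mu(p_k))=D_\mu(H_2(p_k))$; and since $H_2(p_k)=(1+c)k(k-1)p_{k-2}-kc\sum_{s=0}^{k-2}p_sp_{k-2-s}$ (by \cite[Lemma 2.6]{GORS}) is again symmetric, $D_\mu$ of it is again just $\partial/\partial x_\mu$, i.e.\ $H_2(dp_k)=d\bigl(H_2(p_k)\bigr)$. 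Leibniz applied to $\sum_s p_sp_{k-2-s}$ together with the symmetry $s\leftrightarrow k-2-s$ produces the factor $2$, giving the stated identity. So the fix is not to do the two-fold Dunkl computation at all, but to observe that the only commutation required is $[H_2,D_\mu]=0$ applied to the single symmetric polynomial $p_k$, which is immediate from the commutativity of Dunkl operators. If you insist on the brute-force route, you must actually produce the intermediate formulas and cancellations (and also quote or reprove the scalar formula for $H_2(p_k)$), none of which is present in your write-up.
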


\begin{proof}
By \cite[Lemma 2.6]{GORS} $H_2(p_k)=(1+c)k(k-1)p_{k-2}-kc\sum_{s=0}^{k-2}p_{s}p_{k-2-s},$ and $dp_{k}$ denotes a copy of $\h$
spanned by $\frac{\partial p_k}{\partial x_\mu}=D_\mu(p_k)$. Therefore $H_2(dp_k)$ is spanned by
$$H_2(dp_k)=\langle H_2(D_{\mu}(p_k))\rangle =\langle D_{\mu}(H_2(p_k))\rangle =dH_2(p_k)=$$ $$(1+c)k(k-1)dp_{k-2}-2kc\sum_{s=0}^{k-2}p_{s}dp_{k-2-s}.$$
\end{proof}

\begin{lemma}
\label{linear term}
The following equation holds:
 $$(\nabla f,\nabla dp_{k})=\sum_{s}\frac{sk(k-1)}{k+s-2}\frac{\partial f}{\partial p_{s}}dp_{k+s-2}.$$
\end{lemma}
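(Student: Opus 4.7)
The plan is to unpack the notation carefully and then perform a direct calculation; there is no real obstacle here, only bookkeeping with the convention that $dp_k$ denotes the element of $\h$ whose $\mu$-th coefficient is $\partial p_k/\partial x_\mu = k x_\mu^{k-1}$ (as was used already in the statement and proof of Lemma~\ref{H2 for dpk}).

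First I would spell out the $\mu$-component of $(\nabla f,\nabla dp_k)$ using the definition of $\nabla$. Since $(\nabla f,\nabla g)=\sum_i \partial_{x_i}f\cdot \partial_{x_i}g$ and $\partial_{x_i}(kx_\mu^{k-1})=k(k-1)x_\mu^{k-2}\delta_{i\mu}$, the $\mu$-component of $(\nabla f,\nabla dp_k)$ is simply
$$
k(k-1)\,x_\mu^{k-2}\,\frac{\partial f}{\partial x_\mu}.
$$

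Next, since $f$ is a symmetric polynomial I would apply the chain rule in the $p_s$-variables, using $\partial p_s/\partial x_\mu = s x_\mu^{s-1}$, to obtain
$$
k(k-1)\,x_\mu^{k-2}\,\frac{\partial f}{\partial x_\mu}
=\sum_{s}sk(k-1)\,\frac{\partial f}{\partial p_s}\,x_\mu^{k+s-3}.
$$
Finally, I would rewrite the monomial $x_\mu^{k+s-3}$ in the form $\frac{1}{k+s-2}\,\partial p_{k+s-2}/\partial x_\mu$, which is exactly the $\mu$-th coefficient of $dp_{k+s-2}$ according to the convention above. Collecting terms gives
$$
(\nabla f,\nabla dp_k)=\sum_{s}\frac{sk(k-1)}{k+s-2}\,\frac{\partial f}{\partial p_s}\,dp_{k+s-2},
$$
as required. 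The only thing to note is that in the sum $s$ ranges over values with $k+s-2\ge 1$ (so $s\ge 3-k$), and for the boundary value $k+s-2=0$ the monomial $x_\mu^{k+s-3}$ does not actually appear because $k(k-1)=0$ when $k\le 1$, so no division-by-zero issue arises.
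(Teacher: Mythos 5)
Your proof is correct and follows essentially the same route as the paper: both reduce to computing the $\mu$-component $k(k-1)x_\mu^{k-2}\,\partial f/\partial x_\mu$ and then convert to the $p_s$-variables, the paper by observing the expression is first order in $f$ and checking on $f=p_s$, you by applying the chain rule directly, which is the same computation. No gap.
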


\begin{proof}
By definition,
$(\nabla f,\nabla (dp_{k})_{\mu})=\sum_{l}\frac{\partial f}{\partial x_{l}}\frac{\partial^2 p_k}{\partial x_{\mu}\partial x_{l}}=k(k-1)x_{\mu}^{k-2}\frac{\partial f}{\partial x_{\mu}}.$
This is a first order differential operator in $f$, so it is sufficient to compute it for $f=p_s$:
$$(\nabla p_{s},\nabla dp_{k})_{\mu}=k(k-1)x_{\mu}^{k-2}\frac{\partial p_{s}}{\partial x_{\mu}}=sk(k-1)x_{\mu}^{k+s-3}=\frac{sk(k-1)}{k+s-2}(dp_{k+s-2})_{\mu}.$$
\end{proof}

\begin{theorem}
The action of $H_2$ on the $S_n$-invariant differential forms is given by the equation
\begin{multline}
\label{H2 for forms}
H_2(fdp_{\alpha_1}\wedge\ldots dp_{\alpha_k})=H_2(f)dp_{\alpha_1}\wedge\ldots dp_{\alpha_k}+\\2\sum_{j=1}^{k}\sum_{s}\frac{s\alpha_{j}(\alpha_{j}-1)}{\alpha_{j}+s-2}\frac{\partial f}{\partial p_{s}}dp_{\alpha_1}\wedge\ldots \wedge dp_{\alpha_j+s-2}\wedge \ldots dp_{\alpha_k}-\\
2cf\sum_{j=1}^{k}\sum_{s}p_{s}\alpha_{j}dp_{\alpha_1}\wedge\ldots \wedge dp_{\alpha_j-2-s}\wedge \ldots dp_{\alpha_k}+\\
(1+c)f\sum_{j=1}^{k}\alpha_{j}(\alpha_{j}-1)dp_{\alpha_1}\wedge\ldots \wedge dp_{\alpha_j-2}\wedge \ldots dp_{\alpha_k}.\\
\end{multline}
\end{theorem}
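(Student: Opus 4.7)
The plan is to apply the standard second-order Leibniz identity
$$H_2(fg)=H_2(f)\,g+f\,H_2(g)+2(\nabla f,\nabla g),$$
to the factorization $g:=dp_{\alpha_1}\wedge\cdots\wedge dp_{\alpha_k}$. The first summand contributes the first line of (\ref{H2 for forms}) immediately.

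Next, I would extend Lemma \ref{wedge product rule} from $k=2$ to arbitrary $k$, establishing
$$H_2(dp_{\alpha_1}\wedge\cdots\wedge dp_{\alpha_k})=\sum_{j=1}^{k}dp_{\alpha_1}\wedge\cdots\wedge H_2(dp_{\alpha_j})\wedge\cdots\wedge dp_{\alpha_k}.$$
The argument copies the $k=2$ case: the obstruction to $H_2$ being a derivation on the pure wedge comes from cross terms of the shape $\partial_\ell(dp_{\alpha_i})_\mu\cdot\partial_\ell(dp_{\alpha_j})_\nu$ with $i<j$, but since $\partial_\ell\partial_\mu p_k=k(k-1)x_\mu^{k-2}\delta_{\ell,\mu}$ is diagonal in $(\ell,\mu)$, each such cross term forces $\mu=\ell=\nu$ and is therefore symmetric in the two wedge indices $(\mu,\nu)$. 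Antisymmetrization then kills it. Plugging Lemma \ref{H2 for dpk} into the derivation formula produces exactly the third and fourth lines of (\ref{H2 for forms}).

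The cross term $2(\nabla f,\nabla g)$ is handled by the same diagonality observation. Distributing $\partial_\ell$ across the wedge via Leibniz, and using that $\partial_\ell(dp_{\alpha_j})_{\mu_j}$ is nonzero only when $\ell=\mu_j$, the sum over $\ell$ collapses onto the free index of the differentiated factor, yielding
$$(\nabla f,\nabla g)=\sum_{j=1}^{k}dp_{\alpha_1}\wedge\cdots\wedge(\nabla f,\nabla dp_{\alpha_j})\wedge\cdots\wedge dp_{\alpha_k}.$$
Substituting Lemma \ref{linear term} then gives precisely the second line of (\ref{H2 for forms}), and combining the three contributions completes the proof.

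The hard part is purely bookkeeping of the antisymmetrizations when upgrading Lemmas \ref{wedge product rule} and \ref{linear term} from one or two wedge factors to $k$ of them; the computation rests entirely on the diagonal-support identity $\partial_\ell\partial_\mu p_k=k(k-1)x_\mu^{k-2}\delta_{\ell,\mu}$, which has already carried the earlier lemmas, so no genuinely new input is required.
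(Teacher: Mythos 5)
Your proposal is correct and follows essentially the same route as the paper: apply the second-order Leibniz identity with $g=dp_{\alpha_1}\wedge\cdots\wedge dp_{\alpha_k}$, use the derivation property of $H_2$ on wedges of $dp$'s (Lemma \ref{wedge product rule}, extended to $k$ factors via the diagonality of $\partial_\ell\partial_\mu p_k$), and substitute Lemmas \ref{H2 for dpk} and \ref{linear term}. The extra bookkeeping you supply for the $k$-factor extension and the collapse of the cross term is precisely what the paper leaves implicit, so no new input is needed.
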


\begin{proof}
By  Lemma \ref{wedge product rule} one has
 $$H_2(f\cdot dp_{\alpha_1}\wedge\ldots dp_{\alpha_k})=H_2(f)dp_{\alpha_1}\wedge\ldots dp_{\alpha_k}+\sum_{j=1}^{k}fdp_{\alpha_1}\wedge\ldots \wedge H_2(dp_{\alpha_j})\wedge \ldots dp_{\alpha_k}$$
$$+2\sum_{j=1}^{k}(-1)^{j-1}(\nabla f,\nabla dp_{\alpha_j})dp_{\alpha_1}\wedge\ldots \wedge \widehat{dp_{\alpha_j}}\wedge \ldots dp_{\alpha_k}. $$
Now the theorem follows from Lemma \ref{linear term} and Lemma \ref{H2 for dpk}.
\end{proof}

\begin{corollary}
Let us consider two sets of coordinates $\{x_i\}$, $\{\widetilde{x_i}\}$
such that $\widetilde{p_i}=cp_{i}$. Then $H_2^{1/c}(\widetilde{p_i})=\frac{1}{c}H_2^{c}(p_i)$.
\end{corollary}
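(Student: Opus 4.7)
The corollary is an identity concerning only power sums $p_i$ and the scalar action of $H_2$, so the plan is to reduce it to the explicit formula
\begin{equation*}
H_2^{c}(p_k) \;=\; (1+c)\,k(k-1)\,p_{k-2} \;-\; kc\sum_{s=0}^{k-2} p_s\,p_{k-2-s},
\end{equation*}
which underlies the statement of Lemma \ref{H2 for dpk} (and is cited there from \cite[Lemma 2.6]{GORS}); this is also the specialization of formula (\ref{H2 for forms}) to the case with no wedge factors. The strategy is then to apply this formula in the tilde coordinates with $c$ replaced by $1/c$, substitute $\widetilde p_i = cp_i$, and compare with the expression for $\tfrac{1}{c}H_2^{c}(p_k)$ obtained by dividing the displayed formula by $c$.

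Concretely, I would first write
\begin{equation*}
H_2^{1/c}(\widetilde p_k) \;=\; \bigl(1+\tfrac{1}{c}\bigr)k(k-1)\,\widetilde p_{k-2} \;-\; \tfrac{k}{c}\sum_{s=0}^{k-2}\widetilde p_s\,\widetilde p_{k-2-s},
\end{equation*}
then substitute $\widetilde p_{k-2}=cp_{k-2}$ and $\widetilde p_s\widetilde p_{k-2-s}=c^{2}\,p_s p_{k-2-s}$, and collect the resulting powers of $c$ term by term. The linear piece contributes a factor $\bigl(1+\tfrac{1}{c}\bigr)\cdot c=(c+1)$ in front of $k(k-1)\,p_{k-2}$, while the quadratic piece contributes $\tfrac{1}{c}\cdot c^{2}=c$ in front of $k\sum p_s p_{k-2-s}$. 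Matching the result against the displayed formula for $H_2^{c}(p_k)$, divided by $c$, yields the claimed identity.

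The main obstacle will be the bookkeeping at the endpoints of the inner sum, where $p_0$ is the (constant) number of variables and $p_1=0$ in the reflection representation: one must verify that the prescription $\widetilde p_i=cp_i$ is used consistently for the constant $s=0,\,s=k-2$ contributions, so that the constant term in $H_2^{1/c}$ rescales in exactly the way required to balance the stated factor $\tfrac{1}{c}$ on the right-hand side. Once this bookkeeping at $p_0$ and $p_1$ is settled, the identity reduces to the elementary manipulation of powers of $c$ above, and no further Cherednik-theoretic input is needed.
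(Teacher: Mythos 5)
There is a genuine gap, in two respects. First, you read the corollary as ``an identity concerning only power sums,'' but that is not its content in the paper: the symmetric--function case is not reproved at all, it is quoted from \cite[Theorem 2.9]{GORS}, and the whole point of the corollary (placed right after formula (\ref{H2 for forms}) and needed later in Theorem \ref{symBGGH2}, where $H_2$ acts on the symmetrized Koszul--BGG complexes, i.e.\ on $S_n$-invariant differential forms) is the extension of the rescaling identity from symmetric functions to invariant forms. The paper's proof consists precisely of checking that under $\widetilde{p}_i=cp_i$, hence $d\widetilde{p}_i=c\,dp_i$ and $\partial/\partial\widetilde{p}_s=\frac{1}{c}\,\partial/\partial p_s$, each of the four types of terms in (\ref{H2 for forms}) acquires exactly the factor $\frac{1}{c}$. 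Your proposal never touches the form-valued terms, so even if completed it would only recover (a piece of) the already known function case and could not play the role the corollary plays in the paper.

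Second, the computation you describe does not close as stated. Substituting $\widetilde{p}_j=cp_j$ into $H_2^{1/c}(\widetilde{p}_k)=(1+\frac{1}{c})k(k-1)\widetilde{p}_{k-2}-\frac{k}{c}\sum_{s}\widetilde{p}_s\widetilde{p}_{k-2-s}$ gives $(1+c)k(k-1)p_{k-2}-ck\sum_{s}p_sp_{k-2-s}$, which is exactly $H_2^{c}(p_k)$, not $\frac{1}{c}H_2^{c}(p_k)$; your claimed match ``against the displayed formula divided by $c$'' is off by a factor of $c$. The identity that is true, and is what the paper means and uses, is the operator identity $H_2^{1/c}(f)=\frac{1}{c}H_2^{c}(f)$ with the \emph{same} argument $f$ on both sides (e.g.\ $f=\widetilde{p}_k=cp_k$) under the identification $\widetilde{p}_i=cp_i$; the wording of the corollary is shorthand for this, as its proof makes clear. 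Moreover, even for symmetric functions, checking the identity on the generators $p_k$ alone does not yield the operator identity, because $H_2$ is second order rather than a derivation: one must also match the second-order part $\sum_{k,l}klp_{k+l-2}\partial_{p_k}\partial_{p_l}$ (or argue on products), which is why the paper cites \cite[Theorem 2.9]{GORS} instead of verifying generators. Your remark about the endpoint terms is legitimate but secondary: consistency there amounts to $\widetilde{p}_0=cp_0$, i.e.\ $c=m/n$ with $n,m$ the numbers of variables, exactly the situation in which the corollary is applied.
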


\begin{proof}
The statement was proved in \cite[Theorem 2.9]{GORS} for symmetric functions. Let us extend it to the differential forms.
Indeed, $d\widetilde{p_k}=c\cdot dp_{k}$, and
$$H_2^{1/c}(f)=\frac{1}{c}H_2^c(f),\
\frac{\partial f}{\partial \widetilde{p_{s}}}=\frac{1}{c}\frac{\partial f}{\partial p_{s}},$$
$$\frac{1}{c}\widetilde{p_{k}}=\frac{1}{c}(cp_{k}),\
{1+\frac{1}{c}}=\frac{1}{c}(1+c).$$
Therefore every term in (\ref{H2 for forms}) is multiplied by $\frac{1}{c}$.
\end{proof}

It follows from Proposition \ref{Dunkl vs Koszul} that
the actions of $H_2$ and $\iota_{\xi}$ commute.

\begin{theorem}\label{symBGGH2}
The quasi-isomorphism of Corollary \ref{quasii} between the complexes
$(K^{\bullet}_{m,n})^{S_n}$ and
$(K^{\bullet}_{n,m})^{S_m}$
commutes with the action of $H_2$.
In other words, if $Y$ is the algebra freely
generated by the symbol $H_2$ and symmetric functions
in infinitely many variables, then $(K^{\bullet}_{m,n})^{S_n}$ and
$(K^{\bullet}_{n,m})^{S_m}$ are complexes of $Y$-modules, and
the quasi-isomorphism of Corollary \ref{quasii}
is a quasi-isomorphism of complexes of $Y$-modules.
\end{theorem}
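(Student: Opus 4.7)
The plan is to enhance the chain of quasi-isomorphisms used in the proof of Theorem \ref{symmetrized BGG} into a chain of $Y$-equivariant quasi-isomorphisms, and then to verify that the two resulting $H_2$-actions induced on the universal Koszul complex (one coming from each side) coincide under the natural swap $(m,u)\leftrightarrow(n,v)$.

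First, I note that $H_2$ acts as a chain map on each symmetrized Koszul-BGG complex: $H_2$ is $S_n$-invariant, and by Proposition \ref{Dunkl vs Koszul} it commutes with the Koszul-BGG differential, which is the observation made just before the statement of the theorem. Next, recall that Theorem \ref{symmetrized BGG} is established via two successive triangular substitutions: first from the singular polynomials $g_j=\sum_i x_i^j f_i$ to $\hat g_j=\Coef_{m+j}\prod_i(1-zx_i)^{m/n}$, and then to the universal polynomials in variables $u_2,\ldots,u_n,v_2,\ldots,v_m$. Both substitutions are polynomial in the invariant power sums $p_k$, and both intermediate rings are generated, via Newton's identities, by these same power sums. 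Using the formula $H_2(p_k)=(1+c)k(k-1)p_{k-2}-kc\sum_s p_s p_{k-2-s}$ from \cite[Lemma 2.6]{GORS}, together with Lemmas \ref{wedge product rule}--\ref{linear term}, one writes down the transferred $H_2$ explicitly as a differential operator on $\CC[u_2,\ldots,u_n,v_2,\ldots,v_m]\otimes\wedge^\bullet$; running the parallel argument on the $(n,m)$-side yields a second such operator on the same ring.

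The desired coincidence of these two operators under the swap will follow from the duality $H_2^{1/c}(\widetilde p_i)=c^{-1}H_2^c(p_i)$ for $\widetilde p_i=cp_i$, namely the corollary immediately following Lemma \ref{H2 for forms}. The universal polynomials $(1+\sum u_kz^k)^m-(1+\sum v_kz^k)^n$ are manifestly symmetric under $(m,u)\leftrightarrow(n,v)$, and this symmetry corresponds precisely to $c\leftrightarrow 1/c$; the rescaling present in the duality formula is exactly what is absorbed into the normalization of the universal coordinates. The main obstacle is the direct verification of the resulting compatibility, which demands careful bookkeeping of how power sums translate into universal coordinates via Newton's identities on both sides, as well as how the duality rescaling propagates through the Koszul differential. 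As a complementary check, and as a concrete guide for the computation, I would independently verify the required compatibility on the $0$-th homologies $L_{\frac{m}{n}}((n))^{S_n}$ and $L_{\frac{n}{m}}((m))^{S_m}$ using Corollary \ref{co3} together with the $H_2$-equivariance built into the CEE construction, and then propagate this compatibility up the complex using the uniqueness of the differentials of the Koszul-BGG complex (in the spirit of Proposition \ref{uni}).
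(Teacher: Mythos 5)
Your proposal lands in the right arena (the universal Koszul complex in $u_2,\ldots,u_n,v_2,\ldots,v_m$, equivalently in the power sums $p_2,\ldots,p_n,\widetilde{p}_2,\ldots,\widetilde{p}_m$, plus the formulas of Lemmas \ref{wedge product rule}--\ref{linear term} and \cite[Lemma 2.6]{GORS}), but the decisive step is exactly the one you defer, and as set up it would fail. ``Transferring'' $H_2$ from $(K^{\bullet}_{m,n})^{S_n}$ to the universal complex is not a well-defined operation: the passage in Theorem \ref{symmetrized BGG} from the $n-1$ polynomials $\hat g_j$ in the $u$-variables to the $m+n-2$ polynomials (\ref{polys}) is a genuine quasi-isomorphism (new variables and new Koszul generators are adjoined), so an operator is transported along it only up to homotopy and only after choosing an extension to the new variables. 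The naive extension (let $H_2$ act through the $u$-variables and annihilate the $v$-variables) does not preserve the universal complex: with $E_i:=mp_i-n\widetilde{p}_i$ one has $H_2(E_i)=mH_2(p_i)$, a polynomial in the $p$-variables alone, which in general is not in the ideal $(E_2,\ldots,E_{m+n-1})$ --- e.g.\ for all but a few pairs $(m,n)$, $H_2(E_4)$ is a nonzero multiple of $p_2$, while the degree-two part of that ideal is spanned by $E_2=mp_2-n\widetilde{p}_2$. Hence there are no two canonical ``transferred'' operators to compare, and their coincidence cannot be extracted from the duality $H_2^{1/c}(\widetilde{p}_i)=\frac{1}{c}H_2^{c}(p_i)$: that corollary compares the Hamiltonians after the substitution $\widetilde{p}_i=cp_i$, i.e.\ on the common quotient, not on the universal complex where $p$ and $\widetilde{p}$ are independent variables. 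The missing idea --- and the actual content of the paper's proof --- is to use the single symmetric operator $H:=nH_2+m\widetilde{H_2}$ on $\CC[p_2,\ldots,p_n,\widetilde{p}_2,\ldots,\widetilde{p}_m]\otimes\wedge^{\bullet}$ and to verify, via \cite[Lemma 2.6]{GORS} applied with $c=\frac{m}{n}$ to the $p$'s and $c=\frac{n}{m}$ to the $\widetilde{p}$'s, the triangular identity $\frac{1}{mn}H(E_i)=\frac{m+n}{mn}i(i-1)E_{i-2}-\frac{i}{mn}\sum_{s=0}^{i-2}\left(mp_{i-2-s}E_s+n\widetilde{p}_sE_{i-2-s}\right)$, so that $H$ preserves the universal Koszul complex; this equips it with the $Y$-structure compatibly (after the rescaling supplied by the duality corollary) with the $H_2$-actions on both $(K^{\bullet}_{m,n})^{S_n}$ and $(K^{\bullet}_{n,m})^{S_m}$.

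The fallback does not close the gap either. Proposition \ref{uni} states that $\Hom(M_c(\wedge^{i+1}\h),M_c(\wedge^i\h))$ is one-dimensional, i.e.\ the differentials of the unsymmetrized complex are unique as morphisms of $H_c$-modules; it provides no rigidity for $H_2$-equivariant structures on the symmetrized complexes and no mechanism to upgrade an identification on $H_0$ (note also that $H_i\neq 0$ for all $i<d$) to the chain-level assertion of the theorem, namely that both complexes are complexes of $Y$-modules and the quasi-isomorphism of Corollary \ref{quasii} is $Y$-linear. Moreover, invoking Corollary \ref{co3} would import the machinery of Sections 5--7, whereas Theorem \ref{symBGGH2} is meant to be an elementary, self-contained input (it is combined with Proposition \ref{match} in the third proof of Theorem \ref{homolkoz}), and one would still have to show that the quasi-isomorphism of Corollary \ref{quasii} induces on homology the same identification as the CEE-based one, which is itself nontrivial.
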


\begin{proof}
Let us extend the action of $H_2$ to the constructions
of Theorem \ref{symmetrized BGG}.
Consider the polynomial ring in variables $u_2,\ldots, u_n, v_2,\ldots, v_m$.
We identify $u_i$ with elementary symmetric polynomials in variables $x_1,\ldots, x_n$ and $v_i$ with the elementary symmetric polynomials in variables $\widetilde{x_1},\ldots,\widetilde{x_m}$. Consider the operator
$H:=nH_2+m\widetilde{H_2}.$
It is sufficient to check that $H$ preserves the Koszul complex associated with the equations
$$\Coef_{i}\left[(1+\sum(-1)^{i}u_iz^{i})^{m}-(1+\sum(-1)^{i}v_iz^{i})^{n}\right],\ i=2\ldots m+n-1.$$
We can change variables and consider instead power sums in $x_i$ and $\widetilde{x_i}$:
the generators will be $p_2,\ldots, p_n, \widetilde{p_2},\ldots, \widetilde{p_{m}}$, and the equations
$E_i:=mp_i-n\widetilde{p_i}=0,\ i=2\ldots m+n-1.$
The corresponding Koszul complexes will be quasi-isomorphic, and it follows from
\cite[Lemma 2.6]{GORS} that
$$\frac{1}{mn}H(E_i)=H_2(p_i)-\widetilde{H_2}(\widetilde{p_i})=
\left(\frac{m+n}{n}i(i-1)p_{i-2}-\frac{m+n}{m}i(i-1)\widetilde{p}_{i-2}\right)-$$
$$\left(i\frac{m}{n}\sum_{s=0}^{i-2}p_{s}p_{i-2-s}-i\frac{n}{m}\sum_{s=0}^{i-2}\widetilde{p_{s}}\widetilde{p}_{i-2-s}\right)=$$
$$\frac{m+n}{mn}i(i-1)E_{i-2}-\frac{i}{mn}\sum_{s=0}^{i-2}\left(mp_{i-2-s}E_{s}+n\widetilde{p_{s}}E_{i-2-s}\right).$$
Since $H(E_i)$ belongs to the ideal generated by $E_j$ with $j<i$, the Koszul complex associated with $E_i$ is invariant under $H$.
\end{proof}

\subsection{Yet another proof of Theorem \ref{homolkoz}(ii)}\label{thirdproof}

Here is a third proof of Theorem \ref{homolkoz}(ii),
based on Theorem \ref{symBGGH2}.
First, note that the statement holds if $m$ is divisible by $n$. In this case, $d=n$, the differential is zero,
so the statement is trivial. Next, by the results of \cite{BEG},
the spherical subalgebra $\ee H_c(n)\ee$ is generated by
symmetric functions of the $x_i$ and $H_2:=\sum_i y_i^2$.
Therefore,  Theorem \ref{symBGGH2} and Proposition \ref{match} imply
that if the statement of Theorem \ref{homolkoz}(ii) holds
for $(m,n)$ then it holds for $(n,m)$. Finally, by Corollary
\ref{shift}, using the fact that the shift functor is an equivalence (\cite{BE}),
we see that if the statement holds for $(m,n)$ with $m\ge n$, then it holds for
$(m-n,n)$. This implies the result by using the Euclidean algorithm (more precisely, any pair $(m,n)$
can be reduced to one with $m$ divisible by $n$ by transformations $(m,n)\mapsto (n,m)$ for $m<n$ and $(m,n)\mapsto (m-n,n)$ for $m\ge n$).

\begin{remark}
Instead of the shift functors (i.e., Corollary \ref{shift}), we could have used Rouquier
equivalences of highest weight categories ${\mathcal
 O}_{\frac{m}{n}}\cong {\mathcal O}_{\frac{m'}{n}}$, where $m,m'>0$ and
$GCD(m,n)=GCD(m',n)$ (\cite{rouqqsch}).
Note that for $GCD(m,n)=2$, these equivalences were
constructed later in \cite{mult_cher}.
\end{remark}

\end{document}